\begin{document}

\title{On Support Recovery with Sparse CCA: Information Theoretic and Computational Limits}

\author{Nilanjana~Laha, 
Rajarshi~Mukherjee
        \thanks{Nilanjana Laha is with the Department of Statistics, Texas A{\&}M University,  College Station, TX 77843, (email: nlaha@tamu.edu)}
\thanks{Rajarshi Mukherjee is with the Department of Biostatistics, Harvard T. H. Chan School of Public Health,   677 Huntington Ave, Boston, MA 02115, (email: ram521@mail.harvard.edu)}

\thanks{This work was supported by National Institutes of Health grant P42ES030990.}}
\markboth{Journal of \LaTeX\ Class Files,~Vol.~14, No.~8, August~2021}%
{Laha \MakeLowercase{\textit{et al.}}: Support Recovery}


\ifCLASSINFOpdf
\else
\fi
%
%

%
%

\hyphenation{op-tical net-works semi-conduc-tor}

\maketitle

\begin{abstract}
In this paper, we consider asymptotically exact support recovery in the context of high dimensional and sparse Canonical Correlation Analysis (CCA). Our main results describe four regimes of interest based on information theoretic and computational considerations. In regimes of ``low" sparsity we describe a simple, general, and computationally easy method for support recovery, whereas in a regime of ``high" sparsity, it turns out that support recovery is  information theoretically impossible. For the sake of information theoretic lower bounds, our results also demonstrate a non-trivial requirement on the ``minimal" size of the nonzero elements of the canonical vectors that is required for asymptotically consistent support recovery. Subsequently, the regime of ``moderate" sparsity is further divided into two sub-regimes. In the lower of the two sparsity regimes, we show that polynomial time support recovery is possible by using a sharp analysis of a co-ordinate thresholding \cite{deshpande2014} type method. In contrast, in the higher end of the moderate sparsity regime, appealing to the ``Low Degree Polynomial" Conjecture \cite{kunisky2019}, we  provide evidence  that polynomial time support recovery methods are  inconsistent.  Finally, we carry out numerical experiments to compare the efficacy of  various methods discussed.
\end{abstract}

\begin{IEEEkeywords}
Canonical Correlation Analysis,  Support Recovery, Low Degree Polynomials, Variable Selection, High Dimension.
\end{IEEEkeywords}

\section{Introduction}
\IEEEPARstart Canonical Correlation Analysis (CCA) is a highly popular technique to perform initial dimension reduction while exploring relationships between two multivariate objects.  Due to its natural interpretability and success in finding latent information, CCA has found enthusiasm across  vast canvas of disciplines, which include, but are not limited to  psychology and agriculture, information
retrieving \cite{hardoon2004canonical,rasiwasia2010new,gong2014multi}, brain-computer interface \cite{bin2009online}, neuroimaging
\cite{avants2010dementia}, genomics \cite{witten2009}, organizational research
\cite{bagozzi2011measurement}, natural language processing \cite{dhillon2011multi,faruqui2014improving}, fMRI  data analysis \cite{friman2003adaptive}, computer vision \cite{kim2007tensor}, and speech recognition \cite{arora2013multi,wang2015deep}.

 Early developments in the theory and applications of CCA have now been well documented in the statistical literature, and we refer the interested reader to \cite{anderson2003} and references therein for further details. 
However, the modern surge in interest for CCA, often being motivated by \textcolor{black}{data from high throughput biological experiments} \cite{le2009, lee2011, waaijenborg2008}, requires re-thinking several aspects of the traditional theory and methods.  A natural structural constraint that has gained popularity in this regard, is that of sparsity, i.e.,  the phenomenon of an (unknown)  collection of variables being related to each other.  In order to formally introduce the framework of sparse CCA, we present our statistical setup next.
\textcolor{black}{
We shall consider $n$-i.i.d. samples $(X_i,Y_i)\sim \mathbb{P}$ with $X_i\in \mathbb{R}^p$ and $Y_i\in \mathbb{R}^q$ being multivariate mean zero random variables with joint variance covariance matrix 
\begin{equation}\label{def: sigma}
 \Sigma=\begin{bmatrix}
\Sx & \Syx\\
\Syx & \Sy\\
\end{bmatrix}.   
\end{equation}
The first canonical correlation $\Lambda_1$ is then defined as the maximum possible correlation between two linear combinations of $X$ and $Y$. This definition interprets   $\Lambda_1$ as the  optimal value of the following maximization problem:
\begin{maxi}[4]
    {u\in\mathbb R^{p},v\in\mathbb R^{q}}{u^T\Sxy v}
     {\label{opt: sparse CCA}}{}
     \addConstraint{u^T\Sx u=v^T\Sy v}{= 1.}
       \end{maxi}
     The solutions  to \eqref{opt: sparse CCA}
     are the vectors that maximize the correlation of the projections of $X$ and $Y$ in those respective directions. Higher order canonical correlations can thereafter be defined in  a recursive fashion  (cf. \cite{anderson1999asymptotic}).
    In particular,  for $j\geq 1$, we define the $j^{\rm th}$ canonical correlation  $\Lambda_j$ and the corresponding directions $u_j$ and $v_j$ by maximizing \eqref{opt: sparse CCA} with the additional constraint
    \begin{align}\label{def: uj vj}
      u^T\Sx u_l=v^T\Sy v_l=0,\quad 0\leq l\leq j-1.  
    \end{align}
    } 

As mentioned earlier, in many modern data examples, the sample size $n$ is typically at most comparable to or much smaller than $p$ or $q$ -- rendering the classical CCA inconsistent and inadequate without further structural assumptions \cite{cai2018rate,ma2020subspace,bao2019}. The framework of Sparse Canonical Correlation Analysis (SCCA) (cf. \cite{witten2009, mai}), where the $u_i$'s and the $v_i$'s are sparse vectors, was subsequently developed to target low dimensional structures (that allows consistent estimation) when $p,q$ are potentially larger than $n$. 
The corresponding sparse estimates of the leading canonical directions   naturally perform variable selection, thereby  leading to the recovery of their  support (cf. \cite{witten2009, mai, waaijenborg2008, solari2019}). It is unknown, however, under what  settings, this \naive\ method of support recovery, or any other method for the matter, is consistent. 
The support recovery of the leading canonical directions  serves an important purpose of identifying groups of variables that explain the most linear dependence among high dimensional random objects ($X$ and $Y$) under study -- and thereby renders crucial interpretability. Asymptotically optimal support recovery  is yet to be explored systematically in the context of SCCA -- both theoretically, and from the computational viewpoint. In fact, despite the renewed enthusiasm for CCA,  both the theoretical and applied communities  have mainly focused on the estimation  of the leading canonical directions, and relevant scalable algorithms -- see, e.g., \cite{gao2013,gao2015,gao2017,ma2020subspace,mai}. This paper  explores the crucial question of support recovery in the context of SCCA. 
\footnote{In this paper, by support recovery, we refer to the exact recovery of the combined support of the $u_i$'s (or the $v_i$'s) corresponding to nonzero $\Lambda_i$'s.}

The problem of support recovery for SCCA naturally connects to a vast class of variable selection problems (cf. \cite{wainwright2009, amini2009, butucea2015, butucea2017adaptive, meinshausen2010stability}). The problem closest in terms of complexity turns out to be the sparse PCA (SPCA) problem \cite{johnstone2009}. Support recovery  in the latter problem is known to present interesting information theoretic  and computational  bottlenecks  (cf. \cite{boaz2015, amini2009, ding2019, arous2020}). Moreover,  information theoretic and computational issues  also arise in context of SCCA estimation problem   (cf. \cite{gao2013,gao2015,gao2017,mai}). In view of the above, it is natural to expect that such information theoretic and computational issues exist in context of   SCCA support recovery problem as well. However, the techniques used in  SPCA support recovery analysis are  not directly applicable to the SCCA problem, which poses  additional challenges due to the presence of high dimensional nuisance parameters $\Sx$ and $\Sy$. 
  The main focus of our work is therefore  retrieving the complete picture of the information theoretic and computational limitations  of SCCA support recovery. Before going into further details,  we  present a brief summary of our contributions, and defer the discussions on the main subtleties
  to Section \ref{sec:main_results}. Our methods can be implemented using the R package \texttt{Support.CCA} \cite{github_laha}.

\subsection{Summary of Main Results} 

We say a method successfully recovers the support if it achieves exact recovery with probability tending to one uniformly over the  sparse parameter spaces defined in Section \ref{sec:math_formalism}.  In the sequel, we denote the cardinality of the combined support of the $u_i$'s and the $v_i$'s by $s_x$ and $s_y$, respectively. Thus $s_x$ and $s_y$ will be our respective sparsity parameters. Our main contributions are listed below.

\subsubsection{General methodology}
In Section~\ref{sec:easy_regime_general}, we construct a general algorithm called \RecoverSupp,  which leads to successful support recovery whenever the latter is information theoretically tractable. This also serves  as the first step in creating a polynomial time procedure for recovering support in one of the difficult regimes of the problem -- see e,g. Corollary~\ref{cor: CT support recovery}, which shows that  \RecoverSupp\  accompanied by a co-ordinate thresholding type method recovers the support  in polynomial time in a regime that requires subtle analysis. Moreover, Theorem~\ref{thm: support recovery: sub-gaussians} shows that the minimal signal strength required by \RecoverSupp\  matches the information theoretic limit whenever the nuisance precision matrices $\Sx^{-1}$ and $\Sy^{-1}$ are sufficiently sparse.

\subsubsection{Information theoretic and  computational hardness as a function of sparsity} 
 As the sparsity level increases, we show that the CCA support recovery problem transitions from being  efficiently solvable, to NP hard (conjectured), and  to information theoretically impossible.  According to this hardness pattern, the sparsity domain can be partitioned into the following three regimes:
  (i) $s_x,s_y\lesssim \sqn$, (ii) $\sqn\lesssim s_x,s_y\lesssim n/\log(p+q)$, and (iii) $s_x,s_y\gtrsim n/\log(p+q)$. We describe below the distinguishing  behaviours of  these three regimes, which is consistent with the sparse PCA scenario.
\begin{itemize}
    \item We show that when $s_x,s_y\lesssim \sqrt{n/\log(p+q)}$ (``easy regime"), polynomial time support recovery is possible, and well-known consistent estimators of the canonical correlates (cf. \cite{mai, gao2017}) can be utilized to that end. When $\sqrt{n/\log(p+q)}\lesssim s_x,s_y\lesssim\sqn$ (``difficult regime"), we show that a co-ordinate thresholding type algorithm (inspired by \cite{deshpande2014})  succeeds  provided $p+q\asymp n$. We call the last regime ``difficult" because it is unknown whether existing estimation methods like COLAR \cite{gao2017} or SCCA \cite{mai}  have valid statistical guarantees in this regime -- see Section~\ref{sec:easy_regime_general} and Section~\ref{sec: CT} for more details.
    \item  In Section~\ref{sec: low deg}, we show that when $\sqn\lesssim s_x,s_y\lesssim n/\log(p+q)$ (``hard regime"),  support recovery is computationally hard subject to the so called ``low degree polynomial conjecture" recently popularized by \cite{hopkins2017},  \cite{hopkins2018}, and  \cite{kunisky2019}. Of course, this phenomenon is  observable only when $p,q\gtrsim n$, because otherwise, the problem would be solvable by the ordinary CCA analysis (cf. \cite{bao2019, ma2021}). Our findings are consistent with the conjectured computational barrier in context of  SCCA estimation problem  \cite{gao2017}.
    \item When $s_x,s_y\gtrsim n/\log(p+q)$, we show that support recovery is information theoretically impossible (see Section~\ref{sec: IT lower bound}).
\end{itemize}

\subsubsection{Information theoretic hardness as a function of minimal signal strength}
In context of support recovery, the signal strength is quantified by \[\Sig_x=\min_{k\in[p]}\max_{i\in[r]}|(u_i)_k|\quad\text{and}\quad\Sig_y=\min_{k\in[q]}\max_{i\in[r]}|(v_i)_k|.\] 
Generally, support recovery algorithms require  the  signal strength to lie above some threshold. As a concrete example, the detailed analyses provided in \cite{amini2009, deshpande2014}, and \cite{boaz2015} are all based on the nonzero principal component elements being of the order $\pm 1/\sqrt{\rm sparsity}$. To the best of our knowledge, prior to our work, there was no result in the PCA/CCA literature on the information theoretic limit of the minimal signal strength. \textcolor{black}{Generally, PCA studies assume that the top eigenvectors are de-localized, i.e., the principal components have elements of the order $O(1/\sqrt s)$ and thereby mostly considered the cases of de-localized eigenvectors. We do not make any such assumption on the canonical covariates, and thereby we believe that our study paints a more complete picture of the support recovery.}
\begin{itemize}
    \item In Section~\ref{sec: IT lower bound}, we show that $\Sig_x\gtrsim\sqrt{\log(p-s_x)/n}$    (or $\Sig_y\gtrsim\sqrt{\log(q-s_y)/n}$)
    is a necessary requirement for  successful support recovery by $U$ (or $V$).
\end{itemize}

\subsection{Notation}
  For a vector $x\in\RR^p$, we denote its support  by
$D(x)=\{i: x_i\neq 0\}$.
We will overload notation, and for a matrix $A\in\RR^{p\times q}$, we will denote by $D(A)$ the indexes of the nonzero rows of $A$. By an abuse of notation, sometimes we will refer to $D(A)$ as the support of $A$ as well. When $A\in\RR^{p\times q}$ and $\alpha\in\RR^p$ are unknown parameters, generally, the estimator of their supports will be denoted by $\widehat D(A)$ and $\widehat D(\alpha)$, respectively. 
 We let $\NN$ denote the set of all positive numbers, and write $\ZZ$ for the set of all natural numbers $\{0,1,2,\ldots,\}$. For any $n\in\NN$,    We let $[n]$ denote the set $\{1,\ldots,n\}$.  We define the projection of $A$ onto $D\subset [p]\times[q]$ by
  \begin{equation}\label{def:projection operator}
     \lb\mP_D\{A\}\rb_{i,j}=\begin{cases}
A_{i,j} & \text{ if }(i,j)\in D,\\
0 & \text{otherwise.}
\end{cases}
  \end{equation} For any finite set $\mathcal A$, we denote its cardinality by $|\mathcal A|$. Also, for any event $\mathcal B$, we let $1\{ \mathcal B\}$ be the indicator of the event $\mathcal B$. For any $p\in\NN$, we let $\mathbb S^{p-1}$ denote the unit  sphere in $\RR^p$.

 We let $\|\cdot\|_k$ be the usual $l_k$ norm in $\RR^k$ for $k\in\ZZ$.  In particular, we let $\|x\|_0$ denote the number of nonzero elements of a vector $x\in\RR^p$.  For any probability measure $\PP$ on the Borel sigma field of $\RR^p$, we let  $L_2(\PP)$ to be the set of all measurable functions $f:\RR^p\mapsto\RR$ such that $\|f\|_{L_2(\PP)}=\sqrt{\int f^2d\PP}<\infty$. The corresponding $L_2(\PP)$ inner product will be denoted by $\langle \cdot,\cdot\rangle_{L_2(\PP)}$. 
We denote the operator norm and the Frobenius norm of a matrix $A\in\RR^{p\times q}$ by $\|A\|_{op}$ and $\|A\|_F$, respectively. We let $A_{i*}$ and $A_j$ denote the i-th row and $j$-th column of $A$, respectively. For $k\in\NN$, we define the norms $\|A\|_{k,\infty}=\max_{j\in[q]}\|A_j\|_k$ and $\|A\|_{\infty, k}=\max_{i\in[q]}\|A_{i*}\|_k$.
The maximum and minimum eigenvalues of a square matrix $A$   will be denoted  by $\Lambda_{max}(A)$ and $\Lambda_{min}(A)$, respectively.   Also, we let $s(A)$  denote the  maximum number of nonzero entries in any column of $A$, i.e., $s(A)=\max_{j\in[q]}\|A_j\|_0$.

The results in this paper are mostly asymptotic (in $n$) in nature and thus require some standard asymptotic notations.  If $a_n$ and $b_n$ are two sequences of real numbers then $a_n \gg b_n$ (and $a_n \ll b_n$) implies that ${a_n}/{b_n} \rightarrow \infty$ (and ${a_n}/{b_n} \rightarrow 0$) as $n \rightarrow \infty$, respectively. Similarly $a_n \gtrsim b_n$ (and $a_n \lesssim b_n$) implies that $\liminf_{n \rightarrow \infty} {{a_n}/{b_n}} = C$ for some $C \in (0,\infty]$ (and $\limsup_{n \rightarrow \infty} {{a_n}/{b_n}} =C$ for some $C \in [0,\infty)$). Alternatively, $a_n = o(b_n)$ will also imply $a_n \ll b_n$ and $a_n=O(b_n)$ will imply that $\limsup_{n \rightarrow \infty} \ a_n / b_n = C$ for some $C \in [0,\infty)$. \textcolor{black}{We write $a_n\asymp b_n$ if there are positive constants $C_1$ and $C_2$ such that $C_1b_n\leq a_n\leq C_2 b_n$ for all $n\in\NN$.} We will write $a_n=\tilde\Phi(b_n)$ to indicate $a_n$ and $b_n$ are asymptotically of the same order up to a poly-log term. Finally, in our mathematical statements, $C$ and $c$ will be two different generic constants which can vary from line to line.

\section{Mathematical Formalism} \label{sec:math_formalism}

     
    
    We denote the rank of $\Sxy$ by $r$. 
   It can be shown that exactly $r$ canonical correlations are positive and the rest are zero in the model \eqref{opt: sparse CCA}. We will consider the matrices
    $U=[u_1,\ldots,u_r]$ and $V=[v_1,\ldots,v_r]$. From \eqref{opt: sparse CCA} and \eqref{def: uj vj}, it is not hard to see that $U^T\Sx U=I_p$ and $V^T\Sy V=I_q$. The indexes of the nonzero rows of $U$ and $V$, respectively, are the combined support of the $u_i$'s and the $v_i$'s. Since we are interested in the recovery of the latter, it will be useful for us to study of the properties of $U$ and $V$. To that end, we often make use of the following representation connecting $\Sxy$ to $U$ and $V$ \cite{anderson2003}: \begin{equation}\label{representation: preliminary}
        \Sxy=\Sx U\Lambda V^T\Sy=\Sx\left(\sum_{i=1}^r\Lambda_iu_iv_i^T\right)\Sy.
    \end{equation}

 To keep our results straightforward, we restrict our attention to a particular model $\mP(r,s_x,s_y,\B)$ throughout, defined as follows.
\begin{definition}
\label{definition: model}
Suppose $(X,Y)\sim \PP$. Let
 $\B>1$ be a constant. We say
   $\mathbb P\in\mP( r, s_x,s_y, \B)$ if
  \begin{itemize}
  \item[A1] (Sub-Gaussian) $X$ and $Y$ are  sub-Gaussian random vectors (cf.  \cite{vershynin2020}), with joint covariance matrix $\Sigma$ as defined in \eqref{def: sigma}.  
  Also $\text{rank}(\Sxy)=r$.
  \item[A2] Recall the definition of the canonical correlation $\Lambda_i$'s from \eqref{def: uj vj}. Note that by definition, $0\leq \Lambda_r\leq\cdots\leq\Lambda_1$. For $\PP\in\mod$, $\Lambda_r$ additionally satisfies $\Lambda_r\geq 1/\B$. 
  \item[A3] (Sparsity) The number of nonzero rows of $U$ and $V$ are $s_x$ and $s_y$, respectively, that is $s_x=|\cup_{i=1}^rD(\ai)|$ and 
  $s_y=|\cup_{i=1}^rD(\bi)|$. Here $U$ and $V$ are as defined in \eqref{representation: preliminary}.
  \item[A4] (Bounded eigenvalue) 
     \[1/\B<\Lambda_{min}(\Sy), \Lambda_{min}(\Sy), \Lambda_{max}(\Sx), \Lambda_{max}(\Sy)<\B.\]
     \item[A5] (Positive eigen-gap) 
    $\Lambda_i-\Lambda_{i-1}\geq \B^{-1}$ for $i=2,\ldots,r$.
  \end{itemize}
  \end{definition}
  
  Sometimes we will consider a sub-model of $\mod$ where each $\PP\in\mod$ is Gaussian. This model will be denoted by $\modG$, where ``$G$" stands for the Gaussian assumption. Some remarks on the modeling assumptions A1---A5 are in order, which we provide next.
  
  
  \begin{enumerate}
      \item[A1.] We begin by noting that we do not require  $X$ and $Y$ to be jointly sub-Gaussian.  Moreover, the individual sub-Gaussian assumption itself is common in the $s_x,s_y\lesssim\sqrt{n}/\log(p+q)$ regime in the SCCA literature (cf. \cite{mai, gao2017,laha2021}). \textcolor{black}{Our proof techniques depend crucially on the sub-Gaussian assumption. We also anticipate that the  results derived in this paper will change under the violation of this assumption.}
      For the sharper analysis in the difficult regime ($\sqrt{n/\log{(p+q)}}\lesssim s_x,s_y\lesssim\sqrt{n}$), our proof techniques require the Gaussian model $\mP_G$ --  which is in parallel with  \cite{deshpande2014}'s treatment of the sparse PCA in the corresponding difficult regime. In general, the Gaussian spiked model assumption in sparse PCA goes back to \cite{johnstone2001}, and is common in the PCA literature (cf. \cite{amini2009, boaz2015}). 
        \item[A2-A4.] These assumptions are  standard in the analysis of canonical correlations (cf. \cite{mai, gao2017}).
        \item[A5.] This assumption concerns the gap between consecutive canonical correlation strengths. However, we refer to this gap as ``Eigengap" because of its similarity with the Eigengap  in the sparse PCA literature (cf. \cite{deshpande2014, jankova2018}).
This assumption is necessary for the estimation  of the $i$-th canonical covariates. Indeed, if $\lambda_i=\lambda_{i+1}$ then there is no hope of estimating the $i$-th canonical covariates because they are not identifiable, and so support recovery also becomes infeasible. This assumption can be relaxed to requiring only $k$ many $\lambda_i$'s to be strictly larger than $\lambda_{i-1}$'s where $k\leq r$. In this case, we can recover the support of only the first $k$ canonical covariates.
  \end{enumerate}
    
In the following sections, we will denote the preliminary estimators of $U$ and $V$ by $\widehat U$ and $\widehat V$, respectively. The  columns of $\widehat U$ and $\widehat V$ will be denoted by $\hai$ and $\hbi$ ($i\in[r]$), respectively. Therefore $\hai$ and $\hbi$ will stand for the corresponding preliminary estimators of $\ai$ and $\bi$. In case of CCA, the $\ai$'s and $\bi$'s are identifiable only up to a sign flip. Hence, they are also  estimable only up to a sign flip.
Finally, we denote the empirical estimates of $\Sx$, $\Sy$, and $\Sxy$, by $\hSx$, $\hSy$, and $\hSxy$, respectively -- which will often be appended with superscripts to denote their estimation through suitable sub-samples of the data \footnote{e.g., $\hSx^{(j)}$, $\hSy^{(j)}$, and $\hSxy^{(j)}$ will stand for the empirical estimators created from the $j^{th}$-equal split of the data.}. Finally, we  let $C_{\B}$ denote a positive constant which depends on $\PP$ only through $\B$, but can vary from line to line.

\section{Main Results} \label{sec:main_results}

We divide our main results into the following parts based on both statistical and computational difficulties of different regimes. First, in Section \ref{sec:easy_regime_general} we present a general method and associated sufficient conditions for support recovery. This allows us to elicit a sequence of questions regarding necessity of the conditions and remaining gaps both from statistical and computational perspectives. Our subsequent sections are devoted to answering these very  questions.  In particular, in Section \ref{sec: IT lower bound} we discuss information theoretic lower bounds followed by evidence for statistical-computational gaps in Section \ref{sec: low deg}. Finally, we close  a final computational gap in asymptotic regime through sharp analysis of a special co-ordinate-thresholding type method in Section \ref{sec: CT}.


\subsection{\bf A Simple and General Method:}\label{sec:easy_regime_general}

We begin with a simple method for estimating the support, which readily establishes the result for the easy regime,
and sets the directions for the investigation into other more subtle regimes. Since the estimation of $D(U)$ and $D(V)$ are similar, we focus only on the estimation of $D(V)$ for the time being.

 Suppose $\widehat V$ is a row sparse estimator of $V$.
The nonzero indexes of  $\widehat V$ is the most intuitive estimator of $D(V)$. Such an $\widehat V$ is also easily attainable  because most estimators of the canonical directions in the high dimension are sparse (cf. \cite{gao2013, gao2017, mai} among others). \textcolor{black}{Although we have not yet been able to show the validity of this apparently ``\naive" method, we provide numerical results in Section \ref{sec:simulations} to explore its finite sample performance.} However,  a simple method can refine these initial estimators, to often optimally recover the support $D(V)$. We now provide the details of this method and derive its asymptotic properties.

To that end, suppose  we have at our disposal an estimating procedure for $\Sy^{-1}$, which we generically denote by $\hf$ and an estimator $\widehat U\in\RR^{p\times r}$ of $U$. 
We split the sample in two equal parts, and compute  $\widehat U^{(1)}$ and  $\hf^{(1)}$ from the first part of the sample, and the estimator $\hSxy^{(2)}$ from the second part of the sample. Define 
$\widehat V^{clean}= \hf^{(1)}\hSyx^{(2)} \widehat U^{(1)}$.
Our estimator of $D(V)$ is then given by
\begin{equation}\label{def: general method}
    \widehat D(V):= \{i\in[q]: |\widehat V^{clean}_{ik}|>\cut\text{ for some }k\in[r]\},
\end{equation}
where $\cut$ is a pre-specified cut-off  or threshold. We will discuss more on  $\cut$ later. The resulting algorithm  will be referred as \RecoverSupp\  from now on. \textcolor{black}{Algorithm \ref{algo: simple algorithm} gives the algorithm for the support recovery of $V$, but the full version of \RecoverSupp, which estimates  $D(U)$ and $D(V)$ simultaneously, can be found  in Appendix \ref{sec: full version of recoversupp}; see Algorithm  \ref{algo: recoversupp full} there.} 
\RecoverSupp\  is similar in spirit to the ``cleaning" step in the  sparse PCA support recovery literature (cf. \cite{deshpande2014}). 
\textcolor{black}{One thing to remember here is that $\widehat V^{clean}$ is not an estimator $V$. In fact, the $(i,j)$-th element of $\widehat V^{clean}$ is an estimator of $\Lambda_i(v_i)_j$. }

\textcolor{black}{\begin{remark}
In many applications, the rank $r$ may be unknown. \cite{meloun2011} (see Section 4.6.5 therein) suggests to use the screeplot of the  canonical correlations to estimate $r$. Screeplot is also a popular tool to estimate the number of nonzero principal components in PCA analysis \cite{deshpande2014}. For CCA, the screeplot is the plot of the estimated canonical correlations versus their orders. If there is a clear gap between two successive correlations, \cite{meloun2011} suggests taking the larger correlation as the estimator of $\Lambda_r$. One can use \cite{mai}'s SCCA method to estimate the canonical correlations to obtain the screeplot. There can be other ways of estimating $r$. For example, in their trans-eQTL study, \cite{Dutta2021} uses a resampling technique on a holdout dataset to generate observations from the null distribution of the $i$-th canonical correlation estimate under the hypothesis $H_0:\Lambda_i=0$, where $i\in[\min(p,q)]$. The largest $i$, for which the test is rejected, is taken as the estimated rank. A similar  technique has been used by \cite{lock2013} to select the ranks for a related method JIVE. 
\end{remark}}

\begin{algorithm}[h]
\caption{\RecoverSupp\ $(\widehat U^{(1)}, \hf^{(1)},\hSxy^{(2)},\cut,r)$: support recovery of $V$}
\label{algo: simple algorithm}

\begin{algorithmic} 
\REQUIRE \begin{enumerate}
\item Preliminary estimators $\widehat U^{(1)}$  and $\hf^{(1)}$ of $U$ and $\Sy^{-1}$, respectively, based on sample $O_1=(x_i,y_i)_{i=1}^{[n/2]}$.
\item Estimator $\hSxy^{(2)}$ of $\Sxy$ based on sample  $O_2=(x_i,y_i)_{i=[n/2]+1}^{n}$.
\item Threshold level $\cut>0$ and rank $r\in\NN$.
\end{enumerate}
\ENSURE $\widehat D(V)$, an estimator of ${D(V)}$.
\STATE 
\begin{enumerate}
\item \textbf{Cleaning}: $\widehat V^{clean}\leftarrow \hf^{(1)}{{\hSyx}}^{(2)} \widehat U^{(1)}$.
\item \textbf{Threshold:} Compute $ \widehat D(V)$ as in \eqref{def: general method}.
\end{enumerate}

\RETURN $ \widehat D(V)$.
 \end{algorithmic}
\end{algorithm}

It turns out that, albeit being so simple, \RecoverSupp\   has desirable statistical guarantees provided $\widehat U^{(1)}$ and $\hf^{(1)}$ are reasonable estimators of $U$ and $\Sy^{-1}$, respectively. These theoretical properties of \RecoverSupp\ , and the hypotheses and queries generated thereof, lay out  the roadmap for the rest of our paper. However, before getting into  the detailed theoretical analysis of \RecoverSupp\ , we state a $l_2$-consistency condition on $\hai$ and $\hbi$'s, where we remind the readers that we let $\hai$ and $\hbi$ denote the $i$-th columns of $\widehat V$ and $\widehat U$, respectively. Recall also that  the $i$-th columns of $U$ and $V$ are denoted by $u_i$ and $v_i$, respectively.

\begin{cond}[$l_2$ consistency ] \label{Assumption on estimators}
 There exists a function  $\errs\equiv \errs:(n,p,q,s_x,s_y,\B)\mapsto\RR$   so that $|\errs|<1/(2\B\sqrt r)$ and 
the estimators $\hai$ and $\hbi$  of $\ai$ and $\bi$ satisfy
 \[\max_{i\in[r]}\min_{\myv\in\{\pm 1\}}\bl (\myv\hai-\ai)^T\Sx(\myv\hai-\ai)\bl<\errs^2,\]
\[\max_{i\in[r]}\min_{\myv\in\{\pm 1\}}\bl (\myv\hbi-\bi)^T\Sy(\myv\hbi-\bi)\bl<\errs^2\]
with $\PP$ probability $1-o(1)$ uniformly over $\PP\in\mP(r,s_x,s_y,B)$. 
\end{cond}
 We will  discuss  the estimators which satisfy Condition~\ref{Assumption on estimators} later.    
Theorem~\ref{thm: support recovery: sub-gaussians} also requires the  signal strength $\Sig_y$ to be at least of the  order
$\e_n=\xi_n \sqrt{\log(p+q)\sSy/n}$, where the parameter $\xi_n$ depends on  the type of $\hf$ as follows:
\begin{itemize}
    \item[A.] 
    $\hf$ is of type A if there exists $\Cpr>0$ so that $\hf$ satisfies $\|\hf-\Sy^{-1}\|_{\infty,1}\leq \Cpr \sSy\sqrt{(\log q)/n}$ with $\PP$ probability $1-o(1)$ uniformly over $\PP\in\mod$. Here we remind the readers that $\sSy=\max_{j\in[q]}\|(\Sy^{-1})_{j}\|_0$. In this case, $\xi_n=\Cpr\sqrt{\sSy}$.
    \item[B.] $\hf$ is of type B if  $\|\hf-\Sy^{-1}\|_{\infty,2}\leq \Cpr \sqrt{\sSy\log(q)/n}$ with $\PP$ probability $1-o(1)$ uniformly over $\PP\in\modG$ for some $\Cpr>0$. In this case, $\xi_n=\Cpr\max\{\sqrt{r(\log q)/n},1\}$.
    \item[C.] $\hf$ is of type C if $\hf=\Sy^{-1}$. In this case, $\xi_n=1$.
\end{itemize}
The estimation error of $\hf$ clearly decays from type A to C, with the error being zero at type C.
Because   $\sqrt{r(\log q)/n}$ is generally much smaller than $\sSy$, 
 $\xi_n$  shrinks from Case A to Case C monotonously as well. Thus it is fair to say that $\xi_n$  reflects the precision of  the estimator $\hf$ in that $\xi_n$ is smaller if $\hf$ is a sharper estimator. We are now ready to state Theorem~\ref{thm: support recovery: sub-gaussians}. This theorem is  proved in Appendix~\ref{sec: Proof of support rec: subgaussian}.

 \begin{theorem}\label{thm: support recovery: sub-gaussians}
Suppose $\log(p\vee q)=o(n)$ and the estimators $\hai$'s satisfy Condition  \ref{Assumption on estimators}. Further suppose $\hf$ is of  type A, B, or C, which are  stated above. Let $\e_n=\xi_n \sqrt{\log(p+q)\sSy/n}$
 where $\xi_n$ depends on the type of $\hf$ as outlined above.
 Then there exists  a constant $C'_\B>0$, depending only on $\B>0$, so that if
 \begin{equation}\label{intheorem: statement: lower bound on signal}
   \Sig_y>2 C_\B'\e_n,
 \end{equation}
 and 
 $\cut\in[C_\B'\e_n/(2\B),{(\ratio-1)}C_\B'\e_n/(2\B)]$ with $\theta_n=\Sig_y/(C_\B'\e_n)$,
 then  the algorithm \RecoverSupp\  fully recovers $D(V)$ with $\PP$ probability $1-o(1)$ uniformly over $\PP\in\mod$ (for $\hf$ of type A and C), or uniformly over  $\PP\in\modG$ (for $\hf$ of type B).
\end{theorem}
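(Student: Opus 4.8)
The plan is to build everything around the population identity behind the ``cleaning'' step. From \eqref{representation: preliminary} we have $\Syx=\Sy V\Lambda U^T\Sx$, and since $U^T\Sx U=I_r$ this gives the exact identity $\Sy^{-1}\Syx U=V\Lambda$. Because $\Lambda$ is diagonal with entries in $[1/\B,1]$, the $k$-th row of $V\Lambda$ vanishes precisely for $k\notin D(V)$, while for $k\in D(V)$ one has $\|(V\Lambda)_{k*}\|_\infty=\max_{i\in[r]}\Lambda_i|(v_i)_k|\ge \Sig_y/\B$. Hence if $\widehat V^{clean}$ were exactly $V\Lambda$, thresholding at any $\cut\in(0,\Sig_y/\B)$ would recover $D(V)$, and the whole task is to control the cleaning error $\widehat V^{clean}-V\Lambda=\hf^{(1)}\hSyx^{(2)}\widehat U^{(1)}-\Sy^{-1}\Syx U$, in the appropriate entrywise sense, at level a $\B$-dependent constant times $\e_n$.

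After fixing, for each $i$, the sign of the $i$-th column $\widehat u_i^{(1)}$ of $\widehat U^{(1)}$ as in Condition~\ref{Assumption on estimators} (this does not alter any $|\widehat V^{clean}_{ij}|$), I would decompose, for $k\in[q]$ and $j\in[r]$, using $(\Sy^{-1})_{k*}\Syx=(V\Lambda)_{k*}U^T\Sx$ and $U^T\Sx u_j=e_j$,
\[
\widehat V^{clean}_{kj}-(V\Lambda)_{kj}=\underbrace{\big[(\hf^{(1)})_{k*}-(\Sy^{-1})_{k*}\big]\hSyx^{(2)}\widehat u_j^{(1)}}_{\mathrm I_{kj}}+\underbrace{(\Sy^{-1})_{k*}\big[\hSyx^{(2)}-\Syx\big]\widehat u_j^{(1)}}_{\mathrm{II}_{kj}}+\underbrace{(V\Lambda)_{k*}\,\delta_j}_{\mathrm{III}_{kj}},
\]
where $\delta_j:=U^T\Sx(\widehat u_j^{(1)}-u_j)\in\RR^r$. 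The $\Sx$-orthonormality of the columns of $U$ together with Condition~\ref{Assumption on estimators} gives $\|\delta_j\|_2\le \errs$. Collecting the columns, $\widehat V^{clean}_{k*}=(V\Lambda)_{k*}(I_r+\Delta)+(\mathrm I_{k*}+\mathrm{II}_{k*})$ with $\Delta:=[\delta_1,\dots,\delta_r]$; the key structural point is that the $\widehat U$-error enters \emph{multiplicatively} through $I_r+\Delta$. Indeed $\|(V\Lambda)_{k*}\Delta\|_\infty=\max_j|(V\Lambda)_{k*}\delta_j|\le\|(V\Lambda)_{k*}\|_2\,\errs\le\sqrt r\,\|(V\Lambda)_{k*}\|_\infty\,\errs<\|(V\Lambda)_{k*}\|_\infty/(2\B)$, where the last step is exactly the calibration $\errs<1/(2\B\sqrt r)$ (the factors $\sqrt r$ cancel); hence $\|(V\Lambda)_{k*}(I_r+\Delta)\|_\infty\ge\big(1-\tfrac1{2\B}\big)\|(V\Lambda)_{k*}\|_\infty\ge\tfrac12\|(V\Lambda)_{k*}\|_\infty$, and for $k\notin D(V)$ the term $\mathrm{III}_{kj}=(V\Lambda)_{k*}\delta_j$ vanishes altogether.

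The core of the proof is then the uniform (over $\PP$ in the relevant model) bound $\max_{k\in[q],\,j\in[r]}(|\mathrm I_{kj}|+|\mathrm{II}_{kj}|)\le C_0\e_n$ with probability $1-o(1)$, for a constant $C_0$ depending only on $\B$; here sample splitting is essential. For $\mathrm{II}_{kj}$, condition on $O_1$ (freezing $\widehat u_j^{(1)}$) and write $\mathrm{II}_{kj}=n_2^{-1}\sum_i\{[(\Sy^{-1})_{k*}Y_i][X_i^T\widehat u_j^{(1)}]-\E[\cdot]\}$, an average of independent centered sub-exponential summands with norm $\lesssim_\B 1$ (using that $(\Sy^{-1})_{k*}Y$ has variance $(\Sy^{-1})_{kk}\le\B$ and that $(\widehat u_j^{(1)})^T\Sx\widehat u_j^{(1)}\le 4$ on the event of Condition~\ref{Assumption on estimators}); a Bernstein bound plus a union bound over $[q]\times[r]$, with $\log q=o(n)$, give $|\mathrm{II}_{kj}|\lesssim_\B\sqrt{\log q/n}\le\e_n$. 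For $\mathrm I_{kj}$: case~C is vacuous; in case~A, $|\mathrm I_{kj}|\le\|\hf^{(1)}-\Sy^{-1}\|_{\infty,1}\|\hSyx^{(2)}\widehat u_j^{(1)}\|_\infty\le\Cpr\sSy\sqrt{\log q/n}\cdot O_\B(1)\le\e_n$, the second factor being $O_\B(1)$ because $\hSyx^{(2)}\widehat u_j^{(1)}$ concentrates entrywise around $\Syx u_j=\Lambda_j\Sy v_j$, whose sup-norm is $O_\B(1)$. Case~B is the delicate one: since $\hSyx^{(2)}\widehat u_j^{(1)}$ is dense, a naive Cauchy--Schwarz against $\|\hf^{(1)}-\Sy^{-1}\|_{\infty,2}$ would cost a spurious factor $\sqrt q$, so I would split $\hSyx^{(2)}\widehat u_j^{(1)}=\Syx u_j+(\hSyx^{(2)}-\Syx)\widehat u_j^{(1)}+\Syx(\widehat u_j^{(1)}-u_j)$, pair $(\hf^{(1)})_{k*}-(\Sy^{-1})_{k*}$ with the $s_y$-sparse term $\Syx u_j$ directly, bound the last term via $\|\Syx(\widehat u_j^{(1)}-u_j)\|_2\lesssim_\B\errs$, and for the cross term again condition on $O_1$ (so the weight $w:=(\hf^{(1)})_{k*}-(\Sy^{-1})_{k*}$ is a fixed vector with $\|w\|_2\le\Cpr\sqrt{\sSy\log q/n}$, using the Gaussian model) and apply Bernstein to $n_2^{-1}\sum_i(w^TY_i)(X_i^T\widehat u_j^{(1)})$, whose summands have sub-exponential norm $\lesssim_\B\|w\|_2$; keeping track of all the pieces produces the stated $\xi_n=\Cpr\max\{\sqrt{r\log q/n},1\}$.

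Finally, on the intersection of the finitely many good events above (each holding with probability $1-o(1)$ uniformly, so the intersection does too), set $C_\B':=2\B C_0$. For $k\notin D(V)$, $|\widehat V^{clean}_{kj}|=|\mathrm I_{kj}+\mathrm{II}_{kj}|\le C_0\e_n=C_\B'\e_n/(2\B)\le\cut$ for every $j$, so $k\notin\widehat D(V)$; and for $k\in D(V)$,
\[
\max_{j\in[r]}|\widehat V^{clean}_{kj}|\ge\|(V\Lambda)_{k*}(I_r+\Delta)\|_\infty-\max_{j\in[r]}\big(|\mathrm I_{kj}|+|\mathrm{II}_{kj}|\big)\ge\frac{\Sig_y}{2\B}-C_0\e_n=\big(\theta_n-1\big)\frac{C_\B'\e_n}{2\B}\ge\cut,
\]
so $k\in\widehat D(V)$; with $C_\B'=2\B C_0$ the stated interval $[C_\B'\e_n/(2\B),(\theta_n-1)C_\B'\e_n/(2\B)]$ equals $[C_0\e_n,\ \Sig_y/(2\B)-C_0\e_n]$, which is nonempty precisely when $\Sig_y>2C_\B'\e_n$ and on which both inclusions hold. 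This yields $\widehat D(V)=D(V)$ with probability $1-o(1)$ uniformly, as claimed. I expect the main obstacle to be the case-B estimate of $\mathrm I_{kj}$ — propagating a dense-but-small-$\ell_2$ error of the precision-matrix estimator through a dense sample-covariance fluctuation without paying a $\sqrt q$-type factor, which is exactly where Gaussianity is used; the remainder is bookkeeping with sub-exponential concentration together with the structural observation that the $\widehat U$-error is multiplicative.
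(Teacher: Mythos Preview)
Your proof is correct and follows the same three–term decomposition as the paper (their $T_1,T_2,T_3$ are your $\mathrm I,\mathrm{II},\mathrm{III}$), with the identical multiplicative handling of $\mathrm{III}$ via $\errs<1/(2\B\sqrt r)$ and the same final thresholding arithmetic $C_\B'=2\B C_0$.

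There are two genuine, and interesting, technical differences in how you bound individual terms. For $\mathrm{II}_{kj}$ you apply Bernstein directly to $(\Sy^{-1})_{k*}Y_i\cdot X_i^T\hat u_j^{(1)}$, using that $(\Sy^{-1})_{k*}Y$ has variance $(\Sy^{-1})_{kk}\le\B$; the paper instead passes through $\|\Sy^{-1}\|_{1,\infty}\le\sqrt{\sSy}\B$ and the sup-norm bound $|(\hSyx-\Syx)\hat u_i|_\infty\lesssim_\B\sqrt{\log(p+q)/n}$, which is why the $\sqrt{\sSy}$ appears in $\e_n$. Your route is sharper here but still dominated by $\e_n$. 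For $\mathrm I_{kj}$ in case~B, the paper exploits the Gaussian hidden-variable representation \eqref{model: 3} to get $|\theta_1^T(\hSxy-\Sxy)\theta_2|\lesssim_\B\sqrt r\,\|\theta_1\|_2\|\theta_2\|_2/\sqrt n$ (Lemma~\ref{lemma: thm 1: op of covariance mat} and Bai--Yin on $\mZ^T\mZ/n$); your conditioning-plus-Bernstein argument is more elementary and does not use Gaussianity beyond the Type-B hypothesis itself. Both recover the stated $\xi_n$.

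One small correction: the vector $\Syx u_j=\Lambda_j\Sy v_j$ is \emph{not} $s_y$-sparse in general (only $v_j$ is), so your phrase ``pair with the $s_y$-sparse term $\Syx u_j$'' is inaccurate. This does not damage the argument, since what you actually need is $|w^T\Syx u_j|\le\|w\|_2\|\Syx u_j\|_2\le\B^{3/2}\|w\|_2$, which follows from Cauchy--Schwarz and $\|\Syx\|_{op}\le\B$, $\|u_j\|_2\le\sqrt\B$; just replace the sparsity claim by this step.
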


The assumption that $\log p$ and $\log q$ are $o(n)$ appears in all theoretical works of CCA (cf. \cite{gao2017, mai}). A requirement of this type is generally unavoidable. Note that
Theorem~\ref{thm: support recovery: sub-gaussians} implies a more precise estimator $\hf$ requires smaller signal strength for full support recovery. 
\textcolor{black}{
\subsubsection*{Main idea behind the proof of Theorem \ref{thm: support recovery: sub-gaussians}} Because $\Lambda_i(\bi)_k=e_k^T\Sy^{-1}\Syx\ai$, $\widehat V^{clean}_{ik}$ is an estimator of $\Lambda_i(\bi)_k$ for $i\in[q]$ and $k\in[r]$. If $k\notin D(V)$, then $(\bi)_k=0$ for all $i\in[r]$. Therefore, in this case, we expect  $\widehat V^{clean}_{ik}$ to be small for all $i\in[q]$. We will show that  whenever $k\notin D(V)$,  
$|\widehat V^{clean}_{ik}|$ is uniformly bounded by  $C_1\epsilon_n$ for $i\in[q]$ and $k\in[r]$ with high probability. Here $C_1>0$ is a constant. Second, when  $(\bi)_k\neq 0$, we will show that $ \max_{i\in[r]}|\widehat V^{clean}_{ik}|$ can not be too small. In fact, we will show that  \begin{equation}\label{inroadmap:Th1}
    \max_{i\in[r]}\abs{\widehat V^{clean}_{ik}}>C_2\max_{i\in[r]}\abs{\Lambda_i(\bi)_k}-C_1\epsilon_n,\quad k\in[r]
\end{equation}
for some $C_2>0$ with high probability in this case. The lower bound in the above inequality is bounded below by $C_2\Sig_y-C_1\epsilon_n$.  Thus, if the minimal signal strength $\Sig_y$ is bounded below by a large enough multiple of $\epsilon_n$, then the lower bound $C_2\Sig_y-C_1\epsilon_n$ will be larger than the upper bound $C_1\e_n$ in the $k\notin D(V)$ case. Therefore, in this scenario, 
we can choose $C>0$  so that
\[C_1\epsilon_n < C\epsilon_n < C_2\Sig_y-C_1\epsilon_n.\]
If we set \texttt{cut}$=C\epsilon_n$, then the above 
inequality leads to 
\[\sup_{i\notin D(V)}\abs{\widehat V^{clean}_{ik}}< \texttt{cut}< \inf_{i\in D(V)}\abs{\widehat V^{clean}_{ik}}.\]
These $C_1$ and $C_2$ are behind the  constant $C_{\mathcal B}'$ in \eqref{intheorem: statement: lower bound on signal} and our choice of $\theta_n$.}

\textcolor{black}{
Thus the key step in the proof of Theorem \ref{thm: support recovery: sub-gaussians} is analyzing the bias of $\widehat V^{clean}_{ik}$, which hinges on the following bias decomposition:
\begin{align}\label{inroadmap: bias decomposition}
\MoveEqLeft |\widehat V^{clean}_{ik}-\Lambda_i(\bi)_k|\leq  \underbrace{  |e_k^T(\hf-\Phi_0)\hSyx \hai|}_{T_1(i,k)}\nn\\
&\ + \underbrace{  |e_k^T\Phi_0(\hSyx-\Syx) \hai|}_{T_2(i,k)} + \underbrace{ |e_k^T\Phi_0\Syx(\hai-\ai)|}_{T_3(i,k)}.
\end{align}
Note that the term $T_1(i,k)$ corresponds to the bias in estimating $\Phi_0$. Similarly, the error terms $T_2(i,k)$ and $T_3(i,k)$ incur due to the bias in estimating  $\Syx$ and $u_i$, respectively. The main contributing term in  the upper bound in \eqref{inroadmap: bias decomposition} is $T_1(i,k)$. 
 One can use the consistency property of $\hf$ to show that $T_1(i,k) $ is  of the order $O_p(\epsilon_n)$. Since $\hf$ has different rates and modes of convergence in  cases A, B, and C, $T_1(i,k) $ has different orders in cases A, B, and C, which explains why $\epsilon_n$ is of different order in these cases.}

\textcolor{black}{
The term  $T_2(i,k)$ is  much smaller -- it is of the order  $(s(\Sx^{-1})\log (p+q))/n)^{1/2}$. The proof bases on the fact that the $l_\infty$ error of estimating  $\Sxy$ by $\hSxy$ is of the order $(\log (p+q)/n)^{1/2}$ for 
subgaussian $X$ and $Y$. The error term $T_3(i,k) $ is exactly zero for $i\notin D(V)$, and hence does not contribute. Thus only $T_1(i,k)$ and $T_2(i,k)$ contribute to the bias of $\widehat V^{clean}_{ik}$ for $i\notin D(V)$, which is therefore bounded by  $C_1\epsilon_n$ for some $C_1>0$ with high probability in this case.
The term $T_3(i,k)$ does contribute to the bias  of $\widehat V^{clean}_{ik}$ for $i\in D(V)$, however, and it is of the order $\sqrt{r}\max_{j\in[r]}|(v_j)_k|\texttt{Err}$ in this case. Because $\texttt{Err}$ is small by Condition \ref{Assumption on estimators}, we can show that  $T_3(i,k) $ is  smaller than $\max_{i\in[r]}\Lambda_i |(v_i)_k|$, which eventually leads to the relation in \eqref{inroadmap:Th1}, thus completing the proof. }\textcolor{black}{We have already mentioned that \RecoverSupp\ is analogous to the cleaning step in sparse PCA. Therefore the proof of Theorem \ref{thm: support recovery: sub-gaussians} has similarities with some analogous results in sparse PCA. See for example Theorem 3 of \cite{deshpande2014}, which proves the consistency of a ``cleaned" estimator of the joint support of the spiked principal components. However, the proof in the CCA case is a bit more involved because of the presence of $\Sy^{-1}$, which needs to be estimated for the  cleaning step. Different estimators of $\Sy^{-1}$ can have different rates of convergence, which leads to the different types of the estimators. This ultimately leads to different requirements on the order of the threshold \texttt{cut} and the minimal signal strength $\Sig_y$. }

Next we will discuss the implications of Theorem~\ref{thm: support recovery: sub-gaussians}, but before getting into that detail, we will make two important remarks. 

\begin{remark}
Although the estimation of the high dimensional precision matrix $\Sy^{-1}$ is potentially complicated,  it is often unavoidable owing to the inherent subtlety of the CCA framework  due to the presence of high dimensional nuisance parameters $\Sx$ and $\Sy$.   \cite{gao2013} also used precision matrix estimator for partial recovery of the support. 
In case of sparse CCA, to the best of our knowledge, there does not exist an  algorithm that can recover the support, partially or completely, without estimating the precision matrix.
However, our requirements on $\hf$ are not strict in that many common precision matrix estimators, e.g., the nodewise Lasso \cite[Theorem 2.4]{vandegeer2014}, the thresholding estimator  \cite[Theorem 1 and Section 2.3]{bickel2008}, and the CLIME estimator  \cite[Theorem 6]{cai2011} exhibit the  decay rate of type A and B  under standard sparsity assumptions on $\Sy^{-1}$. We will not get into the detail of the  sparsity requirements on $\Sy^{-1}$ because they are unrelated to the sparsity of   $U$ or $V$, and hence are  irrelevant to the primary goal of the current paper. 
\end{remark}

 \begin{remark}
In the easy regime $s_y\lesssim  \sqrt{n/(\log(p+q)}$, polynomial time estimators satisfying Condition~\ref{Assumption on estimators} are already available, e.g., COLAR \cite[Theorem 4.2]{gao2017} or SCCA \cite[Condition C4]{mai}.  
Thus it is easily seen that polynomial time support recovery is possible in the easy regime provided   \eqref{intheorem: statement: lower bound on signal} is satisfied. 
\end{remark}

The  implications of Theorem~\ref{thm: support recovery: sub-gaussians} in context of the sparsity requirements on $D(U)$ and $D(V)$ for full support recovery are somewhat implicit through the assumptions and conditions. However, the restriction on the sparsity is indirectly imposed by two different sources -- which we elaborate on now. To keep the interpretations simple, throughout the following discussion, we  assume that (a) $r=O(n/\log q)$, (b) $p$ and $q$ are of the same order, and (c) $s_x$ and $s_y$ are also of the same order. Note that (a)  implies $\xi_n=O(1)$ for a type B estimator of $\hf$.  Since we separate the task of estimating the nuisance parameter $\Sy^{-1}$ from the support recovery of $V$, we also assume that $\sSy=O(1)$, which implies $\xi_n=O(1)$ for a type A estimator of $\hf$. The assumption  $\sSy=O(1)$, combined with (a), reduces the minimal signal strength condition \eqref{intheorem: statement: lower bound on signal}  in Theorem \ref{thm: support recovery: sub-gaussians} to $\Sig_y\geq C_\B \sqrt{\log(p+q)/n}$.
 
 In lieu of the discussion above, the first source of sparsity restriction is the minimal signal strength condition \eqref{intheorem: statement: lower bound on signal} on $\Sig_y$.
\textcolor{black}{To see this, first note that  \[1=v_i^T\Sy v_i\geq \Lambda_{min}(\Sy)\|v_i\|_2^2\]
where $i\in[r]$.
Since $\Lambda_{min}(\Sy)\geq \B^{-1}$,
\[\Lambda_{min}(\Sy)\|v_i\|_2^2\geq \|v_i\|_2^2/\B\geq \Sig_y^2 s_y/\B,\]
implying  $\Sig_y\leq\sqrt{\B} s_y^{-1/2}$.}
Therefore, implicit in Theorem~\ref{thm: support recovery: sub-gaussians} lies the condition
\begin{align}\label{sparsity requirement: thm 1}
 s_y\leq \frac{C_\B^2 n}{\log(p+q)}, 
\end{align}
which is enforced by the minimal signal strength requirement \eqref{intheorem: statement: lower bound on signal}.   
Thus  Theorem~\ref{thm: support recovery: sub-gaussians} does not hold for $s_y\gg  n/\log(p+q)$ even when $\sSy$ and $r$ are small. This regime requires some attention because in case of sparse PCA \cite{amini2009} and linear regression \cite{wainwright2009}, support recovery at $s\gg n/\log (p-s)$ \footnote{here and later, we will use $s$ to generically denote the sparsity of relevant parameter vectors in parallel problems like sparse PCA or sparse linear regression.} is proven to be information theoretically impossible. However, although a parallel result can be intuited to hold for CCA, the details of the nuances of SCCA support recovery in this regime is yet to be explored.  Therefore, the sparsity requirement in \eqref{sparsity requirement: thm 1}  raises the question whether support recovery for CCA is at all possible when  $s_y\gg n/\log(p+q)$, even if $\Sx$ and $\Sy$ is known.
\begin{question}
\label{Q1}
Does there exist any decoder $\widehat D$ such that $\sup_{\PP\in\mod}\PP(\widehat D(V)\neq D(V))\to 0$ when $s_y\gg n/\log(q-s_y)$?
\end{question}

A related question is whether the minimal signal strength requirement \eqref{intheorem: statement: lower bound on signal} is  necessary.  To the best of our knowledge, there is no formal study on the information theoretic limit of the minimal signal strength  even in context of the sparse PCA support recovery. Indeed, as we noted before,  the detailed analyses of support recovery for SPCA provided in \cite{amini2009, deshpande2014}, and \cite{ boaz2015} are all based on the nonzero principal component elements being of the order $O(1/\sqrt s)$. Finally, although this question is not directly related to the sparsity conditions, it indeed probes the sharpness of the results in Theorem~\ref{thm: support recovery: sub-gaussians}.

\begin{question}\label{Question: minimal signal strength}
What is the minimum signal strength required for the recovery of $D(V)$?
\end{question}

We will  discuss Question \ref{Q1} and Question~\ref{Question: minimal signal strength}  at greater length in Section \ref{sec: IT lower bound}. In particular, Theorem \ref{thm: lower bound}(A) shows that there exists $C>0$ so that  support recovery at $s_y\geq  C\B^{-2}n/\log(q-s_y)$ is indeed information theoretically intractable.  On the other hand, in Theorem \ref{thm: lower bound}(B), we show that the minimal signal strength has to be of the order  $\B\sqrt{\log (q-s_y)/n}$ for full recovery of $D(V)$. Thus when $p\asymp q$ ,   \eqref{intheorem: statement: lower bound on signal} is indeed necessary from information theoretic perspectives.

\textcolor{black}{The second source of restriction  on the sparsity} lies in  Condition~\ref{Assumption on estimators}. Condition~\ref{Assumption on estimators} is a $l_2$-consistency condition, which has  sparsity  requirement itself owing the inherent hardness in the estimation of $U$.  Indeed, Theorem 3.3 of \cite{gao2017} entails that it is impossible to estimate the canonical directions $\ai$'s consistently  if $s_x>Cn/(r+\log (ep/s_x))$ for some large $C>0$. Hence, Condition~\ref{Assumption on estimators} indirectly imposes the restriction $s_x\lesssim n/\max\{\log(p/s_x),r\}$. However, when $s_x\asymp s_y$, $p\asymp q$, and $r=O(1)$, the above restriction is already absorbed into the condition  $s_y\lesssim n/\log(q-s_y)$ elicited in the last paragraph. In fact, there exist consistent estimators of $U$   whenever $s_x\lesssim n/\max\{\log(p/s_x),r\}$ and $s_y\lesssim n/\max\{\log(q/s_y),r\}$ (see  \cite{gao2015} or Section 3 of \cite{gao2017}).  Therefore, in the latter regime,  \RecoverSupp\  coupled with the above-mentioned estimators succeeds. In view of the above, it might be tempting to think  that  Condition~\ref{Assumption on estimators} does not impose significant additional  restrictions.
The restriction due to Condition~\ref{Assumption on estimators}, however, is rather subtle and  manifests itself through computational challenges. Note that  when support recovery is information theoretically possible, the computational hardness of recovery by  \RecoverSupp\  \textcolor{black}{will be at least as much as that of the estimation  of $U$.} 
Indeed, the estimators of $U$ which work in  the regime $s_x\asymp n/\log(p/s_x)$, $s_y\asymp n/\log(q/s_y)$ are not adaptive of the sparsity, and they require a search over exponentially many sets of size $s_x$ and $s_y$. 
Furthermore,  under $\mod$, all  polynomial time  consistent estimators of $U$ in the literature, e.g., COLAR \cite[Theorem 4.2]{gao2017} or SCCA \cite[Condition C4]{mai}, require $s_x$, $s_y$ to be of the order $\sqrt{n/\log (p+q)}$. In fact, \cite{gao2017} indicates that estimation of $U$ or $V$ for  sparsity of larger order is  NP hard. 

The above raises the question whether \RecoverSupp\ (or any method as such) can succeed  at polynomial time  when $ \sqrt{n/\log (p+q)}\ll s_x,s_y\lesssim n/\log(p+q)$. 
We turn to the landscape of sparse PCA for intuition. Indeed, in case of  sparse PCA, different scenarios are observed in  the  regime $s\lesssim n/\log p$,  depending on whether $\sqn\ll s\lesssim n/\log p$, or $s\lesssim \sqrt{n}$ (we recall that for SPCA we denote the sparsity of the leading principal component direction generically through $s$). 
We focus on the sub-regime $\sqn\ll s\lesssim n/\log p$ first. In  this case, both estimation and support recovery for sparse PCA  are \textcolor{black}{conjectured to be NP hard}, which means no polynomial time method succeeds; see Section~\ref{sec: low deg}  for  more details. 
The above hints  that   the regime $s_x,s_y\gg \sqrt n$ is  NP hard for sparse CCA as well.

\begin{question}
\label{Question 2}
Is there any polynomial time method that  can recover the support $D(V)$ when $s_x,s_y\gg \sqn$?
\end{question}
We dedicate Section \ref{sec: low deg} to  answering this question.  Subject to the recent advances in the low degree polynomial conjecture, we establish computational hardness of the regime $s_x,s_y\gg \sqn$   (up to a logarithmic factor gap) subject to $n\lesssim p,q$. Our results are  consistent with \cite{gao2017}'s findings in the estimation case and cover a broader regime; see Remark~\ref{remark: chao gao comp barrier} for a comparison.

When the sparsity is of the order $\sqrt n$ and $p\asymp n$, however, polynomial time  support recovery and estimation are  possible for the sparse PCA case. \cite{deshpande2014} showed that a
 co-ordinate thresholding type spectral algorithm works in this regime. 
 Thus the following question is immediate.
 
 \begin{question}
\label{Question 3}
Is there any polynomial time method that  can recover the support $D(V)$ when $s_x,s_y\in[\sqrt{n/\log (p+q)},\sqn]$?
\end{question}
 
  We give an affirmative answer to Question~\ref{Question 3} in Section~\ref{sec: CT}, which is in parallel with the observations for the sparse PCA. In fact,  Corollary~\ref{cor: CT support recovery} shows that when $\Sx$ and $\Sy$ are known, $p+q\asymp n$, and  $s_x,s_y\lesssim \sqn$,  estimation  is possible in polynomial time. Since estimation is possible,  \RecoverSupp\  suffices for polynomial time support recovery in this regime, where $\sqn$ is well below the information theoretic limit of $n/\log (p+q)$. The main tool used in Section~\ref{sec: CT} is co-ordinate thresholding,
which is originally a method for  high dimensional matrix estimation \cite{bickel2008}, and apparently has nothing to do with estimation of canonical directions. However, under our setup, if the covariance matrix is consistently estimated in operator norm, 
by Wedin's Sin $\theta$ Theorem \cite{yu2015}, an SVD  is enough to get a consistent estimator of $U$ and $V$ suitable for further precise analysis.
 
 \begin{remark}
\RecoverSupp\  uses sample splitting, which can reduce the efficiency. One can swap between the samples and compute two estimators of the supports. One can easily show that both the intersection and the union of the resulting supports enjoy the asymptotic guarantees of Theorem~\ref{thm: support recovery: sub-gaussians}.
 \end{remark}

This section can be best summarized by Figure~\ref{fig: sparsity big picture}, which gives the information theoretic and computational landscape of  sparse CCA analysis in terms of the sparsity. \textcolor{black}{In other words, Figure~\ref{fig: sparsity big picture} gives the phase transition plot for SCCA support recovery with respect to sparsity.} It can be seen that our contributions (colored in red) complete the picture, which was initiated by \cite{gao2017}.

 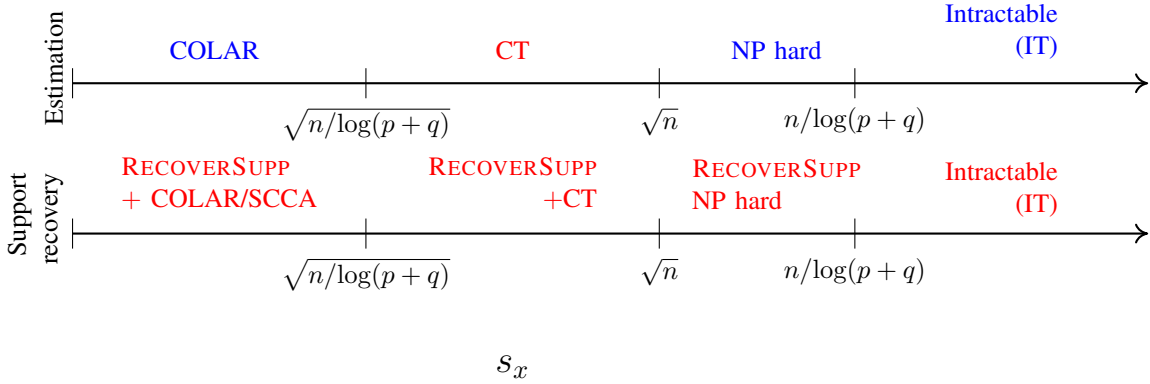
\begin{figure*}
    \begin{tikzpicture}[xscale=1.3]
\draw [ -{>[scale=2.5,
          length=2,
          width=3]}, thick]  (0,0) -- (11,0);
\draw (0,-.2) -- (0, .2);
\draw (3,-.2) -- (3, .2);
\draw (6,-.2) -- (6, .2);
\draw (8, -.2) -- (8, .2);
\node[align=left, above, text=blue] at (1.5,.2)%
    {COLAR };
    \node[align=right, below] at (3,-.2)%
    { $\sqrt{n/\log (p+q)}$};
\node[below] at (6,-.2)%
    {$\sqrt{n}$};
     \node[align=right, above, text=red] at (4.5, .2)%
     {CT};
\node[align=left, above, text=blue] at (7.2,.2)%
    {NP hard};
     \node[align=right, above, rotate=90] at (0,0 .2)%
     {Estimation};
   \node[below] at (8, -.2)%
    {$n/\log (p+q)$};    
     \node[align=right, above, text=blue] at (9.5, .2)%
     {Intractable\\ (IT)}; 
\draw [ -{>[scale=2.5,
          length=2,
          width=3]}, thick]  (0,-2) -- (11,-2);
\draw (0,-2.2) -- (0, -1.8);
\draw (3,-2.2) -- (3, -1.8);
\draw (6,-2.2) -- (6, -1.8);
\draw (8, -2.2) -- (8, -1.8);
\node[align=left, above, text=red] at (1.5,-1.8)%
    {\RecoverSupp\ \\$+$ COLAR/SCCA};
    \node[align=right, below] at (3,-2.2)%
    { $\sqrt{n/\log (p+q)}$};
\node[below] at (6,-2.2)%
    {$\sqrt{n}$};
      
    \node[align=right, below] at (4.5,-3.5)%
    {\Large $s_x$};
     \node[align=right, above, text=red] at (4.5, -1.8)%
     {\RecoverSupp\ \\$+$CT};
\node[align=left, above, text=red] at (7.2,-1.8)%
    {\RecoverSupp\ \\NP hard};
     \node[align=right, above, rotate=90] at (0,-1.8)%
     {Support\\ recovery};
    \node[below] at (8, -2.2)%
    {$n/\log (p+q)$};    
     \node[align=right, above, text=red] at (9.5, -1.9)%
     {Intractable\\ (IT)};   
\end{tikzpicture}
\caption{\textcolor{black}{Phase transition plots for SCCA estimation and support recovery problems with respect to sparsity.} 
We have taken $s_x=s_y$ here. COLAR corresponds to the estimation method of \cite{gao2017}. Our contributions are colored in red. See \cite{gao2017} for more details on the regions colored in blue. }
\label{fig: sparsity big picture}
 \end{figure*}

\subsection{\bf Information Theoretic Lower Bounds: Answers to Question~\ref{Q1} and \ref{Question: minimal signal strength}}
\label{sec: IT lower bound}

     


Theorem~\ref{thm: lower bound} establishes the information theoretic limits on the sparsity levels  $s_x$, $s_y$, and the signal strengths $\Sig_x$ and $\Sig_y$.  The proof of Theorem~\ref{thm: lower bound} is deferred to Appendix~\ref{sec: Proof of IT limit}.
\begin{theorem}\label{thm: lower bound}
  Suppose $\widehat D(U)$ and $\widehat D(V)$ are estimators of $D(U)$ and $D(V)$, respectively.  Let $s_x$, $s_y>1$, and $p-s_x,q-s_y>16$. Then the following assertions hold:
  \begin{itemize}
      \item[A.] If $s_x>16n/\{(\B^2-1)\log(p-s_x)\}$,
      then 
     \[\inf_{\widehat D}\sup_{\PP\in\mP(r,s_x,s_y,\B)}\PP\lb \widehat D(U)\neq D(U)\rb >1/2.\] 
    On the other hand, if $s_y>16n/\{(\B^2-1)\log(q-s_y)\}$, then
 \[\inf_{\widehat D}\sup_{\PP\in\mP(r,s_x,s_y,\B)}\PP\lb  \widehat D(V)\neq D(V)\rb >1/2.\]
 \item[B.] Let  $\modsig$  be the class of distributions $\PP\in\mod$ satisfying \\
 $\Sig^2_x\leq (\B^2-1)(\log(p-s_x))/(8n)$. Then
\[\inf_{\widehat D}\sup_{\PP\in\modsig}\PP\lb \widehat D(U)\neq D(U)\rb >1/2.\]
On the other hand, if \\ $\Sig^2_y\leq(\B^2-1)(\log(q-s_y))/(8n)$, then 
  \[\inf_{\widehat D}\sup_{\PP\in\modsig}\PP\lb  \widehat D(V)\neq D(V)\rb >1/2.\]
  \end{itemize}
In both cases, the infimum is over all possible decoders $\widehat D(U)$ and $\widehat D(V)$.
\end{theorem}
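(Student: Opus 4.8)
The plan is to reduce support recovery to a multiple–hypothesis testing problem and apply Fano's inequality over a finite family of Gaussian CCA models. Since $\modG\subseteq\mod$, for every decoder $\widehat D$ we have $\sup_{\PP\in\mod}\PP(\widehat D\neq D)\ge\sup_{\PP\in\modG}\PP(\widehat D\neq D)$, so it suffices to exhibit the hard instances inside the Gaussian submodel; I describe the argument for $D(V)$, the $D(U)$ statements following by the symmetry $X\leftrightarrow Y$. The construction takes $\Sx=I_p$, $\Sy=I_q$ (which meet A4 for every $\B>1$), fixes canonical correlations satisfying A2 and A5 with $\Lambda_r=\B^{-1}$ and $\Lambda_1<1$ (the last needed for $\Sigma\succ0$, and compatible with A2, A5 whenever the model is non-empty), and fixes orthonormal systems $u_1,\dots,u_r\in\RR^p$ and $v_1,\dots,v_{r-1}\in\RR^q$ with prescribed supports, leaving only the $r$-th direction $v_r$ free. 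Taking the free pair to be the one of \emph{smallest} correlation $\Lambda_r=\B^{-1}$ is precisely what will produce the factor $(\B^2-1)^{-1}$. The hypotheses $\PP^{(k)}=\mathcal N(0,\Sigma^{(k)})$ are indexed by a probe coordinate $k\in[q]\setminus S_0$, where $S_0$ is a fixed set of size $s_y-1$ (nonempty since $s_y>1$), giving $M=q-s_y+1$ of them: for part (A), $v_r^{(k)}$ carries equal weights $s_y^{-1/2}$ on $S_0\cup\{k\}$, so $\|v_r^{(k)}\|_2=1$, $\|v_r^{(k)}\|_0=s_y$ and $\Sig_y=s_y^{-1/2}$; for part (B), $v_r^{(k)}$ carries weight exactly $\Sig_y$ on $k$ and spreads the remaining mass evenly over $S_0$, so the minimal signal is $\Sig_y$ (which needs $\Sig_y^2\lesssim s_y^{-1}$, implied by the hypothesis in the regime of interest). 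With $\Sigma^{(k)}_{XY}=\sum_i\Lambda_iu_iv_i^{(k)T}$ one checks directly that $\PP^{(k)}\in\modG$, respectively that it lies in the low–signal class $\modsig$ once the fixed directions are also arranged to have minimal signal below the $\modsig$ threshold, with the advertised constant $\B$. Finally, any estimator $\widehat D(V)$ induces an estimator $\widehat k$ of the probe coordinate (take any element of $\widehat D(V)\setminus S_0$), and $\{\widehat D(V)=D(V)\}\subseteq\{\widehat k=k\}$ under $\PP^{(k)}$, so it is enough to lower bound $\PP^{(k)}(\widehat k\neq k)$.

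The heart of the argument is bounding $\mathrm{KL}(\PP^{(k)\otimes n}\|\PP^{(k')\otimes n})=n\,\mathrm{KL}(\PP^{(k)}\|\PP^{(k')})$. Because $\Sigma^{(k)}$ and $\Sigma^{(k')}$ differ by conjugation with the permutation swapping coordinates $k$ and $k'$ in the $Y$-block, they share a determinant, and the Gaussian KL collapses to $\tfrac12\,\mathrm{tr}\big((\Sigma^{(k')})^{-1}(\Sigma^{(k)}-\Sigma^{(k')})\big)$. Here $\Sigma^{(k)}-\Sigma^{(k')}$ has rank at most four and is built from $u_r$ and $v_r^{(k)}-v_r^{(k')}$, while $(\Sigma^{(k')})^{-1}$ is an explicit low–rank modification of the identity, obtained by the Woodbury identity on the block form (or by diagonalising on the $O(r)$-dimensional invariant subspace spanned by the canonical directions). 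Carrying this out, the spectrum of $\Sigma$ enters only through the factor $\Lambda_r^2/(1-\Lambda_r^2)=(\B^2-1)^{-1}$, giving $\mathrm{KL}(\PP^{(k)}\|\PP^{(k')})\asymp(\B^2-1)^{-1}\|v_r^{(k)}-v_r^{(k')}\|_2^2$ up to absolute constants. Since $v_r^{(k)}$ and $v_r^{(k')}$ differ only in their probe coordinates, $\|v_r^{(k)}-v_r^{(k')}\|_2^2\asymp s_y^{-1}$ in part (A) and equals $2\Sig_y^2$ in part (B), whence $\mathrm{KL}(\PP^{(k)\otimes n}\|\PP^{(k')\otimes n})\lesssim n/((\B^2-1)s_y)$, respectively $\lesssim n\Sig_y^2/(\B^2-1)$.

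It remains to feed these estimates into Fano: with $M$ hypotheses, $\tfrac1M\sum_k\PP^{(k)}(\widehat k\neq k)\ge 1-\big(\max_{k,k'}\mathrm{KL}(\PP^{(k)\otimes n}\|\PP^{(k')\otimes n})+\log2\big)/\log M$, and $\log M\ge\log(q-s_y)$. Under $s_y>16n/\{(\B^2-1)\log(q-s_y)\}$ in part (A), or $\Sig_y^2\le(\B^2-1)\log(q-s_y)/(8n)$ in part (B), the KL term is a small constant multiple of $\log(q-s_y)$; together with $q-s_y>16$, so that $\log2/\log M\le\log2/\log16=\tfrac14$, the right–hand side exceeds $\tfrac12$ once the absolute constants in the KL bound are tracked (this is the role of the numerical constants $16$ and $8$ in the statement). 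Hence the average, and so the maximum, of $\PP^{(k)}(\widehat D(V)\neq D(V))$ over $k$ is $>1/2$, and the $D(U)$ claims follow by symmetry. The main obstacle is the sharp KL computation: although elementary linear algebra, pinning down the \emph{exact} constant $(\B^2-1)^{-1}$ — rather than a lossy $O(1)$ or $O(\B^{-2})$ bound — is what fixes the precise location of the boundary, and it needs careful treatment of the Woodbury inverse and of the cross terms from the mass of $v_r^{(k)}$ on $S_0$. A secondary, bookkeeping–level difficulty is choosing the fixed directions $u_1,\dots,u_r,v_1,\dots,v_{r-1}$ with the correct supports and orthogonal to every $v_r^{(k)}$ so that A3 and A5 hold with the target sparsities, eigengap, and (for part (B)) signal level.
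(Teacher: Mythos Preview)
Your proposal is correct and follows the same strategy as the paper: reduce to a finite family of Gaussian CCA models with $\Sx=I_p$, $\Sy=I_q$ and apply Fano's inequality, with the KL between two hypotheses controlled by $\rho^2\|v-v'\|_2^2/(1-\rho^2)$ at $\rho=\Lambda_r=\B^{-1}$, yielding the factor $(\B^2-1)^{-1}$. The hypothesis families (probe coordinate for both parts) and the Fano arithmetic with $q-s_y>16$ match the paper's part~(B) exactly.

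The one substantive difference is that the paper simply takes $r=1$: it fixes a unit $\beta_0\in\RR^q$, varies $\alpha$, and computes the KL from the closed-form inverse and determinant of the rank-one $\Sigma$ (their Lemmas on $\det\Sigma$ and $\mathrm{tr}(\Sigma_1\Sigma_2^{-1})$), getting $\mathrm{KL}=\rho^2\|\alpha_1-\alpha_2\|_2^2/(1-\rho^2)$ directly. For part~(A) the paper also uses a slightly different packing---swap one nonzero coordinate of $\alpha$ for one zero coordinate, giving $|\mE|=s_x(p-s_x)$---but this leads to the same $\log(p-s_x)$ denominator and $\|\alpha_1-\alpha_2\|_2^2\le 4/s_x$, hence the same final bound. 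Your general-$r$ route (fix $u_1,\dots,u_r,v_1,\dots,v_{r-1}$, vary $v_r$) is more faithful to the rank constraint in A1 and your Woodbury/eigendecomposition sketch for the KL is correct (only the $\Lambda_r$ block survives because $v_i^T(v_r^{(k)}-v_r^{(k')})=0$ for $i<r$), but it buys you extra bookkeeping: you need $s_y\ge r+1$ to place $r-1$ orthonormal $v_i$'s on $S_0$ orthogonal to the constant vector, and you need $\Lambda_1<1$ together with the eigengap $\ge\B^{-1}$, which forces $r<\B$. The paper's $r=1$ shortcut avoids both issues.
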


First, we discuss the implications of part A of Theorem~\ref{thm: lower bound}. This part entails that for full support recovery of $V$, the minimum sample size requirement is of the order $s_y\log(q-s_y)$. This requirement is consistent with the traditional  lower bound on $n$  in context of  support recovery  for sparse PCA \cite[Theorem 3]{amini2009}  and $L_1$ regression \cite[Corollary 1]{wainwright2009}. However, when $r=O(1)$, the sample size requirement for estimation of $V$ is slightly relaxed, that is, $n\gg s_y\log(q/s_y)$   \cite[Theorem 3.2]{gao2017}. Therefore, from information theoretic point of view, the task of full support recovery appears to be slightly harder than the task of estimation. The scenario for partial support recovery might be different and we do not pursue it here.
Moreover, as mentioned earlier,  in the regime $s_y\lesssim C_\B n/\log(p+q)$, \RecoverSupp\  works with \cite{gao2017}'s (see Section 3 therein) estimator of $U$. Thus part A of Theorem~\ref{thm: lower bound} implies that $n/\log(p+q)$ is the information theoretic upper bound on the sparsity for the full support recovery of sparse CCA.

Part B of Theorem~\ref{thm: lower bound} implies that it is not possible to push the minimum signal strength  below the level $O(\sqrt{\log(q-s_y)/n})$. Thus the minimal signal strength requirement \eqref{intheorem: statement: lower bound on signal} by Theorem~\ref{thm: support recovery: sub-gaussians} is indeed minimal up to a factor of $\xi_n\sqrt{\sSy}$. The last statement can be refined further. To that end, we remind the readers that for a good estimator of $\Sy^{-1}$, i.e.,  a type B  estimator, $\xi_n=O(1)$   if $r=O(n/\log q)$. However, the latter always holds  if support recovery is at all possible, because in that case
 $s_y\lesssim n/\log (p+q)$, and elementary linear algebra gives $s_y\geq r$. Thus, it is fair to say that, provided a good estimator of $\Sy^{-1}$, the requirement \eqref{intheorem: statement: lower bound on signal} is minimal up to a factor of $\sqrt{\sSy}$. Indeed, this implies that for banded inverses with finite band-width our results are rate optimal.

It is further worth comparing this part of the result to the SPCA literature. In the  SPCA support recovery literature, generally, the lower bound on the  signal strength  is  depicted in terms of the sparsity $s$, and usually a signal strength of order $O(1/\sqrt{s})$ is postulated (cf. \cite{deshpande2014,amini2009,boaz2015}). 
Using our proof strategies, it can be easily shown that for SPCA, the analogous lower bound on the signal strength would be $\sqrt{\log(p-s)/n}$. The latter is generally much smaller than $1/\sqrt s$ and only  when $s\asymp n/\log(p)$, the requirement of $1/\sqrt s$ is close to the lower bound. Thus, in the regime $s\lesssim \sqrt{n/\log p}$,  the  lower bound should rather be of the order $O(1/s)$. Therefore the minimum signal strength requirement of $O(1/\sqrt s)$ typically assumed  in SPCA literature seems  larger than necessary. 

\textcolor{black}{
\subsubsection*{Main idea behind the proof of Theorem~\ref{thm: lower bound}} 
The main device used in this proof is Fano's inequality \cite{yatracos1988}.
Note that for any $\mathcal C\subset \mP(r,s_x,s_y,\B)$,
\begin{equation}
\label{inroadmap: Fano}
\inf_{\widehat D_\alpha}\sup_{\mathbb P\in  \mathcal C}\mathbb P\slb\widehat D_{\alpha}\neq D(\alpha)\srb<\inf_{\widehat D}\sup_{\mathbb P\in  \mP(r,s_x,s_y,\B)}\mathbb P\lb \widehat D_{\alpha}\neq D(\alpha)\rb  . 
\end{equation}
Therefore it suffices to show that the left hand side in the above inequality is bounded away from $1/2$ for some carefully chosen $\mathcal C$.  If $\mathcal C$ is finite, we can lower bound the left hand side of \eqref{inroadmap: Fano} using  Fano's inequality \cite{yatracos1988}, which yields
\begin{align}\label{inroadmap:fano}
     \MoveEqLeft \inf_{\widehat{D}_\alpha}  \sup_{\PP\in  \mathcal C}\PP\slb\widehat D_{\alpha}\neq D(\alpha)\srb\geq  1-\dfrac{\frac{\sum_{\PP_1,\PP_2\in\mathcal C}KL(\PP_1^n|\PP_2^n)}{|\mathcal C|^2}+\log 2}{\log(|\mathcal C|-1)},  
\end{align}
Thus the main task is to choose $\mathcal C$ in a way so that the right hand side (RHS) of \eqref{inroadmap:fano} is large. 
We will choose $ \mathcal C$  so that $X$ and $Y$ are jointly Gaussian. In particular, $X\sim N_p(0,I_p)$, $Y\sim N_q(0,I_q)$, and $\Sigma_{xy}=\rho\bk\alpha^T$ where $\bk\in\mathbb S^{q-1}$ and $\rho\in(0,1)$ are fixed, and $\alpha$ is allowed to vary in a set $\mE\subset\mathbb S^{p-1}$. In this model, $r=1$, $\rho$ is the canonical correlation, and $\alpha$ and $\beta_0$ are the left and right canonical covariates, respectively. Also, $\PP$ varies across $\mathcal C$ as $\alpha$ varies across $\mE$. Moreover, $|\mathcal C|=|\mE|$.   Our main task boils down to choosing $\mE$ carefully.
 }

\textcolor{black}{
The idea behind choosing  $\mathcal E$ is as follows. For any decoder, i.e., an estimator of the support, the chance of making error increases when  $|\mE|$ is large. This can also be seen noting that the right hand side of \eqref{inroadmap:fano} increases as $|\mE|=|\mathcal C|$ increases. However, even if we prefer a larger $\mE$, we need to ensure that  the KL divergence between the  distributions in the resulting $\mathcal C$ is small. The reason is that,  for a large $\mE$, the right hand side of \eqref{inroadmap:fano} can be small  unless the KL divergence between the corresponding distributions in $\mathcal C$ is small. In other words, any decoder will face a challenge detecting the true support of $\alpha$ when there are many distributions to choose from, and these distributions are also close to each other in KL distance.  
}

\textcolor{black}{
For part A of Theorem~\ref{thm: lower bound}, we choose $\mE$ in the following way. Letting \[\alk=(\underbrace{1/\sqrt s_x,\ldots,1/\sqrt s_x}_{s_x\text{ many }},\underbrace{0,\ldots,0}_{p-s_x\text{ many }}),\] 
we let $\mE$ be the class of $\alpha$'s which are obtained by  replacing one of the $1/\sqrt s_x$'s in $\alpha_0$ by $0$, and  one of the zero's in $\alk$ by $1/\sqrt s_x$. 
 A typical  $\alpha$ obtained this way looks like 
   \[\alpha=\slb \underbrace{1/\sqrt s_x,\ldots,\textcolor{red}{\mathbf 0},\ldots 1/\sqrt s_x}_{s_x\text{ many }},\underbrace{0,\ldots,\textcolor{red}{\mathbf{1/\sqrt{s_x}}},\ldots,0}_{p-s_x\text{ many }}\srb.\]
   In this case, it turns out that $|\mE|=s_x(p-s_x)$. Under the conditions of part A of \ref{thm: lower bound}, we can show that the RHS of \eqref{inroadmap:fano} is bounded below by 1/2 for this $\mE$. \textcolor{black}{The proof of part A is similar to its PCA analogue, which is Theorem 3 of \cite{amini2009}. The latter theorem is also based on Fano's lemma and uses a similar construction for $\mE$. However, there is no PCA analogue of part B.}
   For part B of Theorem \ref{thm: lower bound}, we let $\mE$ be the class of all $\alpha$'s so that
   \[\alpha=\slb \underbrace{ b,\ldots, b}_{s_x-1\text{ many }},\underbrace{0,\ldots,0,\textcolor{red}{z},0,\ldots,0}_{p-s_x+1\text{ many }}\srb.\]
   where
   \[z=\sqrt{\frac{1-\rho^2}{4n\rho^2}}\log\lb \frac{p-s_x}{4}\rb\]
   can take any position out of the $p-s_x+1$ positions. Clearly, $|\mE|=p-s_x+1$. It can be shown that  the RHS of \eqref{inroadmap:fano} is bounded below by $1/2$ in this case as well.
}

\subsection{\bf Computational Limits and Low Degree Polynomials: Answer to Question~\ref{Question 2}}
 \label{sec: low deg}

  We have so far explored the information theoretic upper and lower bounds for recovering the true support of leading canonical correlation directions. However, as indicated in the discussion preceding Question~\ref{Question 2}, the statistically optimal procedures in the regime where $\sqrt{n}\lesssim s_x,s_y\lesssim n/\log{(p+q)}$ are computationally intensive and is of exponential complexity (as a function of $p,q$).
 \textcolor{black}{ In particular,
  \cite{gao2017} have already showed that  \textit{when $s_x$ and $s_y$ belong to parts of this regime}, estimation of the canonical correlates  is computationally hard, subject to a  computational complexity based  \textit{Planted Clique Conjecture}. For the case of support recovery, the SPCA has been explored in detail and the corresponding  computational hardness has been established in analogous regimes -- see, e.g., \cite{amini2009, deshpande2014}, and \cite{boaz2015} for details. A similar phenomenon of computational hardness is observed in case of  \textit{SPCA spike detection} problem \cite{berthet2013}.
  In light of the above, it is natural to believe that the SCCA support recovery is also computationally hard in the regime  $\sqrt{n}\lesssim s_x,s_y\lesssim n/\log{(p+q)}$, and, as a result, yields a statistical-computational gap.} 
 Although several paths exist to provide evidence towards such gaps \footnote{e.g., Planted Clique Conjecture \cite{berthet2013, gao2017, brennan2018reducibility}, Statistical Query based lower bounds \cite{kearns1998efficient, feldman2012computational, brennan2020statistical, dudeja2021statistical}, and Overlap Gap Property based analysis \cite{gamarnik2017, susenda2019, arous2020}.}, the recent developments using ``Predictions from Low Degree Polynomials" \cite{hopkins2017, hopkins2018, kunisky2019} is particularly appealing due its simplicity in exposition. In order to show computationally hardness of the SCCA support recovery problem in the  $s_x,s_y\in(\sqrt{n},n/\log(p+q))$ regime, we shall resort to this very style of ideas, which has so far been applied successfully to explore  statistical-computational gaps under sparse PCA \cite{ding2019}, Stochastic Block Models, and  tensor PCA \cite{hopkins2018}, among others. This will allow us to explore the computational hardness of the problem in the entire regime where
   \begin{equation}\label{eq: low deg: sx and sy def}
   s_x+s_y\gtrsim  (\sqrt  n)(\log n)^c,
   \end{equation}
   \textcolor{black}{compared to the somewhat partial results (see Remark \ref{remark: chao gao comp barrier} for detailed comparison)} in earlier literature.

We divide our discussions to argue the existence of a statistical-computational gap in this regime as follows. Starting with a brief background on the statistical literature on such gaps, we first present a natural reduction of our problem to a suitable hypothesis testing problem in Section \ref{subsec:reduction_to_testing_gap}. Subsequently, in Section \ref{subsec:background_gap} we present the main idea of the ``low degree polynomial conjecture" by appealing to the recent developments in \cite{hopkins2017, hopkins2018}, and \cite{kunisky2019}.  Finally, we present our main result for this regime in Section \ref{subsec:main_result_gap},  thereby providing evidence of the aforementioned gap modulo the Low Degree Polynomial Conjecture presented in Conjecture \ref{conjecture: low deg}.

   \subsubsection{Reduction to Testing Problem:}\label{subsec:reduction_to_testing_gap}
  Denote by $\QQ$ the distribution of a  $N_{p+q}(0,I_{p+q})$ random vector. Therefore $(X,Y)\sim\QQ$ corresponds to the case when $X$ and $Y$ are uncorrelated.
  We first show that there is any scope of support recovery in $\mod$ only if  $\mod$ is distinguishable from $\QQ$, i.e., the test $H_0:(X,Y)\sim\QQ$ vs. $H_1:(X,Y)\sim\PP\in\mod$ has asymptotic zero error.

  To formalize the ideas, suppose we observe i.i.d random vectors $\{X_i,Y_i\}_{i=1}^{n}$ which are distributed either as $\PP$ or $\QQ$. We denote the $n$-fold product measures corresponding to $\PP$ and $\QQ$ by $\PP_n$ and $\QQ_n$, respectively. Note that if $\PP\in\mod$, then $\PP_n\in\mod^n$. We overload notation, and denote the combined sample $\{X_i\}_{i=1}^n$ and $\{Y_i\}_{i=1}^n$ by $\X$ and $\Y$ respectively. 
  In this section, $\X$ and $\Y$ should be  viewed as unordered sets.
  The test $\Phi_n:\RR^{pn+qn}\mapsto\{0,1\}$ for testing the null $H_0:(\X,\Y)\sim\QQ_n$ vs. the alternative $H_1:(\X,\Y)\sim\PP_n$  is said to  strongly distinguish $\PP_n$ and $\QQ_n$  if
  \[\lim_n \QQ_{n}(\Phi_n(\X,\Y)=1)+\lim_n \PP_{n}(\Phi_n(\X,\Y)=0)=0.\]
  The above implies that both the type I error and the type II error of $\Phi_n$ converges to zero as $n\to\infty$.
In case of composite alternative  $H_1:(\X,\Y)\sim\PP_n\in \mod^n$, the test strongly distinguishes $\QQ_n$ from $\mod^n$ if
\begin{align*}
  \MoveEqLeft  \liminf_{n\to\infty}\lbs \QQ_{n}(\Phi_n(\X,\Y)=1)\\
    &\ + \sup_{\PP_n\in \mod^n}\PP_{n}(\Phi_n(\X,\Y)=0)\rbs=0.
\end{align*}
Now we explain how support recovery and  the testing framework are connected.  Suppose there exist decoders which exactly recover $D(U)$ and $D(V)$ under $\mod$ for $\B\geq 0$. Then  the trivial test, which  rejects the null  if either of the estimated supports is non-empty, strongly distinguishes $\QQ_n$ from $\mod^n$. The above  can be coined as the following lemma.
 \begin{lemma}\label{lemma: low deg: support to detection}
  Suppose there exist polynomial time decoders $\hat D_x$ and $\hat D_y$ of $D(U)$ and $D(V)$ so that 
\begin{align}\label{inlemma: low deg: support}
  \MoveEqLeft  \liminf_{n\to\infty}\sup_{\PP_n\in \mod^n} \PP_{n}\lb\hat D_x(\X,\Y)=D(U)\nn\\
    &\ \text{ and } \hat D_y(\X,\Y)=D(V)\rb=1  \end{align}
  Further assume, $\QQ_n(\hat D_x(\X,\Y)=\emptyset)\to 1$, and $\QQ_n(\hat D_y(\X,\Y)=\emptyset)\to 1$.
  Then there exists a polynomial time test which strongly distinguishes $\mod^n$ and $\QQ_n$.
  \end{lemma}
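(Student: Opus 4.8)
The plan is to exhibit an explicit polynomial time test $\Phi_n$ built directly from the decoders $\hat D_x$ and $\hat D_y$, and verify that both its type I and type II errors vanish. Specifically, I would define
\[
\Phi_n(\X,\Y) = 1\{\hat D_x(\X,\Y)\neq\emptyset \text{ or } \hat D_y(\X,\Y)\neq\emptyset\},
\]
i.e. the test rejects the null precisely when at least one of the estimated supports is nonempty. Since $\hat D_x$ and $\hat D_y$ run in polynomial time by hypothesis and the emptiness check is trivial, $\Phi_n$ is a polynomial time test.

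For the type I error, I would simply invoke the two assumptions $\QQ_n(\hat D_x(\X,\Y)=\emptyset)\to 1$ and $\QQ_n(\hat D_y(\X,\Y)=\emptyset)\to 1$; a union bound then gives $\QQ_n(\Phi_n(\X,\Y)=1)=\QQ_n(\hat D_x\neq\emptyset \text{ or } \hat D_y\neq\emptyset)\leq \QQ_n(\hat D_x\neq\emptyset)+\QQ_n(\hat D_y\neq\emptyset)\to 0$. For the type II error under a fixed $\PP_n\in\mod^n$, the key observation is that in the model $\mP(r,s_x,s_y,\B)$ we have $s_x,s_y\geq 1$ (indeed $s_x,s_y>1$ is imposed, and more basically $r\geq 1$ forces nonempty supports because $\Lambda_r\geq 1/\B>0$), so $D(U)\neq\emptyset$ and $D(V)\neq\emptyset$. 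Hence on the event $\{\hat D_x(\X,\Y)=D(U) \text{ and } \hat D_y(\X,\Y)=D(V)\}$ we have $\hat D_x\neq\emptyset$, so $\Phi_n=1$, i.e. the test does not make a type II error. Therefore $\PP_n(\Phi_n(\X,\Y)=0)\leq \PP_n(\hat D_x(\X,\Y)\neq D(U) \text{ or } \hat D_y(\X,\Y)\neq D(V)) = 1-\PP_n(\hat D_x(\X,\Y)=D(U)\text{ and }\hat D_y(\X,\Y)=D(V))$. Taking the supremum over $\PP_n\in\mod^n$ and using \eqref{inlemma: low deg: support}, the $\limsup$ of this quantity is $0$.

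Combining the two bounds,
\[
\liminf_{n\to\infty}\lbs \QQ_{n}(\Phi_n(\X,\Y)=1)+ \sup_{\PP_n\in \mod^n}\PP_{n}(\Phi_n(\X,\Y)=0)\rbs=0,
\]
so $\Phi_n$ strongly distinguishes $\mod^n$ from $\QQ_n$, which proves the lemma. There is essentially no hard step here; the only mild subtlety worth flagging explicitly is the observation that the alternative is genuinely ``signal-bearing'' in the sense that the true supports are nonempty under every $\PP\in\mod$ (which is why rejecting on nonemptiness of the estimated support is a legitimate test), together with the bookkeeping of converting the joint recovery probability in \eqref{inlemma: low deg: support} into a per-sequence type II error bound and then passing the supremum inside via the uniformity already present in the hypothesis.
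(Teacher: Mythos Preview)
Your proposal is correct and follows exactly the approach the paper describes in the paragraph immediately preceding the lemma: the paper states that ``the trivial test, which rejects the null if either of the estimated supports is non-empty, strongly distinguishes $\QQ_n$ from $\mod^n$,'' and does not give a more detailed proof than this. Your argument simply makes this sentence precise, so there is nothing to add.
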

Thus, if a regime does not allow any polynomial time test for distinguishing $\QQ_n$ from $\mod^n$, there can be no polynomial time computable consistent decoder for $D(U)$ and $D(V)$. Therefore, it suffices to show that there is no polynomial time test which distinguishes $\QQ_n$ from $\mod^n$ in the  regime $s_x,s_y\gg\sqn$. To be more explicit, we want to show that  if $s_x,s_y\gg\sqn$, then
\begin{align}\label{low-deg: what show}
  \MoveEqLeft  \liminf_{n\to\infty}\lbs \QQ_{n}(\Phi_n(\X,\Y)=1)\nn\\
    &\ +\sup_{\PP_n\in \mod^n} \PP_{n}(\Phi_n(\X,\Y)=0)\rbs>0
\end{align}
for any $\Phi_n$ that is computable in polynomial time.

The   testing problem under concern is commonly known as the CCA detection problem, owing to its alternative formulation as  $H_0:\Lambda_1=0$ vs. $H_1:\Lambda_1>0$. 
In other words, the test tries to detect if there is any signal  in the data.  Note that, Lemma \ref{lemma: low deg: support to detection} also implies that detection is an easier problem than support recovery in that the former is always possible whenever the latter is feasible. The opposite direction may not be true,  however, since  detection does not reveal much information on the support.

  \subsubsection{Background on the Low-degree Framework} \label{subsec:background_gap}
   We shall provide a brief introduction to the  low-degree polynomial conjecture which forms the basis of our analyses here, and  refer the interested reader to \cite{hopkins2017, hopkins2018}, and \cite{kunisky2019} for in-depth discussions on the topic.  We will apply this method in context of the test $H_0:(\X,\Y)\sim\QQ_n$ vs. $H_1:(\X,\Y)\sim\PP_n$.
   The low-degree method centers around  the likelihood ratio $\LL_n$, which takes the form $\frac{d\PP_n}{d\QQ_n}$ in the above framework. Our key tool here will be the Hermite polynomials, which form a  basis system  of $L_2(\QQ_n)$ \cite{szego1939}.
    Central to the low-degree approach lies the projection of $\LL_n$ onto the subspace (of $L_2(\QQ_n)$) formed by the Hermite polynomials of degree at most $D_n\in\NN$.
    The latter projection, to be denoted by $\LL_n^{\leq D_n}$ from now on, is important because it measures  how well  polynomials of degree  $\leq D_n$ can distinguish $\PP_n$ from $\QQ_n$. In particular,  \begin{equation}\label{eq:L-defn}
\|\LL_n^{\le D_n}\|_{L_2(\QQ_n)} := \max_{f\text{ deg }{\leq D_n}} \frac{\E_{ \PP_n}[f(\X,\Y)]}{\sqrt{\E_{\QQ_n}[f(\X,\Y)^2]}},
\end{equation}
where the maximization is over polynomials $f:\RR^{n(p+q)}\mapsto\RR$ of degree at most $D_n$ \cite{ding2019}.

 The $L_2(\QQ_n)$ norm of the untruncated likelihood ratio $\LL_n$ has long  held an important place in the theory hypothesis testing since $\|\LL_n\|_{L_2(\QQ_n)}=O(1)$ implies   $\PP_n$ and $\QQ_n$ are asymptotically indistinguishable. While the untruncated likelihood ratio $\LL_n$ is  connected to the existence of \emph{any} distinguishing test, degree $D_n$ projections of $\LL_n$ are connected to the existence of polynomial time distinguishing tests. 
 The implications of the above heuristics are made precise by the following  conjecture \cite[Hypothesis 2.1.5]{hopkins2018}.
\begin{conjecture}[Informal]
\label{conjecture: low deg}
Suppose $t:\NN\mapsto\NN$. For ``nice" sequences of distributions $\PP_n$ and $\QQ_n$, if $\|\LL_n^{\le D_n}\|_{L_2(\QQ_n)}=O(1)$ as $n\to\infty$ whenever $D_n\leq t(n)\text{polylog}(n)$, then there is no time-$n^{t(n)}$ test $\Phi_n:\RR^{n(p+q)}\mapsto\{0,1\}$ that strongly distinguishes $\PP_n$ and $\QQ_n$.
\end{conjecture}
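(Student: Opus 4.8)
The final display is a \emph{conjecture}, and I should be candid: I do not expect to prove it, and the paper does not attempt to — it is stated as a conjecture precisely because a proof would yield unconditional average-case computational lower bounds of a kind far beyond current reach. Still, the natural plan is to formalize the heuristic that, for ``natural'' high-dimensional testing problems, the class of low-degree polynomials is a faithful proxy for the class of efficiently computable test statistics, so that the (computable) quantity $\|\LL_n^{\le D_n}\|_{L_2(\QQ_n)}$ governs the existence of quasi-polynomial-time distinguishers. One would first have to pin down what ``nice'' must mean: product or near-product structure (as with $\QQ_n = N(0,I_{p+q})^{\otimes n}$), coordinate-exchangeability, controlled moments, and a planted-signal-versus-noise form — conditions under which the Hermite expansion of $\LL_n$ is well behaved and no pathological algebraic structure (Gaussian elimination, lattice reduction) is available to a would-be solver.

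Granting such a class, the argument splits into two directions. The \emph{easy} direction — a genuine theorem, and the one actually used downstream — is that if $\|\LL_n^{\le D_n}\|_{L_2(\QQ_n)} = \omega(1)$ for some $D_n = O(\text{polylog}(n))$, then the maximizing polynomial in \eqref{eq:L-defn}, whose $n^{O(D_n)}$ Hermite coefficients can be estimated from the sample, gives a time-$n^{O(D_n)}$ test that strongly distinguishes $\PP_n$ from $\QQ_n$. The \emph{hard} direction is the content of the conjecture: that \emph{no} time-$n^{t(n)}$ test beats the best degree-$\widetilde O(t(n))$ polynomial. To attack it one would try to show that any such test can be approximated in $L_2(\QQ_n)$ by a low-degree polynomial — e.g. represent the algorithm as a bounded-size arithmetic circuit, smooth and truncate it to a polynomial, and bound the degree needed to preserve distinguishing power via hypercontractivity and concentration under $\QQ_n$.

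The main obstacle is exactly this hard direction, and with present techniques it is insurmountable: pushing the circuit-to-low-degree reduction through for \emph{all} polynomial-time algorithms would immediately prove unconditional hardness for problems such as planted clique, which is not known; moreover the naive converse is simply \emph{false} in some regimes — there are testing problems solvable efficiently by algebraic or lattice methods whose low-degree advantage is $O(1)$ — which is why the conjecture is hedged by the as-yet-informal qualifier ``nice,'' and why one can at best hope to prove it within restricted algorithmic models (low-degree, sum-of-squares, statistical query) rather than against arbitrary polynomial-time computation. Accordingly, the right scope here is not to prove Conjecture~\ref{conjecture: low deg} but to adopt it as a hypothesis and verify its testable premise, namely $\|\LL_n^{\le D_n}\|_{L_2(\QQ_n)} = O(1)$ for $D_n \le t(n)\,\text{polylog}(n)$, for the specific sparse CCA detection pair $(\PP_n,\QQ_n)$ in the regime $s_x,s_y \gg \sqrt{n}$; that Hermite-coefficient computation is the actual mathematical work, and it is where I would direct all effort.
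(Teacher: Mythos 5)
You are right that Conjecture~\ref{conjecture: low deg} is not something the paper proves or could prove: it is imported verbatim (in informal form) from Hypothesis 2.1.5 of \cite{hopkins2018} and Conjecture 1.16 of \cite{kunisky2019}, with ``nice'' deferred to those references, and the paper's only mathematical obligation is exactly the one you identify, namely establishing $\|\LL_n^{\leq D_n}\|_{L_2(\QQ_n)}^2=O(1)$ for the sparse CCA pair, which is carried out in Theorem~\ref{thm: low deg} via the Hermite expansion \eqref{eq: low deg: projection} and the replica computation of Lemmas~\ref{lemma: low deg: dot product} and~\ref{lemma: low deg: projection}. Your assessment of the scope, the easy/hard asymmetry, and where the real work lies matches the paper's treatment exactly.
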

Thus Conjecture~\ref{conjecture: low deg} implies that the degree-$D_n$ polynomial $\LL_n^{\leq D_n}$ is a proxy for time-$n^{t(n)}$ algorithms  \cite{kunisky2019}. If we can show that  $\|\LL_n^{\le D_n}\|_{L_2(\QQ_n)}=O(1)$ for a  $D_n$  of the order $(\log n)^{1+\e}$ for some $\e>0$, then the low degree Conjecture says that no polynomial time test can strongly distinguish $\PP_n$ and $\QQ_n$ \cite[Conjecture 1.16]{kunisky2019}.


   Conjecture \ref{conjecture: low deg} is informal in the sense that we do not specify  the ``nice" distributions, which are defined in Section 4.2.4 of \cite{kunisky2019}   (see also Conjecture 2.2.4 of \cite{hopkins2018}). Niceness requires $\PP_n$ to be sufficiently symmetric, which is generally guaranteed by naturally occurring high dimensional problems like ours. The condition of ``niceness" is attributed to eliminate pathological cases where the testing can  be made easier by methods like Gaussian elimination. See \cite{hopkins2018} for more details.
\subsubsection{Main Result} \label{subsec:main_result_gap}
Similar to \cite{ding2019}, we will consider a Bayesian framework. It might not be immediately clear how a Bayesian formulation will fit into the low-degree framework, and  lead to \eqref{low-deg: what show}. However, the connection will be clear soon. 
We put independent Rademacher priors $\pi_x$ and $\pi_y$  on $\alpha$ and $\beta$. We say $\alpha\sim\pi_x$ if $\alpha_1,\ldots,\alpha_p$ are i.i.d., and for each $i\in[p]$,
  \begin{align}\label{prior: radamander}
      \alpha_i=\begin{cases}
      1/\sqrt{s_x} & w.p.\quad  s_x/(2p)\\
      -1/\sqrt{s_x} & w.p.\quad  s_x/(2p)\\
      0 & w.p.\quad  1-s_x/p.
      \end{cases}
  \end{align}
  The Rademacher  prior $\pi_y$ can be defined similarly.  We will denote the product measure $\pi_x\times\pi_y$ by $\pi$.
  Let us define
\begin{align}\label{def: sigma in low deg}
    \Sigma(\alpha,\beta,\rho)=
\begin{bmatrix}
I_p & \rho\al\beta^T\\
\rho\beta\al^T & I_q
\end{bmatrix}, \quad \alpha\in\RR^p,\ \beta\in\RR^q,\ \rho>0.
\end{align}
When $\rho\|\alpha\|_2\|\beta\|_2<1$, $\Sigma(\alpha,\beta,\rho)$ is the covariance matrix corresponding to $X\sim N_p(0,I_p)$ and $Y\sim N_q(0,I_q)$ with covariance $\text{cov}(X,Y)=\rho\alpha\beta^T$.
\textcolor{black}{  Hence, for $\Sigma(\alpha,\beta,\rho)$ to be positive definite, $\|\alpha\|_2\|\beta\|_2< 1/\rho$ is a sufficient condition. The priors $\pi_x$ and $\pi_y$ put positive weight on $\alpha$ and $\beta$ that do not lead to a positive definite $\Sigma(\alpha,\beta,\rho)$, and hence calls for extra care during the low-degree analysis. This subtlety is absent in the sparse PCA analogue \cite{ding2019}.}

Let us define
\begin{equation}\label{def: low deg: PP alpha beta}
    \PP_{\alpha,\beta}=\begin{cases}
    N(0,\Sigma(\alpha,\beta,1/\B)) & \text{when }\|\alpha\|_2\|\beta\|_2< \B\\
    \QQ & \text{o.w.}
    \end{cases}
\end{equation}
 We denote the $n$-fold product measure corresponding to $\PP_{\alpha,\beta}$ by $\PP_{n,\alpha,\beta}$.
If $(\X,\Y)\mid\alpha,\beta\sim \PP_{n,\alpha,\beta}$, then  the marginal density of $(\X,\Y)$ is
 $\E_{\alpha\sim\pi_x,\beta\sim\pi_y}d\PP_{n,\alpha,\beta}$. The following lemma, which is proved in Appendix~\ref{sec: add lemmas: low-deg}, explains how the Bayesian framework is connected to \eqref{low-deg: what show}.
  \begin{lemma}\label{lemma: low deg: bayesian}
  Suppose $\B>2$ and $s_x,s_y\to\infty$. Then
  \begin{align*}
   \MoveEqLeft  \liminf_n\sup_{\PP_n\in \mP_G(r,2s_x,2s_y,\B)^n}\PP_n\slb \Phi_n(\X,\Y)=0\srb\\
     \geq &\ \liminf_n \E_{\pi}\PP_{n,\alpha,\beta}\slb \Phi_n(\X,\Y)=0\srb, 
  \end{align*}
where $\E_\pi$ is the shorthand for $\E_{\alpha\sim\pi_x,\beta\sim\pi_y}$.
\end{lemma}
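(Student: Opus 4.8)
The plan is to show that the Bayesian average risk is a lower bound for the worst-case (minimax) risk over the appropriate sub-model, which is the standard Bayes-risk-lower-bounds-minimax-risk principle. The only genuine wrinkle is bookkeeping: the prior $\pi = \pi_x \times \pi_y$ occasionally produces $(\alpha,\beta)$ for which $\|\alpha\|_2\|\beta\|_2 \geq \B$, in which case $\PP_{\alpha,\beta} = \QQ$ rather than a genuine CCA alternative, and we must argue that this ``bad event'' has vanishing probability so that, asymptotically, the prior is effectively supported on $\mP_G(r,2s_x,2s_y,\B)$ (with $r=1$).

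First I would verify that, on the good event $G := \{\|\alpha\|_2\|\beta\|_2 < \B\}$, the distribution $\PP_{\alpha,\beta} = N(0,\Sigma(\alpha,\beta,1/\B))$ actually lies in $\mP_G(1,2s_x,2s_y,\B)$. Here the rank is $r=1$ with canonical correlation $\Lambda_1 = (1/\B)\|\alpha\|_2\|\beta\|_2$; I need $\Lambda_1 \ge 1/\B$ (assumption A2), which requires $\|\alpha\|_2\|\beta\|_2 \ge 1$, and $\Lambda_1 < 1$ for positive definiteness. The eigenvalue bounds A4 hold trivially since $\Sx = I_p$, $\Sy = I_q$, and A5 is vacuous for $r=1$. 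The sparsity A3 is where the factor-of-two enters: under $\pi_x$, $\|\alpha\|_0 \sim \mathrm{Binomial}(p, s_x/p)$, which concentrates around $s_x$; by a Chernoff/binomial tail bound, $\PP_{\pi_x}(\|\alpha\|_0 > 2s_x) = o(1)$ as $s_x \to \infty$, and similarly for $\beta$. Likewise $\|\alpha\|_2^2 = \|\alpha\|_0/s_x$ concentrates at $1$, so $\|\alpha\|_2\|\beta\|_2 \to 1$ in probability; since $\B > 2 > 1$, the event $G$ also has probability $1-o(1)$, and moreover $\|\alpha\|_2\|\beta\|_2 \ge 1$ fails only on a vanishing set. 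Let $A_n$ denote the intersection of $G$, the sparsity-containment events, and $\{\|\alpha\|_2\|\beta\|_2 \ge 1\}$; then $\pi(A_n) \to 1$ and on $A_n$ we have $\PP_{n,\alpha,\beta} \in \mP_G(1,2s_x,2s_y,\B)^n \subseteq \mP_G(r,2s_x,2s_y,\B)^n$.

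Next, the core inequality: for any fixed test $\Phi_n$,
\begin{align*}
\sup_{\PP_n \in \mP_G(r,2s_x,2s_y,\B)^n} \PP_n(\Phi_n = 0)
&\ge \sup_{(\alpha,\beta) \in A_n} \PP_{n,\alpha,\beta}(\Phi_n = 0) \\
&\ge \E_\pi\!\left[\mathbf{1}\{A_n\}\, \PP_{n,\alpha,\beta}(\Phi_n = 0)\right] \\
&\ge \E_\pi\!\left[\PP_{n,\alpha,\beta}(\Phi_n = 0)\right] - \pi(A_n^c),
\end{align*}
using in the first line that every $\PP_{n,\alpha,\beta}$ with $(\alpha,\beta)\in A_n$ is a member of the sup-set, in the second that a sup dominates a (sub-probability) average, and in the third that $\PP_{n,\alpha,\beta}(\Phi_n=0) \le 1$. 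Taking $\liminf_n$ on both sides and using $\pi(A_n^c) \to 0$ yields exactly the claimed bound. (One should note the $r$ on the left: the lemma's sub-model allows general $r$, and a rank-one distribution is a legitimate element, so the inclusion is immediate; no argument about matching $r$ is needed.)

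The main obstacle is purely the concentration bookkeeping in Step 1 — making sure that the factor of $2$ in $2s_x, 2s_y$ genuinely absorbs the binomial fluctuations of $\|\alpha\|_0$ and $\|\beta\|_0$, and simultaneously that $\|\alpha\|_2\|\beta\|_2$ stays in the window $[1,\B)$ with probability $1-o(1)$; both follow from standard binomial tail bounds once $s_x, s_y \to \infty$, which is assumed, and from $\B > 2$, which gives comfortable room above the limiting value $1$. Everything after that is the textbook ``average $\le$ maximum'' manipulation, so I would keep the write-up of Steps 2–3 brief and spend the words on the concentration estimate.
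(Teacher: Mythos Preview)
Your overall strategy is exactly the paper's: define a high-probability ``good'' set of $(\alpha,\beta)$ on which $\PP_{\alpha,\beta}$ lands in the target model class, then use the average-$\le$-sup inequality together with $\pi(\text{bad set})\to 0$. The paper formalizes the good set as $\mathcal W(s_x,p)\times\mathcal W(s_y,q)$ with $\mathcal W(s,m)=\{\|x\|_0\in[s/2,2s],\ \|x\|_2\in[0.9,1.1]\}$, proves concentration via Chebyshev (its Lemma on the Rademacher prior), and then runs the same three-line chain you wrote.

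There is, however, a genuine gap in your bookkeeping. You include $\{\|\alpha\|_2\|\beta\|_2\ge 1\}$ in $A_n$ and claim it ``fails only on a vanishing set.'' That is false: $\|\alpha\|_2^2=\|\alpha\|_0/s_x$ with $\|\alpha\|_0\sim\mathrm{Bin}(p,s_x/p)$ has mean exactly $1$, so the product $\|\alpha\|_2\|\beta\|_2$ concentrates \emph{at} $1$, not strictly above it, and $\pi(\|\alpha\|_2\|\beta\|_2\ge 1)$ stays bounded away from $1$ (roughly $1/2$ by the CLT). Consequently $\pi(A_n^c)\not\to 0$ and your final inequality does not deliver the claimed bound.

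The paper sidesteps this by not imposing the $\ge 1$ condition at all; on its good set it simply records $\|\alpha\|_2\|\beta\|_2\le 1.21<\B$ and asserts the canonical correlation is $\B^{-1}$ (which, strictly, is $\B^{-1}\|\alpha\|_2\|\beta\|_2\in[\,0.81\,\B^{-1},\,1.21\,\B^{-1}\,]$, so its verification of A2 is itself loose). The clean repair---for both your write-up and the paper's---is to drop the $\{\|\alpha\|_2\|\beta\|_2\ge 1\}$ condition and instead show membership in $\mP_G(r,2s_x,2s_y,\B')$ for some $\B'\ge \B/0.81$; since the downstream hardness statement is for a fixed but arbitrary $\B$, this costs nothing. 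Alternatively, tilt the prior so that nonzero entries have magnitude $c/\sqrt{s_x}$ with $c>1$, pushing $\|\alpha\|_2^2$ to concentrate strictly above $1$. Either way, do not rely on $\pi(\|\alpha\|_2\|\beta\|_2\ge 1)\to 1$.
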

Note that a similar result holds for $\mod$ as well because $\modG\subset\mod$.
  Lemma~\ref{lemma: low deg: bayesian} implies that to show \eqref{low-deg: what show}, it suffices to show
 that a polynomial time computable $\Phi_n$ fails to strongly distinguish  the marginal distribution of $\X$ and $\Y$ from $\QQ_n$. 
However,  the latter falls within the realms of  the low degree framework because the corresponding likelihood ratio takes the form 
\begin{equation}\label{def: def of LL n}
    \LL_n=\frac{\E_{\alpha\sim\pi_x,\beta\sim\pi_y}d\PP_{n,\alpha,\beta}}{d\QQ_n(\X,\Y)}.
\end{equation}
Using priors on the alternative space is a common trick to convert a composite alternative to a simple alternative, which generally yields more easily to various mathematical tools.
  If we can show that $\|\LL_n^{\leq D_n}\|^2_{L_2(\QQ_n)}=O(1)$ for some $D_n=O(\log n)$, then 
   Conjecture~\ref{conjecture: low deg} would indicate that a $n^{\tilde\Theta(D_n)}$-time computable $\Phi_n$ fails to distinguish the distribution of  $\E_{\alpha\sim\pi_x,\beta\sim\pi_y}d\PP_{n,\alpha,\beta}$ from $\QQ_n$. Theorem \ref{thm: low deg} accomplishes the above under some additional conditions on $p$, $q$, and $n$, which we will discuss shortly. Theorem~\ref{thm: low deg} is proved in Appendix~\ref{sec; proof of low deg}.
  \begin{theorem}\label{thm: low deg}
 Suppose $D_n\leq \min(\sqrt p,\sqrt q,n)$, 
  \begin{equation}\label{condition: low deg: spasity}
   s_x,s_y\geq \sqrt{enD_n}/\B  \quad\text{and}\quad p,q\geq 3en/\B^2.
  \end{equation}
  Then $\|\LL_n^{\leq D_n}\|^2_{L_2(\QQ_n)}$ is $O(1)$ where $\LL_n$ is as defined in \eqref{def: def of LL n}.
  \end{theorem}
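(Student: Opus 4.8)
The plan is to compute the truncated second moment $\|\LL_n^{\leq D_n}\|_{L_2(\QQ_n)}^2$ directly via the standard ``second moment over the prior'' identity for planted models. Concretely, for a Bayesian planted model one has
\[
\|\LL_n^{\leq D_n}\|_{L_2(\QQ_n)}^2 = \E_{\alpha,\beta,\alpha',\beta'}\!\!\left[\ \sum_{\substack{S:\,|S|\leq D_n}} \widehat{\PP}_{\alpha,\beta}(S)\,\widehat{\PP}_{\alpha',\beta'}(S)\ \right],
\]
where the sum runs over Hermite indices $S$ of total degree at most $D_n$ and $(\alpha,\beta),(\alpha',\beta')$ are two independent draws from $\pi=\pi_x\times\pi_y$. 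So the first step is to expand the single-sample density $\PP_{\alpha,\beta}$ in the Hermite basis of $\QQ=N_{p+q}(0,I)$; since $\PP_{\alpha,\beta}$ (on the event $\|\alpha\|_2\|\beta\|_2<\B$) is Gaussian with covariance $I + \rho\,(\alpha\beta^\top \oplus \beta\alpha^\top)$, $\rho=1/\B$, only the cross-moments between an $X$-coordinate and a $Y$-coordinate survive, and the Hermite coefficients factor through $\rho\,\alpha_i\beta_j$. Taking the $n$-fold product and tracking how degree is distributed across the $n$ samples, one arrives — exactly as in \cite{ding2019} for sparse PCA — at a bound of the shape
\[
\|\LL_n^{\leq D_n}\|_{L_2(\QQ_n)}^2 \ \le\ \sum_{\ell=0}^{D_n} \frac{1}{\ell!}\left(\frac{n}{\B^2}\right)^{\!\ell} \E_{\pi}\!\left[\langle \alpha,\alpha'\rangle^{\ell}\right]\E_{\pi}\!\left[\langle \beta,\beta'\rangle^{\ell}\right]\ +\ (\text{correction from the truncation event}),
\]
up to the combinatorial bookkeeping constants.

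The second step is to control the prior overlap moments $\E_\pi[\langle\alpha,\alpha'\rangle^\ell]$. Under the Rademacher prior \eqref{prior: radamander}, $\langle\alpha,\alpha'\rangle = \frac{1}{s_x}\sum_{k\in[p]} \xi_k$ where each $\xi_k\in\{-1,0,1\}$ has $\PP(\xi_k\neq 0) = (s_x/p)^2$, so $\langle\alpha,\alpha'\rangle$ is a centered sum and its $\ell$-th moment is dominated (for $\ell\le D_n \le \sqrt p$, using the sparsity $s_x\ge \sqrt{enD_n}/\B$) by the term where the $\xi_k$'s pair up; a direct moment computation gives something like $\E_\pi[\langle\alpha,\alpha'\rangle^\ell]\lesssim (\ell/s_x^2)^{\ell/2}$ for even $\ell$ and $0$ for odd $\ell$ (with the condition $p,q\ge 3en/\B^2$ ensuring the ``higher-coincidence'' terms are negligible). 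Plugging the $\alpha$- and $\beta$-bounds into the sum, the $\ell$-th summand becomes roughly $\frac1{\ell!}(n/\B^2)^\ell (\ell/s_x^2)^{\ell/2}(\ell/s_y^2)^{\ell/2} \le \frac{1}{\ell!}\big(\tfrac{n\ell}{\B^2 s_x s_y}\big)^{\ell}$; using Stirling ($\ell!\ge (\ell/e)^\ell$) this is at most $\big(\tfrac{en}{\B^2 s_x s_y}\big)^{\ell}$, which is $\le 2^{-\ell}$ precisely because the hypothesis \eqref{condition: low deg: spasity} forces $s_x s_y \ge enD_n/\B^2 \ge 2en/\B^2$. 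Summing the geometric series yields $O(1)$.

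The third step — and the one I expect to be the genuine obstacle, since the authors explicitly flag it as ``a subtlety absent in the sparse PCA analogue'' — is handling the truncation in \eqref{def: low deg: PP alpha beta}: when $\|\alpha\|_2\|\beta\|_2\ge\B$ the planted distribution is replaced by $\QQ$, so $\PP_{\alpha,\beta}$ is not literally the clean Gaussian whose Hermite expansion I used. I would handle this by writing $\PP_{\alpha,\beta} = \mathbf 1\{\|\alpha\|_2\|\beta\|_2<\B\}\,N(0,\Sigma(\alpha,\beta,1/\B)) + \mathbf 1\{\|\alpha\|_2\|\beta\|_2\ge\B\}\,\QQ$, noting that the second piece contributes nothing to any Hermite coefficient of positive degree (it equals $\QQ$), so the expansion identity above holds verbatim provided we insert the indicator $\mathbf 1\{\|\alpha\|_2\|\beta\|_2<\B,\ \|\alpha'\|_2\|\beta'\|_2<\B\}$ into the overlap expectations. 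Since inserting an indicator only decreases a nonnegative integrand, the upper bound in step two survives unchanged — the clean-model moment bounds dominate the truncated ones. The only place care is needed is to check that $\|\alpha\|_2^2$ concentrates around $1$ under $\pi_x$ (it is a normalized Binomial-type sum) so that the truncation event is rare and, more importantly, so that on its complement $\Sigma(\alpha,\beta,1/\B)$ is indeed positive definite — but for the final second-moment bound we only need monotonicity, so this concentration is used only to confirm the construction is non-vacuous, not in the estimate itself. Assembling steps one through three gives $\|\LL_n^{\leq D_n}\|_{L_2(\QQ_n)}^2 = O(1)$ under \eqref{condition: low deg: spasity} and $D_n\le\min(\sqrt p,\sqrt q,n)$.
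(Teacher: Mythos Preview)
Your overall architecture matches the paper's: compute the low-degree second moment via Hermite expansion, reduce to a sum over degrees of the form $\sum_d c_d\,\E_\pi[W(\alpha_1^T\alpha_2)^d(\beta_1^T\beta_2)^d]$ (this is the paper's Lemma~\ref{lemma: low deg: projection}), then bound the overlap moments and sum. However, your Step~3 contains a genuine gap.

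\textbf{The monotonicity argument for dropping $W$ fails.} You write that ``inserting an indicator only decreases a nonnegative integrand,'' but the integrand is not pointwise nonnegative: the degree-$d$ term $c_d(\alpha_1^T\alpha_2)^d(\beta_1^T\beta_2)^d$ is negative whenever $d$ is odd and $(\alpha_1^T\alpha_2)(\beta_1^T\beta_2)<0$, and the truncated sum over $d\le\lfloor D_n/2\rfloor$ has no reason to be nonnegative. So you cannot simply bound $W\le 1$, nor can you factor $\E_\pi[W\cdot(\alpha_1^T\alpha_2)^d(\beta_1^T\beta_2)^d]$ into separate $\alpha$- and $\beta$-expectations as your Step~1 formula presumes. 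The paper resolves this by first proving that the odd-$d$ terms vanish \emph{even with $W$ present}: since $\alpha_1\stackrel{d}{=}-\alpha_1$ under the Rademacher prior and $W$ depends on $\alpha_1$ only through $\|\alpha_1\|_2$, the random variable $(\alpha_1^T\alpha_2)W$ is symmetric, so $\E[(\alpha_1^T\alpha_2)^dW]=0$ for odd $d$. Only after this step do the surviving even-$d$ terms become nonnegative, and then $W\le 1$ and the $\alpha/\beta$ factorization are legitimate. This symmetry argument is short but essential, and it is exactly the ``subtlety absent in the sparse PCA analogue'' that the authors flag.

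\textbf{A secondary issue in Step~2.} Your claimed overlap bound $\E_\pi[\langle\alpha,\alpha'\rangle^{2d}]\lesssim(2d/s_x^2)^d$ is the wrong scale: under the prior, $\langle\alpha,\alpha'\rangle$ has variance $1/p$, and the leading (pairing) contribution to the $2d$-th moment is of order $(Cd/p)^d$, not $(Cd/s_x^2)^d$. Consequently it is the hypothesis $p,q\ge 3en/\B^2$, not the one on $s_x,s_y$, that makes the degree-sum geometric; in the paper's accounting the sparsity condition $s_x,s_y\ge\sqrt{enD_n}/\B$ is used instead as the hypothesis under which the refined moment bound (Lemma~4.5 of \cite{ding2019}) applies, controlling the ``higher-coincidence'' corrections you allude to. Your final arithmetic therefore attributes convergence to the wrong hypothesis, though the overall conclusion would survive once the correct moment estimate is substituted.
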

  The following Corollary results from combining Lemma~\ref{lemma: low deg: bayesian} with Theorem~\ref{thm: low deg}.
  \begin{corollary}\label{cor: low deg corollary}
    Suppose 
    \begin{equation}\label{condition: low deg: spasity 2}
        s_x,s_y\geq 2\sqrt{enD_n}/\B  \quad\text{and}\quad p,q\geq 3en/\B^2.
    \end{equation}
     If Conjecture~\ref{conjecture: low deg} is true, then for $D_n\leq \min(\sqrt p,\sqrt q,n)$, there is no time-$n^{\tilde\Theta(D_n)}$ test that strongly distinguishes $\modG$ and $\QQ_n$.
  \end{corollary}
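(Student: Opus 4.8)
The plan is to chain together Theorem~\ref{thm: low deg}, Conjecture~\ref{conjecture: low deg}, and Lemma~\ref{lemma: low deg: bayesian}; the only real work is to line up the sparsity parameters and then to transfer the low-degree norm bound into a computational lower bound via the conjecture.

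First I would fix $D_n\le\min(\sqrt p,\sqrt q,n)$ and instantiate the Rademacher priors $\pi_x,\pi_y$ of \eqref{prior: radamander} with sparsity parameters $s_x/2$ and $s_y/2$ in place of $s_x,s_y$. The hypothesis $s_x,s_y\ge 2\sqrt{enD_n}/\B$ forces $s_x/2,s_y/2\ge\sqrt{enD_n}/\B$, while $p,q\ge 3en/\B^2$ is unaffected, so Theorem~\ref{thm: low deg} applies to the likelihood ratio $\LL_n$ of the Bayesian mixture $\E_\pi d\PP_{n,\alpha,\beta}$ against $\QQ_n$ and gives $\|\LL_n^{\le D_n}\|^2_{L_2(\QQ_n)}=O(1)$. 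I would then invoke Conjecture~\ref{conjecture: low deg} with $t(n)$ chosen so that $D_n\le t(n)\,\text{polylog}(n)$: since the mixture $\E_\pi d\PP_{n,\alpha,\beta}$ versus $\QQ_n$ is a ``nice'' sequence in the sense of Section~\ref{subsec:background_gap} --- it is a Gaussian spiked-covariance model with i.i.d.\ coordinates, invariant under coordinate permutations and sign flips --- the conjecture implies that no time-$n^{\tilde\Theta(D_n)}$ test $\Phi_n$ strongly distinguishes $\E_\pi d\PP_{n,\alpha,\beta}$ from $\QQ_n$. As $\Phi_n$ is deterministic, $\E_\pi\PP_{n,\alpha,\beta}(\Phi_n=0)$ is exactly the type~II error of $\Phi_n$ against the mixture, so this conclusion reads
\[
\liminf_n\Bigl[\QQ_n(\Phi_n(\X,\Y)=1)+\E_\pi\PP_{n,\alpha,\beta}(\Phi_n(\X,\Y)=0)\Bigr]>0
\]
for every such $\Phi_n$, where now $\pi=\pi_x\times\pi_y$ carries the sparsity parameters $s_x/2,s_y/2$.

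Next I would pass from the mixture to the worst case over $\modG$ using Lemma~\ref{lemma: low deg: bayesian}, applied with $(s_x/2,s_y/2)$ substituted for $(s_x,s_y)$; this is legitimate since $\B>2$ and $s_x,s_y\to\infty$ in the regime under consideration, and since $2\cdot(s_x/2)=s_x$ it yields
\[
\liminf_n\sup_{\PP_n\in\mP_G(r,s_x,s_y,\B)^n}\PP_n(\Phi_n(\X,\Y)=0)\ \ge\ \liminf_n\E_\pi\PP_{n,\alpha,\beta}(\Phi_n(\X,\Y)=0).
\]
Adding $\QQ_n(\Phi_n=1)$ to the left-hand quantity pointwise in $n$ and combining with the strict inequality from the previous step shows that
\[
\liminf_n\Bigl[\QQ_n(\Phi_n(\X,\Y)=1)+\sup_{\PP_n\in\modG^n}\PP_n(\Phi_n(\X,\Y)=0)\Bigr]>0
\]
for every time-$n^{\tilde\Theta(D_n)}$ test $\Phi_n$; that is, no such test strongly distinguishes $\modG$ and $\QQ_n$, which is the claim.

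The bookkeeping above is routine, and the heavy lifting is already contained in Theorem~\ref{thm: low deg} and Lemma~\ref{lemma: low deg: bayesian}. The step that is not a black box is checking that the Bayesian mixture qualifies as a ``nice'' sequence for Conjecture~\ref{conjecture: low deg}, which I would handle exactly as in the informal discussion of Section~\ref{subsec:background_gap} (permutation and sign symmetry of the coordinates, ruling out Gaussian-elimination-type shortcuts). The one genuine subtlety worth flagging is the factor-of-two deflation of the prior sparsity: a prior of the form \eqref{prior: radamander} with parameter $m$ produces a support of size $\approx m$, and of size at most $2m$ with high probability, so taking $m=s_x/2$ (resp.\ $m=s_y/2$) is what keeps the sampled distributions inside $\mP_G(r,s_x,s_y,\B)$ --- and this is precisely why the hypothesis asks for $s_x,s_y\ge 2\sqrt{enD_n}/\B$ rather than the bare $\ge\sqrt{enD_n}/\B$ of Theorem~\ref{thm: low deg}.
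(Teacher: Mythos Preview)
Your proposal is correct and follows essentially the same route as the paper, which simply states that the corollary ``results from combining Lemma~\ref{lemma: low deg: bayesian} with Theorem~\ref{thm: low deg}.'' You have filled in the details the paper omits --- in particular the halving of the prior sparsity to $s_x/2,s_y/2$ so that Lemma~\ref{lemma: low deg: bayesian} lands in $\mP_G(r,s_x,s_y,\B)$, which is exactly why the hypothesis carries the extra factor of~$2$ relative to Theorem~\ref{thm: low deg}. One minor remark: Lemma~\ref{lemma: low deg: bayesian} is stated as a $\liminf$ inequality, but its proof actually yields the pointwise bound $\sup_{\PP_n}\PP_n(\Phi_n=0)\ge \E_\pi\PP_{n,\alpha,\beta}(\Phi_n=0)-o(1)$, which is what justifies your ``adding $\QQ_n(\Phi_n=1)$ pointwise'' step cleanly.
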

  Corollary~\ref{cor: low deg corollary} conjectures that polynomial time algorithms can not strongly distinguish  $\modG^n$ and $\QQ_n$  provided $s_x,s_y$, $p$, and $q$ satisfy \eqref{condition: low deg: spasity 2}. Therefore under \eqref{condition: low deg: spasity 2}, Lemma~\ref{lemma: low deg: support to detection}  conjectures  support recovery  to be NP hard.
  
    Now we discuss a bit on  condition  \eqref{condition: low deg: spasity 2}.
    The first constraint in \eqref{condition: low deg: spasity 2} is expected because it ensures $s_x,s_y\gg\sqn$, which indicates that the sparsity is in the hard regime. We need to explain a bit on why the other constraint $p,q>3en/\B^2$ is needed. If $n\gg p,q$, the sample canonical correlations are consistent, and therefore  strong separation is possible in polynomial time without any restriction on the sparsity  \cite{bao2019, ma2021}. Even if $p/n\to c_1\in(0,1)$ and $q/n\to c_2\in(0,1)$, then also strong separation is possible in model \ref{def: sigma in low deg}  provided the canonical correlation $\rho$ is larger than some threshold depending on $c_1$ and $c_2$ \cite{bao2019}. The restriction $p,q>3en/\B^2$ ensures that the problem is hard enough so that the vanilla CCA does not lead to successful detection. The constant $3e$ is not sharp and possibly can be improved. 
    The necessity of the condition $p,q\gtrsim n/\B^2$ is unknown for support recovery, however. Since support recovery is a harder problem than detection, in the hard regime, polynomial time support recovery algorithms may  fail at a weaker condition on $n$, $p$, and $q$.

  
   \begin{remark}
\label{remark: chao gao comp barrier}
[Comparison with previous work:]
As mentioned earlier, \cite{gao2017} was the first to discover the existence of computational gap in context of sparse CCA.  In their seminal work, \cite{gao2017} established the computational hardness of CCA estimation problem at a particular subregime of $s_x,s_y\gg \sqrt{n}/(\B \sqrt{\log(p+q)})$ provided  $\B\to\infty$ is allowed. In view of the above, it was hinted that sparse CCA becomes computationally hard when $ s_x,s_y\gg \sqrt{n}/(\B \sqrt{\log(p+q)})$. However, when $\B$ is bounded, the entire regime $ s_x,s_y\gg \sqrt{n}/(\B \sqrt{\log(p+q)})$ is probably not computationally hard. In Section \ref{sec: CT}, we show that if $p+q\asymp n$, then both polynomial time estimation and support recovery are possible  if  $s_x+s_y\lesssim \sqn$, at least in the known $\Sx$ and $\Sy$ case. The latter sparsity regime can be considerably larger than $ s_x,s_y\lesssim \sqrt{n/\log(p+q)}$. Together, Section~\ref{sec: CT} and the current section  indicate that in the bounded $\B$ case,  the transition of computational hardness for sparse CCA probably happens at the sparsity level $\sqn$, not $\sqrt{n/\log(p+q)}$, which is consistent with  sparse PCA. Also,  the low-degree polynomial conjecture allowed us to explore almost the entire targeted regime $s_x,s_y\gg\sqrt{n}$, where \cite{gao2017}, who used the  planted clique conjecture, considers only a subregime of $s_x,s_y\gg \sqrt{n}/(\B \sqrt{\log(p+q)})$.



\end{remark}
We will end the current section with a brief outline of  the proof of  Theorem \ref{thm: low deg}.
  \textcolor{black}{
  \subsubsection*{The main idea behind the proof of Theorem \ref{thm: low deg}} Let us denote by $\Pi_n^{\leq D_n}$ the linear span of all $n(p+q)$-variate Hermite polynomials of degree at most $D_n$. For each $z\in\ZZ^{m}$ and $y\in\RR^{m}$, we let $\widehat H_z(y)=\prod _{i=1}^{m}\widehat h_{z_i}(y_i)$, where $\widehat h_{z_i}$ is the univariate normalized Hermite polynomial of degree $z_i$. We will discuss the Hermite polynomials in greater detail in Appendix \ref{sec; proof of low deg}. Any normalized $m$-variate Hermite polynomial is of the form $\widehat H_z$, where $z\in \ZZ^{m}$.
   Then  $\Pi_n^{\leq D_n}$ is the linear span of all $\widehat H_w$'s with
   \[w\in \mathcal C_{l}:=\lbs z\in \ZZ^{n(p+q)}: \sum_{i=1}^{n(p+q)}z_i\leq D_n\rbs.\]
  Since $\LL_n^{\leq D_n}$ is the projection of $\LL_n$ on $\Pi_n^{\leq D_n}$, it then holds that
  \[ \|\LL_n^{\leq D_n}\|^2_{L_2(\QQ_n)}=\sum_{w\in \mathcal{C}_l}\langle \LL_n, \widehat H_\myv\rangle_{L^2(\QQ_n)}^2. \] 
  }
  
  \textcolor{black}{The first step of the proof is to find out the expression of $\langle \LL_n, \widehat H_\myv\rangle_{L^2(\QQ_n)}^2$. Since $w\in \ZZ^{n(p+q)}$, we can partition $w$ into  $w=(w_1,\ldots,w_n)$, where $w_i\in\ZZ^{p+q}$ for each $i\in[n]$. Using some algebra, we can show that \begin{align*}
     \langle \LL_n, \widehat H_\myv\rangle_{L^2(\QQ_n)}=
     \ \E_{\pi}\lbt \prod_{i\in[n]}\E_{(X_i,Y_i)\sim \PP_{\alpha,\beta}}\slbt \widehat H_{\myv_i} (X_i,Y_i)\srbt\rbt.
     \end{align*}
Exploiting the properties of Hermite polynomials, it can be shown that
\begin{align*}
  \MoveEqLeft  \E_{(X_i,Y_i)\sim \PP_{\alpha,\beta}}\slbt\widehat H_{\myv_i} (X_i,Y_i)\srbt\\
    =&\ \frac{1\{\|\alpha\|_2\|\beta\|_2<\B\}}{\sqrt{\prod_{j=1}^{p+q}(w_i)_j!}}\\
    &\ \times\partial^{\myv_i}_{t}\lb\exp\left\{\frac{1}{2}t^T\slb \Sigma (\alpha,\beta,1/\B) -I_{p+q}\srb t\right\}\rb\bl_{t=(0,\ldots,0)},
\end{align*}
where for $z\in\ZZ^{p+q}$, $t\in\RR^{p+q}$, and any function $f:\RR^{p+q}\mapsto\RR$, the notation $\partial^z_t(f(t))|_{t=(0,\ldots,0)}$ stands for the $z$-th order partial derivative of $f$ with respect to $t$ evaluated at the origin. }\textcolor{black}{ The rest of the proof is similar to the PCA analogue in \cite{ding2019}, but there is an extra indicator term for the CCA case. Following \cite{ding2019}, we use the common trick of using replicas of $\alpha$ and $\beta$ to simplify the algebra. Suppose  $\alpha_1,\alpha_2\sim \pi_x$ and $\beta_1,\beta_2\sim\pi_y$ are independent. Let $W$ be the indicator function of the event $|(\alpha_1^T\alpha_2)(\beta_1^T\beta_2)|<\B^2$. Denote by $((1-x)^{-n})^{\leq p}$ the $p$-th order truncation of the Taylor series expansion of $(1-x)^{-n}$ at $x=0$.
 Following some algebra, it can be shown that
 \begin{align*}
   \MoveEqLeft  \|\LL_n^{\leq D_n}\|^2_{L_2(\QQ_n)}\\
     =&\ \E_{\pi}\lbt W\lbs \lb 1-\B^{-2}(\alpha_1^T\alpha_2)(\beta_1^T\beta_2)\rb^{-n}\rbs^{\leq \floor{D_n/2}}\rbt.
 \end{align*}
Comparing the above with the analogous result for PCA, namely Lemma 4.2 of \cite{ding2019}, we note that the indicator term $W$ does not appear in the PCA analogue. The indicator term $W$ appears in the CCA case because we had set  $\PP_{\alpha,\beta}$ to be $\QQ$ for  $\|\alpha\|_2\|\beta\|_2>\B$ to tackle  the extra restrictions on $\alpha$ and $\beta$ in this case.
}

\subsection{\textbf {A Polynomial Time Algorithm for $\sqrt{n}/\log{(p+q)}\ll s_x,s_y\ll \sqrt{n}$ Regime : Answer to Question~\ref{Question 3}}}
\label{sec: CT}

In this subsection, we show that  in the difficult regime $s_x+s_y\in[\sqrt{n/\log(p+q)},\sqn]$, using a soft co-ordinate thresholding (CT) type algorithm,  we can estimate the  canonical directions consistently  when $p+q\asymp n$. CT was introduced by the seminal work of \cite{bickel2008} for the purpose of estimating high dimensional covariance matrices. For SPCA, \cite{deshpande2014}'s CT is the only algorithm that  provably recovers the full support in the difficult regime (see also \cite{boaz2015}). In context of CCA, \cite{gao2013} uses CT for partial support recovery in the rank one model under what we referred to as the easy regime. However, \cite{gao2013}'s main goal was the  estimation of the leading canonical vectors, not support recovery.  As a result, \cite{gao2013} detects the support of the relatively large elements of the leading canonical directions, which are subsequently used to obtain consistent preliminary estimators of the leading canonical directions. Our thresholding level and theoretical analysis are different from that of \cite{gao2013} because the analytical tools used in the easy regime do not work in the difficult regime.

\subsubsection{Methodology: Estimation via CT}
By ``thresholding a matrix $A$ co-ordinate-wise",  we will roughly mean the process of assigning the value zero to any element of $A$ which is below a certain threshold in absolute value. Similar to \cite{deshpande2014}, we will consider the soft thresholding operator, which, at threshold level $t$, takes the form
\[\eta(x,t)=\begin{cases}
x-t & x>t\\
0 & |x|<t\\
x+t & x<-t.
\end{cases}\]
It will be worth noting that the soft thresholding operator $x\mapsto\eta(x,t)$ is continuous.

  
 
 
 \begin{algorithm}[h]
\caption{Coordinate Thresholding (CT) for estimating $D(V)$}
\label{algo: CT: CCA}

\begin{algorithmic} 
\REQUIRE \begin{enumerate}
\item Sample covariance matrices $\hSxy^{(1)}$ and $\hSxy^{(2)}$ based on samples  $O_1=(x_i, y_i)_{i=1}^{[n/2]}$ and $O_2=(x_i, y_i)_{i=[n/2]+1}^{n}$, respectively.
\item Variances $\Sx$ and $\Sy$.
\item Parameters $\Thres$ and $\cut$.
\item $r$, i.e., rank of $\Sxy$
\end{enumerate}
\ENSURE  $\widehat D(V)$.
\STATE 
\begin{enumerate}
\item \textbf{Peeling:} calculate $\tSxy=\Sx^{-1}\hSxy^{(1)}\Sy^{-1}$.
\item \textbf{Threshold:} Letting $N=m+n$, perform soft thresholding  $x\mapsto \eta (x;\Thres/\sqrt{N})$ entrywise on $\tSxy$ to obtain thresholded $\eta(\tSxy)$.
\item \textbf{Sandwitch:} $\eta(\tSxy)\mapsto \Sx^{1/2}\eta(\tSxy)\Sy^{1/2}$.
\item \textbf{SVD: } Find $\widehat U_{pre}$, the matrix of the leading $r$ singular vector of  $\Sx^{1/2}\eta(\tSxy)\Sy^{1/2}$. 
\item \textbf{Premultiply:} Set $\widehat{U}^{(1)}=\Sx^{-1/2}\widehat U_{pre}$.
\end{enumerate}
\RETURN  \RecoverSupp\ $(\widehat U^{(1)}, \cut, \Sy^{-1},\hSxy^{(2)},r)$ where \RecoverSupp\  is given by Algorithm~\ref{algo: simple algorithm}.
 \end{algorithmic}
\end{algorithm}

We will also assume that  the covariance matrices $\Sx$ and $\Sy$ are known.  To understand the difficulty of unknown $\Sx$ and $\Sy$, we remind the readers that $\Sxy=\Sx U\Lambda V^T\Sy$. Because the matrices $U$ and $V$ are sandwiched between the matrices $\Sx$ and $\Sy$, their sparsity pattern does not get reflected in the  sparsity pattern of  $\Sxy$.  Therefore, if one blindly applies  CT  to $\hSxy$,   they can at best hope to recover the sparsity pattern of  the outer matrices $\Sx$ and $\Sy$. If  the supports of the matrices $U$ and $V$ are of main concern, CT should rather be applied on the matrix $\tSxy=\Sx^{-1}\hSxy\Sy^{-1}$. If $\Sx$ and $\Sy$ are unknown, one needs to efficiently estimate $\tSxy$ before the application of CT. Although under certain structural conditions, it is possible to find rate optimal estimators $\hSx^{-1}$ and $\hSy^{-1}$ of $\Sx^{-1}$ and $\Sy^{-1}$ at least in theory,  the errors 
 $\|(\hSx^{-1}-\Sx^{-1})\hSxy \Sy^{-1}\|_{op}$ and 
  $\|\Sx^{-1}\hSxy (\hSy^{-1}-\Sy^{-1})\|_{op}$ may still blow up due to the presence of the high dimensional matrix $\hSxy$, which can be as big as $O(\sqrt{(p+q)/n})$ in operator norm.
 One may be tempted to replace $\hSxy$ with a sparse estimator of $\Sxy$ to facilitate faster estimation, but that does not work because  we explicitly require the formulation of $\hSxy$ as the sum of Wishart matrices (see equation \ref{representation: Sxy: model 3} in the proof). The latter representation, which is critical for the sharp analysis, may not be preserved by a CLIME \cite{cai2011} or nodewise Lasso estimator \cite{vandegeer2014} of $\Sxy$.

 We remark in passing that it is possible to obtain an estimator $\widehat A$ so that $|\widehat A-\tSxy|_\infty=o_p(1)$. Although the latter does not provide much  control over the operator norm of $\widehat A-\tSxy$, it is sufficient for  partial support recovery, e.g., the recovery of the rows of $U$ or $V$ with strongest signals. (See Appendix B of \cite{gao2013} for example, for some results  in this direction under the easy regime when $r=1$.)

As indicated by the previous paragraph, we apply co-ordinate thresholding to the matrix $\tSxy=\Sx^{-1}\hSxy\Sy^{-1}$, which directly targets the matrix $\txy=U\Lambda V^T$. We call this step the peeling step because it extracts the matrix $\tSxy$ from the sandwiched matrix $\hSxy=\Sx\tSxy\Sy$. 
We then perform the entry-wise co-ordinate thresholding algorithm on the peeled form $\tSxy$ with threshold $\Thres$ so as  to obtain $\eta(\tSxy; \Thres/\sqrt{n})$. We postpone the discussion on $\Thres$ to Section \ref{subsec: analysis of CT}. The thresholded matrix is an  estimator of $\txy$, but we need an estimator of $\Sx^{-1/2}\Sxy\Sy^{-1/2}$. Therefore, we again sandwich $\tSxy$  between $\Sx^{1/2}$ and $\Sy^{1/2}$. The motivation behind this sandwiching is that if $\|\tSxy-\txy\|_{op}=\e_n$, then $\Sx^{1/2}\tSxy\Sy^{1/2}$ is a good estimator of $\Sx^{-1/2}\Sxy\Sy^{-1/2}$ in that
\[\|\Sx^{1/2}\tSxy\Sy^{1/2}-\Sx^{-1/2}\Sxy\Sy^{-1/2}\|_{op}\leq \sqrt{\|\Sx\|_{op}\|\Sy\|_{op}}\e_n\leq\B\e_n.\]
However, $\Sx^{1/2}U\Lambda V^T\Sy^{1/2}$ is an SVD of $\Sx^{-1/2}\Sxy\Sy^{-1/2}$. Using Davis-Kahan sin theta theorem \cite{yu2015}, one can show that the SVD of $\Sx^{1/2}\tSxy\Sy^{1/2}$ produces   estimators $\widehat U'$ and $\widehat V'$ of $\Sx^{1/2}U$ and $\Sy^{1/2}V$,  where the columns of  $\widehat U'$ and $\widehat V'$ are  $\e_n$-consistent in $l_2$ norm for the columns of $\Sx^{1/2}U$ and $\Sy^{1/2}V$, respectively, up to a sign flip (cf. Theorem 2 of \cite{yu2015}). Pre-multiplying the resulting $U'$ by $\Sx^{-1/2}$ yields an estimator $\widehat U$ of $U$ up to a sign flip of the columns. We do not worry about the sign flip because Condition~\ref{Assumption on estimators} allows for the sign flips of the columns.
 Therefore, we feed this $\widehat U$  into \RecoverSupp\ as our final step.
See Algorithm~\ref{algo: CT: CCA} for more details. 

\begin{remark}
\textcolor{black}{In case of electronic health records data, it is possible to obtain large surrogate data on $X$ and $Y$ separately and thus might allow relaxing the known precision matrices assumption above. We do not pursue such semi-supervised setups here. }
\end{remark}

\subsubsection{Analysis of the CT Algorithm}
\label{subsec: analysis of CT}

For the asymptotic analysis of the CT algorithm, we will assume the underlying distribution to be Gaussian, i.e., $\mathbb P\in\mP_G(r,s_x,s_y,\B)$.  This Gaussian assumption will be used to perform a crucial decomposition of sample covariance matrix, which typically holds for Gaussian random vectors. \cite{deshpande2014}, who used similar devices for obtaining the sharp rate results in SPCA,   also required a similar Gaussian assumption. We do not yet know how to extend these results to  sub-Gaussian random vectors.

Let us consider the threshold  $\Thres/\sqrt {n}$, where $\Thres$ is explicitly given in Theorem~\ref{thm: CT matrix operator norm}. Unfortunately, tuning of $\Thres$ requires the knowledge of the underlying sparsity $s_x$ and $s_y$.
 Similar to \cite{deshpande2014}, our  thresholding level  is different than the traditional choice of order $O(\sqrt{\log (p+q)/n})$ in the easy regime analyzed in \cite{bickel2008, cai2012} and \cite{ gao2013}. The latter level is   too large to successfully recover all the nonzero elements in the difficult regime. We  threshold $\tSxy$ at a lower level, which in its turn, complicates the analysis to a greater degree. Our main result in this direction, stated in 
 Theorem~\ref{thm: CT matrix operator norm}, is proved in Appendix~\ref{sec: Proof of CT}.
  
\begin{theorem}\label{thm: CT matrix operator norm}
Suppose $(X_i,Y_i)\sim\mathbb P\in \mP_G(r, s_x,s_y, \B)$. Further suppose $s_x+s_y< \sqn$, $p\vee q=o(\log n)$, and $\log n=o(\sqrt p\vee \sqrt q)$. Let $K$ and $C_1$ be constants so that $K\geq 1288\B^4$ and $C_1\geq C\B^4$, where $C>0$  is an absolute constant. Suppose the threshold level $\Thres$ is defined by
\begin{align*}
\Thres=\begin{cases}
\sqrt{C_1\log(p+q)}  
\\
\text{ if }\quad (s_x+s_y)^2 < 2^{1/4}(p+q)^{3/4}\text{ (case i)}
\\
\slb K\log(\frac{p+q}{(s_x+s_y)^2})\srb^{1/2} 
\\\text{ if }\quad 2^{1/4}(p+q)^{3/4}\leq (s_x+s_y)^2\leq (p+q)/e\\
\text{ (case ii)}\\
0
\quad \text{ o.w. (case iii).}
\end{cases}
\end{align*}
Suppose $c_\B$ is a constant that takes the value $K$,  $C_1$, or one in case (i), (ii), and (iii), respectively. 
Then there exists an absolute constant $C>0$ so that the following holds with probability $1-o(1)$ for $\tSxy=\Sx^{-1}\hSxy\Sy^{-1}:$
\begin{align*}
  \MoveEqLeft \|\eta(\tSxy;\eta)-\txy\|_{op}\leq C\B^2 \frac{(s_x+s_y)}{\sqn}\\
   &\ \times\max\lbs \lb c_\B\log (\frac{p+q}{(s_x+s_y)^2})\rb^{1/2}, 1\rbs.
\end{align*}
\end{theorem}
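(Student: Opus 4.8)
The plan is to control $\|\eta(\tSxy;\Thres/\sqrt N)-\txy\|_{op}$ by first extracting a Wishart-type representation for $\tSxy$ and then performing a deterministic thresholding analysis on the resulting random matrix. Under the Gaussian model $\mP_G$, writing $\hSxy^{(1)}$ in terms of i.i.d. $N(0,\Sigma)$ samples and pre-multiplying/post-multiplying by $\Sx^{-1}$ and $\Sy^{-1}$, I would derive a decomposition of the form $\tSxy = \txy + \frac{1}{m}\sum_{i} (\text{cross terms of Wishart type})$ (this is the representation \eqref{representation: Sxy: model 3} that the excerpt refers to forward). The point of this step is that the noise part $\tSxy-\txy$ is a sum of $m\asymp n$ independent mean-zero terms whose entries are (products of) Gaussians with variance controlled by $\B$ via assumption A4, so each entry of the noise concentrates at scale $O(1/\sqrt n)$ with sub-exponential tails. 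Crucially, the signal $\txy=U\Lambda V^T$ has at most $s_x$ nonzero rows and $s_y$ nonzero columns, hence at most $s_x s_y$ nonzero entries, each of size at most $O(1)$ by the normalization $U^T\Sx U=I$ and A4.

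Next I would split the error into a signal-distortion term and a noise term. On the support $D(\txy)$ (the at most $s_x\times s_y$ block), soft thresholding at level $t=\Thres/\sqrt N$ shifts each entry by at most $t$, so the contribution to the operator norm from the support block is at most $\sqrt{s_x s_y}\,t \lesssim \frac{(s_x+s_y)}{\sqrt n}\Thres$ (using $\sqrt{s_x s_y}\le (s_x+s_y)/2$), which already matches the claimed bound up to the choice of $\Thres=\sqrt{c_\B\log((p+q)/(s_x+s_y)^2)}$. Off the support we only see thresholded noise. Here the key is a union bound: $\eta(\cdot,t)$ kills any noise entry below $t$ in absolute value, and by the sub-exponential tail each of the $\lesssim (p+q)^2$ off-support entries exceeds $t=\Thres/\sqrt N$ only with probability $\lesssim \exp(-c\Thres^2)$. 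Choosing $\Thres^2 \asymp \log(p+q)$ in case (i) makes the expected number of surviving entries $o(1)$, so with probability $1-o(1)$ the thresholded noise matrix off-support is exactly zero and contributes nothing; in case (ii), $\Thres^2 \asymp \log((p+q)/(s_x+s_y)^2)$ is smaller, so surviving entries do appear, and their number and magnitude must be controlled by a more careful combinatorial/operator-norm argument (bounding the operator norm of a sparse random matrix by controlling row/column sparsity after thresholding, e.g. via a Gershgorin or $\epsilon$-net bound on the number of large entries per row). In case (iii) there is no thresholding and one simply invokes the standard bound $\|\tSxy-\txy\|_{op}\lesssim \sqrt{(p+q)/n}$, which is $\lesssim (s_x+s_y)/\sqrt n$ precisely because $(s_x+s_y)^2 \gtrsim p+q$ in that regime.

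The main obstacle I anticipate is the intermediate regime (case ii), where the threshold is deliberately taken \emph{below} the classical $\sqrt{\log(p+q)/n}$ level: here surviving off-support noise entries genuinely contribute, and one cannot dispose of them by a crude union bound. The delicate part is showing that, after thresholding at this lower level, the random matrix restricted to the off-support region still has operator norm $\lesssim \frac{(s_x+s_y)}{\sqrt n}\sqrt{K\log((p+q)/(s_x+s_y)^2)}$ with high probability. This requires simultaneously (a) controlling the number of entries per row/column that survive thresholding (a binomial tail estimate with success probability $\exp(-c\Thres^2)\approx ((s_x+s_y)^2/(p+q))^{cK}$, which is why the constant $K\ge 1288\B^4$ must be taken large enough that the per-row survivor count is $\lesssim s_x+s_y$), and (b) bounding the magnitudes of the survivors and combining via a moment or net argument — mirroring \cite{bickel2008} and \cite{deshpande2014}'s covariance-thresholding operator-norm analyses, but adapted to the non-symmetric cross-covariance $\tSxy$ and carrying the nuisance-matrix constants from A4 throughout. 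The hypotheses $p\vee q=o(\log n)$ — presumably a typo for $\log(p\vee q)=o(n)$ — and $\log n = o(\sqrt p\vee\sqrt q)$ are used to guarantee the sub-exponential concentration kicks in uniformly and the union bounds close.
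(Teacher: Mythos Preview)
Your high-level decomposition is essentially the paper's: a Wishart representation for $\tSxy$, a split into signal (on the $s_x\times s_y$ support block) and noise (off support), and a case analysis where case~(i) is handled by a union bound that kills all off-support entries and case~(iii) by the crude $\sqrt{(p+q)/n}$ bound. These parts are fine and match the paper closely. The paper organizes the split slightly differently (partitioning $[p]\times[q]$ into $E=E_1\times E_2$, $F=F_1\times F_2$ and the mixed region $G$, giving three pieces $\mS_1,\mS_2,\mS_3$), but the content is the same.

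The genuine gap is in case~(ii). Your plan is to count survivors per row/column after thresholding and then bound the operator norm ``via a Gershgorin or $\epsilon$-net bound on the number of large entries per row''. A Gershgorin (row/column $\ell_1$) argument will not give the sharp rate here: it does not exploit sign cancellations, and for the off-support noise matrix of the form $\frac{1}{n}M\mZ_1^T\mZ_2 N$ it yields a bound that is a factor $\sqrt{p+q}$ too large compared to the target $\sqrt{(p+q)/n}\,e^{-\Thres^2/K}$. What the paper actually does is an $\epsilon$-net argument on $x^T\eta(\mQ)y$ treated as a Lipschitz function of the underlying Gaussian matrix, combined with the Deshpande--Montanari Gaussian concentration lemma. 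The crucial subtlety you are missing is the \emph{large/small coordinate splitting} of the net vectors: one writes $x=x_{S_x}+x_{S_x^c}$ with $S_x=\{i:|x_i|\ge\sqrt{J/p}\}$, so that (a) on the large-coordinate part the net has only $\exp((p+q)\log(CJ)/J)$ points, allowing a loose Lipschitz constant, while (b) on the small-coordinate part the Lipschitz constant picks up an extra factor $e^{-\Thres^2/K}$ because $|\nabla\eta|\le 1_{\{|\cdot|>\Thres/\sqrt n\}}$ and the number of surviving entries per column is controlled. Step~(b) in turn requires showing that $\sum_i 1\{|\mQ_{ij}|>\Thres/\sqrt n\}\lesssim p\,e^{-\Thres^2/K}$ uniformly in $j$, and because these indicators are \emph{not} independent (the entries $M_{i*}^T\mZ_1^T\mZ_2 N_j$ share the common factor $\mZ_2 N_j$), the paper needs the Panconesi--Srinivasan generalized Chernoff bound for weakly dependent Bernoullis. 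Your binomial-tail heuristic implicitly assumes independence across $i$, which fails here; and the constant $K\ge 1288\B^4$ arises precisely from making this weakly-dependent Chernoff step close, not from a row-count calculation.
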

To disentangle the statement of Theorem \ref{thm: CT matrix operator norm}, let us assume $p+q\asymp n$ for the time being. Then case (ii) in the theorem corresponds to $n^{3/4}\lesssim (s_x+s_y)^2\leq n$.
Thus, CT works in the difficult regime provided $p+q\asymp n$. It should be noted that the threshold for this case is almost of the order $O(1/\sqn)$, which is much smaller than $O(\sqrt{\log(p+q)/n})$, the traditional threshold for the easy regime. Next, observe that case (i) is an easy case because $s_x+s_y$ is much smaller than $\sqn$. Therefore, in this case, the traditional threshold of the easy regime works. Case (iii) includes the hard regime, where polynomial time support recovery is probably impossible.   Because  it is unlikely that CT can  improve over the vanilla estimator $\tSxy$ in this regime, a threshold of zero is set.

\begin{remark}
Theorem \ref{thm: CT matrix operator norm} requires $\log n=o(\sqrt{p}\vee\sqrt{q})$ because one of our concentration inequalities in the  analysis of case (ii)  needs this technical condition (see Lemma~\ref{lemma: sqrt n: M N lemma: rot}). The omitted  regime $\log n>C(\sqrt{p}\vee\sqrt{q})$ is indeed an easier one, where  special methods like CT is not even required. In fact,  it is well known that subgaussian $X$ and $Y$ satisfy (cf. Theorem 4.7.1 of \cite{vershynin2020})
\[\|\hSxy-\Sxy\|_{op}\leq C\lb\lb\frac{p+q}{n}\rb^{1/2}+\frac{p+q}{n}\rb,\]
which is $O(\log n/\sqn)$ in the regime under concern. Including this result in the statement of Theorem \ref{thm: CT matrix operator norm} could  unnecessarily lengthen the exposition. Therefore,  we decided to exclude  this regime from Theorem \ref{thm: CT matrix operator norm} to focus more on the $s_x+s_y\approx \sqrt{p+q}$ regime.
\end{remark}

\begin{remark}
\label{remark: CT constants}
The statement of Theorem~\ref{thm: CT matrix operator norm} is not explicit on the lower bound of the constant $C_1$. However,  our simulation shows that the algorithm works for $C_1\geq 50\B^4$.
 Both  threshold parameters $C_1$ and $K$ in Theorem \ref{thm: CT matrix operator norm} depend on the unknown $\B>0$. The proof actually shows that $\B$ can be replaced by $\max\{ \Lambda_{max}(\Sx),\Lambda_{max}(\Sy), \Lambda_{max}(\Sx^{-1}), \Lambda_{max}(\Sy^{-1})\}$.  
\end{remark}



Finally, Theorem \ref{thm: CT matrix operator norm} leads to the following corollary, which establishes that in the difficult regime, there exist estimators which satisfy Condition~\ref{Assumption on estimators}, and Algorithm~\ref{algo: CT: CCA}   succeeds with probability one  provided $p+q\asymp n$. This answers Question~\ref{Question 3} in the affirmative for Gaussian distributions.
\begin{corollary}\label{cor: CT support recovery}
  Instate the conditions of Theorem~\ref{thm: CT matrix operator norm}. Then there exists $C_\B>0$ so that if 
  \begin{equation}\label{intheorem: statement: CT support recovery}
     n\geq C_\B r(s_x+s_y)^2\max\lbs \log\lb \frac{p+q}{(s_x+s_y)^2}\rb ,1\rbs, 
  \end{equation}
  then the 
  $\widehat U^{(1)}$ defined in  Algorithm~\ref{algo: CT: CCA} satisfies 
  Condition~\ref{Assumption on estimators}, and 
  $\inf_{\PP\in\modG}\PP($Algorithm~\ref{algo: CT: CCA} correctly recovers $D(V)$ $)\to_n 1$.
\end{corollary}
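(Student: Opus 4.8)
The plan is to verify that Algorithm~\ref{algo: CT: CCA} produces a $\widehat U^{(1)}$ meeting Condition~\ref{Assumption on estimators} with $\errs$ of the order dictated by Theorem~\ref{thm: CT matrix operator norm}, and then invoke Theorem~\ref{thm: support recovery: sub-gaussians} (with $\hf = \Sy^{-1}$, i.e.\ type C, so $\xi_n = 1$) to conclude support recovery of $V$. The first half is the substantive step; the second is essentially a citation of the already-established theorem, once the hypotheses are checked.

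\textbf{Step 1: from operator-norm control to $l_2$ consistency of the canonical directions.} By Theorem~\ref{thm: CT matrix operator norm}, with probability $1-o(1)$,
\[
\|\eta(\tSxy;\Thres/\sqrt n)-\txy\|_{op}\le C\B^2\,\frac{s_x+s_y}{\sqn}\,\max\Bigl\{\bigl(c_\B\log\tfrac{p+q}{(s_x+s_y)^2}\bigr)^{1/2},1\Bigr\}=:\rho_n .
\]
Sandwiching by $\Sx^{1/2},\Sy^{1/2}$ inflates this by at most $\sqrt{\|\Sx\|_{op}\|\Sy\|_{op}}\le\B$, so the matrix fed into the SVD step is within $\B\rho_n$ in operator norm of $\Sx^{-1/2}\Sxy\Sy^{-1/2}=\Sx^{1/2}U\Lambda V^T\Sy^{1/2}$. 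The columns of $\Sx^{1/2}U$ (resp.\ $\Sy^{1/2}V$) are orthonormal singular vectors of this target with singular values $\Lambda_1,\dots,\Lambda_r$, and by A2, A5 the relevant singular-value gaps are $\gtrsim\B^{-1}$; hence the Davis--Kahan / Wedin $\sin\theta$ bound (Theorem~2 of \cite{yu2015}) gives, column-by-column up to sign, $\|\myv\widehat U_{pre,i}-(\Sx^{1/2}U)_i\|_2\le C_\B\rho_n$. Premultiplying by $\Sx^{-1/2}$ and using $\|\Sx^{-1/2}\|_{op}\le\B^{1/2}$, together with the identity $(\myv\hai-u_i)^T\Sx(\myv\hai-u_i)=\|\Sx^{1/2}(\myv\hai-u_i)\|_2^2$, yields $\max_i\min_{\myv}(\myv\hai-u_i)^T\Sx(\myv\hai-u_i)\le C_\B^2\rho_n^2=:\errs^2$. (Strictly, the $r$ singular vectors must be matched jointly to the $r$ target directions via an orthogonal matrix; since the singular values are separated by $\gtrsim\B^{-1}$ this orthogonal matrix is within $O(\rho_n)$ of a signed permutation, which is absorbed into the constant — this is the one genuinely fiddly bookkeeping point.)

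\textbf{Step 2: checking the smallness of $\errs$ under \eqref{intheorem: statement: CT support recovery}.} Condition~\ref{Assumption on estimators} requires $|\errs|<1/(2\B\sqrt r)$. Squaring, we need $C_\B^2\rho_n^2<1/(4\B^2 r)$, i.e.
\[
C\B^4\,\frac{(s_x+s_y)^2}{n}\,\max\Bigl\{c_\B\log\tfrac{p+q}{(s_x+s_y)^2},1\Bigr\}<\frac{1}{4\B^2 r},
\]
which is exactly \eqref{intheorem: statement: CT support recovery} with the constant $C_\B$ chosen large enough (note $c_\B$ is itself $K$, $C_1$, or $1$, all $O(\B^4)$, so it folds into $C_\B$). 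This is where Corollary~\ref{cor: CT support recovery}'s sample-size hypothesis is consumed. Note also that \eqref{intheorem: statement: CT support recovery} forces $(s_x+s_y)^2\lesssim n$, consistent with the standing hypothesis $s_x+s_y<\sqn$ of Theorem~\ref{thm: CT matrix operator norm}, and it makes $\errs\to 0$, so in particular $r=o(n)$ and the preliminary estimator is genuinely consistent.

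\textbf{Step 3: applying Theorem~\ref{thm: support recovery: sub-gaussians} with $\hf=\Sy^{-1}$.} Algorithm~\ref{algo: CT: CCA} calls \RecoverSupp\ with the exact $\Sy^{-1}$, which is type C, so $\xi_n=1$ and $\e_n=\sqrt{\log(p+q)\sSy/n}$; here $\Sy$ is a fixed $q\times q$ matrix with bounded eigenvalues, so $\sSy\le q$ and in any case the theorem only needs $\log(p\vee q)=o(n)$, which holds since $p\vee q=o(\log n)\cdot(\dots)$ — more precisely the standing assumptions $p\vee q=o(\log n)$(as written) or at worst $\log(p+q)=o(n)$ are in force from Theorem~\ref{thm: CT matrix operator norm}. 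With Condition~\ref{Assumption on estimators} verified in Steps 1--2 and the signal-strength hypothesis inherited (the statement ``instate the conditions of Theorem~\ref{thm: CT matrix operator norm}'' plus the implicit $\Sig_y$ requirement of Theorem~\ref{thm: support recovery: sub-gaussians}, which in the known-$\Sy^{-1}$ case reads $\Sig_y>2C_\B'\sqrt{\log(p+q)\sSy/n}$), Theorem~\ref{thm: support recovery: sub-gaussians} gives that \RecoverSupp, and hence Algorithm~\ref{algo: CT: CCA}, recovers $D(V)$ with $\PP$-probability $1-o(1)$ uniformly over $\modG$. Taking the infimum over $\PP\in\modG$ of the success probability, which is $1-o(1)$ uniformly, gives the claimed $\inf_{\PP\in\modG}\PP(\cdot)\to_n 1$.

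\textbf{Main obstacle.} The real work is Step 1: converting an operator-norm bound on the thresholded peeled matrix into a \emph{column-wise} $\Sx$-weighted $l_2$ bound on the canonical directions, correctly handling (a) the three-way sandwich of norms $\Sx^{1/2},\Sy^{1/2},\Sx^{-1/2}$, (b) the joint (not one-at-a-time) subspace alignment of the top-$r$ singular vectors via $\sin\theta$, and (c) the passage from a subspace $\sin\theta$ bound to per-column bounds, which needs the eigen-gap A5 and the resulting near-diagonality of the alignment matrix. Everything after that is hypothesis-matching against theorems already proved in the paper.
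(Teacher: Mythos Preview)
Your proof is correct and follows essentially the same route as the paper's: bound $\|\eta(\tSxy)-\txy\|_{op}$ via Theorem~\ref{thm: CT matrix operator norm}, sandwich by $\Sx^{1/2},\Sy^{1/2}$, apply Wedin's $\sin\theta$ theorem column-wise using the eigengap A5, premultiply by $\Sx^{-1/2}$, and then invoke Theorem~\ref{thm: support recovery: sub-gaussians} with $\hf=\Sy^{-1}$. One small remark: your parenthetical worry about joint subspace alignment versus per-column bounds is unnecessary here, since the version of Wedin's theorem the paper cites (Theorem~4 of \cite{yu2015}) already delivers individual singular-vector bounds directly under the assumption of distinct singular values, which is exactly what A5 guarantees; no separate argument about near-diagonality of an alignment matrix is needed.
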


We defer the proof of Corollary~\ref{cor: CT support recovery} to Appendix~\ref{sec: proof of cor CT}.
\textcolor{black}{
We will now present a brief outline of the proof of Theorem \ref{thm: CT matrix operator norm}.
\subsubsection*{Main idea behind the proof  of Theorem \ref{thm: CT matrix operator norm}} The proof hinges on the hidden variable representation of $X$ and $Y$ due to \cite{bach2005}. We discuss this representation in detail in Appendix  \ref{secpf: case B: Thm 1}, which basically says the data matrices $\mX$ and $\mY$  can be represented as
\[
     \mX= \mZ\W_1^T+\ \mZ_1\H_1^T\quad \text{ and }\quad \mY= \mZ\W_2^T+ \mZ_2\H_2^T,
 \]
 where $\mZ\in\RR^{n\times r}$, $\mZ_1\in\RR^{n\times p}$, and $\mZ_2\in\RR^{n\times q}$ are independent  standard Gaussian data matrices, and 
 $\W_1=\Sx U\Lambda^{1/2}$, $\W_2=\Sy V\Lambda^{1/2}$, $\H_1=(\Sx-\W_1\W_1^T)^{1/2}$, 
 and
$\H_2=(\Sy-\W_2\W_2^T)^{1/2}$. We will later show in Section  \ref{secpf: case B: Thm 1} that $\H_1$ and $\H_2$ are well defined positive definite matrices. It follows that $\hSxy=X^TY/n$ has the representation
\begin{align*}
\hSxy=&\ \frac{1}{n}\lbs \W_1 \mZ^T\mZ\W_2^T+\W_1 \mZ^T\mZ_2\H_2+\H_1^T\mZ_1^T\mZ \W_2^T\nn\\
&\  +\H_1^T\mZ_1^T\mZ_2\H_2\rbs.
\end{align*}
Next, we define some sets. Let $E_1=\cup _{i=1}^{r}D(\ai)$, $F_1=[p]\setminus E_1$, $E_2=\cup _{i=1}^{r}D(\bi)$, and $F_2=[q]\setminus E_2$. Therefore $E_1$ and $E_2$ correspond to the supports, where $F_1$ and $F_2$ correspond to their complements. 
  Now we  partition  $[p]\times[q]$ into the following three sets:
  \begin{align}\label{def: E and F}
  E=E_1\times E_2,\quad F= F_1\times F_2,
  \end{align}
and
\begin{equation}\label{def: G}
G=\lb F_1 \times E_2\rb \cup \lb E_1\times F_2\rb.
\end{equation}
Therefore $E$ is  the set that contains the joint support.
We can decompose $\tSxy$ as
\begin{equation}\label{intheorem: CT operator: hsy decomp}
 \tSxy=\underbrace{\mP_E\{\tSxy\}}_{\mS_1}+\underbrace{\mP_F\{\tSxy\}}_{\mS_2}+\underbrace{\mP_G\{\tSxy\}}_{\mS_3}.   \end{equation}
where $\mP$ is the projection operator defined in \eqref{def:projection operator}.
}

\textcolor{black}{
The usefulness of the decomposition in \eqref{intheorem: CT operator: hsy decomp} is that $S_1$, $S_2$, and $S_3$ have different supports, which enables us to write
\[\eta(\tSxy)=\eta(S_1)+\eta(S_2)+\eta(S_3).\]
We can therefore analyze the three terms $\eta(S_1)$, $\eta(S_2)$, and $\eta(S_3)$ separately. 
In general, the thresholding operator $\eta$ is not linear in that for matrices $A$ and $B$,  $\eta(A+B)=\eta(A)+\eta(B)$ generally does  not hold. 
}

\textcolor{black}{
As indicated above, we analyze the operator norms of  $\eta(S_1)$, $\eta(S_2)$, and $\eta(S_3)$ separately.
Among $S_1$, $S_2$, and $S_3$,  $S_1$ is the only matrix that is supported on $E$, the true support. The basic idea of the proof is showing that co-ordinate thresholding preserves the matrix $S_1$, and kills off the other matrices $S_2$ and $S_3$, which contain the noise terms. $S_1$ includes the matrix $U\Lambda^{1/2}\mZ^T\mZ\Lambda^{1/2}U^T$. Because $\mZ^T\mZ$ concentrates around $I_r$ by Bai-Yin law (cf.  Lemma 4.7.1 of \cite{vershynin2020}), $U\Lambda^{1/2}\mZ^T\mZ\Lambda^{1/2}U^T$ concentrates around $U\Lambda V^T$.   Therefore the analysis of $\eta(S_1)$ is relatively straightforward.
}

\textcolor{black}{Most of the proof is devoted towards showing $\|\eta(S_2)\|_{op}$ and $\|\eta(S_3)\|_{op}$ are small, i.e.,  co-ordinate thresholding kills off the noise terms. The difficulty  arises because the threshold was kept smaller than the traditional threshold of order $\sqrt{\log(p+q)/n}$ to 
 adjust for the hard regime. Therefore the approaches of  \cite{bickel2008} or \cite{gao2017} do not work in this regime. The noise matrices $S_2$ and $S_3$ are sum of matrices of the form $M\mZ^T\mZ_1 N$, $M\mZ^T_1\mZ_2 N$, or $M\mZ^T\mZ_2 N$, or their transposes, where for rest of this section, $M$ and $N$ should be understood as  deterministic matrices of appropriate dimension, whose definition can change from line to line.  Analyzing $\|\eta(S_2)\|_{op}$ and $\|\eta(S_3)\|_{op}$  essentially hinges on Lemma \ref{lemma: sqrt n: M N lemma: rot}, which upper bounds the operator norm of matrices of the form $\eta(M\mZ_1^T\mZ_2 N)$.  The proof of Lemma  \ref{lemma: sqrt n: M N lemma: rot} uses, among other tools, a  sharp Gaussian concentration result from \cite{deshpande2014} (see Corollary 10 therein), and a generalized Chernoff's  inequality for dependent Bernoulli random variables   \cite{panconesi1997}. Using  Lemma  \ref{lemma: sqrt n: M N lemma: rot}, we can also upper bound operator norms of matrices of the form $\eta(M_1\mZ_1^T\mZ N_1+M_2\mZ_1^T\mZ_2 N_2)$  because $M_1\mZ_1^T\mZ N_1+M_2\mZ_1^T\mZ_2 N_2$ can be represented as $M_3 [\mZ \ \mZ_1]^T\mZ_2 N_2$ for some matrix $M_3$ of appropriate dimension. Therefore,  to show  $\|\eta(S_2)\|_{op}$ and $\|\eta(S_3)\|_{op}$ are small, Lemma  \ref{lemma: sqrt n: M N lemma: rot} suffices, which also completes the proof.}

\textcolor{black}{
The proof of Theorem \ref{thm: CT matrix operator norm} has similarities with the proof of the analogous result for PCA in \cite{deshpande2014} (see Theorem 1 therein). However, one main difference is that for PCA, the key instrument is the representation of $X$ as the spiked model \cite{johnstone2001}, which yields the representation 
\begin{align}\label{inroadmap: spike model}
    \mX=\mZ M+\sigma \mZ_1,
\end{align}
where $\mZ\in\RR^{n\times r}$ and $\mZ_1\in\RR^{n\times p}$ are standard Gaussian data matrices, and $M\in\RR^{r\times p}$ is a deterministic matrix. The analysis in PCA revolves around the sample covariance matrix $\hSx=X^TX/n$, which, following \eqref{inroadmap: spike model},  writes as
\[\hSx=\frac{1}{n}\lbs M^T\mZ^T\mZ M+\sigma \mZ_1^T\mZ M+\sigma M^T\mZ^T\mZ_1+\sigma^2\mZ_1^T\mZ_1\rbs.\]
From the above representation, it can be shown that the analogues of $S_2$ and $S_3$  in the PCA case are sum of matrices of the form $M_1\mZ_1^T\mZ_2$ or their transposes.  \cite{deshpande2014} uses an upper bound on $\|\eta(\mZ_1^T\mZ_2)\|_{op}$ to bound the PCA analogue of $\|\eta(S_2)\|_{op}$ and $\|\eta(S_3)\|_{op}$ (see Proposition 13 therein). In contrast,  we encounter terms of the form $M_1\mZ_1^T\mZ_2N_1$ since CCA is concerned with $X^TY/n$. To deal with these terms,  we needed the upper bound result  on $\|\eta(M_1\mZ_1^T\mZ_2 N_1)\|_{op}$ instead, which requires a separate elaborate proof. Although the basic idea behind bounding   $\|\eta(M_1\mZ_1^T\mZ_2 N_1)\|_{op}$ and bounding  $\|\eta(\mZ_1^T\mZ_2)\|_{op}$ is similar,   the proof  of bounding $\|\eta(M_1\mZ_1^T\mZ_2 N_1)\|_{op}$ is more involved. For example, some independence structures are destroyed due to the pre and post  multiplication by the matrices $M_1$ and $N_1$, respectively. We required  concentration inequalities on dependent Bernoulli random variables to tackle the latter. 
}

\section{Numerical Experiments}
\label{sec:simulations}
\begin{figure*}[h]
\centering
\subfigure[Type I error for support recovery of $\alpha$]{
         \includegraphics[width=\textwidth]{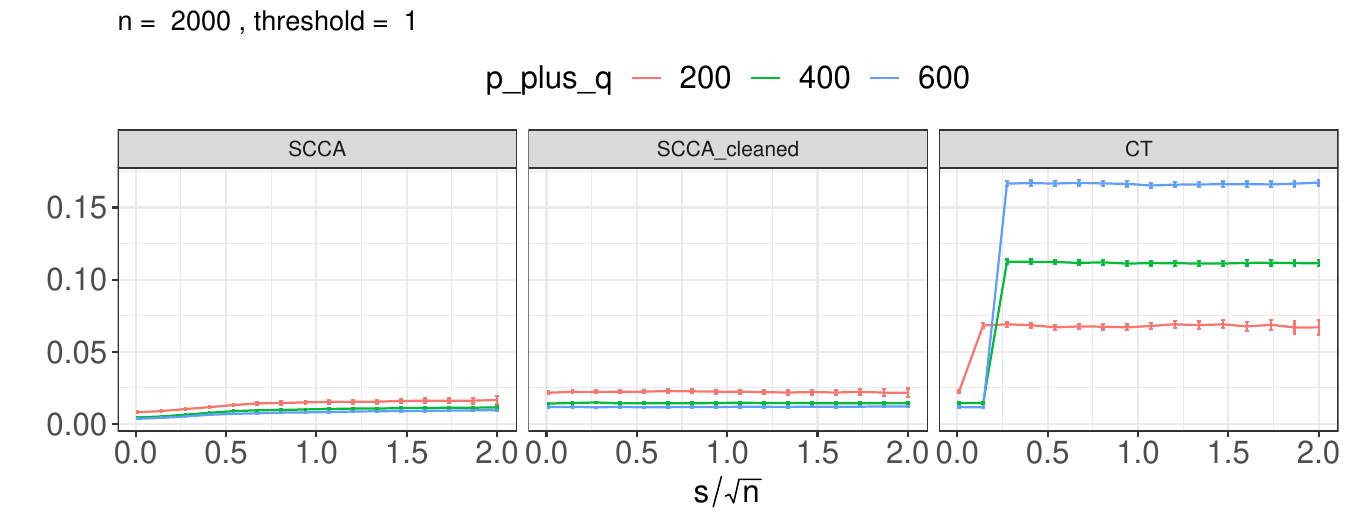}
        }
     \hfil
    \subfigure[Type II error  for support recovery of $\alpha$]{
\includegraphics[width=\textwidth]{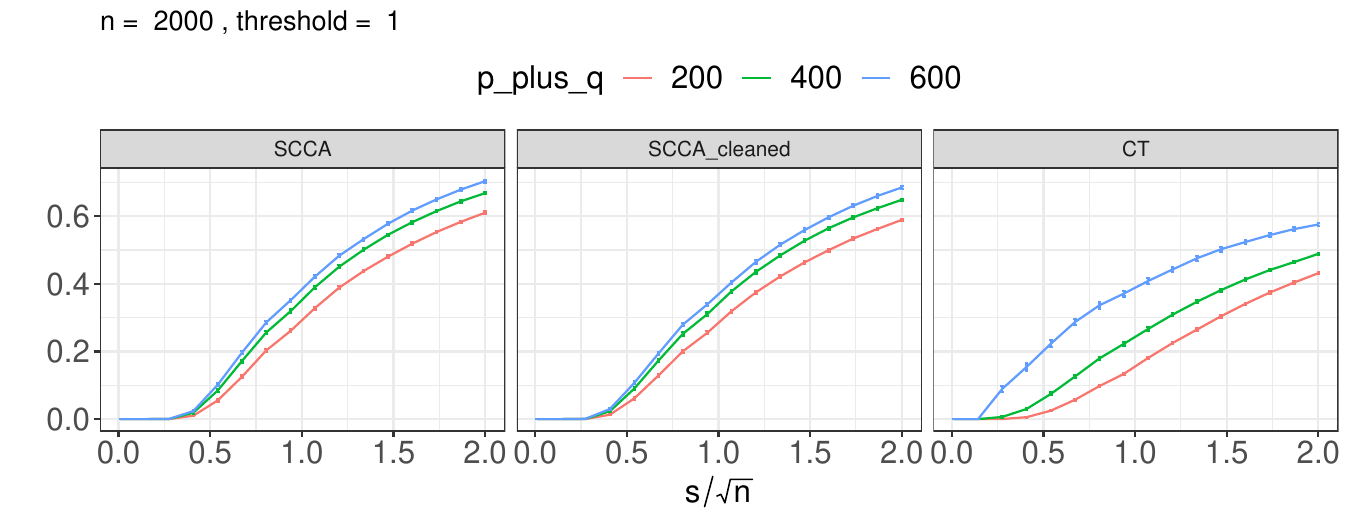}}
       \hfil
      \subfigure[Symmetrized Hamming error for support recovery of $\alpha$]
         {\includegraphics[width=\textwidth]{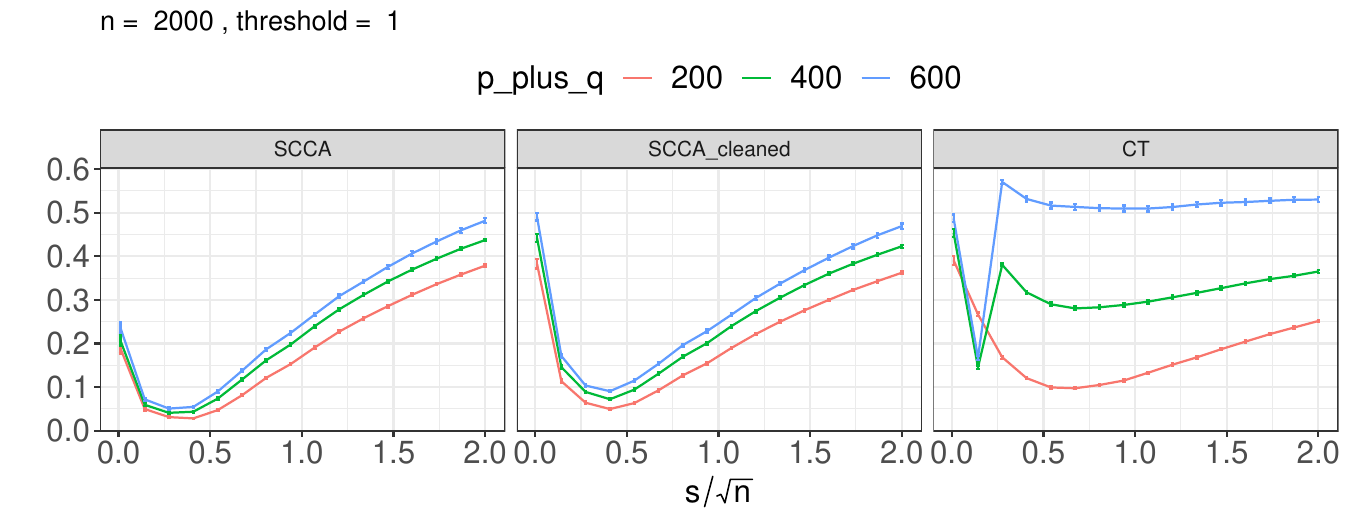}}
         \label{fig:three sin x 2}
     \caption{Support recovery for $\alpha$ when $\Sx=I_p$ and $\Sy=I_q$. Here threshold refers to \texttt{cut} in Theorem \ref{thm: support recovery: sub-gaussians}.
     }
    \label{Fig: id: alpha: large n}
   \end{figure*}
 
 \begin{figure*}[h]
    \centering
\subfigure[Type I error for support recovery of $\beta$]{
         \includegraphics[width=\textwidth]{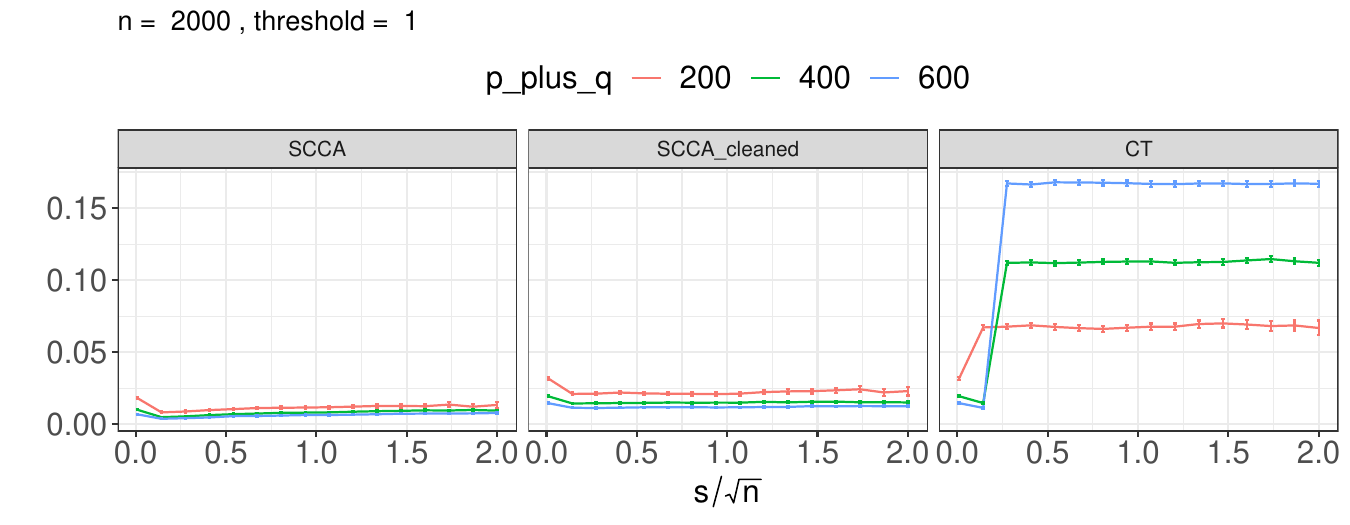}
     }
  \subfigure[Type II error  for support recovery of $\beta$]{
\includegraphics[width=\textwidth]{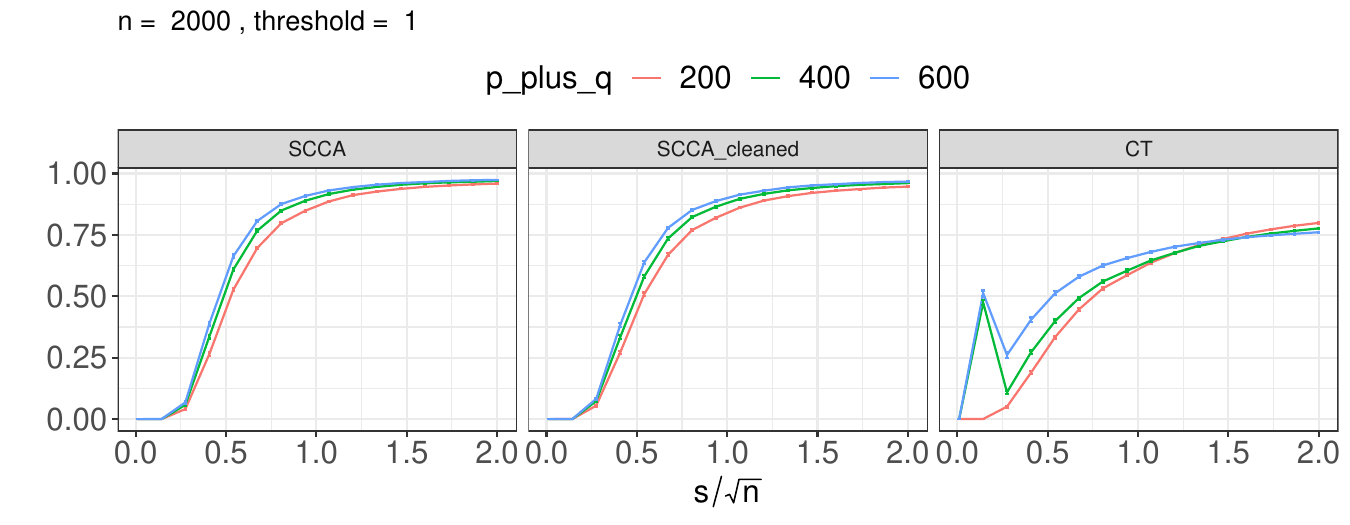}
 }
 \subfigure[Symmetrized Hamming error for support recovery of $\beta$ ]{
         \includegraphics[width=\textwidth]{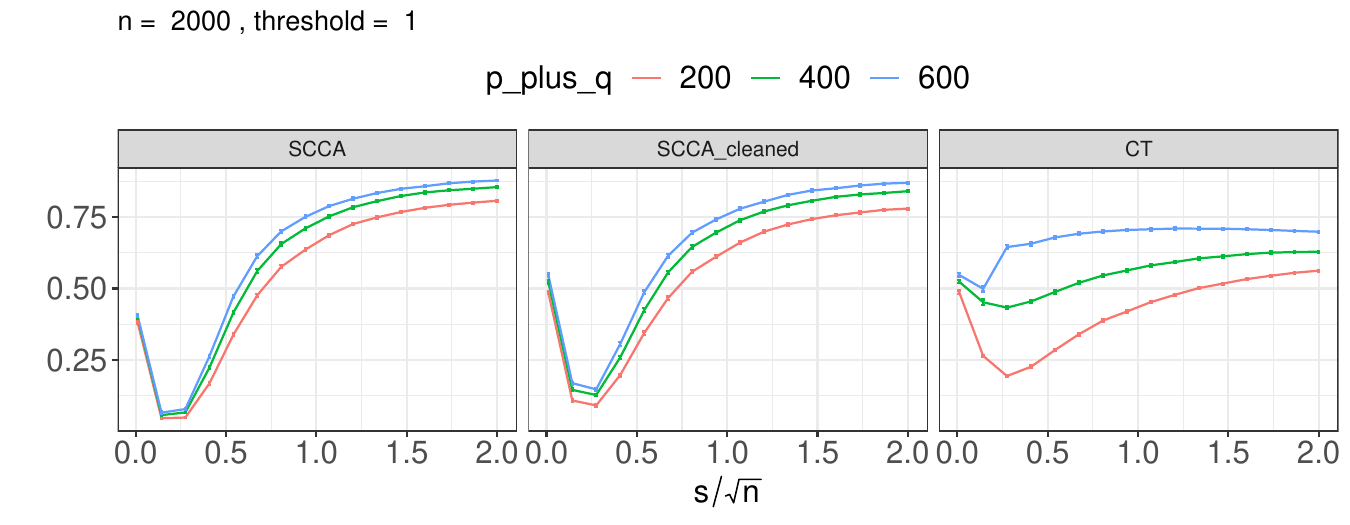}
         \label{fig:three sin x 3}
         }
     \caption{Support recovery for $\beta$ when $\Sx=I_p$ and $\Sy=I_q$. Here threshold refers to \texttt{cut} in Theorem \ref{thm: support recovery: sub-gaussians}.
     }
    \label{Fig: id: beta: large n}
   \end{figure*}

   \begin{figure*}[h]
    \centering
\subfigure[Type I error for support recovery of $\alpha$]{
         \includegraphics[width=\textwidth]{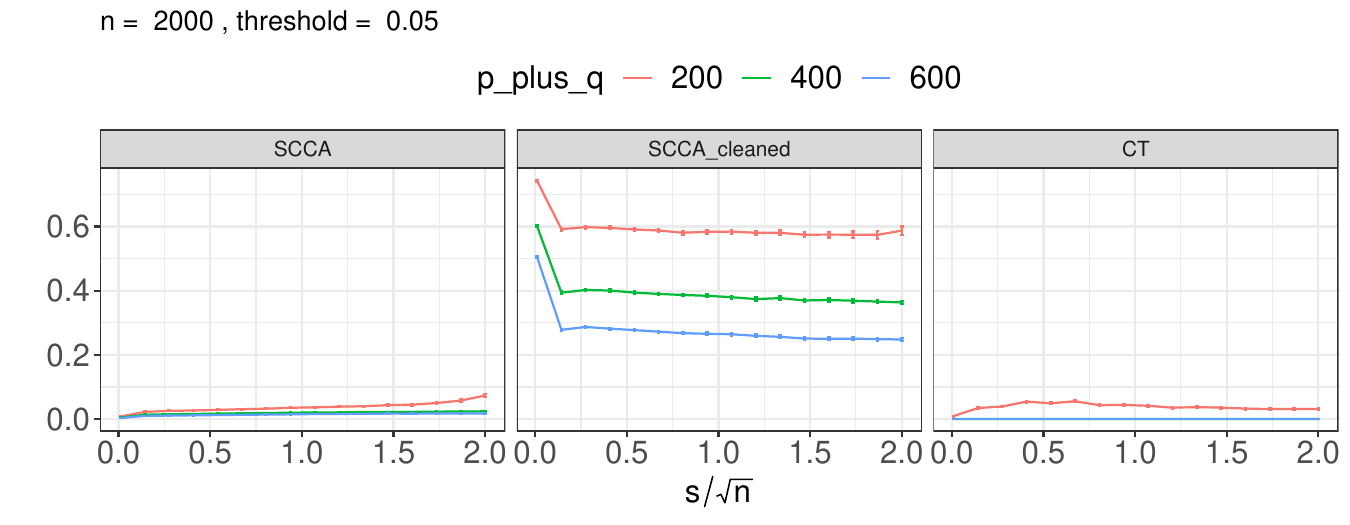}
     }
 \subfigure[Type II error  for support recovery of $\alpha$]{
\includegraphics[width=\textwidth]{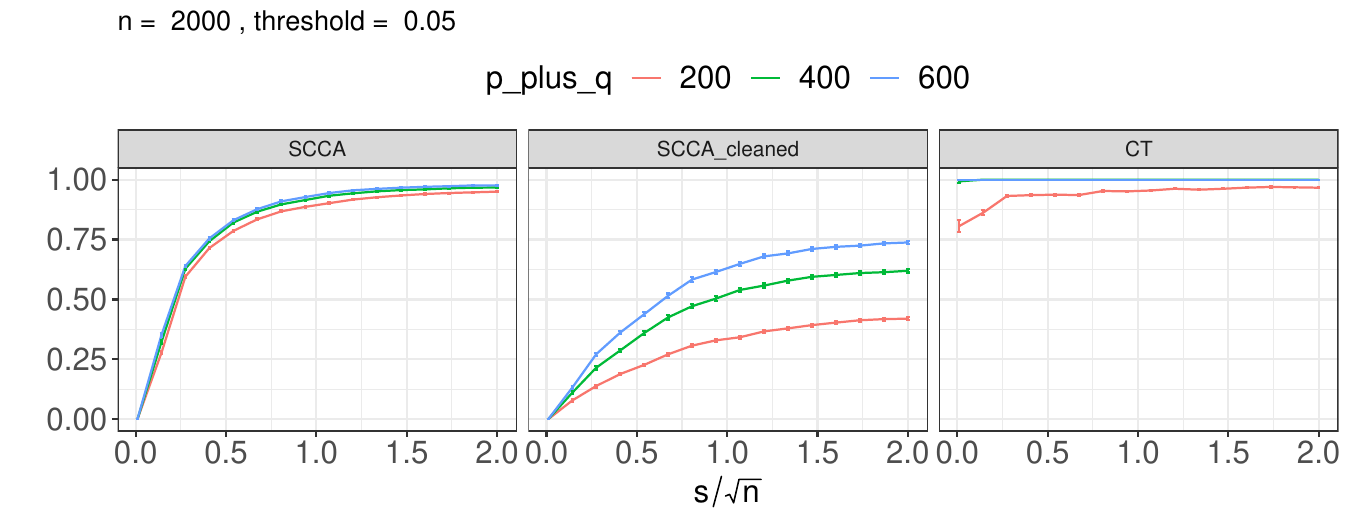}
        }
      \subfigure[Symmetrized Hamming error for support recovery of $\alpha$]{
         \includegraphics[width=\textwidth]{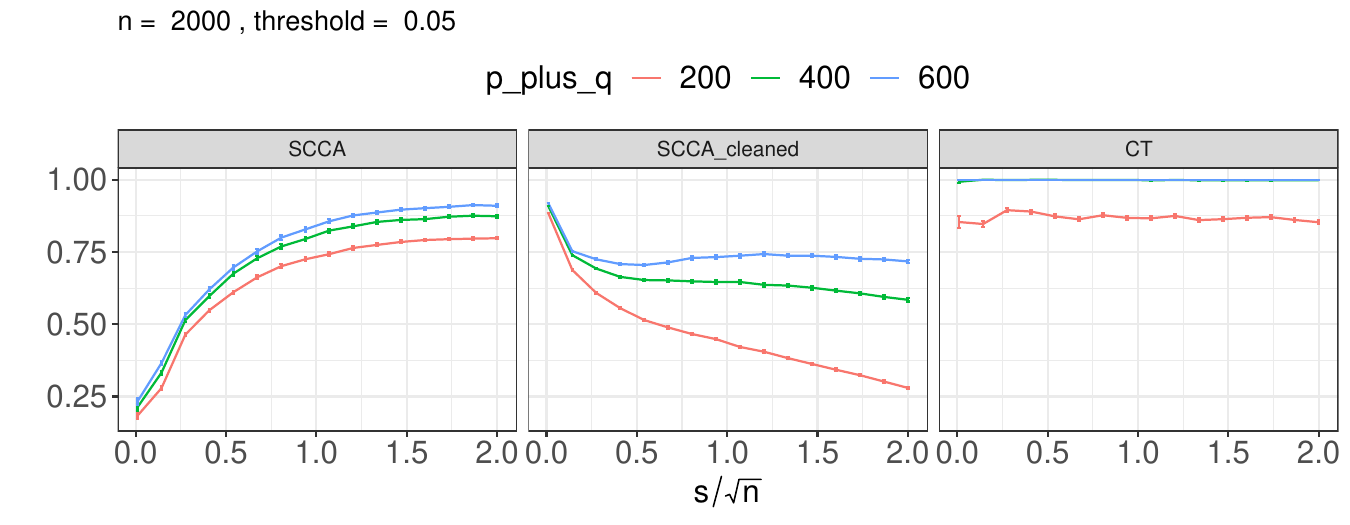}
         \label{fig:three sin x 4}
 }
     \caption{Support recovery for $\alpha$ when $\Sx$ and $\Sy$ are the sparse covariance matrices. Here threshold refers to \texttt{cut} in Theorem \ref{thm: support recovery: sub-gaussians}.}
    \label{Fig: sparseinv: alpha: large n}
   \end{figure*}
   
   \begin{figure*}[h]
    \centering
\subfigure[Type I error for support recovery of $\beta$]{
         \includegraphics[width=\textwidth]{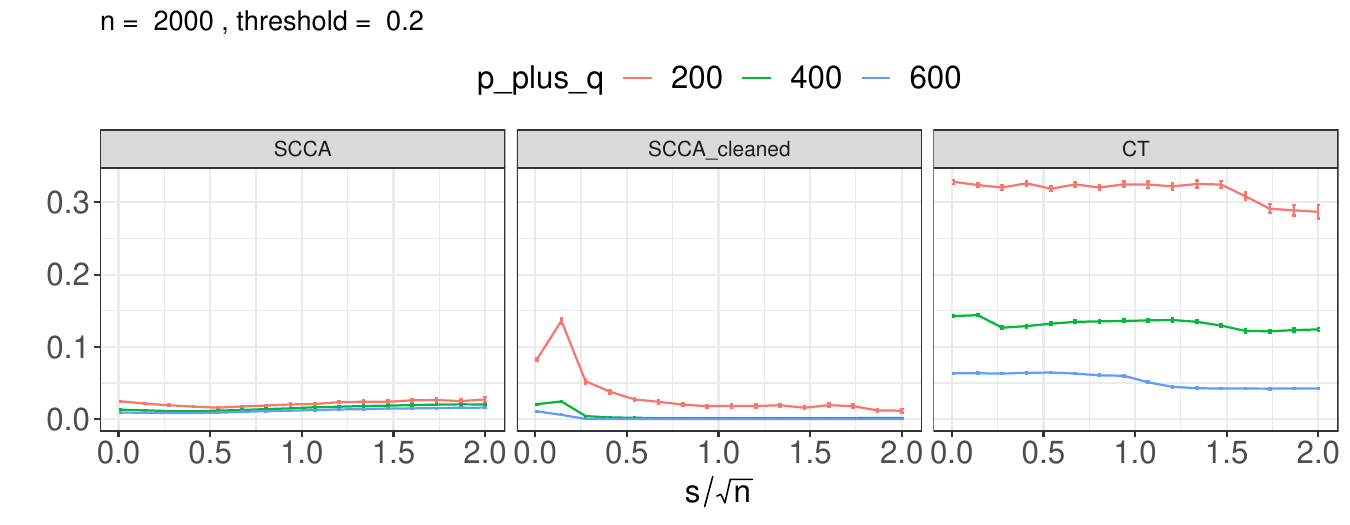}

    }
 \subfigure[Type II error  for the support recovery of $\beta$]{
\includegraphics[width=\textwidth]{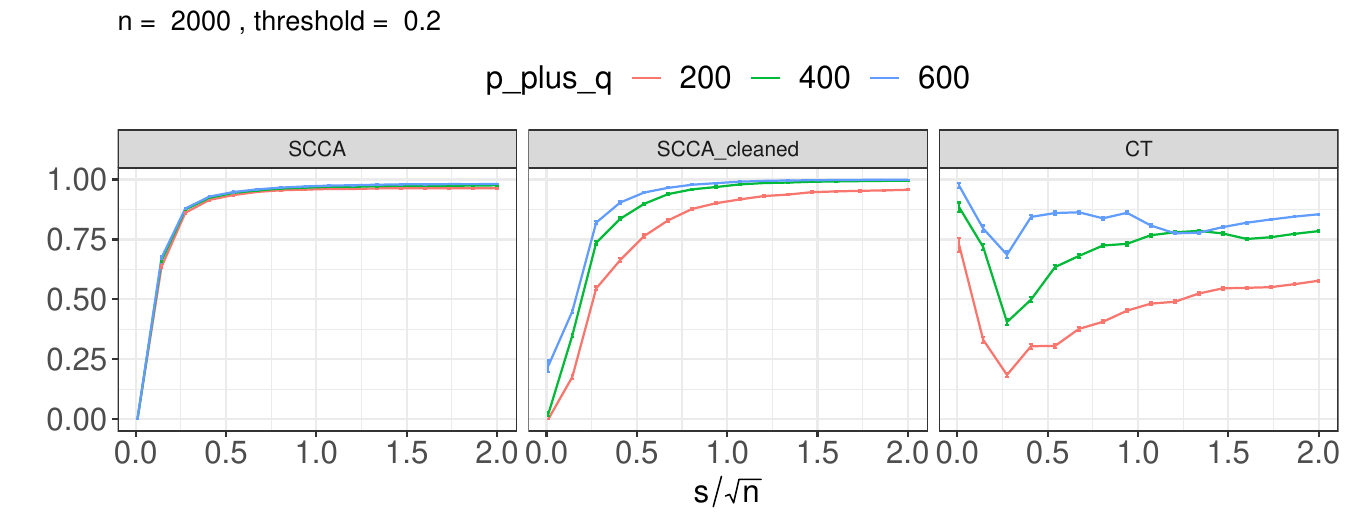}
        }
      \subfigure[Symmetrized Hamming error for support recovery of $\beta$]{
         \includegraphics[width=\textwidth]{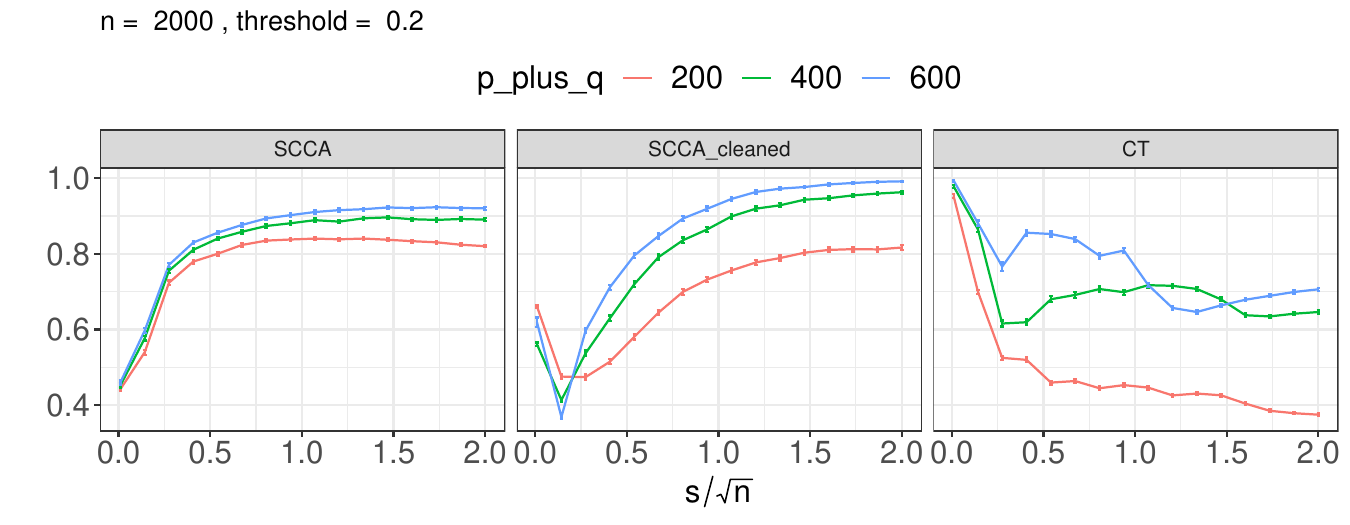}
         \label{fig:three sin x 1}
   }
     \caption{Support recovery for $\beta$ when $\Sx$ and $\Sy$ are the sparse covariance matrices. Here threshold refers to \texttt{cut} in Theorem \ref{thm: support recovery: sub-gaussians}.}
    \label{Fig: sparseinv: beta: large n}
   \end{figure*}
   
   \begin{figure*}[h]
    \centering
\subfigure[Errors for support recovery of $\alpha$]
    {
         \includegraphics[width=\textwidth]{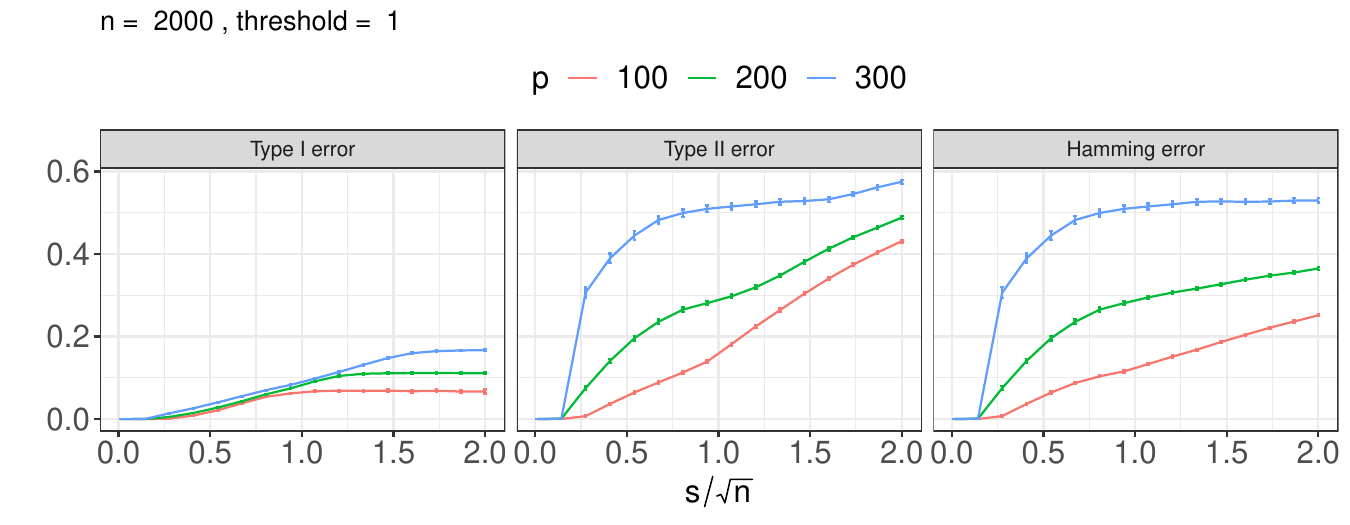}
        }
      \subfigure[Errors for support recovery of $\beta$]
      {
        \includegraphics[width=\textwidth]{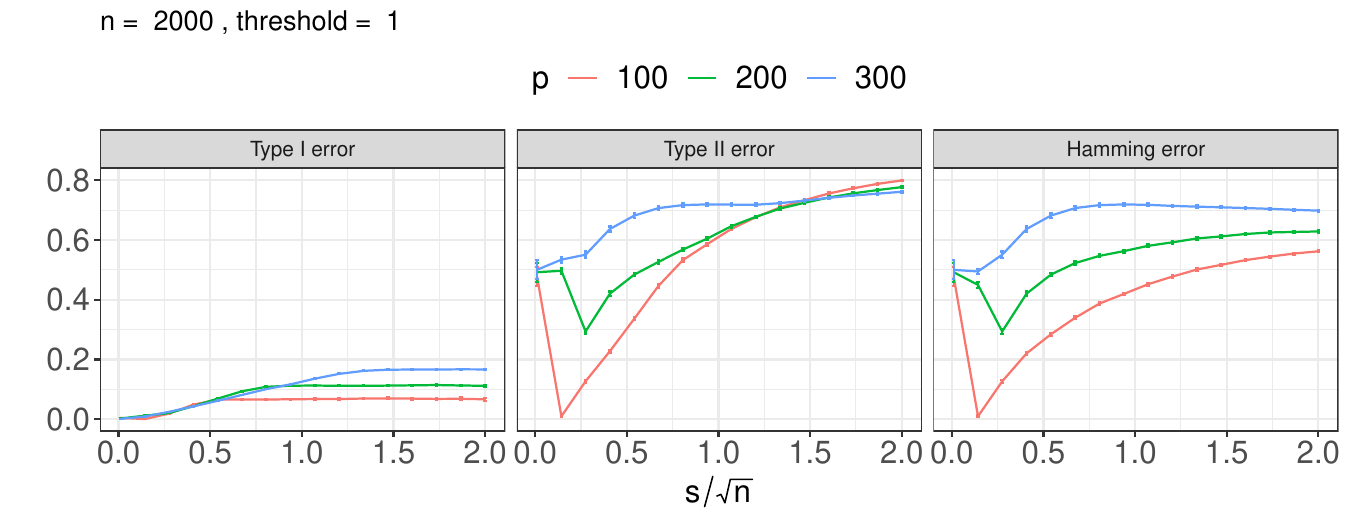}
         \label{fig:three sin x}
      }
     \caption{Support recovery by the CT algorithm when we use the information on sparsity to improve the  type I error.   Here $\Sx$ and $\Sy$ are $I_p$ and $I_q$, respectively, and  threshold refers to \texttt{cut} in Theorem \ref{thm: support recovery: sub-gaussians}.  To see the decrease in type I error, compare the errors with that of Figure~\ref{Fig: id: alpha: large n} and Figure~\ref{Fig: id: beta: large n}.}
    \label{Fig: all error}
   \end{figure*}
This section illustrates the performance of different polynomial time CCA support recovery methods when the sparsity transitions from the easy to difficult regime. We base our demonstration on a  Gaussian rank one model, i.e., $(X,Y)$  are jointly Gaussian with covariance matrix $\Sxy=\rho\Sx\alpha\beta^T\Sy$. For simplicity, we take $p=q$ and $s_x=s_y=s$. In all our simulations, $\rho$ is set to be $0.5$, and $\alpha=\alpha^*/\sqrt{(\alpha^*)^T\Sx\alpha^*}$, $\beta=\beta^*/\sqrt{(\beta^*)^T\Sy\beta^*}$ where 
\begin{align*}
    \alpha^*=  &\ (1/\sqrt{s}, \ldots,1/\sqrt{s},0,\ldots,0),\\
    \beta^*=&\ \slb\sqrt{1- (s-1)s^{-4/3}},s^{-2/3},\dots,s^{-2/3},0,\ldots,0\srb 
\end{align*}
are unit norm vectors.
Note that the order of most elements of $\beta$ is $O(s^{-2/3})$, where a typical element of $\alpha$ is $O(s^{-1/2})$. Therefore,  we will refer to $\alpha$ and $\beta$ as the moderate and the small signal case, respectively. For the population covariance matrices $\Sx$ and $\Sy$ of $X$ and $Y$, we consider the following two scenarios:
\begin{itemize}
\item \textbf{A (Identity):} $\Sx=I_p$ and $\Sy=I_q$. Since $p=q$, they are essentially the same.
\item \textbf{B (Sparse inverse):} This example is taken from \cite{gao2017}. In this case, $\Sx^{-1}=\Sy^{-1}$ are banded matrices, whose entries are given by
\begin{align*}
    (\Sx^{-1})_{i,j}=&\ 
1\{i=j\}+
0.65\times 1\{|i-j|=1\}\\
&\ +
0.4\times 1\{|i-j|=2\}.
\end{align*}
\end{itemize}
  Now we explain our common simulation scheme.
  We take the sample size $n$ to be $1000$, and consider three values for $p$: $100$, $200$, and $300$. The highest value of  $p+q$  is thus $600$, which is smaller than but in  proportion to $n$ regime.  Our simulations indicate that all of the methods considered here requires $n$ to be quite larger than $p+q$ for the asymptotics to kick in at $\rho=0.5$. We will later discuss this point in  detail. We further let  $s/\sqn$  vary in the set $[0.01,2]$. To be more specific, we consider $16$ equidistant points in the set $[0.01,2]$ for the ratio $s/\sqn$. 
  
  Now we discuss the error metric used here to compare the performance of different support recovery methods. Type I  and type II errors are   commonly used tools  to measure the performance of support recovery \cite{deshpande2014}. In case of support recovery of $\alpha$, we define the type I error  to be the proportion of zero elements in $\alpha$ that appear in the estimated support $\widehat D(\alpha)$. Thus, we quantify the type I error of $\alpha$ by $|\widehat D(\alpha)\setminus D(\alpha)|/(p-s)$. On the other hand, the type II error for $\alpha$ is the proportion of elements in $D(\alpha)$ which are absent in $\widehat D(\alpha)$, i.e., the type II error is quantified by $|D(\alpha)\setminus \widehat D(\alpha)|/s $. One can define the type I and type II errors corresponding to $\beta$ similarly.
  Our simulations demonstrate that often the methods with low type I error  exhibit a high type II error, and vice versa. In such situations, comparison between the corresponding methods becomes difficult if one  uses the type I and type II errors separately. Therefore, we consider a scaled 
  Hamming loss type metric, which suitably combines  the type I and type II error. The symmetric Hamming error of  estimating  $D(\alpha)$ by $\widehat D(\alpha)$ is  
  \cite[Section 2.1]{wang2021}
  \[ 1-\frac{|D(\alpha)\cap \widehat D(\alpha)|}{\sqrt{|D(\alpha)|| \widehat D(\alpha)|}}.\]
  Note that the above quantity is always bounded above by one. We can similarly define the symmetric Hamming distance between $D(\beta)$ and  $\widehat D(\beta)$. Finally, the estimates of these three errors (Type I, Type II, and scaled Hamming Loss) are obtained based on $1000$ Monte Carlo replications.
  
   Now we discuss the support recovery methods we compare here. \begin{itemize}
    \item \textbf{Na\"{i}ve  SCCA.} We estimate $\alpha$ and $\beta$ using the SCCA method of \cite{mai}, and set $\widehat D(\alpha)=\{i\in[p]:\widehat\alpha_i\neq 0\}$ and  $\widehat D(\beta)=\{i\in[q]:\widehat\beta_i\neq 0\}$, where $\widehat\alpha$ and $\widehat\beta$ are the corresponding SCCA estimators. To implement the SCCA method of \cite{mai}, we use the  R code referred therein with  default  tuning parameters.
    \item \textbf{Cleaned SCCA.} This method implements \textsc{RecoverSupp} with the above mentioned SCCA estimators of $\alpha$ and $\beta$ as the preliminary estimators.
    \item \textbf{CT.} This is the method outlined in Algorithm~\ref{algo: CT: CCA}, which is \textsc{RecoverSupp} coupled with the CT estimators of $\alpha$ and $\beta$. 
    \end{itemize} 
    
    Our CT method requires the knowledge of the population covariance matrices $\Sx$ and $\Sy$. Therefore, to keep the comparison fair, in case of the cleaned SCCA method as well, we implement \textsc{RecoverSupp} with the popular covariance matrices. Because of their reliance on \RecoverSupp, both cleaned SCCA and CT 
     depend on the threshold $\cut$, tuning which seems to be a non-trivial task. We set $\cut=C\sqrt{\log(p+q)s(\Sx^{-1})/n}$, where $C$ is the thresholding constant.  Our simulations show that a large $C$ results in high  type II error, where insufficient thresholding inflates the type I error. Taking the hamming loss  into account, we observe that  $C\approx 1$ leads to a better performance in case A in an overall sense. On the other hand, case B requires a smaller value of thresholding parameter.
  In particular, we let $C$ to be one in case A, and set $C=0.05$ and $0.2$, respectively, for the support recovery of $\alpha$ and $\beta$ in case B. The CT algorithm requires an extra threshold parameter, namely  the parameter $\Thres$ in Algorithm~\ref{algo: CT: CCA},  which corresponds to the co-ordinate thresholding step. We set $\Thres$ in accordance with Theorem~\ref{thm: CT matrix operator norm} and Remark~\ref{remark: CT constants}, with $K$ being $1288\B^4$ and $C_1$ being $50\B^4$. We set $\B$ as in Remark~\ref{remark: CT constants}, that is
  \[\B=\max\{\Lambda_{max}(\Sx),\Lambda_{max}(\Sy),\Lambda_{max}(\Sx^{-1}), \Lambda_{max}(\Sy^{-1})\}.\] 
  The errors incurred by our methods in case A are displayed in Figure \ref{Fig: id: alpha: large n} (for $\alpha$) and Figure \ref{Fig: id: beta: large n} (for $\beta$). Figures \ref{Fig: sparseinv: alpha: large n} and \ref{Fig: sparseinv: beta: large n}, on the other hand, display the errors in the recovery of $\alpha$ and $\beta$, respectively, in case B.

  Now we discuss the main observations from the above plots.   When the sparsity parameter $s$ is considerably low (less than ten in the current settings), the \naive\ SCCA method is sufficient in the sense that the specialized methods do not perform any better. Moreover, the \naive\ method is the most conservative one among all three methods. As a consequence, the associated type I error is always small, although the type II error of the \naive\ method grows faster than any other method.  The specialized methods are able to improve the type II error at the cost of higher type I error. At a higher sparsity level, the specialized methods can outperform the \naive\ method in terms of the Hamming error, however. This is most evident when the setting is also complex, i.e., the signal is small, or the underlying covariance matrices are not identity. In particular, Figure~\ref{Fig: id: alpha: large n} and \ref{Fig: sparseinv: alpha: large n}  entail  that when the signal strength is moderate and the sparsity is high, the cleaned SCCA has the lowest hamming error. In the small signal case, however, CT exhibits the best hamming error as $s/\sqn$ increases; cf. Figure~\ref{Fig: id: beta: large n} and \ref{Fig: sparseinv: beta: large n}. 
  
   The Type I error of CT can be slightly improved if the sparsity information can be incorporated during the  thresholding step. We simply replace $\cut$ by the maximum of $\cut$ and the $s$-th largest element of $\widehat V^{clean}$, where the latter is as in Algorithm \RecoverSupp. See, e.g.,   Figure \ref{Fig: all error}, which entails that this modification reduces the Hamming error of the CT algorithm in case A. 
  Our empirical analysis hints that the CT algorithm has potential for improvement from the implementation perspective. In particular, it may be desirable to obtain a more efficient procedure for choosing  $\cut$ in a systematic way. However, such a detailed numerical analysis is beyond the scope of the current paper and will require further modifications of the initial methods for estimation of $\alpha,\beta$ both for scalability and finite sample performance reasons. We keep these explorations as important future directions.

      It is  natural to wonder what is the effect of cleaning via \RecoverSupp\ on SCCA.
      As mentioned earlier, during our simulations we observed that a cleaning step  generally improves the type II error of the \naive\ SCCA, but it also increases the type I error. In terms of the combined measure, i.e., the Hamming error, it turns out that  cleaning  does have an edge at higher sparsity levels in case B; cf.  Figure \ref{Fig: sparseinv: alpha: large n} and Figure \ref{Fig: sparseinv: beta: large n}. However, the scenario is different in case A. Although Figures~\ref{Fig: id: alpha: large n} and \ref{Fig: id: beta: large n} indicate that  almost no cleaning occurs at the set threshold level of one, we saw that cleaning happens at lower threshold levels.  However, the latter does not improve the overall Hamming error of \naive\ SCCA.  The consequence of cleaning may be different for other SCCA methods.
        
        To summarize, when the  sparsity is low, support recovery using the  \naive\ SCCA is probably as good as the specialized methods. However, at higher sparsity level, specialized support recovery methods may be preferable. Consequently, the precise analysis of the apparently \naive\ SCCA will indeed be an interesting future direction. 

  \section{Discussion}
In this paper, we have discussed rate optimal behavior of information theoretic and computational limits of the joint support recovery  for the sparse canonical correlation analysis problem. Inspired by recent results in the estimation theory of sparse CCA, a flurry of results in sparse PCA, and related developments based on low-degree polynomial conjecture -- we are able to paint a complete picture of the landscape of support recovery for SCCA. For future directions, it is worth noting that our results are so far not designed to recover $D(v_i)$ for individual $i\in[r]$ separately (and hence the term joint recovery). Although this is also the case for most state of the art in the sparse PCA problem (results often exist only for the combined support \cite{deshpande2014} or the single spike model where $r=1$ \cite{wainwright2009}), we believe that it is an interesting question for deeper explorations in the future. Moreover, moving beyond asymptotically exact recovery of support to more nuanced metrics (e.g., Hamming Loss) will also require new ideas worth studying. Finally, it remains an interesting question to pursue whether polynomial time support recovery is possible in the $\sqrt{n/\log{(p+q)}}\ll s_x,s_y\ll \sqrt{n}$ regime using a CT type idea -- but for unknown yet structured high dimensional nuisance parameters $\Sigma_x,\Sigma_y$.

\FloatBarrier
 \appendices
 \section{Full version of \RecoverSupp}
 \label{sec: full version of recoversupp}
 \begin{algorithm}[H]
\caption{\RecoverSupp : simultaneous support recovery of $U$ and $V$}
\label{algo: recoversupp full}

\begin{algorithmic} 
\REQUIRE \begin{enumerate}
\item Preliminary estimators $\widehat U^{(1)}$ and   $\widehat V^{(1)}$  of $U$ and $V$, and estimators $\hfx$ and $\hf^{(1)}$ of $\Sx^{-1}$ and $\Sy^{-1}$, respectively. All are based on sample $O_1=(x_i,y_i)_{i=1}^{[n/2]}$.
\item Estimator $\hSxy^{(2)}$ of $\Sxy$ based on sample  $O_2=(x_i,y_i)_{i=[n/2]+1}^{n}$.
\item Threshold levels $\cut_x,\cut_y>0$ and rank $r\in\NN$.
\end{enumerate}
\ENSURE $\widehat D(U)$ and $\widehat D(V)$,  estimators of $D(U)$ and ${D(V)}$, respectively.
\STATE 
\begin{enumerate}
\item \textbf{Cleaning}: $\widehat V^{clean}\leftarrow \hf^{(1)}{{\hSyx}}^{(2)} \widehat U^{(1)}$; $\widehat U^{clean}\leftarrow \hfx{{\hSxy}}^{(2)} \widehat V^{(1)}$.
\item \textbf{Threshold:} Compute 
\[ \widehat D(U):= \{i\in[p]: |\widehat U^{clean}_{ik}|>\cut_x\text{ for some }k\in[r]\}\]
and
\[\widehat D(V):= \{i\in[q]: |\widehat V^{clean}_{ik}|>\cut_y\text{ for some }k\in[r]\}.\] 
\end{enumerate}
\RETURN $ \widehat D(U)$ and $ \widehat D(V)$.
 \end{algorithmic}
\end{algorithm}
In Algorithm \ref{algo: recoversupp full}, we used different cut-offs for estimating $\widehat D(U)$ and $\widehat D(V)$, which are $\cut_x$ and $\cut_y$, respectively. In practice, one can choose the same threshold $\cut$ for both of them. 
 \section{Proof preliminaries}
  The Appendix collects the proof of all our theorems and lemmas. This section introduces some new notations and
collects  some facts, which are  used repeatedly in our proofs.
\subsection{New Notations}
\label{sec: new notaton}

Since the columns of $\Sx^{1/2}U$, i.e.,  $[\Sx^{1/2}U_1,\ldots,\Sx^{1/2}U_r]$  are orthogonal, we can extend it to an orthogonal basis of $\RR^p$, which can also be expressed in the form $[\Sx^{1/2}u_1,\ldots,\Sx^{1/2}u_p]$ since $\Sx$ is non-singular. Let us denote the matrix $[u_1,\ldots,u_p]$ by $\tU$, whose first $r$ columns form the matrix $U$. Along the same line, we can define $\tV$, whose first $q$ columns constitute the matrix $V$.

Suppose $A\in\RR^{p\times q}$ is a matrix. Recall the projection operator defined in \eqref{def:projection operator}.
For any $ S\subset [p]$, we let $A_{S*}$ denote the matrix $\mP_{S\times [q]}\{A\}$. Similarly, for $F\subset[q]$, 
we let $A_{F}$ be the matrix $\mP_{[p]\times F}\{A\}$.
For $k\in\NN$, we define the norms $\|A\|_{k,\infty}=\max_{j\in[q]}\|A_j\|_k$ and $\|A\|_{\infty, k}=\max_{i\in[q]}\|A_i\|_k$. We will use the notation $|A|_{\infty}$ to denote the quantity $\sup_{1\in[p],j\in[q]}|A_{i,j}|$.  

The Kullback Leibler  (KL) divergence between two probability distributions $P_1$ and $P_2$ will be denoted by $KL(P_1\mid P_2)$.
For $x\in\RR$, we let $\floor*{x}$ denote greatest integer less than or equal to $x\in\RR$.
\subsection{Facts on $\mod$}
First, note that since $\bi^T\Sy\bi=1$ by \eqref{opt: sparse CCA} for all $i\in[q]$, we have  $\|\bi\|_2\leq \sqrt{\B}$. Similarly, we can also show that $\|\ai\|_2\leq \sqrt{\B}$.
Second, we note that $\|\Sx^{1/2}U\|_{op}=\|\Sy^{1/2}V\|_{op}=1$, and
 \begin{align}\label{ineq: op norm of Syx}
    |\Syx|_{\infty}\leq &\  \|\Syx\|_{op}= \|\Sy V\Lambda U^T\Sx\|_{op}\nn \\
    \leq &\  \|\Sy^{1/2}\|_{op}\|\Sy^{1/2}V\|_{op}\|\Lambda\|_{op}\|\Sx^{1/2} U\|_{op}\|\Sx^{1/2}\|_{op}\nn\\
    \leq &\ \B
 \end{align}
because the largest element of  $\Lambda$ is not larger than one. 
Since $X_i$'s and $Y_i$'s are Subgaussian, for any random vector $v$ independent of $\mX$ and $\mY$, it follows that  \cite[Lemma 7]{jankova2018}
 \begin{equation}\label{eq: Lemma 7 of sara}
 |(\hSyx-\Syx) v|_{\infty}\leq C_\B\|v\|_2 \sqrt{\frac{\log (p+q)}{n}}
 \end{equation}
with $\PP$ probability $1-o(1)$ uniformly over $\PP\in\mod$.
Also, we can show that $\Phi_0=\Sy^{-1}$ satisfies
\begin{align*}
    \|(\Phi_0)_{k}\|_{1,\infty}  \leq &\   \sqrt{s({\Sx})}\|(\Phi_0)_{k}\|_{2,\infty}\leq \sqrt{s({\Sx})}\|\Phi_0\|_{op}\\
    \leq &\ \sqrt{s({\Sx})} \B,
\end{align*}
  where Cauchy-Schwarz inequality was used in the first step.
 \subsection{General Technical Facts}

\begin{fact}\label{Fact: Frobenius norm inequality}
  For two matrices $A\in\RR^{m\times n}$ and $B\in\RR^{n\times q}$, we have
 $$ \|AB\|_F^2\leq \|A\|_{op}^2\|B\|_F^2,\quad \|AB\|_F^2\leq \|A\|_{F}^2\|B\|_{op}^2$$
  \end{fact}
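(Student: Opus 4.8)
The plan is to prove both inequalities by slicing the Frobenius norm into columns (respectively rows) and applying the defining property of the operator norm, namely $\|Av\|_2 \le \|A\|_{op}\|v\|_2$ for every vector $v$.

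First I would establish the first inequality. Write $B=[B_1,\ldots,B_q]$ in terms of its columns $B_j\in\RR^n$, so that $AB=[AB_1,\ldots,AB_q]$. By the definition of the Frobenius norm as the sum of squared column norms,
\[
\|AB\|_F^2=\sum_{j=1}^q\|AB_j\|_2^2\le\sum_{j=1}^q\|A\|_{op}^2\|B_j\|_2^2=\|A\|_{op}^2\sum_{j=1}^q\|B_j\|_2^2=\|A\|_{op}^2\|B\|_F^2,
\]
where the single inequality is the operator-norm bound applied to each column $B_j$.

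For the second inequality I would reduce to the first one by transposition. Using the elementary identities $\|M\|_F=\|M^T\|_F$ and $\|M\|_{op}=\|M^T\|_{op}$, valid for any matrix $M$, and applying the first inequality to the product $B^TA^T$,
\[
\|AB\|_F^2=\|(AB)^T\|_F^2=\|B^TA^T\|_F^2\le\|B^T\|_{op}^2\|A^T\|_F^2=\|B\|_{op}^2\|A\|_F^2.
\]
(Equivalently, one can argue directly row-by-row: writing $A$ in terms of its rows $A_{i*}$ gives $\|AB\|_F^2=\sum_i\|A_{i*}B\|_2^2=\sum_i\|B^TA_{i*}^T\|_2^2\le\|B^T\|_{op}^2\sum_i\|A_{i*}\|_2^2=\|B\|_{op}^2\|A\|_F^2$.)

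There is no genuine obstacle here; this is a routine fact. The only points requiring (minimal) care are invoking the correct elementary ingredients — the sub-multiplicativity $\|Av\|_2\le\|A\|_{op}\|v\|_2$, the transpose-invariance of $\|\cdot\|_F$ and $\|\cdot\|_{op}$, and the column/row decomposition of the Frobenius norm — each of which is entirely standard.
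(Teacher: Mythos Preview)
Your proof is correct. The paper does not actually prove this fact; it is simply stated as a standard result without proof, so your column-wise argument (and the transpose reduction for the second inequality) is a perfectly valid way to fill in the details.
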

 
  
\begin{fact}[Lemma 11 of \cite{deshpande2014}]
\label{fact: Gaussian matrix concentration inequality}
 Let $\mZ\in\RR^{n\times p}$ be a matrix with i.i.d. standard normal entries, i.e.,  $Z_{i,j}\sim N(0,1)$. Then for every $t>0$,
 \[\mathbb P(\|\mZ\|_{op}\geq \sqrt{p}+\sqrt{n}+t)\leq \exp(-t^2/2).\]
 As a consequence, there exists an absolute constant $C>0$ such that
 \[\mathbb P\slb \|\mZ\|_{op}\geq \sqrt 2(\sqrt{p}+\sqrt{n})\srb\leq \exp(-C(p+n)).\]
\end{fact}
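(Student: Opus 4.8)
The plan is to prove the stated bound the standard way one controls the largest singular value of a Gaussian matrix: Gaussian concentration of measure handles the fluctuations about the mean, and a Gaussian comparison inequality handles the mean itself.

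First I would use the variational formula $\|\mZ\|_{op}=\sup_{u\in S^{n-1},\,v\in S^{p-1}}u^\top\mZ v$ and observe that, viewing $\mZ$ as an element of $\RR^{np}$ with the Frobenius (Euclidean) norm, the map $\mZ\mapsto\|\mZ\|_{op}$ is $1$-Lipschitz: for fixed unit vectors $u,v$ the function $\mZ\mapsto u^\top\mZ v$ is $1$-Lipschitz by Cauchy--Schwarz (indeed $|u^\top(\mZ-\mZ')v|\le\|u\|_2\|v\|_2\|\mZ-\mZ'\|_F$), and a supremum of $1$-Lipschitz functions is $1$-Lipschitz. Since the entries of $\mZ$ are i.i.d.\ $N(0,1)$, the Gaussian concentration inequality for Lipschitz functions then yields $\PP(\|\mZ\|_{op}\ge\E\|\mZ\|_{op}+t)\le\exp(-t^2/2)$ for every $t>0$.

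It then remains to bound $\E\|\mZ\|_{op}\le\sqrt n+\sqrt p$. The cleanest route is Sudakov--Fernique: compare the centered Gaussian process $X_{u,v}=u^\top\mZ v$ with $Y_{u,v}=g^\top u+h^\top v$, where $g\sim N(0,I_n)$ and $h\sim N(0,I_p)$ are independent, by checking that $\E(X_{u,v}-X_{u',v'})^2=2-2\langle u,u'\rangle\langle v,v'\rangle\le 4-2\langle u,u'\rangle-2\langle v,v'\rangle=\E(Y_{u,v}-Y_{u',v'})^2$, which is just $(1-\langle u,u'\rangle)(1-\langle v,v'\rangle)\ge 0$. This gives $\E\sup_{u,v}X_{u,v}\le\E\sup_{u,v}Y_{u,v}=\E\|g\|_2+\E\|h\|_2\le\sqrt n+\sqrt p$, using $\E\|g\|_2\le\sqrt{\E\|g\|_2^2}=\sqrt n$ and likewise for $h$. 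Substituting into the concentration bound proves the first assertion. For the ``consequence'', I would take $t=(\sqrt2-1)(\sqrt n+\sqrt p)$ and use $(\sqrt n+\sqrt p)^2\ge n+p$ to turn the Gaussian tail into $\exp(-C(n+p))$ with $C=(\sqrt2-1)^2/2$.

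The only step that is not pure bookkeeping is the mean bound $\E\|\mZ\|_{op}\le\sqrt n+\sqrt p$, which rests on a Gaussian comparison theorem; everything else is immediate. Since the statement is quoted as Lemma~11 of \cite{deshpande2014}, one could equivalently just invoke a textbook version (e.g.\ Davidson--Szarek, or Vershynin) rather than reprove it.
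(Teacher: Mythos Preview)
Your proof is correct and follows the standard textbook route (Gaussian concentration for the $1$-Lipschitz map $\mZ\mapsto\|\mZ\|_{op}$, plus Sudakov--Fernique for the mean bound $\E\|\mZ\|_{op}\le\sqrt n+\sqrt p$). The paper does not supply its own proof of this fact at all: it is stated as a quoted result (Lemma~11 of \cite{deshpande2014}) and simply invoked where needed. So there is nothing to compare against, and your write-up is more than the paper itself provides; indeed, as you note in your final paragraph, one could equally well just cite Davidson--Szarek or \cite{vershynin2020} for this bound.
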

Recall that for $A\in\RR^{p\times q}$, in Appendix~\ref{sec: new notaton}, we defined  $\|A\|_{1,\infty}$ and $\|A\|_{\infty,1}$ to be the  matrix norms $\max_{j\in[q]}\|A_j\|_1$ and  $\max_{i\in[p]}\|A_{i*}\|_1$, respectively.

The following fact is a Corollary to \eqref{eq: Lemma 7 of sara}.
\begin{fact}
\label{fact: Sup norm of subgaussian data}
Suppose $X$ and $Y$ are jointly subgaussian. Then $|\hSxy-\Sxy|_{\infty}=O_p(\sqrt{\log (p+q)/n})$.
\end{fact}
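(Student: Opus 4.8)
The plan is to obtain Fact~\ref{fact: Sup norm of subgaussian data} as an immediate consequence of \eqref{eq: Lemma 7 of sara}, applied coordinatewise, together with an elementary union bound. Writing $|\hSxy-\Sxy|_{\infty}=\max_{j\in[q]}|(\hSxy-\Sxy)e_j|_{\infty}$, where $e_j\in\RR^{q}$ is the $j$-th standard basis vector, the key observation is that each $e_j$ is deterministic (hence trivially independent of $\{(X_k,Y_k)\}_{k=1}^n$) and has $\|e_j\|_2=1$. Applying \eqref{eq: Lemma 7 of sara} with $v=e_j$ therefore gives
\[
\max_{i\in[p]}|(\hSxy-\Sxy)_{ij}|=|(\hSxy-\Sxy)e_j|_{\infty}\leq C_\B\sqrt{\frac{\log(p+q)}{n}}
\]
on an event of probability $1-o(1)$.

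To upgrade this per-column statement to a bound uniform over all $pq$ entries, I would note that the deviation probability underlying \eqref{eq: Lemma 7 of sara} (equivalently, Lemma 7 of \cite{jankova2018}) is in fact polynomially small in $p+q$ — of the form $1-c_1(p+q)^{-c_2}$ with $c_2$ as large as one wishes at the cost of enlarging $C_\B$. Consequently, taking a union bound of the displayed event over the $q\leq p+q$ columns $j=1,\dots,q$ inflates the failure probability by only a factor $p+q$, and still yields an event of probability $1-o(1)$ on which $|\hSxy-\Sxy|_{\infty}\leq C_\B\sqrt{\log(p+q)/n}$; this is precisely the claimed $O_p(\sqrt{\log(p+q)/n})$ rate.

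For completeness I would also spell out the self-contained mechanism behind \eqref{eq: Lemma 7 of sara} in the jointly sub-Gaussian setting, since it is short: for each $(i,j)$ the variable $X_iY_j$ is sub-exponential with $\|X_iY_j\|_{\psi_1}\leq \|X_i\|_{\psi_2}\|Y_j\|_{\psi_2}\leq C_\B$, so $(\hSxy-\Sxy)_{ij}=\tfrac1n\sum_{k=1}^n\big(X_{ki}Y_{kj}-\E[X_iY_j]\big)$ (and, if $\hSxy$ is formed with sample centering, an additional term $\bar X\bar Y^{T}$ that is entrywise $O_p((\log(p+q))/n)$ and hence of strictly smaller order) is an average of i.i.d.\ centered sub-exponential variables. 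Bernstein's inequality then gives, in the regime $\log(p+q)=o(n)$ maintained throughout the paper, $\PP\big(|(\hSxy-\Sxy)_{ij}|>t\big)\leq 2\exp(-cnt^{2}/C_\B^{2})$ for $t=O(\sqrt{\log(p+q)/n})$; choosing $t=C_\B\sqrt{\log(p+q)/n}$ with $C_\B$ large and union-bounding over the at most $(p+q)^{2}$ entries completes the proof. There is no genuine obstacle here: the only point needing (minor) care is passing from the per-vector form of \eqref{eq: Lemma 7 of sara} to a statement uniform over all coordinates, which the union bound handles precisely because the underlying entrywise deviation probabilities are polynomially small in $p+q$.
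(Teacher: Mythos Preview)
Your proposal is correct and takes essentially the same approach as the paper, which simply declares Fact~\ref{fact: Sup norm of subgaussian data} a corollary to \eqref{eq: Lemma 7 of sara} without further detail. Your elaboration—applying \eqref{eq: Lemma 7 of sara} with $v=e_j$ and union-bounding over $j\in[q]$, together with the observation that the underlying per-entry Bernstein bound has polynomially small failure probability so that the union bound goes through—is exactly the intended fill-in.
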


\begin{fact}[Chi-square tail bound]
\label{fact: Chi square tail probability}
Suppose $\mathbb Z_1,\ldots,\mathbb Z_k\iid N(0,1)$. Then for any $y>5$, we have
\[\mathbb P\slb \sum_{l=1}^k\mathbb Z_l^2\geq y k \srb\leq \exp(-yk/5).\]
\end{fact}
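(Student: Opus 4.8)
The plan is to prove the chi-square tail bound stated as Fact~\ref{fact: Chi square tail probability}: for $\mathbb{Z}_1,\dots,\mathbb{Z}_k \iid N(0,1)$ and any $y > 5$, we have $\mathbb{P}(\sum_{l=1}^k \mathbb{Z}_l^2 \geq yk) \leq \exp(-yk/5)$. The natural approach is a Chernoff bound (exponential Markov inequality) combined with the known moment generating function of a chi-square random variable, followed by optimizing (or near-optimizing) the free parameter and then a crude numerical estimate that produces the clean constant $1/5$.

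\textbf{Step 1: Chernoff bound.} For any $t \in (0, 1/2)$, by Markov's inequality applied to $\exp(t \sum_l \mathbb{Z}_l^2)$ and independence,
\[
\mathbb{P}\Bigl(\sum_{l=1}^k \mathbb{Z}_l^2 \geq yk\Bigr) \leq e^{-tyk}\,\E\bigl[e^{t\mathbb{Z}_1^2}\bigr]^k = \exp\Bigl(-k\bigl(ty + \tfrac{1}{2}\log(1-2t)\bigr)\Bigr),
\]
using $\E[e^{t\mathbb{Z}_1^2}] = (1-2t)^{-1/2}$ for $t < 1/2$.

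\textbf{Step 2: Choose $t$ and bound the exponent.} I would pick a convenient value rather than the exact optimizer $t^* = \tfrac{1}{2}(1 - 1/y)$; for instance $t = 2/5$ works cleanly. With $t = 2/5$ the exponent becomes $\tfrac{2}{5}y + \tfrac{1}{2}\log(1/5) = \tfrac{2}{5}y - \tfrac{1}{2}\log 5$. Since $\tfrac{1}{2}\log 5 \approx 0.805 < 1$ and $y > 5$, we have $\tfrac{2}{5}y - \tfrac{1}{2}\log 5 > \tfrac{2}{5}y - 1 \geq \tfrac{1}{5}y$ precisely when $\tfrac{1}{5}y \geq 1$, i.e. $y \geq 5$. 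This gives $\mathbb{P}(\sum_l \mathbb{Z}_l^2 \geq yk) \leq \exp(-k(\tfrac{2}{5}y - \tfrac{1}{2}\log 5)) \leq \exp(-yk/5)$, as claimed. (Alternatively one can take the exact optimizer: the optimal exponent is $\tfrac{k}{2}(y - 1 - \log y)$, and one checks $\tfrac{1}{2}(y - 1 - \log y) \geq y/5$ for $y \geq 5$ by verifying it at $y=5$ — where $\tfrac{1}{2}(4 - \log 5) \approx 1.20 \geq 1$ — and noting the left side minus $y/5$ has derivative $\tfrac{1}{2}(1 - 1/y) - 1/5 = \tfrac{3}{10} - \tfrac{1}{2y} > 0$ for $y > 5/3$, hence is increasing on $[5,\infty)$.)

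\textbf{Main obstacle.} There is essentially no deep obstacle here — it is a standard concentration estimate — so the only ``work'' is the elementary calculus check that the chosen exponent dominates $y/5$ uniformly over $y > 5$; I would present that via monotonicity in $y$ (checking the endpoint $y = 5$ and the sign of the derivative) to keep it airtight. I would write the proof in a few lines: state the MGF, apply Chernoff with $t = 2/5$, and finish with the one-line inequality $\tfrac{2}{5}y - \tfrac{1}{2}\log 5 \geq \tfrac{1}{5}y$ for $y \geq 5$.
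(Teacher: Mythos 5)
Your proof is correct, but it takes a different route from the paper's. The paper starts from the Laurent--Massart-type bound it cites from Deshpande and Montanari, namely $\mathbb{P}\bigl(\sum_{l=1}^k\mathbb{Z}_l^2\geq k+2\sqrt{kx}+2x\bigr)\leq \exp(-x)$, substitutes $x=yk$, absorbs the three terms via $1+2\sqrt{y}+2y\leq 5y$ for $y>1$, and then relabels $5y\mapsto y$ to land on the stated form with the condition $y>5$. You instead run the Chernoff argument from scratch: the chi-square MGF $\E[e^{t\mathbb{Z}_1^2}]=(1-2t)^{-1/2}$, the convenient choice $t=2/5$, and the elementary check that $\tfrac{2}{5}y-\tfrac{1}{2}\log 5\geq \tfrac{1}{5}y$ for $y\geq 5$ (your endpoint-plus-monotonicity verification of the optimized exponent $\tfrac{1}{2}(y-1-\log y)\geq y/5$ is also correct). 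What your approach buys is self-containedness --- no external lemma is needed --- and it makes visible the true large-deviation exponent, of which $y/5$ is a crude lower bound; what the paper's approach buys is brevity, since it reduces the fact to a two-line corollary of an already-quoted inequality. Both yield exactly the claimed bound on the stated range of $y$.
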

\begin{proof}[Proof of Fact~\ref{fact: Chi square tail probability}]
Since $Z_l$'s are independent standard Gaussian random variables, by tail bounds on Chi-squared random variables (The form below is from Lemma 12 of \cite{deshpande2014}), 
\[\mathbb P\slb \sum_{l=1}^k\mathbb Z_l^2\geq k+2\sqrt{kx}+2x \srb\leq \exp(-x).\]
Plugging in $x=yk$, we obtain that
\[\mathbb P\slb \sum_{l=1}^k\mathbb Z_l^2\geq (1+2\sqrt{y}+2y)k \srb\leq \exp(-yk),\]
which implies for $y>1$, 
\[\mathbb P\slb \sum_{l=1}^k\mathbb Z_l^2\geq 5yk \srb\leq \exp(-yk),\]
which can be rewritten as 
\[\mathbb P\slb \sum_{l=1}^k\mathbb Z_l^2\geq y k \srb\leq \exp(-yk/5)\]
 as long as $y>5$. 
\end{proof}

  \section{Proof of Theorem~\ref{thm: support recovery: sub-gaussians}}
 \label{sec: Proof of support rec: subgaussian} 
 
 For the sake of simplicity, we denote $\widehat U^{(1)}$, $\hSxy^{(2)}$, and $\hf^{(1)}$ by $\widehat U$, $\hSxy$, and $\hf$, respectively. The reader should keep in mind that $\widehat U$ and $\hf$ are independent of $\hSxy$ and $\hf$ because they are constructed from a different sample. Next, using Condition~\ref{Assumption on estimators}, we can show that there exists $(w_i,\ldots,w_p)\in\{\pm 1\}^p$ so that
 \begin{align*}
 \inf_{\PP\in\mod}\PP & \slb \max_{i\in[r]}\sbl (w_i\hai-\ai)^T\Sx(w_i\hai-\ai)\sbl\\
 &\ <\errs^2\srb
   \to  1.
 \end{align*}
as $n\to\infty$.
 Without loss of generality, we assume $w_i=1$ for all $i\in[r]$. The proof will be similar for general $w_i$'s.  Thus
 \begin{align}\label{assump: proxy: estimator}
     \inf_{\PP\in\mod}\PP\slb \max_{i\in[r]}\sbl (\hai-\ai)^T\Sx(\hai-\ai)\sbl<\errs^2\srb \to 1
 \end{align}
 Therefore $\|\hai-\ai\|_2\leq \errs\sqrt{\B}$ for all $i\in[r]$ with $\PP$ probability tending to one.
 
 Now we will collect some facts which will be used during the proof. Because $\hai$ and $\hSyx$ are independent, \eqref{eq: Lemma 7 of sara} implies that
\begin{equation*}
 |(\hSyx-\Syx) \hai|_{\infty}\leq C_\B\|\hai\|_2 \sqrt{\frac{\log (p+q)}{n}}.
 \end{equation*} 
Using  \eqref{assump: proxy: estimator}, we obtain that
 $\|\hai\|_2\leq \|\hai-\ai\|_2+\|\ai\|_2\leq \sqrt{B}(\errs+1)$.
Because $\errs<\B^{-1}\leq 1$, we have
 \begin{align}\label{eq: Lemma 7 of sara for our case}
\MoveEqLeft \inf_{\PP\in\mod}\PP\lb \max_{i\in[r]}|(\hSyx-\Syx) \hai|_{\infty}\leq  C_\B\sqrt{\frac{\log (p+q)}{n}}\rb\nn\\
 &\ =1-o(1).
 \end{align}
 Noting \eqref{ineq: op norm of Syx} implies
 $|\Syx\hai|_\infty\leq \|\Syx\|_{op}\|\hai\|_2\leq 2\B^{3/2}$, and that $\log(p+q)=o(n)$, using \eqref{eq: Lemma 7 of sara for our case},
we obtain that
\begin{align}\label{intheorem: bound on sxy times hai}
    \max_{i\in[r]}|\hSyx\hai|_\infty\leq |(\hSyx-\Syx)\hai|_\infty+|\Syx\hai|_\infty\leq 3\B^{3/2}
\end{align}
with $\PP$ probability $1-o(1)$.

Now we are ready to prove Theorem~\ref{thm: support recovery: sub-gaussians}. We will denote the columns of $\widehat V^{clean}_n$ by $\tbi$ for $i\in[r]$.
Because $\Lambda_i(\bi)_k=e_k^T\Sy^{-1}\Syx\ai$, it holds that
\begin{align*}
    ( \tbi)_k-\Lambda_i(\bi)_k = &\ e_k^T(\hf-\Phi_0)\hSyx \hai\\
    &\ + e_k^T\Phi_0(\hSyx-\Syx) \hai\\
    &\ + e_k^T\Phi_0\Syx(\hai-\ai)
\end{align*}
leading to
\begin{align*}
 |( \tbi)_k-\Lambda_i(\bi)_k|\leq &\ \underbrace{  |e_k^T(\hf-\Phi_0)\hSyx \hai|}_{T_1(i,k)}\\
 &\ + \underbrace{  |e_k^T\Phi_0(\hSyx-\Syx) \hai|}_{T_2(i,k)}\\
 &\ + \underbrace{ |e_k^T\Phi_0\Syx(\hai-\ai)|}_{T_3(i,k)}.
\end{align*}
Handling the term $T_2$ is the easiest because 
 \begin{align*}
\MoveEqLeft \max_{i\in[r],k\in[q]}  T_2(i,k)\\
\leq &\  \|\Phi_0\|_{1,\infty}|(\hSyx-\Syx) \hai|_{\infty}\\
 \leq &\ C_\B\sqrt{\frac{\sSy\log(p+q)}{n}}
 \end{align*}
with $\PP$ probability $1-o(1)$ uniformly over $\mod$,
where we used \eqref{eq: Lemma 7 of sara for our case} and the fact that $\|\Phi_0\|_{1,\infty}\leq\sqrt{s({\Sy^{-1}})} \B$. The difference in cases (A), (B), (C) arises only due to different bounds on $T_1(i,k)$ in these cases. We demonstrate the whole proof only for case (A). For the other two cases, we only discuss  the analysis of $T_1(i,k)$ because the rest of the proof remains identical in these cases.

\subsubsection{Case (A)}
Since we have shown in \eqref{intheorem: bound on sxy times hai} that $|\hSyx\hai|_\infty\leq 3\B^{3/2}$, we calculate
\begin{align*}
 \max_{i\in[r],k\in[q]}  T_1(i,k)\leq &\  \|\hf-\Phi_0\|_{1,\infty}\max_{i\in[r]}|\hSyx\hai|_\infty\\
 \leq &\  3\B^{3/2}\Cpr \sSy\sqrt{\frac{\log q}{n}}
\end{align*}
with $\PP$ probability tending to one, uniformly over $\mod$,
where to get the last inequality, we also used the bound on $\|\hf-\Phi_0\|_{\infty,1}$ in case (A).

  Finally, for $T_3$, we notice that
  \begin{align*}
     T_3(i,k)=&\ \big|e_k^T\Phi_0\Syx (\hai-\ai)\big|\\
     = &\ \bl e_k^T\sum_{j=1}^{r}\Lambda_j\bj\aj^T\Sx(\hai-\ai)\bl\\
      \leq &\ \max_{j\in[r]}\big|(\bj)_k\big|\bl\sum_{j=1}^{r}\aj^T\Sx(\hai-\ai)\bl 
  \end{align*}
     since $\Lambda_1\leq 1$. Since $(\bj)_k=V_{kj}$, it is clear that $T_3(i,k)$ is  identically zero if $k\notin D(V)$. Otherwise, 
    Cauchy Schwarz inequality implies, 
    \begin{align*}
      \MoveEqLeft  \bl\sum_{j=1}^{r}\aj^T\Sx(\hai-\ai)\bl\\
        \leq &\ \sqrt{r}\lb\sum_{j=1}^r (\aj^T\Sx(\hai-\ai))^2\rb^{1/2}\\
        \leq &\  \sqrt{r}\|\Sx^{1/2}(\hai-\ai)\|_2
    \end{align*}
because $\Sx^{1/2}\aj$'s are orthogonal. Thus  
     \begin{align*}
    \max_{i\in[r],k\in D(V)}|T_3(i,k)|\leq \sqrt{r}\max_{j\in[r]}\big|(\bj)_k\big|\errs. \end{align*}

 Now we will combine the above pieces together. Note that
 \begin{align}\label{intheorem: Thm 1: e-n}
    \MoveEqLeft \max_{i\in[q]}\max_{k\in[r]}(|T_1(i,k)|+|T_2(i,k)|\nn\\
     \leq&\ C_\B\underbrace{\Cpr \sSy\sqrt{\frac{\log(p+q)}{n}}}_{\e_n}.
 \end{align}
 For $k\notin D(V)$, denoting the $i$-th column of $\widehat V^{clean}$ by $\tbi$ we observe that,
 \begin{align}\label{eq: inlemma: general: the support recovery}
\max_{k\notin D(V)} \max_{i\in[r]} |\widehat V_{ki}^{clean}|=&\ \max_{k\notin D(V)} \max_{i\in[r]} |( \tbi)_k|\nn\\
\leq &\ \max_{i\in[q]}\max_{k\in[r]}(|T_1(i,k)|+|T_2(i,k)|)\nn\\
\leq &\  C_\B{\e_n}
 \end{align}
 with $\PP$ probability $1-o(1)$ uniformly over $\PP\in\mod$.
 On the other hand,
if  $k\in D(\bi)$, then we have for all $i\in[r]$,
\begin{align*}
    |(\tbi)_k|> &\ \Lambda_i|(\bi)_k|-\sqrt{r}\max_{j\in[r]}\big|(\bj)_k\big|\errs\\
    &\ -\max_{i\in[q]}\max_{k\in[r]}(|T_1(i,k)|+|T_2(i,k)|),
\end{align*}
which implies
\[\max_{i\in[r]}|\widehat V_{ki}^{clean}|>\max_{i\in[r]}\Lambda_i|(\bi)_k|-\sqrt{r}\max_{i\in[r]}\big|(\bi)_k\big|\errs-C_\B\e_n.\]
Since $\errs<\B^{-1}/(2\sqrt r)$ and $\B^{-1}<\min_{i\in[r]}\Lambda_i$, we have
\begin{align*}
  \MoveEqLeft  \max_{i\in[r]}\Lambda_i|(\bi)_k|-\sqrt{r}\max_{i\in[r]}\big|(\bi)_k\big|\errs\\
    &\ > (\B^{-1}-\sqrt r\errs)\max_{i\in[r]}\big|(\bi)_k\big|\\
    &\ >\B^{-1}\max_{i\in[r]}\big|(\bi)_k\big|/2.
\end{align*}
Thus, noting $V_{ki}=(\bi)_{k}$, we obtain that
\begin{align*}
   \min_{k\in D(V)}\max_{i\in[r]}|(\tbi)_k|=&\ \min_{k\in D(V)}\max_{i\in[r]}|\widehat V_{ki}^{clean}|\\
   >  &\ \min_{k\in D(V)}\max_{i\in[r]}\big|V_{ki}\big|/(2\B)-C_\B\e_n 
\end{align*}
with $\PP$ probability $1-o(1)$ uniformly over $\PP\in\mod$.
Suppose $C_\B'=2\B C_\B$. Note that
\[\min_{k\in[p]}\max_{i\in[r]}\big|(\bi)_k\big|=\ratio C_\B' \e_n\]
where $\ratio>2$. Then with $\PP$ probability $1-o(1)$ uniformly over $\PP\in\mod$,
\[\min_{k\in D(V)}\max_{i\in[r]}\widehat V_{ki}^{clean}>(\ratio-1)C_\B' \e_n/(2\B).\] 
 This, combined with \eqref{eq: inlemma: general: the support recovery}
implies setting $\cut\in[C_\B'\e_n/(2\B), (\ratio-1)C_\B' \e_n/(2\B)]$ leads to full support recovery with $\PP$ probability $1-o(1)$. The proof of the first part follows. 


\subsubsection{Case (B)}
\label{secpf: case B: Thm 1}
  In the Gaussian case, we resort to the hidden variable representation of $X$ and $Y$ due to \cite{bach2005}, which enables sharper bound on the term $T_1(i,k)$.
 Suppose $\mZ\sim N_r(0,I_r)$  where $r$ is the rank of $\Sxy$. Consider $Z_1\sim N_p(0, I_p)$ and $Z_2\sim N_q(0,I_q)$ independent of $\mZ$. Then  
 $X$ and $Y$ can be represented as
 \begin{equation}\label{model: 3}
     X=\W_1 Z+\ \H_1 Z_1\quad \text{ and }\quad Y=\W_2 Z+\H_2 Z_2,
 \end{equation}
 where
 \[\W_1=\Sx U\Lambda^{1/2},\ \W_2=\Sy V\Lambda^{1/2},\ \H_1=(\Sx-\W_1\W_1^T)^{1/2},\] 
 and
 \[  \H_2=(\Sy-\W_2\W_2^T)^{1/2}.\]
 Here $(\Sx-\W_1\W_1^T)^{1/2}$ is well defined because
 \[\Sx-\W_1\W_1^T=\Sx\tU(I_p-\xLambda)\tU^T\Sx,\]
 where $\xLambda$ is a $p\times p$ diagonal matrix whose first $p$ elements are $\Lambda_1,\ldots,\Lambda_r$, and they rest are zero. Because $\Lambda_1\leq 1$, we have 
 \[(\Sx-\W_1\W_1^T)^{1/2}=\Sx\tU(I_p-\xLambda)^{1/2}\tU^T\Sx.\]
 Similarly, we can show that \[(\Sy-\W_2\W_2^T)^{1/2}=\Sy\tV(I_q-\yLambda)^{1/2}\tV^T\Sy,\]
 where $\yLambda$ is the diagonal matrix whose first $r$ elements are $\Lambda_1,\ldots,\Lambda_r$, and the rest are zero.
 It can be easily verified that
 \[Var(X)=\W_1\W_1^T+\H_1=\Sx,\  Var(Y)=\W_2\W_2^T+\H_2=\Sy,\]
 and
 \[\Sxy=\W_1\W_2^T=\Sx U\Lambda V^T\Sy,\]
 which ensures that the joint variance  of $(X,Y)$ is still $\Sigma$. Also, some linear algebra leads to
  \begin{align}\label{intheorem: 1: ineq: ub on H1 H2 W1 and}
     \max\lbs \|\H_1\|^2_{op},\|\H_2\|^2_{op},\|\W_1\|_{op},\|\W_2\|_{op}\rbs<\B.
  \end{align}

 Suppose we have $n$ independent  realizations of the pseudo-observations $Z_1$, $Z_2$, and $Z$.
 Denote by $\mZ_1$, $\mZ_2$, and $\mZ$, the stacked data matrices with the i-th row as  $(Z_1)_i$, $(Z_2)_i$, and $Z_i$, respectively, where $i\in[n]$. Here we used the term data-matrix although we do not observe $\mZ
$, $\mZ_1$ and $\mZ_2$ directly. Due to the representation  in \eqref{model: 3},
the data matrices $\mX$ and $\mY$ have the form
\[\mX= \mZ\W_1^T+\mZ_1\H_1,\quad \mY=\mZ\W_2^T+\mZ_2\H_2.\]
We can write
 the covariance matrix $\hSxy=\mX^T \mY/n$ as
\begin{align}\label{representation: Sxy: model 3}
    \hSxy=&\ \frac{1}{n}\lbs \W_1 \mZ^T\mZ\W_2^T+\W_1 \mZ^T\mZ_2\H_2+\H_1^T\mZ_1^T\mZ \W_2^T\nn\\
    &\ +\H_1^T\mZ_1^T\mZ_2\H_2\rbs.
\end{align}
Therefore, for any vector $\th_1\in\RR^p$ and $\th_2\in\RR^q$, we have
\begin{align}\label{intheorem: thm 1; expansion of quad}
  \MoveEqLeft \th_1^T(\hSxy-\Sxy) \th_2= \th_1^T\W_1^T\slb \frac{\mZ^T\mZ}{n}-I_r\srb \W_2 \th_2\nn\\
   & +\frac{1}{n}\th_1^T\slb \W_1 \mZ^T\mZ_2\H_2+\H_1^T\mZ_1^T\mZ \W_2^T+\H_1^T\mZ_1^T\mZ_2\H_2\srb \th_2. \end{align}
By Bai-Yin law on eigenvalues of Wishart matrices \cite{Bai-Yin}, there exists abolute constant $C>0$ so that for any $t>1$,
\[P\lb\norm{\frac{\mZ^T\mZ}{n}-I_r}_{op} <t\sqrt{r/n}\rb\geq 1-2\exp(-Ct^2r),\]
which, combined with \eqref{intheorem: 1: ineq: ub on H1 H2 W1 and}, implies 
\begin{align*}
    \inf_{\PP\in\modG}\PP\slb &  \sbl\th_1^T\W_1^T({\mZ^T\mZ}/{n}-I_r) \W_2 \th_2\sbl\\
    \leq &\  t\B^2 \|\th_1\|_2\|\th_2\|_2\sqrt{r/n}\srb\\
   \quad  \geq &\  1-2\exp(-Ct^2r).
\end{align*}
Now we will state a lemma which will be required to control the other terms on the right hand side of \eqref{intheorem: thm 1; expansion of quad}.
\begin{lemma}
\label{lemma: thm 1: op of covariance mat}
Suppose $\mZ_1\in\RR^{n\times p}$ and $\mZ_2\in\RR^{n\times q}$ are independent Gaussian data matrices. Further suppose $x\in\RR^p$ and $y\in\RR^q$ are either deterministic or  independent of both $\mZ_1$ and $\mZ_2$. Then there exists a constant $C>0$ so that for any $t>1$,
\[P\slb \abs{x^T\mZ_1^T\mZ_2y}>t\|x\|_2\|y\|_2\sqrt{n}\srb\leq \exp(-Cn)-\exp(t^2/2).\]
\end{lemma}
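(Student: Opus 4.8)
\emph{Proof strategy.} The plan is to reduce the bilinear form $x^T\mZ_1^T\mZ_2 y$ to an inner product of two independent Gaussian vectors in $\RR^n$, and then combine a conditional one-dimensional Gaussian tail bound with $\chi^2$ concentration. First, by homogeneity, I would assume without loss of generality that $\|x\|_2=\|y\|_2=1$, since dividing both sides by $\|x\|_2\|y\|_2$ reduces to this case. Set $a:=\mZ_1 x\in\RR^n$ and $w:=\mZ_2 y\in\RR^n$, so that $x^T\mZ_1^T\mZ_2 y=\langle a,w\rangle$. The key point is that, conditionally on the triple $(\mZ_2,x,y)$, the vector $w$ is fixed while $a=\mZ_1 x\sim N(0,\|x\|_2^2 I_n)=N(0,I_n)$, because the rows of $\mZ_1$ are i.i.d.\ $N(0,I_p)$ and $\mZ_1$ is independent of $(\mZ_2,x,y)$. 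Hence $\langle a,w\rangle\mid(\mZ_2,x,y)\sim N(0,\|w\|_2^2)$, and the standard Gaussian tail bound gives, for every $s>0$,
\[
P\slb \abs{x^T\mZ_1^T\mZ_2 y}>s \;\big|\; \mZ_2,x,y\srb\le 2\exp\!\lb-\frac{s^2}{2\|w\|_2^2}\rb .
\]

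Second, I would control $\|w\|_2^2=\|\mZ_2 y\|_2^2$. Conditionally on $y$ this is a $\chi^2_n$ random variable, so by Fact~\ref{fact: Chi square tail probability} there is an absolute constant such that $P\lb\|w\|_2^2>6n\rb\le\exp(-6n/5)$. Combining this with the previous display at $s=t\sqrt n$ and integrating out the conditioning,
\begin{align*}
P\slb \abs{x^T\mZ_1^T\mZ_2 y}>t\sqrt n\srb
&\le P\lb\|w\|_2^2>6n\rb+\E\!\lbs 1\{\|w\|_2^2\le 6n\}\,P\slb \abs{\langle a,w\rangle}>t\sqrt n\;\big|\;\mZ_2,x,y\srb\rbs\\
&\le \exp(-6n/5)+2\exp\!\lb-\frac{t^2}{12}\rb ,
\end{align*}
which gives the claimed bound (reading the sign in the statement as $+$) after relabelling the constants, and, if desired, absorbing the factor $2$ into the exponent using $t>1$.

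I do not expect a genuine obstacle here. The only points requiring a little care are (i) the conditioning argument — one must condition jointly on $(\mZ_2,x,y)$ so that $\mZ_1 x$ remains exactly a standard Gaussian vector independent of $w$, which is precisely where the hypothesis that $x,y$ are deterministic or independent of both data matrices is used; and (ii) invoking the $\chi^2_n$ deviation bound in a form that makes the ``bad event'' $\{\|\mZ_2 y\|_2^2>6n\}$ have probability exponentially small in $n$ rather than merely small. Everything else is routine, and this lemma then feeds directly into the bounds on the second and third terms on the right-hand side of \eqref{intheorem: thm 1; expansion of quad}.
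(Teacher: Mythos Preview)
Your proof is correct. The paper obtains this lemma by specializing the more general operator-norm bound of Lemma~\ref{lemma: normal data matrix} to the rank-one case $a=b=1$: there one writes $\|D^T\mZ_1^T\mZ_2 B\|_{op}\leq \|D\|_{op}\|B\|_{op}\,\|\mZ'\|_{op}\,\|\mZ\|_{op}$ with $\mZ'\in\RR^{n\times 1}$ and $\mZ\in\RR^{1\times 1}$ Gaussian, and then applies Fact~\ref{fact: Gaussian matrix concentration inequality} to each factor. When specialized, that is exactly your decomposition---$\|\mZ'\|_{op}$ is the $\sqrt{\chi^2_n}$ term you control via Fact~\ref{fact: Chi square tail probability}, and $|\mZ|$ is the one-dimensional Gaussian you bound by the standard tail inequality. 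Your route is slightly more elementary in that it avoids the projection machinery of Facts~\ref{fact: normal data mtx}--\ref{fact: normal data proj matrix} and works directly by conditioning on $(\mZ_2,x,y)$; the paper's route has the advantage that the same argument simultaneously yields the higher-rank statement needed later in Lemma~\ref{lemma: supp: s1}. Your observation that the ``$-$'' in the displayed bound should read ``$+$'' (and that the exponent should be $-t^2/2$) is also correct and matches how the result is actually used downstream.
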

The proof of Lemma \ref{lemma: thm 1: op of covariance mat} follows directly  setting $b=1$ in the following Lemma, which is proved in Appendix~\ref{sec: add lemma: operator norm}.
\begin{lemma}
\label{lemma: normal data matrix}
Suppose $\mZ_1\in\RR^{n\times p}$ and $\mZ_2\in\RR^{n\times q}$ are independent standard Gaussian data matrices, and $D\in\RR^{n\times k_1}$ and $B\in\RR^{n\times k_2}$ are deterministic matrices with rank $a$ and $b$, respectively. Let $a\leq b\leq n$. 
Then there exists an absolute constant $C>0$ so that for any $t\geq 0$, the following holds with probability at least $1-\exp(-Cn)-\exp(- t^2/2)$:
\[\|D^T\mZ_1^T\mZ_2B\|_{op}\leq C\|D\|_{op}\|B\|_{op}\sqrt{n}\max\{\sqrt{b},t\}.\]
\end{lemma}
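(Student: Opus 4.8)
The plan is to strip off the deterministic matrices $D$ and $B$ via their singular value decompositions, reducing the statement to an operator-norm bound for the product of two \emph{independent} Gaussian matrices whose inner dimension has been cut down to $a$ and $b$; that bound will then follow from two applications of Fact~\ref{fact: Gaussian matrix concentration inequality} together with a conditioning argument. (Here we use the dimension convention under which $D^T\mZ_1^T\mZ_2B$ is well defined, i.e.\ $D\in\RR^{p\times k_1}$ and $B\in\RR^{q\times k_2}$.)

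Concretely, I would write reduced SVDs $D=U_DS_DV_D^T$ and $B=U_BS_BV_B^T$, where $U_D$ has $a$ orthonormal columns, $U_B$ has $b$ orthonormal columns, and $\|S_D\|_{op}=\|D\|_{op}$, $\|S_B\|_{op}=\|B\|_{op}$. Since $\|V_D\|_{op}=\|V_B\|_{op}=1$, submultiplicativity of the operator norm gives
\[\|D^T\mZ_1^T\mZ_2B\|_{op}\le \|D\|_{op}\,\|B\|_{op}\,\|\widetilde{\mZ}_1^T\widetilde{\mZ}_2\|_{op},\qquad \widetilde{\mZ}_1:=\mZ_1U_D\in\RR^{n\times a},\ \ \widetilde{\mZ}_2:=\mZ_2U_B\in\RR^{n\times b}.\]
Because $U_D$ and $U_B$ have orthonormal columns and $\mZ_1$ is independent of $\mZ_2$, the matrices $\widetilde{\mZ}_1$ and $\widetilde{\mZ}_2$ are independent with i.i.d.\ $N(0,1)$ entries, so it remains to show $\|\widetilde{\mZ}_1^T\widetilde{\mZ}_2\|_{op}\le C\sqrt n\,\max\{\sqrt b,t\}$ on an event of probability at least $1-\exp(-Cn)-\exp(-t^2/2)$.

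For this last step I would condition on $\widetilde{\mZ}_2$: the $a$ rows of $\widetilde{\mZ}_1^T\widetilde{\mZ}_2$ are then i.i.d.\ $N(0,\widetilde{\mZ}_2^T\widetilde{\mZ}_2)$ vectors in $\RR^b$, whence $\widetilde{\mZ}_1^T\widetilde{\mZ}_2\stackrel{d}{=}G\,(\widetilde{\mZ}_2^T\widetilde{\mZ}_2)^{1/2}$ with $G\in\RR^{a\times b}$ having i.i.d.\ standard normal entries independent of $\widetilde{\mZ}_2$, so that $\|\widetilde{\mZ}_1^T\widetilde{\mZ}_2\|_{op}\le\|G\|_{op}\,\|\widetilde{\mZ}_2\|_{op}$. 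Fact~\ref{fact: Gaussian matrix concentration inequality} applied to $G$ gives $\|G\|_{op}\le\sqrt a+\sqrt b+t\le 2\sqrt b+t\le 3\max\{\sqrt b,t\}$ (using $a\le b$) with probability at least $1-\exp(-t^2/2)$, and applied to $\widetilde{\mZ}_2$ gives $\|\widetilde{\mZ}_2\|_{op}\le\sqrt2\,(\sqrt b+\sqrt n)\le 2\sqrt2\,\sqrt n$ (using $b\le n$) with probability at least $1-\exp(-Cn)$. A union bound and multiplying the three displayed inequalities then yield the claim with $C$ an absolute constant (one may take $C=6\sqrt2$). The computation is essentially routine; the only points requiring a little care are arranging the SVD reduction so that the \emph{ranks} $a,b$ rather than the ambient dimensions control the bound, and the elementary inequalities $\sqrt a+\sqrt b\le 2\sqrt b$ and $\sqrt b+\sqrt n\le 2\sqrt n$ licensed by $a\le b\le n$. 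I do not anticipate a genuine obstacle.
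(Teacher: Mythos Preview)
Your proposal is correct and follows essentially the same approach as the paper: reduce the dimensions via orthonormal projections and then apply Fact~\ref{fact: Gaussian matrix concentration inequality} twice, once for each Gaussian factor. The only cosmetic difference is that the paper first treats $A=\mZ_1D$ as a random matrix of rank $a'\le a$ and invokes its projection lemma (Fact~\ref{fact: normal data proj matrix}) conditionally on $\mZ_1$, whereas you peel off $D$ and $B$ deterministically via SVD up front; your route is slightly cleaner since it avoids tracking the random rank $a'$, but the two arguments are otherwise interchangeable.
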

Lemma \ref{lemma: thm 1: op of covariance mat}, in conjunction with \eqref{intheorem: 1: ineq: ub on H1 H2 W1 and}, implies that there exists  an absolute constant $C>0$ so that
\begin{align*}
 \MoveEqLeft \frac{1}{n}\sbl\th_1^T\slb \W_1 \mZ^T\mZ_2\H_2+\H_1^T\mZ_1^T\mZ \W_2^T+\H_1^T\mZ_1^T\mZ_2\H_2\srb \th_2\sbl\\
   \leq &\ t\B^2\|\th_1\|_2\|\th_2\|_2n^{-1/2} 
\end{align*}
with $\PP$ probability at least  $1-\exp(-Cn)-\exp(t^2/2)$ for all  $\PP\in\modG$. Therefore, there exists $C>0$ so that
\begin{align}\label{intheorem 1: prob of quad term}
\MoveEqLeft\PP\slb|\th_1^T(\hSxy-\Sxy) \th_2|
\leq  t\sqrt{r}\B^2\|\th_1\|_2\|\th_2\|_2n^{-1/2}\srb\nn\\
    \geq &\ 1-\exp(-Cn)-\exp(-Ct^2).
\end{align}
for all $\PP\in\modG$. Note that
 \begin{align*}
     T_{1}(i,k)\leq &\  \underbrace{\sbl\slb(\hf)_{k*}-(\Sy^{-1})_{k*}\srb^T(\hSyx-\Syx)\hai\sbl}_{T_{11}(i,k)}\\
     &\ +\underbrace{\sbl\slb(\hf)_{k*}-(\Sy^{-1})_{k*}\srb^T\Syx\hai\sbl}_{T_{12}(i,k)}.
 \end{align*}
Now suppose $\th_1=(\hf)_{k*}-(\Sy^{-1})_{k*}$ and $\th_2=\hai$.  By our assumption,
$\|\th_1\|_2\leq \Cpr \sqrt{\sSy(\log q)/n}$ with $\PP$ probability $1-o(1)$ uniformly across $\PP\in\modG$. We also showed that $\|\hai\|_2\leq 2\sqrt{\B}$.
It is not had to see that
\begin{align}\label{intheorem: B: T12}
    \sup_{i\in[q],k\in[r]}T_{12}(i,k)\leq 2\B^{3/2}\Cpr \sqrt{\sSy(\log q)/n}
\end{align}
with $\PP$ probability $1-o(1)$ uniformly across $\PP\in\modG$.
For $T_{11}$, observe that \eqref{intheorem 1: prob of quad term}  applies because $\th_i=(\hf)_{k*}-(\Sy^{-1})_{k*}$ and $\th_2=\hai$ are independent of $\hSxy$. 
   Thus we can write that for any $t>1$, there exists $C_\B>1$ such that
   \begin{align*}
    \MoveEqLeft   \sup_{\PP\in\modG}\PP\slb |T_{11}(i,k)|
       >  tC_\B\Cpr\sqrt{r\sSy\log q}/n\srb\\
       \leq &\ \exp(-Cn)+\exp(-Ct^2).
   \end{align*}
Applying union bound, we obtain that for any $\PP\in\modG$,
\begin{align*}
    \ \MoveEqLeft\PP\left ( \max_{i\in[q]}\max_{k\in[r]}|T_{11}(i,k)|>\frac{tC_\B\Cpr\sqrt{r\sSy\log q}}{n}\right )\\ &\leq \exp(-Cn+\log (qr))+\exp(-Ct^2+\log(qr)).
\end{align*}
Since $r<q$ and $\log q=o(n)$, setting $t=2\sqrt{\log q}/C$,  we obtain that
\begin{align*}
    \sup_{\substack{\PP\in\\\modG}}\PP\left( \max_{i\in[q]}\max_{k\in[r]}|T_{11}(i,k)|>C_\B\Cpr\sqrt{r\sSy}\frac{\log q}{n}\right )
\end{align*}
is $o(1)$.
Using \eqref{intheorem: Thm 1: e-n} and \eqref{intheorem: B: T12}, one can show that 
\[\e_n=\Cpr \sqrt{\sSy(\log (p+q))/n}\max\{\sqrt{r(\log q)/n}, 1\}\]
in this case.

\subsubsection{Case (C)}
 Note that when $\hf=\Sy^{-1}$, $T_1(i,k)=0$. Therefore, \eqref{intheorem: Thm 1: e-n} implies $\e_n=\sqrt{{\sSy\log(p+q)}/{n}}$  in this case.

\section{Proof of Theorem~\ref{thm: lower bound}}
\label{sec: Proof of IT limit}
Since the proof for $U$ and $V$ follows in a similar way, we will only consider the support recovery of $U$. The proof for both cases follows a common structure. Therefore, we will elaborate the common structure first. 
Since the model $\mod$ is fairly large, we will work with a smaller submodel. Specifically, we will  consider a subclass of  the single spike models, i.e.,  $r=1$. Because we are concerned with only  the  support recovery of the left singular vectors, we fix  $\beta_0$ in $\RR^q$ so that $\|\beta_0\|_2=1$. We also fix $\rho\in(0,1)$ and consider the subset 
$\mE\subset\{\alpha\in\RR^p:\|\alpha\|_2=1\}$. Both $\rho$ and $\mE$  will be chosen later. We  restrict our attention to the submodel $\mM(s_x,s_y,\rho,\mE)$ given by
\begin{align*}
    \lbs &\ \PP\in \mP(1,s_x,s_y,\B)\ :\ \PP\equiv N_{p+q}(0,\Sigma)  \text{ where }\Sigma\\
   &\ \text{ is of the form }\eqref{def: sigma in Theorem 3} \text{ with }\alpha\in\mathcal E,\beta=\bk\rbs,
\end{align*}
where \eqref{def: sigma in Theorem 3} is as follows:
\begin{equation}\label{def: sigma in Theorem 3}
    \Sigma=\begin{bmatrix}
    I_p & \rho \alpha\beta^T\\
    \rho\beta\alpha^T & I_q
    \end{bmatrix}.
\end{equation}
That $\Sigma$ is positive definite for $\rho\in(0,1)$ can be shown either using elementary linear algebra or the  the hidden variable representation \eqref{model: 3}.
 During the proof of part (B), we will choose $\mE$ so that $\Sig_x^2\leq (B^2-1)(\log(p-s_x))/8n$, which will ensure that $\mM(s_x,s_y,\rho,\mE)\subset\modsig$ as well. 
 
 Note that for $\PP\in\mM(s_x,s_y,\rho,\mE)$, $U$ corresponds to $\alpha$, and hence $D(U)=D(\alpha)$.
 Therefore for the proof of both parts, it suffices to show that for any decoder $\widehat D_{\alpha}$ of $D(\alpha)$,
\begin{equation}\label{intheorem: thm 3: end goal}
    \inf_{\widehat D_{\alpha}}\sup_{\PP\in \mM(s_x,s_y,\mE)}\PP\slb \widehat D_{\alpha}\neq D(\alpha)\srb>1/2.
\end{equation}
In both of the proofs, our $\mE$ will be a finite set.  Our goal is to choose $\mE$  so that $\mM(s_x,s_y,\rho,\mE)$ is structurally rich enough to guarantee \eqref{intheorem: thm 3: end goal}, yet lends itself to easy computations.
The guidance for choosing $\mE$ comes from our main technical tool for this proof, which is Fano's inequality.  We use  the  verson of Fano's inequality  in \cite{yatracos1988} (Fano's Lemma). Applied to our problem, this inequality yields
   \begin{align}\label{ineq: fano: pre}
  \MoveEqLeft \inf_{\widehat{D}_\alpha}  \sup_{\PP\in  \mM(s_x,s_y,\rho,\mE)}\PP\slb\widehat D_{\alpha}\neq D(\alpha)\srb\nn\\
   \geq &\  1-\dfrac{\frac{\sum_{\PP_1,\PP_2\in\mM(s_x,s_y,\rho,\mE)}KL(\PP_1^n|\PP_2^n)}{|\mM(s_x,s_y,\rho,\mE)|^2}+\log 2}{\log(|\mM(s_x,s_y,\rho,\mE)|-1)},  
   \end{align}
   where $\PP_n$ denotes the product measure corresponding to $n$ i.i.d. observations from $\PP$. We also have the following result for product measures,
   $KL(\PP_1^n|\PP_2^n)=nKL(\PP_1|\PP_2)$. Moreover, when
 $\PP_1,\PP_2\in\mM(s_x,s_y,\rho,\mE)$ with left singular vectors $\alpha_1$ and $\alpha_2$, respectively, 
 \[KL(\PP_1|\PP_2)=\log\frac{\text{det}(\Sigma_2)}{\text{det}(\Sigma_1)}-(p+q)+Tr(\Sigma_2^{-1}\Sigma_1),\]
 where $\text{det}(\Sigma_1)=\text{det}(\Sigma_2)=1-\rho^2$ by  Lemma~\ref{Lemma: determinant of Sigma}, and
 \[-(p+q)+Tr(\Sigma_2^{-1}\Sigma_1)=\frac{2\rho^2}{1-\rho^2}\lb1-(\alpha_1^T\alpha_2)\|\bk\|_2^2\rb\]
 by Lemma~\ref{Lemma: trace }.
 Noting $\alpha_1$, $\alpha_2$, and $\bk$ are unit vectors, we derive
 $
  KL(\PP_1|\PP_2)=\rho^2(\|\alpha_1-\alpha_2\|^2_2)/(1-\rho^2)$.
 Therefore, in our case, \eqref{ineq: fano: pre} reduces to
 \begin{align}\label{ineq: fano}
  \MoveEqLeft \inf_{\widehat{D}_\alpha}  \sup_{\PP\in  \mM(s_x,s_y,\rho,\mE)}\PP\slb\widehat D_{\alpha}\neq D(\alpha)\srb\nn\\
   \geq &\  1-\dfrac{n\rho^2\sup_{\alpha_1,\alpha_2\in\mE}\|\alpha_1-\alpha_2\|^2/(1-\rho^2)+\log 2}{\log(|\mE|-1)}. 
   \end{align}
 Thus, to ensure the right hand side of \eqref{ineq: fano} is non-negligible, the key is to choose $\mE$ so that the  $\alpha$'s in $\mE$ are  close in $l_2$ norm, but $|\mE|$ is sufficiently large. Note that the above ensures that distinguishing the $\alpha$'s in $\mE$ is difficult.
 
 \subsection{Proof of part (A)}
 Note that our main job is to choose $\mE$ and $\rho$ suitably.
 Let us denote 
   \[\alk=(\underbrace{1/\sqrt s_x,\ldots,1/\sqrt s_x}_{s_x\text{ many }},\underbrace{0,\ldots,0}_{p-s_x\text{ many }}).\]
   We generate a class of $\alpha$'s by replacing one of the $1/\sqrt s_x$'s in $\alpha_0$ by $0$, and  one of the zero's in $\alk$ by $1/\sqrt s_x$. A typical  $\alpha$ obtained this way looks like 
   \[\alpha=\slb \underbrace{1/\sqrt s_x,\ldots,\textcolor{red}{\mathbf 0},\ldots 1/\sqrt s_x}_{s_x\text{ many }},\underbrace{0,\ldots,\textcolor{red}{\mathbf{1/\sqrt{s_x}}},\ldots,0}_{p-s_x\text{ many }}\srb.\]
 Let $\mE$ be the class, which consists of $\alk$, and all such resulting $\alpha$'s. Note that  $|\mE|=s_x(p-s_x)$, and  $\alpha_1,\alpha_2\in\mE$ satisfy
    \[\|\alpha_1-\alpha_2\|^2_2\leq \|\alpha_1-\alpha_0\|^2_2+\|\alpha_2-\alpha_0\|^2_2\leq 4 s_x^{-1}.\]
 Because $p>s_x>1$, we have \[\log(s_x(p-s_x)-1)\geq \log(p-s_x).\] Therefore,  \eqref{ineq: fano} leads to
 \begin{align*}
    \MoveEqLeft  \inf_{\widehat{D}_\alpha}  \sup_{\PP\in  \mM(s_x,s_y,\rho,\mE)}\PP\slb\widehat D_{\alpha}\neq D(\alpha)\srb\\
      \geq&\    1- \frac{4\rho^2 n s_x^{-1}/(1-\rho^2)+\log 2}{\log (p-s_x)},
 \end{align*}
which is bounded below by $1/2$  whenever
   \[s_x>\frac{8\rho^2 n}{(1-\rho^2)\{\log(p-s_x)-\log 4\}},\]
   which follows if 
   \[s_x>\frac{16\rho^2 n}{(1-\rho^2)\log(p-s_x)}\]
 because $4=\sqrt{16}<\sqrt{p-s_x}$. To get the best bound on $s_x$, we choose the value of $\rho$ which minimizes $\rho^2/(1-\rho^2)$ for $\PP\in\mod$, that is $\rho=1/\B$.
Plugging in $\rho=1/\B$, the proof follows.
\subsection{Proof of part (B)}
Suppose each $\alpha\in\mE$  is of the following form 
\[\alpha=\slb \underbrace{ b,\ldots, b}_{s_x-1\text{ many }},\underbrace{0,\ldots,0,\textcolor{red}{z},0,\ldots,0}_{p-s_x+1\text{ many }}\srb.\]
We fix $z\in(0,1)$, and hence $b=\sqrt{(1-z^2)/(s_x-1)}$ is also fixed.  We will choose the value of $\rho$ and $z$ later so that $\modsig\supset \mM(s_x,s_y,\rho,\mE)$.
Since $z$ is fixed, such an $\alpha$ can be chosen in $p-s_x+1$ ways. Therefore $|\mE|=p-s_x+1$. Also note that for $\alpha,\alpha'\in\mE$, 
$\|\alpha-\alpha'\|_2^2\leq 2z^2$.
Therefore \eqref{ineq: fano} implies
\begin{align}\label{ineq: fano: 2}
  \MoveEqLeft \inf_{\widehat{D}_\alpha}  \sup_{P\in  \mM(s_x,s_y,\rho,\mE)}P\slb\widehat D_{\alpha}
   \neq  D(\alpha)\srb\\
   \geq &\ 1-\dfrac{2n\rho^2z^2/(1-\rho^2)+\log 2}{\log(p-s_x)},  
  \end{align}
 which is greater than $1/2$  whenever
 \[z^2<\frac{1-\rho^2}{4n\rho^2}\log(\frac{p-s_x}{4}),\]
which holds if  \[z^2=\frac{1-\rho^2}{8n\rho^2}\log(p-s_x)\]
 because $16<p-s_x$.
 To get the best bound on $z$, we choose the value of $\rho$ for $\PP\in\mod$ which maximizes $(1-\rho^2)/\rho^2$, that is $\rho=1/\B$. Thus  \eqref{intheorem: thm 3: end goal} is satisfied when $\rho=1/\B$, and $\mE$ corresponds to \[z^2=(\B^2-1)\log(p-s_x)/(8n).\] Since the minimal signal strength $\Sig_x$ for any $\PP\in\mM(s_x,s_y,\B^{-1},\mE)$ equals $\min(z,b)\leq z$, we have $\modsig\supset \mM(s_x,s_y,\B^{-1},\mE)$, which
 completes the proof.

\section{Proof of Theorem~\ref{thm: low deg}}
\label{sec; proof of low deg}
We first introduce some notations and terminologies that are required for the proof.
For $\myv\in\ZZ^m$, and $x\in \RR^m$, we denote $\myv!=\prod_{i=1}^{m}\myv_i!$ and  $x^\myv=\prod_{i=1}^{m}x_i^{\myv_i}$. In low-degree polynomial literature, when $\myv\in\ZZ^m$,  the notation $|\myv|$ is commonly used to denote the sum $\sum_{i=1}^m \myv_i$ for sake of simplicity. We also follow the above convention. 
Here the notation $|\cdot|$ should not be confused with the  absolute value of real numbers.
   Also, for any function $f:\RR^m\mapsto\RR$, $\myv\in\ZZ^m$, and $t=(t_1,\ldots,t_m)$, we denote 
   \[\partial_t^{\myv} f(t)=\frac{\partial^{|\myv|}}{\partial t_1^{\myv_1}\cdots\partial t_r^{\myv_r}}f(t).\]
  We will also use the shorthand notation $\E_{\pi}$ to denote $\E_{\alpha\sim\pi_x,\beta\sim\pi_y}$ sometimes.

   Our analysis relies on the Hermite polynomial, which we will discuss here very briefly. For a  detailed  account on the Hermite polynomials, see Chapter V of  \cite{szego1939}. The univariate Hermite polynomials of degree $k$ will be denoted by $h_k$. For $k\geq 0$, the univariate Hermite polynomials $h_k:\RR\mapsto\RR$ are defined recursively as follows:
   \begin{align*}
      \MoveEqLeft  h_0(x)=1,\quad h_1(x)=xh_0(x),\quad\ldots,\\
      &\ h_{k+1}(x)=xh_k(x)-h_k'(x).
   \end{align*}
   The normalized univariate Hermite polynomials are given by $\widehat h_k(x)=h_k(x)/\sqrt{k!}$. The univariate Hermite polynomials  form an orthogonal basis of $L_2(N(0,1))$. 
 For $\myv\in\ZZ^m$, the $m$-variate Hermite polynomials are given by $H_{\myv}(y)=\prod_{i=1}^{m}h_{\myv_i}(y_i)$, where $y\in\RR^m$. The normalized version $\widehat H_{\myv}$ of  $H_{\myv}$ equals 
   $H_{\myv}/\sqrt{\myv!}$. The polynomials $\widehat H_{\myv}$'s form an orthogonal basis of $L_2(N_m(0,I_m))$. We denote by $\Pi_n^{\leq D_n}$ the linear span of all $n(p+q)$-variate Hermite polynomials of degree at most $D_n$.
     Since $\LL_n^{\leq D_n}$ is the projection of $\LL_n$ on $\Pi^{\leq D_n}$, it then follows that
   \begin{equation}\label{eq: low deg: projection}
    \|\LL_n^{\leq D_n}\|^2_{L_2(\QQ_n)}=\sum_{\substack{\myv\in \ZZ^{n(p+q)}\\ |\myv|\leq D_n}}\langle \LL_n, \widehat H_\myv\rangle_{L^2(\QQ_n)}^2.   
   \end{equation}
From now on, the degree-index vector $w$ of $\widehat H_w$ or $H_w$ will be assumed to lie in $\ZZ^{n(p+q)}$. We will partition $\myv$ into $n$ components, which gives $\myv=(\myv_1,\ldots,\myv_n)$, where $\myv_i\in\ZZ^{p+q}$ for each $i\in[n]$. Clearly,  $i$ here  corresponds to the $i$-th observation. We also separate each $w_i$ into two parts $\myv_i^x\in \ZZ^p$ and $\myv_i^y\in\ZZ^q$ so that $\myv_i=(\myv_i^x,\myv_i^y)$. We will also denote $w^x=(w^x_1,\ldots,w^x_n)$, and $w_y=(w^y_1,\ldots,w^y_n)$. Note that $w^x\in\ZZ^{np}$ and $w^y\in\ZZ^{nq}$, but $w\neq (w^x,w^y)$ in general, although $|w|=|w^x|+|w^y|$.

Now we state the main lemmas which yields the value of $\|\LL_n^{\leq D_n}\|^2_{L_2(\QQ_n)}$. The first lemma,  proved in Appendix~\ref{sec: add lemmas: low-deg}, gives the form of the inner products $ \langle \LL_n, \widehat H_\myv\rangle_{L^2(\QQ_n)}$.
   
   \begin{lemma}\label{lemma: low deg: dot product}
   Suppose $\myv$ is as defined above and $\LL_n$ is as in \eqref{def: def of LL n}. Then it holds that
   \begin{align*}
    \MoveEqLeft  \langle \LL_n, \widehat H_\myv\rangle_{L^2(\QQ_n)}^2\\
    =&\ \begin{cases}\frac{\B^{-|\myv|}}{\myv!}\lbs\E_{\pi}\lbt 1\{\|\alpha\|_2\|\beta\|_2<\B\}\alpha^{\sum_{i=1}^n\myv^{x}_i}\beta^{\sum_{i=1}^{n}\myv^{y}_i}\rbt\rbs^2  \\
    \times\lb\prod_{i=1}^n{|\myv^{x}_i|!}\rb^2 \quad \text{ if }\quad |\myv^{x}_i|=|\myv^{y}_i|\text{ for all } i\in[n],\\
  0 \quad \text{ o.w.}\end{cases} 
   \end{align*}
  Here the priors $\pi_x$ and $\pi_y$ are the Rademacher priors defined in \eqref{prior: radamander}.
   \end{lemma}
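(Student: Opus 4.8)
The plan is to compute the Hermite coefficient $\langle\LL_n,\widehat H_\myv\rangle_{L_2(\QQ_n)}$ by first passing from the likelihood ratio to the planted mixture, and then reducing to a single Gaussian moment via the exponential generating function of the Hermite polynomials. First, since $\LL_n=d\PP_n/d\QQ_n$ with $\PP_n=\E_{\alpha\sim\pi_x,\beta\sim\pi_y}\PP_{n,\alpha,\beta,\B}$ the mixture, and since $\pi_x,\pi_y$ have finite support while Hermite polynomials have all Gaussian moments, Fubini yields
\[
\langle\LL_n,\widehat H_\myv\rangle_{L_2(\QQ_n)}=\E_{\PP_n}[\widehat H_\myv(\X,\Y)]=\E_{\alpha\sim\pi_x,\beta\sim\pi_y}\big[\E_{\PP_{n,\alpha,\beta,\B}}[\widehat H_\myv(\X,\Y)]\big].
\]
When $\|\alpha\|_2\|\beta\|_2\ge\B$ we have $\PP_{n,\alpha,\beta,\B}=\QQ_n$, and orthonormality of the $\widehat H_\myv$'s in $L_2(\QQ_n)$ forces $\E_{\QQ_n}[\widehat H_\myv]=0$ for every $\myv\neq 0$; this is precisely where the indicator $1\{\|\alpha\|_2\|\beta\|_2<\B\}$ enters. (The degenerate index $\myv=0$, for which $\langle\LL_n,\widehat H_0\rangle=\E_{\QQ_n}[\LL_n]=1$, is treated separately and plays no role in the sequel, so the displayed formula is understood for $\myv\neq 0$.) I expect this bookkeeping around the prior mass assigned to parameters for which $\Sigma(\alpha,\beta,1/\B)$ is not positive definite to be the only genuinely delicate point — the subtlety flagged before the statement — while the remaining steps are routine.

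For $\|\alpha\|_2\|\beta\|_2<\B$ the measure $\PP_{n,\alpha,\beta,\B}$ is the $n$-fold product of $N_{p+q}(0,\Sigma(\alpha,\beta,1/\B))$ and $\widehat H_\myv=\prod_{i=1}^n\widehat H_{\myv_i}$ factorizes over observations, so $\E_{\PP_{n,\alpha,\beta,\B}}[\widehat H_\myv]=\prod_{i=1}^n\E_{N_{p+q}(0,\Sigma(\alpha,\beta,1/\B))}[H_{\myv_i}(Z)]/\sqrt{\myv_i!}$, and it suffices to evaluate $\E_{N_{p+q}(0,\Sigma(\alpha,\beta,\rho))}[H_\myv(Z)]$ for a single vector with $\rho=1/\B$ and $\myv=(\myv^x,\myv^y)$. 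I would use the identity $\sum_{\myv}\frac{t^\myv}{\myv!}H_\myv(z)=\exp(\langle t,z\rangle-\tfrac12\|t\|_2^2)$: taking $\E_{N(0,\Sigma)}$ and using that $\Sigma(\alpha,\beta,\rho)-I_{p+q}$ has only the off-diagonal blocks $\rho\alpha\beta^T$, so that $\tfrac12 t^T(\Sigma(\alpha,\beta,\rho)-I_{p+q})t=\rho\langle t^x,\alpha\rangle\langle t^y,\beta\rangle$ for $t=(t^x,t^y)$, gives
\[
\E_{N(0,\Sigma(\alpha,\beta,\rho))}\!\big[e^{\langle t,Z\rangle-\frac12\|t\|_2^2}\big]=e^{\rho\langle t^x,\alpha\rangle\langle t^y,\beta\rangle}=\sum_{m\ge 0}\frac{\rho^m}{m!}\langle t^x,\alpha\rangle^m\langle t^y,\beta\rangle^m.
\]
Expanding each power by the multinomial theorem and matching the coefficient of $t^\myv$ on the two sides yields $\E_{N(0,\Sigma(\alpha,\beta,\rho))}[H_\myv(Z)]=\rho^{|\myv^x|}|\myv^x|!\,\alpha^{\myv^x}\beta^{\myv^y}$ when $|\myv^x|=|\myv^y|$, and $0$ otherwise.

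Finally I would assemble the pieces: multiplying the single-observation formula over $i\in[n]$ forces $|\myv_i^x|=|\myv_i^y|$ for every $i$ (else the coefficient vanishes), and gives $\E_{\PP_{n,\alpha,\beta,\B}}[\widehat H_\myv]=\rho^{\sum_i|\myv_i^x|}\big(\prod_i|\myv_i^x|!\big)\,\alpha^{\sum_i\myv_i^x}\beta^{\sum_i\myv_i^y}/\sqrt{\myv!}$, using $\prod_i\alpha^{\myv_i^x}=\alpha^{\sum_i\myv_i^x}$ and $\sqrt{\myv!}=\prod_i\sqrt{\myv_i!}$. Taking $\E_\pi$ (the factors $\rho^{|\myv_i^x|}$, $|\myv_i^x|!$, $\myv!$ are deterministic) reinstates the indicator as in Step~1, and squaring, together with $\sum_i|\myv_i^x|=|\myv^x|=|\myv|/2$ under the degree constraints so that $\rho^{2\sum_i|\myv_i^x|}=\rho^{|\myv|}=\B^{-|\myv|}$, produces exactly the stated expression for $\langle\LL_n,\widehat H_\myv\rangle_{L_2(\QQ_n)}^2$.
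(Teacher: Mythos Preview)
Your proposal is correct and follows essentially the same route as the paper's proof: the paper also passes to $\E_\pi[\E_{\PP_{n,\alpha,\beta}}[H_\myv]]$, handles the $\|\alpha\|_2\|\beta\|_2\ge\B$ case via $\PP_{\alpha,\beta}=\QQ$, and extracts the single-observation Hermite coefficient from the same exponential generating function, packaging the last two steps as separate auxiliary lemmas (one giving $\sum_j\frac{t^j}{j!}\E[H_j(\Sigma^{1/2}Z)]=e^{t^T(\Sigma-I)t/2}$, the other computing the $\partial_t^\myv$ derivative at $t=0$ via the multinomial expansion). Your observation about the $\myv=0$ edge case is a genuine (minor) imprecision that the paper does not address explicitly.
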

  Our next lemma uses Lemma~\ref{lemma: low deg: dot product} to give the form of $\|\LL_n^{\leq D_n}\|^2_{L_2(\QQ_n)}$. This lemma uses replicas of $\alpha$ and $\beta$. Suppose $\alpha_1,\alpha_2\sim\pi_x$ and $\beta_1,\beta_2\sim\pi_y$ are all independent Rademacher priors, where $\pi_x$ and $\pi_y$ are defined as in \eqref{prior: radamander}. We overload notation, and use $\E_\pi$ to denote the expectation under $\alpha_1$, $\alpha_2$, $\beta_1$, and $\beta_2$.
     \begin{lemma}\label{lemma: low deg: projection}
     Suppose $\myW$ is the indicator function of  the event
 $\{\|\alpha_1\|_2\|\beta_1\|_2<\B,\ \|\alpha_2\|_2\|\beta_2\|_2<\B\}$. Then 
    For any $D_n\in\NN$, $\|\LL_n^{\leq D_n}\|^2_{L_2(\QQ_n)}$ equals
   \[ \E_{\pi}\lbt W\sum_{d=0}^{\floor*{D_n/2}}{d+n-1\choose d}\lbs \B^{-2}(\alpha_1^T\alpha_2)(\beta_1^T\beta_2)\rbs^d\rbt.\]
   \end{lemma}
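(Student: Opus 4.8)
The plan is to compute $\|\LL_n^{\leq D_n}\|^2_{L_2(\QQ_n)}$ directly from the Parseval identity \eqref{eq: low deg: projection} by feeding in the explicit inner products from Lemma~\ref{lemma: low deg: dot product}. The first step is pure bookkeeping. Writing $\myv=(\myv_1,\dots,\myv_n)$ with $\myv_i=(\myv_i^x,\myv_i^y)\in\ZZ^p\times\ZZ^q$ and setting $d_i:=|\myv_i^x|$, Lemma~\ref{lemma: low deg: dot product} tells us that $\langle\LL_n,\widehat H_{\myv}\rangle^2_{L^2(\QQ_n)}$ vanishes unless $|\myv_i^x|=|\myv_i^y|=d_i$ for every $i\in[n]$, and for such $\myv$ one has $|\myv|=2\sum_i d_i$; hence the truncation $|\myv|\le D_n$ is equivalent to $\sum_i d_i\le\floor*{D_n/2}$, which is the source of the upper limit in the claimed formula.

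The second step is to remove the square appearing in Lemma~\ref{lemma: low deg: dot product} via the standard replica trick. Since the indicator $\mathbf 1\{\|\alpha\|_2\|\beta\|_2<\B\}$ is idempotent, the square of an expectation over a single draw $(\alpha,\beta)\sim\pi$ equals one expectation over two independent replicas $(\alpha_1,\beta_1),(\alpha_2,\beta_2)\sim\pi$; this produces exactly the factor $\myW$ of the statement, multiplied by $\prod_k(\alpha_{1,k}\alpha_{2,k})^{(\sum_i\myv_i^x)_k}\prod_\ell(\beta_{1,\ell}\beta_{2,\ell})^{(\sum_i\myv_i^y)_\ell}$. After interchanging the (finite) sum over $\myv$ with $\E_\pi$, the summand factorizes across the $n$ indices $i$, and for each fixed $d_i$ the multinomial theorem collapses the inner sum $\sum_{|\myv_i^x|=d_i}(d_i!/\myv_i^x!)\prod_k(\alpha_{1,k}\alpha_{2,k})^{(\myv_i^x)_k}$ to $(\alpha_1^T\alpha_2)^{d_i}$, and likewise the $\beta$-part collapses to $(\beta_1^T\beta_2)^{d_i}$. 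The powers of $\B$ combine to $\B^{-2\sum_i d_i}$, since each paired index contributes to both $|\myv_i^x|$ and $|\myv_i^y|$.

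The last step is to regroup the resulting sum over $(d_1,\dots,d_n)\in\ZZ^n$ by its total $d:=\sum_i d_i$: every such tuple contributes the same quantity $\big[\B^{-2}(\alpha_1^T\alpha_2)(\beta_1^T\beta_2)\big]^d$, and the number of tuples in $\ZZ^n$ with prescribed sum $d$ is ${d+n-1\choose d}$ by stars and bars, so altogether
\[
\|\LL_n^{\leq D_n}\|^2_{L_2(\QQ_n)}=\E_{\pi}\!\left[\myW\sum_{d=0}^{\floor*{D_n/2}}{d+n-1\choose d}\Big(\B^{-2}(\alpha_1^T\alpha_2)(\beta_1^T\beta_2)\Big)^{d}\right],
\]
which is the claim. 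I do not expect a genuine analytic obstacle: every interchange is over a finite sum and every identity is elementary. The step demanding the most care is the first one — correctly tracking how the pairing constraint $|\myv_i^x|=|\myv_i^y|$ forces each per-$i$ factor to depend only on $d_i$ and converts the degree cutoff into $\floor*{D_n/2}$, and checking that the indicator decouples cleanly into $\myW$ under the replica substitution rather than entangling the two replicas. The indicator $\myW$ is present precisely because the Rademacher priors place positive mass on pairs with $\|\alpha\|_2\|\beta\|_2\ge\B$, for which $\PP_{\alpha,\beta}=\QQ$; this is the one substantive deviation from the sparse-PCA argument of \cite{ding2019}, where the prior is supported on valid covariances.
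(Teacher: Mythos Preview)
Your proposal is correct and follows essentially the same approach as the paper: Parseval with Lemma~\ref{lemma: low deg: dot product}, the replica trick to open the square (producing $\myW$), the multinomial theorem to collapse each per-$i$ sum over $\myv_i^x,\myv_i^y$ to $(\alpha_1^T\alpha_2)^{d_i}(\beta_1^T\beta_2)^{d_i}$, and stars and bars to count compositions of $d$ into $n$ nonnegative parts. Your handling of the degree cutoff via the parity constraint $|\myv|=2\sum_i d_i$ matches the paper's observation that odd-degree terms vanish so that $\|\LL_n^{\le D_n}\|^2=\|\LL_n^{\le 2\floor*{D_n/2}}\|^2$.
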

 The proof of Lemma~\ref{lemma: low deg: projection} is also deferred to 
   Appendix~\ref{sec: add lemmas: low-deg}.
 We remark in passing that the  negative binomial series expansion yields 
 \begin{equation}\label{expansion: negative binomial}
     (1-x)^{-n}=\sum_{d=0}^{\infty}{n+d-1\choose d}x^d,\quad\text{for } |x|<1,
 \end{equation}
   whose  $D_n$-th order truncation equals
   \[\lb(1-x)^{-n}\rb^{\leq D_n}=\sum_{d=0}^{D_n}{n+d-1\choose d}x^d.\]
  Note that  $W$ is nonzero if and only if $\|\alpha_1\|_2\|\beta_1\|_2<\B$ and $\|\alpha_2\|_2\|\beta_2\|_2<\B$, which, by Cauchy Schwarz inequality, implies 
  \[|(\alpha_1^T\alpha_2)(\beta_1^T\beta_2)|<\B^2.\]
  Thus $|\B^{-2}(\alpha_1^T\alpha_2)(\beta_1^T\beta_2)|<1$ when $W=1$. Hence Lemma~\ref{lemma: low deg: projection} can also be written as 
  \begin{align*}
      \MoveEqLeft \|\LL_n^{\leq D_n}\|^2_{L_2(\QQ_n)}\\
      =&\ \E_{\pi}\left [W\lbs\lb  1-\B^{-2}(\alpha_1^T\alpha_2)(\beta_1^T\beta_2)\rb^{-n}\rbs^{\leq \floor*{D_n/2}}\right ].
  \end{align*}
 Now we are ready to prove Theorem~\ref{thm: low deg}.

\begin{proof}[Proof of Theorem~\ref{thm: low deg}]
Our first task is to get rid of $W$ from the expression of $\|\LL_n^{\leq D_n}\|_{L_2(\QQ_n)}$ in Lemma~\ref{lemma: low deg: projection}.
However, we can not directly bound $W$ by one since the term $(\alpha_1^T\alpha_2)^{d}(\beta_1^T\beta_2)^{d}W$ may be negative for odd  $d\in\NN$.
We claim that 
$\E[(\alpha_1^T\alpha_2)^{d}(\beta_1^T\beta_2)^{d}W]=0$ if $d\in\NN$ is odd.
To see this, first we write
\begin{align}\label{intheorem: low deg: claim for odd}
\E\slbt (\alpha_1^T\alpha_2)^{d}(\beta_1^T\beta_2)^{d}W\srbt= \E\slbt\E\slbt (\alpha_1^T\alpha_2)^{d}W\sbl \beta_1,\beta_2\srbt(\beta_1^T\beta_2)^{d}\srbt.   
\end{align}
Note that 
$(\alpha_1^T\alpha_2)^{d}W\mid \beta_1,\beta_2$
has the same distribution as 
\begin{align*}
 1\{\|\alpha_1\|_2<\B\|\beta_1\|_2^{-1}\}1\{\|\alpha_2\|_2<\B\|\beta_2\|_2^{-1}\}(\alpha_1^T\alpha_2).   
\end{align*}
Notice from \eqref{prior: radamander} that marginally,  $\alpha_1\stackrel{d}{=}-\alpha_1$, and $\alpha_1$ is independent of $\alpha_2$,  $\beta_1$ and $\beta_2$. Therefore,
\[(\alpha_1^T\alpha_2)W\mid \beta_1,\beta_2\stackrel{d}{=}-(\alpha_1^T\alpha_2)W\mid \beta_1,\beta_2.\]
Hence, conditional on $\beta_1$ and $\beta_2$, $(\alpha_1^T\alpha_2)W$ is a symmetric random variable,  and $\E[(\alpha_1^T\alpha_2)^{d}W^d\mid \beta_1,\beta_2]=0$ for any odd positive integer $d$. Since $W$ is binary random variable, $W^d=W$. Thus, $\E[(\alpha_1^T\alpha_2)^{d}W\mid \beta_1,\beta_2]=0$ as well for an odd number $d\in\NN$. Thus the claim follows from \eqref{intheorem: low deg: claim for odd}.
Therefore, from Lemma~\ref{lemma: low deg: projection}, it follows that 
\begin{align*}
 \MoveEqLeft   \|\LL_n^{\leq D_n}\|^2_{L_2(\QQ_n)}\\
    =&\ \E_{\pi}\lbt W\sum_{d=0}^{\floor*{\frac{\floor*{\frac{D_n}{2}}}{2}}}{2d+n-1\choose 2d}\lbs \B^{-2}(\alpha_1^T\alpha_2)(\beta_1^T\beta_2)\rbs^{2d}\rbt. 
\end{align*}
Observe that $\floor*{\floor*{D_n/2}/2}\leq D_n/4$. Hence, $\floor*{\floor*{D_n/2}/2}\leq \floor*{D_n/4}$. Also the summands in the last expression are non-negative. Therefore, using the fact that $W\leq 1$, we obtain
\begin{align}\label{intheorem: low deg: ub}
 \MoveEqLeft  \|\LL_n^{\leq D_n}\|^2_{L_2(\QQ_n)}\nn\\
   \leq&\ \E_{\pi}\lbt \sum_{d=0}^{\floor*{D_n/4}}{2d+n-1\choose 2d}\lbs \B^{-2}(\alpha_1^T\alpha_2)(\beta_1^T\beta_2)\rbs^{2d}\rbt. 
\end{align}
Our next step is to simplify the above bound on $\|\LL_n^{\leq D_n}\|^2_{L_2(\QQ_n)}$.
To that end, define the random variables
 $\xi_{i}=\alpha_{1i}\alpha_{2i}$ for $i\in[p]$, and $\xi'_j=\beta_{1j}\beta_{2j}$ for $j\in[q]$.
Denoting
  \[\nu=({s_x}/{p})^2,\quad\text{ and }\quad\omega=(s_y/q)^2,\]
we note that
 \begin{align*}
     \xi_{i}=\begin{cases}
     \frac{+1}{s_x} & w.p.\ \nu/2\\
       \frac{-1}{s_x} & w.p.\ \nu/2\\
      0 & w.p.\ 1-\nu,
     \end{cases}\quad\text{and}\quad 
      \xi'_{j}=\begin{cases}
     \frac{+1}{s_y} & w.p.\ \omega/2\\
     \frac{-1}{s_y} & w.p.\ \omega/2\\
      0 & w.p.\ 1-\omega.
     \end{cases}
 \end{align*}
  Also, since $\xi_{i}$ and $\xi_j$'s are symmetric, $\E\xi_{i}^{2k+1}$ and $\E\xi_j^{2k+1}$ vanishes for any $k\in\ZZ$.
Then  for any $d\in\ZZ$,
\begin{align*}
  \MoveEqLeft \E_{\pi}\lbt (\alpha_1^T\alpha_2)^{2d}(\beta_1^T\beta_2)^{2d}\rbt\\
    = &\ E_{\pi_x}\lbt\lb\sum_{i=1}^{p}\xi_i\rb ^{2d}\rbt E_{\pi_y}\lbt\lb\sum_{j=1}^q   \xi'_{j}\rb^{2d}\rbt\\
    =&\ \lb\sum_{\substack{z\in\ZZ^{p},\\
    |z|=2d}}\frac{(2d)!}{z!}\prod_{i=1}^p
    \E\slbt\xi_{i}^{z_i}\srbt\rb\lb
    \sum_{\substack{l\in\ZZ^{q},\\
    |l|=2d}}\frac{(2d)!}{l!}\prod_{j=1}^q
    \E\slbt(\xi'_{j})^{l_j}\srbt\rb
\end{align*}
by Fact~\ref{fact: multinomial theorem}. Since the odd moments of $\xi$ and $\xi'$ vanish,
the above equals
\begin{align*}
 \MoveEqLeft   \lb\sum_{\substack{z\in\ZZ^{p},\\
    |z|=d}}\frac{(2d)!}{(2z)!}\prod_{i=1}^p
    \E\slbt\xi_{i}^{2z_i}\srbt\rb\lb
    \sum_{\substack{l\in\ZZ^{q},\\
    |l|=d}}\frac{(2d)!}{(2l)!}\prod_{j=1}^q
    \E\slbt(\xi'_{j})^{2l_j}\srbt\rb\\
    =&\ \lb\sum_{\substack{z\in\ZZ^{p},\\
    |z|=d}}\frac{\nu^{|D(z)|}(2d)!}{(2z)!}\prod_{i=1}^p
    s_x^{-2z_i}\rb\\
    &\ \times\lb
    \sum_{\substack{l\in\ZZ^{q},\\
    |l|=d}}\frac{\omega^{|D(z)|}(2d)!}{(2l)!}\prod_{j=1}^q
    s_y^{-2l_j}\rb,
\end{align*}
where we remind the readers that $|D(z)|$ denotes the cardinality of the support of $z$ for any vector $z$. The above implies
\begin{align*}
    \MoveEqLeft \E_{\pi}\lbt (\alpha_1^T\alpha_2)^{2d}(\beta_1^T\beta_2)^{2d}\rbt\\
     =&\ (s_xs_y)^{-2d}\underbrace{\sum_{\substack{z\in\ZZ^{p},\\
    |z|=d}}\frac{(2d)!}{(2z)!}\nu^{|D(z)|}}_{\mJ(d;p)}\underbrace{\sum_{\substack{l\in\ZZ^{q},\\
    |l|=d}}\frac{(2d)!}{(2l)!}\nu^{|D(l)|}}_{\mJ(d;q)}.
\end{align*}

Plugging the above into \eqref{intheorem: low deg: ub} yields
 \begin{align*}
   \MoveEqLeft  \|\LL_n^{\leq D_n}\|^2_{L_2(\QQ_n)}\\
     \leq&\ \sum_{d=0}^{\floor*{D_n/4}}{2d+n-1\choose 2d}(s_xs_y)^{-2d}\B^{-4d} \mJ(d;p)\mJ_d(d;q)\\
     \stackrel{(a)}{\leq} &\ \sum_{d=0}^{\floor*{D_n/4}}\lb \frac{(2d+n-1)e}{2d}\rb^{2d}(s_xs_y)^{-2d}\B^{-4d} \mJ(d;p)\mJ(d;q),
 \end{align*}
 where (a) follows since ${a \choose b}\leq (ae/b)^b$ for $a,b\in\NN$. 
 Let us denote $\mu_x=\sqrt{ne}/(\sqrt{p}\B)$ and  $\mu_y=\sqrt{ne}/(\sqrt{q}\B)$.
 By \eqref{condition: low deg: spasity},  $\mu_x,\mu_y<1/\sqrt 3$
and
\[D_n\leq \frac{\min\{s_y^2,s_x^2\}\B^2}{ne}.\]
Therefore we have
\[ \mu_x^2D_n< \frac{s_x^2}{p}\quad\text{ and }\quad \mu_y^2 D_n<\frac{s_y^2}{q}.
\]
   Hence  Lemma 4.5 of \cite{ding2019} implies that for any $11\leq d\leq D_n$,
 \[\mJ(d;p)\lesssim (2d)!{p\choose d}\sqrt d e^{d^2/p+d/2}2^{-3d/2}\mu_x^{-2d}\nu^d,\]
 \[\mJ(d;q)\lesssim (2d)!{q\choose d}\sqrt d e^{d^2/q+d/2}2^{-3d/2}\mu_y^{-2d}\omega^d.\]
 For $d\geq 1$, Theorem 5 of \cite{debnath2000} gives
 \[(2d)!\leq \frac{(2d)^{2d+1}e^{-2d}\sqrt{2\pi}}{\sqrt{2d-1}}.\]
Also since ${p\choose d}\leq (pe/d)^d$, we have
  \[\mJ(d;p)\lesssim (2d)^{2d+1/2}\lb\frac{pe}{d}\rb^d\sqrt d e^{d^2/p-d}2^{-3d/2}\mu_x^{-2d}\nu^d,\]
  \[\mJ(d;q)\lesssim (2d)^{2d+1/2}\lb\frac{qe}{d}\rb^d\sqrt d e^{d^2/q-d}2^{-3d/2}\mu_y^{-2d}\omega^d,\]
  leading to
  \[\mJ(d;p)\mJ(d;q)\lesssim d^{2d+2}e^{d^2/p+d^2/q}2^{d+1}(\mu_x\mu_y)^{-2d}(\nu p)^d(\omega q)^d.\]
 Therefore $\|\LL_n^{\leq D}\|^2_{L_2(\QQ_n)}$ is bounded by a constant multiple of 
 \begin{align*}
\MoveEqLeft \sum_{d=11}^{\floor*{D_n/4}}\lb \frac{(2d+n-1)e}{2d}\rb^{2d} (s_xs_y)^{-2d}d^{2d+2}e^{d^2/p+d^2/q}\\
&\ \times 2^{d+1}(\mu_x\mu_y)^{-2d}(\nu p)^d(\omega q)^d\B^{-4d}\\
\lesssim &\ \sum_{d=11}^{\floor*{D_n/4}} d \lbs 
\frac{\B^{-4} (2d+n-1)^2e^2 }{2\mu_x^2\mu_y^2 pq}\rbs^de^{d^2/p+d^2/q}.
 \end{align*}
Since $D_n^2\leq \min\{p,q\}$, it follows that  $e^{d^2/p+d^2/q}\leq e^2$. Note that the above sum converges if
\begin{equation*}
 (D_n/2+n-1)^2e^2<2\B^4\mu_x^2\mu_y^2 pq=2n^2e^2,  
\end{equation*}
 or equivalently $(D_n/2+n-1)^2<2n^2$, which is satisfied for all  $n\in\NN$ since $D_n<n$. Thus the proof follows.
\end{proof}
 
\section{Proof of Theorem \ref{thm: CT matrix operator norm}}
\label{sec: Proof of CT}

We invoke the decomposition of $\hSxy$ in \eqref{representation: Sxy: model 3}. But first, we will derive a simplified form for the matrices $\H_1$ and $\H_2$ in  \eqref{representation: Sxy: model 3}. Note that we can write  $\H_1$  as 
\[\H_1=\Sx^{1/2}(I_p-\Sx^{1/2}U\Lambda U^T\Sx^{1/2})\Sx^{1/2}.\]
Let us  denote
\begin{equation}\label{def:Bx}
  B_x=\text{diag}(\underbrace{1-\Lambda_1, \ldots, 1-\Lambda_r}_{r\ \text{times}},\underbrace{1,\ldots,1}_{p-r\ \text{times}}).  
\end{equation}
Because $\Sx^{1/2}\tU$ is an orthogonal matrix, $\Sx^{1/2}\tU B_x \tU^T\Sx^{1/2}$ is a spectral decomposition, which leads to
\[\H_1=\Sx^{1/2}\slb \Sx^{1/2}\tU B_x \tU^T\Sx^{1/2}\srb^{1/2}\Sx^{1/2}=\Sx\tU B_x^{1/2} \tU^T\Sx.\]
Similarly, we can show that the matrix $\H_2$ in \eqref{representation: Sxy: model 3} equals $\Sy\tV B_y^{1/2} \tV^T\Sy$, where
\[B_y=\text{diag}(\underbrace{1-\Lambda_1, \ldots, 1-\Lambda_r}_{r\ \text{times}},\underbrace{1,\ldots,1}_{q-r\ \text{times}}).\]
Finally the fact that $\H_1= \Sx U\Lambda^{1/2}$ and $\W_2= \Sy V\Lambda^{1/2}$  in conjuction with \eqref{representation: Sxy: model 3} produces the following representation for $\tSxy=\Sx^{-1}\hSxy\Sy^{-1}$:
\begin{align*}
    \tSxy =&\ \frac{1}{n}\lbs U\Lambda^{1/2}\mZ^T\mZ\Lambda^{1/2}V^T+U\Lambda^{1/2}\mZ^T\mZ_2\Sy(\tV B_y\tV^T)\\
    &\ +(\tU B_y\tU^T)\Sx \mZ_1^T\mZ\Lambda^{1/2}V^T\\
    &\ +(\tU B_x\tU^T)\Sx \mZ_1^T\mZ_2\Sy(\tV B_y\tV^T)\rbs.
\end{align*}


Now recall the sets $E$, $F$, and $G$ defined in \eqref{def: E and F} and \eqref{def: G} in Section \ref{sec: CT}, and the decomposition of $\tSxy$ in \eqref{intheorem: CT operator: hsy decomp}.
From \eqref{intheorem: CT operator: hsy decomp} it follows that
\[\eta(\tSxy)=\eta(\mP_E\{\tSxy\})+\eta(\mP_F\{\tSxy\})+\eta(\mP_G\{\tSxy\}).\]

Recall that for any matrix $A\in\RR^{p\times q}$, and $S\subset [p]$, we denote by $A_{S^*}$ the matrix $\mP_{S\times[q]}\{A\}$. Then it is not hard to see that
$U_{E_1^*}=U$ and $V_{E_2^*}=V$, which leads to
\begin{align}\label{def: Operator: S1}
    \mS_1=&\ \frac{1}{n}\lbs  U\Lambda^{1/2}\mZ^T\mZ\Lambda^{1/2}V^T\nn\\
    &\ +U\Lambda^{1/2}\mZ^T\mZ_2\Sy\slb\tV B_y(\tV_{E_2*})^T\srb\nn\\
    &\ +(\tU_{E_1*} B_y\tU^T)\Sx \mZ_1^T\mZ\Lambda^{1/2}V^T\nn\\
    &\ +(\tU_{E_1*} B_x\tU^T)\Sx \mZ_1^T\mZ_2\Sy\slb\tV B_y(\tV_{E_2*})^T\srb\rbs.
\end{align}
Next, note that  $U_{F_1*}=0$ and  $V_{F_2*}=0$. Therefore, 
\begin{align}\label{def: Operator: S2}
    \mS_2=\frac{1}{n}\lbs(\tU_{F_1*} B_x\tU^T)\Sx \mZ_1^T\mZ_2\Sy\slb\tV B_y(\tV_{F_2*})^T\srb\rbs.
\end{align}
Finally, we note that $\mS_3=(\mathbf H_1+\mathbf H_2)$, where
\begin{align}\label{def: Operator: S3}
  \mathbf H_1= &\ \frac{1}{n}\lbs U\Lambda^{1/2}\mZ^T\mZ_2\Sy\slb\tV B_y(\tV_{F_2*})^T\srb\nn\\
  &\ +(\tU_{E_1*} B_x\tU^T)\Sx \mZ_1^T\mZ_2\Sy\slb\tV B_y(\tV_{F_2*})^T\srb\rbs
\end{align}
and
\begin{align*}
    \mathbf H_2= &\ \frac{1}{n}\lbs (\tU_{F_1*} B_y\tU^T)\Sx \mZ_1^T\mZ\Lambda^{1/2}V^T\\
    &\ +(\tU_{F_1*} B_x\tU^T)\Sx \mZ_1^T\mZ_2\Sy\slb\tV B_y(\tV_{E_2*})^T\srb\rbs.
\end{align*}


Here the term $\mS_1$ holds the information about  $\txy=U\Lambda V^T$.  Its elements are  not killed off by co-ordinate thresholding because it contains the  Wishart matrix $\mZ^T\mZ$ which concentrates around $I_r$ by  Bai-Yin law (cf. Theorem 4.7.1 of  of \cite{vershynin2020}). The only term that contributes to $\hSxy$ is $\mS_1$. 
Lemma~\ref{lemma: supp: s1} entails that $\eta(\mS_1)$ concentrates around $\txy$ in operator norm. The proof of Lemma~\ref{lemma: supp: s1} is  deferred to Appendix~\ref{Subsection: key lemma for CT}.
 \begin{lemma}\label{lemma: supp: s1}
  Suppose $s_x,s_y<n$.  Then 
  with probability $1-o(1)$,
  \begin{align*}
      \|\eta(\mS_1)-\txy\|_{op}\leq &\ \frac{\Thres \min\{s_x,s_y\}}{\sqn}\\
      &\ +C\B^2 \frac{\max\{\sqrt{s_x},\sqrt{s_y}\}}{\sqrt{n}}.
  \end{align*}
  \end{lemma}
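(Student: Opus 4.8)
The plan is to decompose $\eta(\mS_1)$ into a ``main'' piece and a ``noise'' piece, and to control each separately. Recall from \eqref{def: Operator: S1} that $\mS_1$ is the sum of four terms, the first of which is $U\Lambda^{1/2}(\mZ^T\mZ/n)\Lambda^{1/2}V^T \cdot n$ (before normalization); after dividing by $n$ this term is exactly $U\Lambda^{1/2}(\mZ^T\mZ/n)\Lambda^{1/2}V^T$, which concentrates around $\txy=U\Lambda V^T$ in operator norm. The key structural observation is that $\mS_1$ is supported on the small index block $E_1\times E_2$, which has only $s_x$ rows and $s_y$ columns. So first I would write $\mS_1 = A + N$, where $A := U\Lambda^{1/2}(\mZ^T\mZ/n)\Lambda^{1/2}V^T$ is the ``signal + Wishart fluctuation'' term and $N$ collects the remaining three terms of \eqref{def: Operator: S1}, each of which involves a cross-term $\mZ^T\mZ_2$ or $\mZ_1^T\mZ$ or $\mZ_1^T\mZ_2$ and is therefore genuinely ``noise'' of size $O(\sqrt{\max\{s_x,s_y\}/n})$ in operator norm by Lemma~\ref{lemma: thm 1: op of covariance mat} / Lemma~\ref{lemma: normal data matrix} (the ranks involved are at most $r \le \min\{s_x,s_y\}$, and the relevant dimensions after restricting to $E_1,E_2$ are $s_x,s_y$). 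Combined with \eqref{intheorem: 1: ineq: ub on H1 H2 W1 and}-type bounds and the fact that $\|\Lambda\|_{op}\le 1$, this gives $\|N\|_{op} \le C\B^2 \max\{\sqrt{s_x},\sqrt{s_y}\}/\sqrt n$ with probability $1-o(1)$.

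Next I would handle the effect of the soft-thresholding operator $\eta(\cdot;\Thres/\sqrt N)$. Using that soft thresholding is $1$-Lipschitz entrywise and shifts each entry by at most $\Thres/\sqrt N$, we have $\|\eta(\mS_1) - \mS_1\|_{\max} \le \Thres/\sqrt N$, hence (since $\mS_1$ and $\eta(\mS_1)$ are both supported on the $s_x \times s_y$ block $E$) the matrix $\eta(\mS_1)-\mS_1$ has Frobenius norm at most $\sqrt{s_x s_y}\,\Thres/\sqrt N$, and a cruder but better-aligned bound on the operator norm: any matrix supported on an $s_x \times s_y$ block with entries bounded by $\Thres/\sqrt N$ has operator norm at most $\Thres\sqrt{s_x s_y}/\sqrt N \le \Thres\min\{s_x,s_y\}/\sqrt N$ is too weak; instead one bounds $\|\eta(\mS_1)-\mS_1\|_{op} \le \sqrt{\|\eta(\mS_1)-\mS_1\|_{1,\infty}\|\eta(\mS_1)-\mS_1\|_{\infty,1}} \le \Thres\sqrt{s_x s_y}/\sqrt N$. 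Hmm — to get $\min\{s_x,s_y\}$ rather than $\sqrt{s_x s_y}$ one instead argues that $\eta$ applied to $\mS_1$ can only increase sparsity, so $\eta(\mS_1)$ has at most $s_x$ nonzero rows and $s_y$ nonzero columns, and then $\|\eta(\mS_1)-\mS_1\|_{op} \le \min\{s_x,s_y\}\cdot\|\eta(\mS_1)-\mS_1\|_{\max} \le \Thres\min\{s_x,s_y\}/\sqrt n$ (using $N = m+n \ge n$ and absorbing constants). Thus $\|\eta(\mS_1)-\mS_1\|_{op} \le \Thres\min\{s_x,s_y\}/\sqrt n$.

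Finally I would combine the pieces via the triangle inequality:
\[
\|\eta(\mS_1)-\txy\|_{op} \le \|\eta(\mS_1)-\mS_1\|_{op} + \|\mS_1 - A\|_{op} + \|A-\txy\|_{op} \le \frac{\Thres\min\{s_x,s_y\}}{\sqrt n} + \|N\|_{op} + \|A-\txy\|_{op}.
\]
The last term $\|A - \txy\|_{op} = \|U\Lambda^{1/2}(\mZ^T\mZ/n - I_r)\Lambda^{1/2}V^T\|_{op} \le \|\Sx^{-1/2}\|_{op}\|\Sy^{-1/2}\|_{op}\|\mZ^T\mZ/n - I_r\|_{op} \le \B \cdot C\sqrt{r/n}$ by the Bai--Yin law as invoked in the proof of Theorem~\ref{thm: support recovery: sub-gaussians}, and since $r \le \min\{s_x,s_y\} \le \max\{s_x,s_y\}$ this is absorbed into the $C\B^2\max\{\sqrt{s_x},\sqrt{s_y}\}/\sqrt n$ term. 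Putting everything together yields the claimed bound.

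\textbf{Main obstacle.} The routine parts are the triangle-inequality bookkeeping and the Bai--Yin / Wishart concentration. The step requiring the most care is the bound $\|N\|_{op} \le C\B^2\max\{\sqrt{s_x},\sqrt{s_y}\}/\sqrt n$: one must correctly track that after restriction to the index blocks $E_1$ and $E_2$, the relevant Gaussian data-matrix products $\mZ^T\mZ_2$, $\mZ_1^T\mZ$, $\mZ_1^T\mZ_2$ are sandwiched by matrices of bounded operator norm and of rank at most $r$ (respectively $\le s_x$, $\le s_y$), so that Lemma~\ref{lemma: normal data matrix} applies with $\max\{\sqrt b, t\}$ giving the $\sqrt{\max\{s_x,s_y\}}$ scaling after a union bound over nothing (it is already an operator-norm statement) — the subtlety is making sure the $\tU_{E_1*}B_x\tU^T\Sx$-type factors are handled by $\|\cdot\|_{op}$ bounds and do not secretly reintroduce a $\sqrt p$ or $\sqrt q$. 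This is exactly where the restriction to the support block $E$ is essential, and it is the crux of the lemma.
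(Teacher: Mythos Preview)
Your proposal is correct and follows essentially the same route as the paper's proof. Both arguments split via the triangle inequality into (i) the soft-thresholding perturbation $\|\eta(\mS_1)-\mS_1\|_{op}$, controlled using $|\eta(x)-x|\le \Thres/\sqrt n$ together with the fact that $\mS_1$ is supported on the $s_x\times s_y$ block $E_1\times E_2$, and (ii) $\|\mS_1-\txy\|_{op}$, handled by isolating the Wishart term $U\Lambda^{1/2}(\mZ^T\mZ/n-I_r)\Lambda^{1/2}V^T$ via Bai--Yin and bounding the three cross terms with Lemma~\ref{lemma: normal data matrix}; the paper does exactly this, labeling the pieces $T_1$ and $T_2=S_{11}+S_{12}+S_{13}+S_{14}$, and the ``main obstacle'' you identify (tracking that $\tU_{E_1*}B_x\tU^T\Sx$ and its analogues have rank $s_x$ or $s_y$ and operator norm $\le \B$ so that Lemma~\ref{lemma: normal data matrix} yields the $\max\{\sqrt{s_x},\sqrt{s_y}\}/\sqrt n$ scaling) is precisely the crux there as well.
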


The entries of $\mS_2$ and $\mS_3$ are linear combinations of  the entries of $\mZ_1^T\mZ_2$, $\mZ^T\mZ_1$, and $\mZ^T\mZ_2$. Since $\mZ$, $\mZ_1$, $\mZ_2$ are independent, the entries from the latter matrices are of order $O_p(n^{-1/2})$, and as we will see, they  are killed off by the thresholding operator $\eta$. Our main work boils down to showing that thresholding kills off most terms of the noise matrices $\mS_2$ and $\mS_3$, making $\|\eta(\mS_2)\|_{op}$ and $\|\eta(\mS_3)\|_{op}$ small. To that end, we state some general  lemmas,  which are proved in  Appendix~\ref{Subsection: key lemma for CT}.
That $\|\eta(\mS_2)\|_{op}$ and $\|\eta(\mS_3)\|_{op}$ are small follows as corollaries to this lemmas. Our next lemma provides a sharp concentration bound which is our main tool in analyzing the difficult regime, i.e.,  $s_x+s_y\approx \sqrt{p+q}$ case.



 



\begin{lemma}\label{lemma: sqrt n: M N lemma: rot}
Suppose $\mZ_1\in\RR^{n\times p}$ and $\mZ_2\in\RR^{n\times q}$ are independent standard Gaussian data matrices.  Let us also denote $\mQ=M\mZ_1^T\mZ_2 N$ where $M\in\RR^{p'\times p}$ and $N\in\RR^{q\times q'}$ are fixed matrices so that $p'\leq p$ and $q'\leq q$. Further suppose   $\log (p\vee q)=o(n)$ and $\log n=o(\sqrt p\vee \sqrt q)$.  Let $K_0={161}\|M\|^2_{op}\|N\|^2_{op}$.  Suppose $K\geq K_0$ is such that threshold level $\tau$ satisfies $\tau\in[\sqrt{K_0},  \sqrt{K\log (\max\{p,q\})}/2]$. 
Then there exists a constant $C>0$ so that with probability $1-o(1)$,
\begin{align*}
 \MoveEqLeft   \norm{\eta ( \mQ;\tau/\sqn)}_{op}\leq C\|M\|_{op}\|N\|_{op}\lb \sqrt{\frac{p+q}{n}}\vee\frac{p+q}{n}\rb\\
    &\ \times e^{-\tau^2/K}.
\end{align*}
\end{lemma}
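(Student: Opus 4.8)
Write $A:=\eta(\mQ;\tau/\sqn)$. We understand $\mZ_1,\mZ_2$ as scaled so that their columns have unit expected norm (equivalently, each $\mZ_i$ has i.i.d.\ $N(0,1/n)$ entries), consistently with the way $\hSxy$ appears in \eqref{representation: Sxy: model 3}; thus each entry of $\mZ_1^\top\mZ_2$ is of order $n^{-1/2}$. Since $x\mapsto\eta(x;t)$ is $1$-Lipschitz coordinatewise and sign preserving, $\|A\|_{op}\le\|\mQ\|_{op}$. The plan is to split on the size of $\tau$. When $\tau$ is below an absolute constant, $e^{-\tau^2/K}\ge e^{-\tau^2/K_0}$ is bounded below, and it suffices to prove the \emph{un-thresholded} estimate $\|\mQ\|_{op}\le\|M\|_{op}\|N\|_{op}\|\mZ_1^\top\mZ_2\|_{op}\le C\|M\|_{op}\|N\|_{op}\bigl(\sqrt{(p+q)/n}\vee(p+q)/n\bigr)$ with probability $1-o(1)$; this follows from Lemma~\ref{lemma: normal data matrix} with $D=I$, $B=I$ and $t\asymp\sqrt{\log(p\vee q)}$ (admissible since $\log(p\vee q)=o(n)$), swapping $p$ and $q$ if needed to meet its rank hypothesis. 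Henceforth assume $\tau$ exceeds that constant, so the thresholding must do real work.

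I would first condition on $\mZ_1$. By the $\chi^2$ tail bound of Fact~\ref{fact: Chi square tail probability} and a union bound over the $p'\le p$ rows (using $\log p=o(n)$), on an event $\mathcal E_1$ of probability $1-o(1)$ we have $\|M_{i*}\mZ_1^\top\|_2^2\le 6\|M_{i*}\|_2^2\le 6\|M\|_{op}^2$ for every $i$, and $\|\mZ_1\|_{op}\le\sqrt2(1+\sqrt{p/n})$ by Fact~\ref{fact: Gaussian matrix concentration inequality}. On $\mathcal E_1$, conditionally on $\mZ_1$ the matrix $\mQ$ is a centered Gaussian matrix whose $(i,j)$ entry has variance $\|M_{i*}\mZ_1^\top\|_2^2\,\|N_{\cdot j}\|_2^2\le\bar\sigma^2:=6\|M\|_{op}^2\|N\|_{op}^2/n$, so $\PP(|\mQ_{ij}|\ge s\mid\mZ_1)\le 2\exp(-s^2/(2\bar\sigma^2))$ for all $s\ge0$, and hence for the thresholded entries
\[
\E[\eta(\mQ_{ij};\tau/\sqn)^2\mid\mZ_1]\ \lesssim\ \bar\sigma^2\exp\!\Bigl(-\tfrac{\tau^2}{4\bar\sigma^2 n}\Bigr)\ =\ \tfrac{C\|M\|_{op}^2\|N\|_{op}^2}{n}\exp\!\Bigl(-\tfrac{\tau^2}{24\|M\|_{op}^2\|N\|_{op}^2}\Bigr).
\]
Here the deliberately large constant $K_0=161\|M\|_{op}^2\|N\|_{op}^2$ provides the needed slack: since $K\ge 48\|M\|_{op}^2\|N\|_{op}^2$, even after taking a square root the display delivers the stated exponent, $\bigl(\sum_j\E[\eta(\mQ_{ij})^2\mid\mZ_1]\bigr)^{1/2}\le C\|M\|_{op}\|N\|_{op}\sqrt{q/n}\;e^{-\tau^2/K}$, and symmetrically over columns with $p$ in place of $q$ (the unused room also absorbs the $\chi^2$ constant $6$, the polynomial prefactor in the Gaussian second moment, and the square root).

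Next I would reduce to bounding the conditional mean $\E[\|A\|_{op}\mid\mZ_1]$. On $\mathcal E_1$ the map $\mZ_2\mapsto\|\eta(\mQ;\tau/\sqn)\|_{op}$ is Lipschitz in $\mZ_2$ (in Frobenius norm) with constant $L'\le\|M\|_{op}\|\mZ_1\|_{op}\|N\|_{op}\lesssim\|M\|_{op}\|N\|_{op}(1\vee\sqrt{p/n})$, because $\mZ_2\mapsto\mQ$ is linear, $\eta$ is coordinatewise $1$-Lipschitz, and $\|\cdot\|_{op}\le\|\cdot\|_F$. Gaussian concentration then gives $\|A\|_{op}\le\E[\|A\|_{op}\mid\mZ_1]+L'\sqrt{2\log n/n}$ conditionally with probability $1-1/n$, and the additive term is $\lesssim\|M\|_{op}\|N\|_{op}\sqrt{\log n}\,(n^{-1/2}\vee\sqrt p\,n^{-1})$. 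Since $\tau\le\tfrac12\sqrt{K\log(\max\{p,q\})}$ forces $e^{-\tau^2/K}\gtrsim(\max\{p,q\})^{-1/4}$, the target's right side is $\gtrsim\|M\|_{op}\|N\|_{op}(\max\{p,q\})^{1/4}n^{-1/2}$ (resp.\ $\gtrsim\|M\|_{op}\|N\|_{op}(\max\{p,q\})^{1/4}n^{-1}$ in the regime $p+q>n$), and the comparison with the additive term reduces to $\sqrt{\log n}\ll(\max\{p,q\})^{1/4}$ — precisely the hypothesis $\log n=o(\sqrt p\vee\sqrt q)$. (This is the technical condition flagged in the remark following the theorem.)

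It remains to bound $\E[\|A\|_{op}\mid\mZ_1]$ by $C\|M\|_{op}\|N\|_{op}\bigl(\sqrt{(p+q)/n}\vee(p+q)/n\bigr)e^{-\tau^2/K}$, and \textbf{this is the main obstacle}. Morally one wants a random-matrix operator-norm inequality of the form $\E\|A\|_{op}\lesssim\max_i\bigl(\sum_j\E[\eta(\mQ_{ij})^2\mid\mZ_1]\bigr)^{1/2}+\max_j\bigl(\sum_i\E[\eta(\mQ_{ij})^2\mid\mZ_1]\bigr)^{1/2}+\bigl(\E[\max_{ij}\eta(\mQ_{ij})^2\mid\mZ_1]\bigr)^{1/2}\sqrt{\log(p\wedge q)}$; by the display of Step~2 the first two terms are each $\le C\|M\|_{op}\|N\|_{op}\sqrt{(p+q)/n}\,e^{-\tau^2/K}$, and the last is $\lesssim\bar\sigma\log(p\vee q)\lesssim\|M\|_{op}\|N\|_{op}\log(p\vee q)/\sqrt n$, which is again $o$ of the target by the same $(\max\{p,q\})^{1/4}$-comparison used above. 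The difficulty is that such inequalities are standard only for matrices with \emph{independent} entries, whereas, conditionally on $\mZ_1$, the entries in each row of $\mQ$ are jointly Gaussian with covariance proportional to $N^\top N$. I would resolve this by conditioning further on the Gram matrix $\mZ_1^\top\mZ_1$ (equivalently on the vectors $M_{i*}\mZ_1^\top$), after which $\mQ=B\mZ_2N$ with $B$ deterministic and $\|B\|_{op}\lesssim\|M\|_{op}$, and then invoking rotational invariance of the law of $\mZ_2$ (the ``rot'' in the name) to pass to the singular basis of $N$, followed by a symmetrization/decoupling step combined with a dyadic peeling of the surviving entries by magnitude and an $\varepsilon$-net estimate for each of the $O(\log\log(p\vee q))$ magnitude scales — in the spirit of \cite{deshpande2014}'s analogous covariance-thresholding lemma. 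Intersecting with $\mathcal E_1$ and the high-probability bound $\max_{ij}|\mQ_{ij}|\le C\bar\sigma\sqrt{\log(p\vee q)}$, and undoing the conditioning, yields the claim with probability $1-o(1)$.
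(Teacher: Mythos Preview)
Your proposal correctly identifies the easy small-$\tau$ regime and the role of the hypothesis $\log n=o(\sqrt p\vee\sqrt q)$, but it has a genuine gap precisely where you flag ``the main obstacle.'' The inequality $\E\|A\|_{op}\lesssim\max_i(\sum_j\E A_{ij}^2)^{1/2}+\dots$ is a Latala/Bandeira--van~Handel type bound that requires independent entries, and your proposed fix via rotational invariance of $\mZ_2$ does not work: if $N=UDV^\top$ then indeed $\mQ\stackrel{d}{=}(B\tilde\mZ_2 D)V^\top$, but soft thresholding is applied \emph{coordinatewise} to $\mQ$, and $\eta(XV^\top)\neq\eta(X)V^\top$ in general, so passing to the singular basis of $N$ does not decouple the columns of $\eta(\mQ)$. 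The subsequent ``symmetrization/decoupling + dyadic peeling + $\varepsilon$-net'' is not spelled out concretely enough to carry the argument, and I do not see how a second-moment computation on thresholded entries alone yields an operator-norm bound at this level of dependence.

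The paper's proof is structurally different. It works with the full Gaussian vector $\mathcal Z=(\mathrm{vec}(\mZ_1^\top),\mathrm{vec}(\mZ_2^\top))$ and applies the Deshpande--Montanari Gaussian concentration lemma (Corollary~10 of \cite{deshpande2014}, quoted as Lemma~\ref{lemma: CT: deshpande}) to the bilinear forms $f_{x,y}(\mathcal Z)=\langle x,\eta(\mQ)y\rangle$ over an $\varepsilon$-net of $(x,y)$. The key device is a \emph{splitting of the test vectors}, not of the matrix: set $S_x=\{i:|x_i|\ge\sqrt{J/p}\}$, $S_y$ analogously, and decompose $\langle x,\eta(\mQ)y\rangle$ accordingly (Lemmas~\ref{Adlemma: lowrank: premultiply: 1} and~\ref{Adlemma: lowrank: premultiply: 2}). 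On the large-coordinate block the net has size only $\exp((p+q)\log(CJ)/J)$, so the coarse Lipschitz bound $\|\nabla f_{x,y}\|_2\lesssim\|M\|_{op}\|N\|_{op}(\|\mZ_1\|_{op}+\|\mZ_2\|_{op})/n$ suffices. On the small-coordinate block the net is full size $\exp(C(p+q))$, but the Lipschitz constant is tightened using $|\nabla\eta(z)|\le\mathbf 1\{|z|>\tau/\sqrt n\}$ together with $\|x_{S_x^c}\|_\infty\le\sqrt{J/p}$, which reduces matters to bounding $\max_j\sum_i\mathbf 1\{|\mQ_{ij}|>\tau/\sqrt n\}$. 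This is where the dependence among entries is actually handled: the indicators across $i$ are correlated through $M$, and the paper controls their sum via the Panconesi--Srinivasan generalized Chernoff bound for weakly dependent Bernoullis (Lemma~\ref{lemma: CT: Chernoff}), verifying the product-moment condition $\E\prod_{i\in\mathcal B}X_i\le\delta^{|\mathcal B|}$ through a $\chi^2$ tail bound on $\mathbb Z^\top M_{\mathcal B}^\top M_{\mathcal B}\mathbb Z$. Optimizing the free parameter $J=\exp(\tau^2/(2K_0))$ balances the two blocks and produces the factor $e^{-\tau^2/K}$. This test-vector splitting combined with the dependent-Chernoff step is the missing idea in your outline.
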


Our next lemma, which also is proved in Appendix~\ref{Subsection: key lemma for CT}, handles the easier case when the threshold is exactly of the  order $\sqrt{\log (p+q)}$. This thresholding, as we will see, is required 
in the easier sparsity regime, i.e.,  $s_x+s_y\ll \sqrt{p+q}$. Although Lemma~\ref{lemma: CT: Bickel levina case} follows as a corollary to Lemma A.3 of  \cite{bickel2008}, we include it here for the sake of completeness.

\begin{lemma}\label{lemma: CT: Bickel levina case}
  Suppose $\mZ_1$, $\mZ_2$, $M$, $N$, and $\mQ$ are as in Lemma~\ref{lemma: sqrt n: M N lemma: rot}, and $\log(p+q)=o(n)$. Further suppose $\|M\|_{op},\|N\|_{op}\leq C\B$ where $C>0$ is an absolute constant. Let  $\tau=\sqrt{C_1\log(p+q)}$. Here the tuning parameter $C_1>C\B^4$ where  $C>0$ is a sufficiently large constant.  Then $\eta(\mQ;\tau/\sqn)=0$ with probability tending to one.
  \end{lemma}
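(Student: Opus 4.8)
The plan is to reduce the claim to a uniform entrywise bound and finish with a union bound. First, since soft thresholding acts coordinatewise and $\eta(x;t)=0$ exactly when $|x|\le t$, the assertion $\eta(\mQ;\tau/\sqn)=0$ is equivalent to $|\mQ|_\infty<\tau/\sqn$. As $\mQ$ has $p'q'\le pq\le(p+q)^2$ entries, it therefore suffices to prove a single-entry tail estimate of the form $\PP(|\mQ_{ij}|\ge\tau/\sqn)\le 4(p+q)^{-cC_1/\B^4}$ for an absolute constant $c>0$, uniformly in $(i,j)$; a union bound then yields $\PP(|\mQ|_\infty\ge\tau/\sqn)\le 4(p+q)^{2-cC_1/\B^4}=o(1)$ as soon as $C_1$ is a sufficiently large absolute multiple of $\B^4$.

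For the entrywise estimate I would write $\mQ_{ij}=\frac1n\sum_{k=1}^n\xi_k\zeta_k$, with $\xi_k=\langle M_{i*},(\mZ_1)_{k*}\rangle$ and $\zeta_k=\langle N_j,(\mZ_2)_{k*}\rangle$ (the factor $n^{-1}$ being the normalization that $\mZ_1^T\mZ_2$ carries throughout Section~\ref{sec: CT}). By Gaussianity the $\xi_k$ are i.i.d.\ $N(0,\sigma_a^2)$ with $\sigma_a=\|M_{i*}\|_2\le\|M\|_{op}\le C\B$, the $\zeta_k$ are i.i.d.\ $N(0,\sigma_b^2)$ with $\sigma_b\le C\B$, and --- the one structural point --- $\{\xi_k\}$ is independent of $\{\zeta_k\}$ because $\mZ_1$ and $\mZ_2$ are independent, so every summand has mean zero and $\mQ_{ij}$ concentrates about $0$ (there is no population term to subtract). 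Normalizing and polarizing, $\mQ_{ij}/(\sigma_a\sigma_b)=\frac12(\bar A-\bar B)$ where $\bar A$ and $\bar B$ are independent $\chi^2_n/n$ random variables; applying a standard $\chi^2$ deviation bound (the form in Fact~\ref{fact: Chi square tail probability}, or Lemma~A.3 of \cite{bickel2008}) gives, for $0<s\le 1$,
\[
\PP\lb|\mQ_{ij}|\ge s\,\sigma_a\sigma_b\rb\le\PP\lb|\bar A-\bar B|\ge 2s\rb\le 4e^{-cns^2}.
\]
Taking $\sigma_a\sigma_b$ at its worst (largest) value $(C\B)^2$ and $s=\sqrt{C_1\log(p+q)/n}/(C\B)^2$ --- so that $s\,\sigma_a\sigma_b=\tau/\sqn$, and $s\le1$ because $\log(p+q)=o(n)$ --- the display becomes $\PP(|\mQ_{ij}|\ge\tau/\sqn)\le 4(p+q)^{-c'C_1/\B^4}$ with $c'=c/C^4$ absolute; since any smaller $\sigma_a\sigma_b$ only decreases $\PP(|\mQ_{ij}|\ge\tau/\sqn)$, this holds for every $(i,j)$.

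Combining the two displays closes the proof once $c'C_1/\B^4>2$, i.e.\ once $C_1$ exceeds a fixed absolute multiple of $\B^4$, which is exactly the stated hypothesis. I do not anticipate a genuine obstacle here: the sole use of $\log(p+q)=o(n)$ is to keep the $\chi^2$ tail in its Gaussian regime (exponent proportional to $ns^2$), and everything else is constant-bookkeeping. This lemma is deliberately the elementary Bickel--Levina counterpart to Lemma~\ref{lemma: sqrt n: M N lemma: rot}: it is invoked in the easy sparsity regime where the threshold $\Thres/\sqn\asymp\sqrt{\log(p+q)/n}$ is of classical order and annihilates the whole noise block at once, whereas the real analytic work of Section~\ref{sec: CT} lives in Lemma~\ref{lemma: sqrt n: M N lemma: rot}, which must handle sub-classical thresholds under which many entries of $\mQ$ survive and an operator-norm estimate becomes unavoidable.
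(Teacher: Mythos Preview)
Your proof is correct and follows essentially the same approach as the paper: reduce $\eta(\mQ;\tau/\sqn)=0$ to the entrywise bound $|\mQ|_\infty<\tau/\sqn$, establish a sub-Gaussian tail estimate for each entry $\mQ_{ij}$ using that it is (after normalization) an average of products of independent standard Gaussians, and finish with a union bound over the at most $(p+q)^2$ entries. The only cosmetic difference is that the paper invokes Lemma~A.3 of \cite{bickel2008} as a black box for the entrywise tail, whereas you rederive that bound via the polarization $\tilde\xi_k\tilde\zeta_k=\tfrac12\bigl((\tilde\xi_k+\tilde\zeta_k)^2/2-(\tilde\xi_k-\tilde\zeta_k)^2/2\bigr)$ and chi-square concentration; the two are equivalent.
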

  
We will need another technical lemma for handling the terms $\mS_2$ and $\mS_3$.
\begin{lemma}\label{lemma: technical}
Suppose $A\in\RR^{m\times p}$ and  $D=D_1\times D_2 \subset[m]\times[p]$. Then the followings hold:
\begin{itemize}
\item[(a)]  $\mP_D(\eta(A))=\eta(\mP_D(A))$.
\item[(b)] $\|\mP_D(A)\|_{op}\leq \|A\|_{op}$
\end{itemize}
\end{lemma}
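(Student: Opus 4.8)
The plan is to observe that both assertions reduce to elementary facts about entrywise operations and coordinate restrictions, so that essentially no analysis is required; the only genuine input is that $D$ is a product set $D_1\times D_2$.

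For part (a), the point is that both the map $\eta(\cdot)$ (applied entrywise) and the map $\mP_D\{\cdot\}$ act coordinate by coordinate, and on each coordinate they commute. First I would fix an index $(i,j)\in[m]\times[p]$ and consider two cases. If $(i,j)\in D$, then $(\mP_D\{A\})_{i,j}=A_{i,j}$, so $(\eta(\mP_D\{A\}))_{i,j}=\eta(A_{i,j})=(\eta(A))_{i,j}=(\mP_D\{\eta(A)\})_{i,j}$. If $(i,j)\notin D$, then $(\mP_D\{A\})_{i,j}=0$, hence $(\eta(\mP_D\{A\}))_{i,j}=\eta(0)=0$ (this uses only that the threshold level is nonnegative, so $\eta(0)=0$), while $(\mP_D\{\eta(A)\})_{i,j}=0$ by definition of $\mP_D$. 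Since the two matrices agree in every entry, they are equal.

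For part (b), I would realize the restriction to the rectangle $D=D_1\times D_2$ as a two-sided product with coordinate-selection matrices. Let $R\in\RR^{m\times m}$ be the diagonal $0$--$1$ matrix with $R_{ii}=1\{i\in D_1\}$ and let $C\in\RR^{p\times p}$ be the diagonal $0$--$1$ matrix with $C_{jj}=1\{j\in D_2\}$. A direct entrywise check gives $\mP_D\{A\}=RAC$, where it is exactly the product structure $D=D_1\times D_2$ that makes this identity valid. Both $R$ and $C$ are orthogonal projections (diagonal with entries in $\{0,1\}$), so $\|R\|_{op}\le 1$ and $\|C\|_{op}\le 1$. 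Submultiplicativity of the operator norm then yields $\|\mP_D\{A\}\|_{op}=\|RAC\|_{op}\le\|R\|_{op}\|A\|_{op}\|C\|_{op}\le\|A\|_{op}$.

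There is no substantive obstacle here. The only thing worth flagging is the role of the hypothesis that $D$ is rectangular: for a general (non-product) index set $D$ the factorization $\mP_D\{A\}=RAC$ fails and part (b) can be false, so the product structure $D=D_1\times D_2$ is used essentially (as it is in the applications to $\mS_1,\mS_2,\mS_3$ in the proof of Theorem~\ref{thm: CT matrix operator norm}).
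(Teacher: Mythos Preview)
Your proof is correct and essentially matches the paper's. For part (a) the paper simply says ``the first result is immediate,'' which is exactly your entrywise check; for part (b) the paper uses the variational form $\|A\|_{op}=\sup_{x,y}\frac{x^TAy}{\|x\|\,\|y\|}$ and observes that $x^T\mP_D(A)y=x_{D_1}^TAy_{D_2}$ with $\|x_{D_1}\|\le\|x\|$, $\|y_{D_2}\|\le\|y\|$, while you factor $\mP_D\{A\}=RAC$ with diagonal $0$--$1$ matrices and use submultiplicativity---these are two equivalent one-line arguments exploiting the same product structure of $D$.
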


Note that $M=\tU_{F_1*}B_x\tU^T\Sx$ satisfies
\[\|M\|_{op}\leq \|\Sx^{-1/2}\|_{op}\|\Sx^{1/2}\tU_{F^*}\|_{op}\|B_x\|_{op}\|\Sx^{1/2}\tU\|_{op}\|\Sx^{1/2}\|_{op}.\]
However, $\|B_x\|_{op}\leq 1$. Also because $\mathbb P\in\mP(r,s_x,s_y,\B)$, it follows that $\|\Sx\|_{op},\|\Sx^{-1}\|_{op}\leq \B$. Moreover, since $\Sx^{1/2}\tU$ is orthogonal, $\|\Sx^{1/2}\tU\|_{op}=1$.
 Part B of Lemma~\ref{lemma: technical} then yields $\|\Sx^{1/2}\tU_{F^*}\|_{op}\leq \|\Sx^{1/2}\tU\|_{op}=1$.
Therefore 
\begin{align}\label{eq: intheorem: CT: M}
\|M\|_{op}= \|\tU_{F_1*}B_x\tU^T\Sx\|_{op}\leq \B.   
\end{align}
 Similarly we can show that the matrix $N=\tV B_y(\tV_{F_2*})^T\Sy$ satisfies $\|N\|_{op}\leq\B$. Because $\mS_2=\eta(MZ_1^TZ_2N)$ by \eqref{def: Operator: S2},
that   $\eta(\mS_2)$ is small follows immediately from  Lemma~\ref{lemma: sqrt n: M N lemma: rot}. Under the conditions of Lemma~\ref{lemma: sqrt n: M N lemma: rot},
we have
\begin{align}\label{inlemma: supp: S2}
    \|\eta(\mS_2)\|_{op}\leq &\  C\B^2 \lb \sqrt{\dfrac{p+q}{n}}\vee \dfrac{p+q}{n}\rb\nn\\
    &\ \times e^{-\Thres^2/K}
\end{align}
  with high probability provided $K\geq 161\B^4$ and  $\Thres\in[13\B^2,\sqrt{K\log(\max\{p,q\})}/2]$. On the other hand, under the setup of Lemma~\ref{lemma: CT: Bickel levina case}, $P(\|\eta(S_2)\|_{op}=0)\to_n 1$.
   Lemma~\ref{lemma: supp: S3}, which we prove in Appendix~\ref{Subsection: key lemma for CT}, entails that the same holds for $\mS_3$.
   
  \begin{lemma}\label{lemma: supp: S3}
  Consider the setup of Lemma~\ref{lemma: sqrt n: M N lemma: rot}. Suppose $K\geq 1288\B^4$ is such that $\Thres\in[36\B^2,\sqrt{K\log(2\max\{p,q\})}/2]$. Then there exists a constant $C>0$  so that with probability tending to one,
  \[\|\eta(\mS_3)\|_{op}\leq C\B^2 \lb \sqrt{\dfrac{p+q}{n}}\vee \dfrac{p+q}{n}\rb e^{-\Thres^2/K}.\]
 Under the setup of Lemma~\ref{lemma: CT: Bickel levina case}, on the other hand, $\eta(\|\eta(S_3)\|_{op})=0$ with probability tending to one.
  \end{lemma}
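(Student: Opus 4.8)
The plan is to handle $\eta(\mS_3)$ by treating its two blocks $\mathbf H_1,\mathbf H_2$ from \eqref{def: Operator: S3} separately and reducing each to a single invocation of Lemma~\ref{lemma: sqrt n: M N lemma: rot} (respectively Lemma~\ref{lemma: CT: Bickel levina case}). First I would record that $\mathbf H_1$ is supported on $E_1\times F_2$ and $\mathbf H_2$ on $F_1\times E_2$: the nonzero rows of $U$ and $\tU_{E_1*}$ lie in $E_1$, those of $\tU_{F_1*}$ in $F_1$, the nonzero columns of $V^{T}$ and $(\tV_{E_2*})^{T}$ lie in $E_2$, and those of $(\tV_{F_2*})^{T}$ in $F_2$. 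Since $E_1\cap F_1=E_2\cap F_2=\emptyset$, these blocks are disjoint and their union is $G$ (cf.\ \eqref{def: G}), so $\mS_3=\mathbf H_1+\mathbf H_2$ with disjointly supported summands; because $\eta(\cdot\,;\Thres/\sqn)$ acts entrywise and fixes $0$, this gives $\eta(\mS_3)=\eta(\mathbf H_1)+\eta(\mathbf H_2)$ and hence $\|\eta(\mS_3)\|_{op}\le\|\eta(\mathbf H_1)\|_{op}+\|\eta(\mathbf H_2)\|_{op}$.

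The key step is to re-express each of $\mathbf H_1,\mathbf H_2$ as a single bilinear form $M\,\mathcal G_1^{T}\mathcal G_2\,N$ in two \emph{independent} standard Gaussian data matrices, which is exactly the shape Lemma~\ref{lemma: sqrt n: M N lemma: rot} requires. For $\mathbf H_1$ both constituent terms carry $\mZ_2$ on the inner right, so I would take $\mathcal G_1=\begin{bmatrix}\mZ & \mZ_1\end{bmatrix}\in\RR^{n\times(r+p)}$, $\mathcal G_2=\mZ_2$, $M=\begin{bmatrix}U\Lambda^{1/2} & \tU_{E_1*}B_x\tU^{T}\Sx\end{bmatrix}$, and $N=\Sy\tV B_y(\tV_{F_2*})^{T}$; since $\mZ,\mZ_1,\mZ_2$ are mutually independent with i.i.d.\ $N(0,1)$ entries, $\mathcal G_1$ is a standard Gaussian data matrix independent of $\mathcal G_2$, and a block multiplication checks $M\mathcal G_1^{T}\mathcal G_2 N=\mathbf H_1$. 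Symmetrically, for $\mathbf H_2$ both terms carry $\mZ_1$ on the inner left, so I would stack on the right, taking $\mathcal G_1=\mZ_1$, $\mathcal G_2=\begin{bmatrix}\mZ & \mZ_2\end{bmatrix}$, $M=\tU_{F_1*}B_x\tU^{T}\Sx$, and $N=\begin{bmatrix}\Lambda^{1/2}V^{T}\\ \Sy\tV B_y(\tV_{E_2*})^{T}\end{bmatrix}$. For the operator-norm bookkeeping I would use that left-multiplication by a row-restriction equals row-restriction of the product (and similarly for columns), so that Lemma~\ref{lemma: technical}(b), orthogonality of $\Sx^{1/2}\tU$ and $\Sy^{1/2}\tV$, $\|B_x\|_{op},\|B_y\|_{op},\|\Lambda\|_{op}\le1$, and $\|\Sx^{\pm1/2}\|_{op},\|\Sy^{\pm1/2}\|_{op}\le\sqrt{\B}$ give $\|M\|_{op}\|N\|_{op}\le\sqrt{2}\,\B^{2}$ in both cases; hence $161\,(\|M\|_{op}\|N\|_{op})^{2}\le 322\,\B^{4}\le 1288\,\B^{4}\le K$ and $\sqrt{161}\,\|M\|_{op}\|N\|_{op}\le 18\,\B^{2}\le 36\,\B^{2}\le\Thres$.

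With the reduction in place I would apply Lemma~\ref{lemma: sqrt n: M N lemma: rot} with $\tau=\Thres$ to each block; the roles of ``$p$'' and ``$q$'' there become the column counts of $\mathcal G_1,\mathcal G_2$, namely $r+p$ or $r+q$, both at most $p+q\le 2\max\{p,q\}$ — which is precisely where the factor $2$ inside the logarithm in the hypothesis $\Thres\le\tfrac12\sqrt{K\log(2\max\{p,q\})}$ (and the assumption $\log n=o(\sqrt p\vee\sqrt q)$) is spent. The lemma then delivers $\|\eta(\mathbf H_a;\Thres/\sqn)\|_{op}\le C\B^{2}\bigl(\sqrt{(p+q)/n}\vee(p+q)/n\bigr)e^{-\Thres^{2}/K}$ for $a=1,2$, and summing and absorbing the factor $2$ into $C$ gives the first assertion. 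For the second assertion I would run the identical decomposition but invoke Lemma~\ref{lemma: CT: Bickel levina case} — whose hypotheses $\|M\|_{op},\|N\|_{op}\le C\B$ and $\tau=\sqrt{C_1\log(p+q)}$ are met — to conclude $\eta(\mathbf H_1)=\eta(\mathbf H_2)=0$, hence $\eta(\mS_3)=0$, with probability $1-o(1)$.

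The step I expect to be the main obstacle is this re-expression of $\mathbf H_1$ and $\mathbf H_2$ as single bilinear forms in fresh independent Gaussian matrices: it is essential rather than cosmetic, because soft-thresholding is not additive, so one cannot threshold the two constituent terms of $\mathbf H_1$ (or $\mathbf H_2$) individually and recombine; the stacking maneuver, legitimate only because $\mZ,\mZ_1,\mZ_2$ are mutually independent, is what makes a single clean application of Lemma~\ref{lemma: sqrt n: M N lemma: rot} possible. A secondary nuisance is tracking the dimension change and checking that the constant $K$ and the admissible range for $\Thres$ stay compatible with that lemma's hypotheses, which is the role of the harmless factor $2$ inside the logarithm in the statement.
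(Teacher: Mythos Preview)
Your proposal is correct and mirrors the paper's own proof almost exactly: the paper likewise splits $\eta(\mS_3)=\eta(\mathbf H_1)+\eta(\mathbf H_2)$, rewrites $\mathbf H_1$ as $A_4^T[\mZ\ \mZ_1]^T\mZ_2 A_3$ (your $M\mathcal G_1^T\mathcal G_2 N$) with $\|A_4\|_{op}\le 2\B$, $\|A_3\|_{op}\le\B$, handles $\mathbf H_2$ symmetrically, and then invokes Lemma~\ref{lemma: sqrt n: M N lemma: rot} (resp.\ Lemma~\ref{lemma: CT: Bickel levina case}). Your justification of the block split via disjoint supports and your slightly sharper bound $\|M\|_{op}\|N\|_{op}\le\sqrt{2}\,\B^2$ are, if anything, cleaner than the paper's treatment.
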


We will now combine all the above lemmas and finish the proof. First, we consider the regime when   $(s_x+s_y)^2\leq (p+q)e$, so that there is thresholding, i.e.,  $\Thres>0$. We split this regime into two subregimes: $2^{1/4}(p+q)^{3/4}\leq (s_x+s_y)^2\leq (p+q)e$ and $(s_x+s_y)^2\leq 2^{1/4}(p+q)^{3/4}$.

\subsubsection{Regime $2^{1/4}(p+q)^{3/4}\leq (s_x+s_y)^2\leq (p+q)/e$:}
First, we explain why we needed to split the $e(s_x+s_y)^2\leq p+q$ regime into two parts.
Since $s_x,s_y<\sqrt{n}$,  Lemma~\ref{lemma: supp: s1} applies.  Note that  if $\Thres\in[36\B^2,\sqrt{K\log(\max\{p,q\})}/2]$ with $K\geq 1288\B^4$, then
 Lemma~\ref{lemma: supp: S3} and \eqref{inlemma: supp: S2} also apply.
Therefore it follows that in this case
\begin{align}
    \label{intheorem: CT: stitch: 1}
   \MoveEqLeft \|\eta(\tSxy)-\txy\|_{op}\leq C\B^2\lb \frac{ (s_x+s_y)\Thres}{\sqn}\nn\\
   &\ +\frac{\sqrt{s_x+s_y}}{n}+  \sqrt{\frac{p+q}{n}}\vee \frac{p+q}{n}e^{-\Thres^2/K}\rb .
\end{align}
  We will shortly show that under $(s_x+s_y)^2\leq (p+q)/e$, setting $\Thres^2 =K\log((p+q)/(s_x+s_y)^2)$ ensures that the bound in \eqref{intheorem: CT: stitch: 1} is small. However, for  \eqref{intheorem: CT: stitch: 1} to hold,
  $\Thres^2$ needs to satisfy
  \[\Thres^2/K\leq \frac{\log(\max\{p,q\}))}{4},\]
 which holds with $\Thres^2 =K\log((p+q)/(s_x+s_y)^2)$ if and only if
  \[\log((p+q)/(s_x+s_y)^2)\leq \log(\max\{p,q\}^{1/4}).\]
 Since $\max\{p,q\}\geq (p+q)/2$ the above holds when
  \[(p+q)^{3/4}\leq 2^{-1/4}(s_x+s_y)^2.\]
   Therefore, setting $\Thres^2 =K\log((p+q)/(s_x+s_y)^2)$ is useful when we are in the regime $2^{1/4}(p+q)^{3/4}\leq (s_x+s_y)^2\leq (p+q)/e$. We will analyze the regime $(s_x+s_y)^2\leq ^{1/4}(p+q)^{3/4}$ using separate procedure.
   
   In the $2^{1/4}(p+q)^{3/4}\leq (s_x+s_y)^2\leq (p+q)/e$ case,
   \begin{align*}
     \MoveEqLeft  \sqrt{\frac{p+q}{n}}e^{-\Thres^2/K} =\sqrt{\frac{p+q}{n}}\frac{(s_x+s_y)^2}{(p+q)}\\
       =&\ \frac{(s_x+s_y)}{\sqn}\frac{s_x+s_y}{\sqrt{p+q}}<\frac{(s_x+s_y)}{\sqrt{en}}
   \end{align*}
   because $(s_x+s_y)^2\leq (p+q)/e$, and similarly,
\[\frac{p+q}{n}e^{-\Thres^2/K} =\frac{p+q}{n}\frac{(s_x+s_y)^2}{(p+q)}=\frac{(s_x+s_y)}{2\sqn}\]   
  since  we also assume $s_x+s_y\leq 2\sqn$.
The above bounds entail that, in this regime, the first term on the bound in \eqref{intheorem: CT: stitch: 1} is the leading term provided $\Thres>1$, i.e., 
\begin{align*}
  \MoveEqLeft  \|\eta(\tSxy)-\txy\|_{op}\\
  \leq &\ C\B^2\lb \frac{ (s_x+s_y)\Thres}{\sqn}+  \frac{(s_x+s_y)}{\sqn}\rb\\
     \leq &\ C\B^2\frac{ (s_x+s_y)\max(\Thres,1)}{\sqn}
\end{align*}
   with probability $1-o(1)$.
Plugging in the value of $\Thres$ leads to
\begin{align*}
 \MoveEqLeft    \|\eta(\tSxy)-\txy\|_{op}\\
     \leq &\ C\B^2\frac{s_x+s_y}{\sqn}\lb \max\lbs K\log(\frac{(s_x+s_y)^2}{p+q}),1\rbs \rb^{1/2}
\end{align*}
in the regime $2^{1/4}(p+q)^{3/4}\leq (s_x+s_y)^2\leq (p+q)/e$. In our case, $(s_x+s_y)^2/(p+q)\geq e$. Also since $\B>1$ by definition of $\mP(r,s,p,q,\B)$, we also have $K\geq 1$, indicating
\begin{align*}
   \MoveEqLeft  \|\eta(\tSxy)-\txy\|_{op}\\
     \leq&\ C\B^2\frac{s_x+s_y}{\sqn}\lb  K\log(\frac{(s_x+s_y)^2}{p+q}) \rb^{1/2}.
\end{align*}


 

\subsubsection{Regime $(s_x+s_y)^2< 2^{1/4}(p+q)^{3/4}$}
When $(s_x+s_y)^2< 2^{1/4}(p+q)^{3/4}$, of course, the above line of arguments may not work although this indeed is an easier regime because $s_x+s_y$ is less than $\sqrt{(p+q)/\log(p+q)}$. In this regime, we  set $\Thres=\sqrt{C_1\log(p+q)}$ where $C_1$ is a constant depending on $\B$ as in Lemma~\ref{lemma: CT: Bickel levina case}. 
For this $\tau$, we have showed that  $\|\eta(S_2)\|_{op}= 0$ with probability tending to one. Lemma~\ref{lemma: supp: S3} implies the same holds for $\|\eta(S_3)\|_{op}$ as well. Thus from the decomposition of $\tSxy$ in \eqref{intheorem: CT operator: hsy decomp}, it follows that the asymptotic error  occurs  only due to the estimation of $\txy$ by  $\eta(S_1)$. Using Lemma~\ref{lemma: supp: s1}, we thus obtain
\[\|\tSxy-\txy\|_{op}\leq C\B^2 \frac{ (s_x+s_y)\max\{\Thres,1\}}{\sqn}.\]
On the other hand, since  $(p+q)^{3/4}>2^{-1/4}(s_x+s_y)^2$, rearranging terms, we have 
\begin{align*}
  \log((p+q)/(s_x+s_y)^2)> &\ \slb\log(p+q)-\log 2\srb /4\\
    > &\ C\log(p+q).
\end{align*}
Thus, in the regime $(s_x+s_y)^2< 2^{1/4}(p+q)^{3/4}$, we have
\begin{align}
  \MoveEqLeft  \|\tSxy-\txy\|_{op}\nn\\
    \leq &\  C\B^2\frac{s_x+s_y}{\sqrt{n}}\max\lbs\sqrt{C_1\log(\frac{p+q}{(s_x+s_y)^2})},1\rbs.
\end{align}

\subsubsection{Regime $(s_x+s_y)^2>(p+q)/e$}
It remains to analyze the case when either $(s_x+s_y)^2>(p+q)/e$.  In that case, there is no thresholding, i.e.,  $\Thres=0$.   We will show that the assertions of Theorem~\ref{thm: CT matrix operator norm} holds in this case as well. To that end, note that \eqref{intheorem: CT operator: hsy decomp} implies
\[\|\tSxy-\txy\|_{op}\leq \|S_1-\txy\|_{op}+\|S_2+S_3\|_{op}.\]
From the proof of Lemma~\ref{lemma: supp: s1}
it follows that $\|S_1-\txy\|_{op}\leq C\B^2\max\{\sqrt{s_x},\sqrt{s_y}\}/\sqn$. For $S_2$, we have shown that it is of the form $M\mZ_1^T\mZ_2N/n$ where $\|M\|_{op},\|N\|_{op}\leq\B$. On the other hand, we showed that $S_3=\mathbf H_1+\mathbf H_2$, where  the proof of Lemma~\ref{lemma: supp: S3} shows  $\mathbf H_1$ and $\mathbf H_2$ are of the  form $MAN$ where $\|M\|_{op}\|N\|_{op}\leq 2\B^2$ and $A$ is either $[\mZ\ \mZ_1]^T\mZ_2/n$ (for $\mathbf H_1)$ or $\mZ_1^T[\mZ\ \mZ_2] /n$ (for $\mathbf H_2$). Therefore, it is not hard to see that $\|S_2+S_3\|_{op}$ is bounded by
\[ C\B^2\slb \|\mZ_1^T\mZ_2\|_{op}+\|\mZ_1^T[\mZ\ \mZ_2]\|_{op}+\|[\mZ\ \mZ_1]^T\mZ_2\|_{op}\srb.\]
For standard Gaussian matrices $\mZ_1\in\RR^{n\times p}$ and $\mZ_2\in\RR^{n\times q}$ it holds that $\|Z_1^TZ_2/n\|_{op}\leq C(\sqrt{(p+q)/n}+(p+q)/n)$ with probability $1-o(1)$ (cf. Theorem 4.7.1 of  of \cite{vershynin2020}). Since $r\leq \min\{p,q\}$, 
it follows that  $\|S_2+S_3\|_{op}\leq C\B^2(\sqrt{{p+q}/{n}}+(p+q)/n)$ with probability $1-o(1)$. The above discussion leads to
\begin{align*}
    \|\tSxy-\txy\|_{op}\leq &\  C\B^2\lb \slb\frac{s_x+s_y}{n}\srb^{1/2}+\slb \frac{p+q}{n}\srb^{1/2}\\
    &\ + \frac{p+q}{n}\rb\\
    \leq &\ 2C\B^2\lb\lb \frac{p+q}{n}\rb^{1/2}+\frac{p+q}{n}\rb
\end{align*}
because $s_x+s_y<p+q$.
If $(p+q)\leq e(s_x+s_y)^2$, the above bound is of the order $(s_x+s_y)/\sqn$.  Thus  Theorem \ref{thm: CT matrix operator norm} follows.

\section{Proof of Corollary~\ref{cor: CT support recovery}}
\label{sec: proof of cor CT}

\begin{proof}[Proof of Corollary~\ref{cor: CT support recovery}]
We will first show that there exist $C'_\B, c_\B>0$ so that
\begin{align}\label{intheorem: statement 1: CT support recovery}
     \MoveEqLeft \max_{i\in[r]}\|\hai-\myv\ai\|_2\nn\\
     \leq &\ C'_\B \frac{(s_x+s_y)}{\sqn}\max\lbs \lb c_\B\log (\frac{p+q}{(s_x+s_y)^2})\rb^{1/2}, 1\rbs. 
  \end{align}
For the sake of simplicity, we denote the matrix $\widehat U^{(1)}$ in Algorithm~\ref{algo: CT: CCA} by $\widehat U$. 
Denoting $\e_n'=\|\eta(\tSxy)-\txy\|_{op}$, we note that 
\[\|\Sx^{1/2}\eta(\tSxy)\Sy^{1/2}-\Sx^{1/2}U\Lambda V^T\Sy^{1/2}\|_{op}\leq \B\e_n'.\]
Also the matrix $\widehat{U}_{\text{pre}}$ defined in Algorithm~\ref{algo: CT: CCA} and $\Sx^{1/2}U$ are the matrices corresponding to the
leading $r$ singular vectors of $\Sx^{1/2}\eta(\tSxy)\Sy^{1/2}$ and $\Sx^{1/2}U\Lambda V^T\Sy^{1/2}$, respectively.
By Wedin's sin-theta theorem (we use Theorem 4 of \cite{yu2015}), for any $1\leq i< r$,
\[\min_{\myv\in\{\pm 1\}}\|\widehat U_i^{\text{pre}}-\myv\Sx^{1/2}\ai\|_2\leq \frac{2^{3/2}(2\Lambda_1+\e'_n)\e_n'}{\min\{\Lambda_{i-1}^2-\Lambda_i^2,\Lambda_i^2-\Lambda_{i+1}^2\}},\]
where $\Lambda_0$ is taken to be $\infty$, and
\[\min_{\myv\in\{\pm 1\}}\|\widehat U_r^{\text{pre}}-\myv\Sx^{1/2}u_r\|_2\leq \frac{2^{3/2}(2\Lambda_1+\e_n')\e_n'}{\Lambda_{r-1}^2-\Lambda_r^2}.\]
Since $\PP\in\modG$, $\min_{i\in[r]}(\Lambda_{i-1}-\Lambda_{i})>\B^{-1}$ and $\min_{i\in[r]}\Lambda_i>\B^{-1}$. Therefore, for $\e_n'<1$, we have
\[\max_{i\in[r]}\min_{\myv\in\{\pm 1\}}\|\widehat U_i^{\text{pre}}-\myv\Sx^{1/2}\ai\|_2\leq C_\B\e_n'.\]
We have to show $\e_n'<1$.
Theorem~\ref{thm: CT matrix operator norm} gives a bound on $\e_n'$, which can be made smaller than one if the $C_\B$ in \eqref{intheorem: statement: CT support recovery} is chosen to be sufficiently large. Hence, the above inequality holds.
Because $\widehat U_i^{\text{pre}}=\Sx^{1/2}\hai$, using the fact $\|\Sx\|_{op}\leq \B$, the last display implies
\[\max_{i\in[r]}\min_{\myv\in\{\pm 1\}}\|\hai-\myv\ai\|_2\leq \B^{1/2}C_\B\e_n',\]
which, combined with Theorem~\ref{thm: CT matrix operator norm}, proves \eqref{intheorem: statement 1: CT support recovery}. Now note that the constant $C_\B$ in \eqref{intheorem: statement: CT support recovery} can be chosen so large such that the right hand side of \eqref{intheorem: statement 1: CT support recovery} is smaller than  $1/(2\B^2\sqrt{r})$. Since $\|\Sx\|_{op}<\B$, it follows that Condition~\ref{Assumption on estimators} is satisfied, and the rest  of the proof then follows from Theorem~\ref{thm: support recovery: sub-gaussians}.
\end{proof}

\section{Proof of Auxilliary Lemmas}
\subsection{Proof of Technical Lemmas for Theorem~\ref{thm: lower bound}}
\label{sec: add lemma: lower bound }

The  following lemma can be verified using elementary linear algebra, and hence its proof is  omitted.
  \begin{lemma}\label{Lemma: spectrum of sigma}
Suppose $\Sigma$ is of the form \eqref{def: sigma in Theorem 3}. Then the spectral decomposition of $\Sigma$ is as follows:
\[\Sigma=\sum_{i=1}^{p-1}x_1^{(i)}(x_1^{(i)})^T+\sum_{i=1}^{q-1}x_2^{(i)}(x_2^{(i)})^T+(1+\rho)x_3x_3^T+(1-\rho)x_4x_4^T,\]
where the eigenvectors are of the following form:
\begin{enumerate}
    \item For $i\in[p-1]$, $x_1^{(i)}=(y_i,0_{q})$, where $\{y_i\}_{i=1}^{p-1}\subset \RR^p$ forms an orthonormal basis system  of the orthogonal space of $\alpha$.
    \item For $i\in[q-1]$, $x_2^{(i)}=(0_{p},z_i)$, where $\{z_i\}_{i=1}^{q-1}\subset \RR^p$ forms an orthonormal basis system  of the orthogonal space of $\beta$.
    \item $x_3=(\alpha/\sqrt{2},\beta/\sqrt{2})$ and $x_4=(\alpha/\sqrt{2},-\beta/\sqrt{2})$.
\end{enumerate}
Here for $k\in\NN$, $0_k$ denotes the $k$-dimensional vector whose all entries are zero.
 \end{lemma}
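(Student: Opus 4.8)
The plan is to verify directly that the $p+q$ vectors listed in the statement form an orthonormal eigenbasis of $\Sigma$ with the indicated eigenvalues; the spectral theorem then gives the displayed decomposition as the expansion of $\Sigma$ in this basis. I first record the standing assumption under which this lemma is used (the submodel $\mM(s_x,s_y,\rho,\mE)$ of the proof of Theorem~\ref{thm: lower bound}), namely that $\alpha$ and $\beta$ are unit vectors, $\|\alpha\|_2=\|\beta\|_2=1$, and that $\{y_i\}_{i=1}^{p-1}\cup\{\alpha\}$ and $\{z_i\}_{i=1}^{q-1}\cup\{\beta\}$ are genuine orthonormal bases of $\RR^p$ and $\RR^q$ respectively.

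The eigen-relations follow from block multiplication. For $x_1^{(i)}=(y_i,0_q)$ with $\alpha^Ty_i=0$ one gets $\Sigma x_1^{(i)}=(y_i,\ \rho\beta(\alpha^Ty_i))=(y_i,0_q)=1\cdot x_1^{(i)}$, and symmetrically $\Sigma x_2^{(i)}=(\rho\alpha(\beta^Tz_i),\ z_i)=(0_p,z_i)=1\cdot x_2^{(i)}$ using $\beta^Tz_i=0$. For $x_3=(\alpha,\beta)/\sqrt2$, using $\beta^T\beta=\alpha^T\alpha=1$, the top block of $\Sigma x_3$ is $(\alpha+\rho\alpha)/\sqrt2$ and the bottom block is $(\rho\beta+\beta)/\sqrt2$, so $\Sigma x_3=(1+\rho)x_3$; replacing $\beta$ by $-\beta$ in the same computation gives $\Sigma x_4=(1-\rho)x_4$ for $x_4=(\alpha,-\beta)/\sqrt2$.

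Orthonormality is then routine: within each of the two families $\{x_1^{(i)}\}$ and $\{x_2^{(i)}\}$ it is inherited from that of $\{y_i\}$ and $\{z_i\}$; the two families are mutually orthogonal by disjointness of support; $x_3$ and $x_4$ are orthogonal to every $x_1^{(i)}$ and $x_2^{(i)}$ because $\alpha^Ty_i=\beta^Tz_i=0$; and finally $\langle x_3,x_4\rangle=(\|\alpha\|_2^2-\|\beta\|_2^2)/2=0$ with $\|x_3\|_2^2=\|x_4\|_2^2=(\|\alpha\|_2^2+\|\beta\|_2^2)/2=1$. Since this exhibits $(p-1)+(q-1)+1+1=p+q$ orthonormal vectors in $\RR^{p+q}$, they form a complete orthonormal basis, and writing $\Sigma$ in this eigenbasis yields the stated formula. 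I do not anticipate a genuine obstacle here — the argument is mechanical; the only point worth flagging is the use of $\|\alpha\|_2=\|\beta\|_2=1$, without which $x_3$ and $x_4$ need not even be eigenvectors.
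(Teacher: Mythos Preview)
Your proof is correct and is precisely the elementary linear algebra verification the paper has in mind; the paper in fact omits the proof entirely, remarking only that it ``can be verified using elementary linear algebra.'' Your explicit check of the eigen-relations and orthonormality, together with the observation that $\|\alpha\|_2=\|\beta\|_2=1$ is needed, fills in exactly what the paper leaves to the reader.
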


\begin{lemma}\label{Lemma: determinant of Sigma}
 Suppose $\Sigma$ is as in \eqref{def: sigma in Theorem 3}. Then $\text{det}(\Sigma)=1-\rho^2$ and 
\begin{align*}
\Sigma^{-1}=&\ \begin{bmatrix}I-\alpha\alpha^T& 0\\
0 & I-\beta\beta^T\\
\end{bmatrix}
+\dfrac{1}{2(1+\rho)}\begin{bmatrix}
\alpha\alpha^T & \alpha\beta^T\\
\beta\alpha^T & \beta\beta^T
\end{bmatrix} \\
+&\ \dfrac{1}{2(1-\rho)}\begin{bmatrix}
\alpha\alpha^T & -\alpha\beta^T\\
-\beta\alpha^T & \beta\beta^T
\end{bmatrix}.
\end{align*}
\end{lemma}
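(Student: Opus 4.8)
The plan is to read both claims directly off the spectral decomposition already recorded in Lemma~\ref{Lemma: spectrum of sigma}, using only that $\alpha$ and $\beta$ are unit vectors. Recall from there that $\Sigma$ has eigenvalue $1$ with eigenvectors $x_1^{(i)}=(y_i,0_q)$, $i\in[p-1]$, and $x_2^{(i)}=(0_p,z_i)$, $i\in[q-1]$; eigenvalue $1+\rho$ with eigenvector $x_3=(\alpha/\sqrt2,\beta/\sqrt2)$; and eigenvalue $1-\rho$ with eigenvector $x_4=(\alpha/\sqrt2,-\beta/\sqrt2)$. These $p+q$ vectors form an orthonormal basis of $\RR^{p+q}$. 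Since the determinant equals the product of the eigenvalues, $\det(\Sigma)=1^{p-1}\cdot 1^{q-1}\cdot(1+\rho)(1-\rho)=1-\rho^2$. (Equivalently one can use the Schur-complement identity $\det(\Sigma)=\det(I_q-\rho^2\beta\alpha^T\alpha\beta^T)$ together with $\|\alpha\|_2=1$ and the matrix determinant lemma, but the eigenvalue computation is cleanest given Lemma~\ref{Lemma: spectrum of sigma}.)

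For the inverse I will invert $\Sigma$ on each eigenspace, i.e.
\[\Sigma^{-1}=\sum_{i=1}^{p-1}x_1^{(i)}(x_1^{(i)})^T+\sum_{i=1}^{q-1}x_2^{(i)}(x_2^{(i)})^T+\frac{1}{1+\rho}\,x_3x_3^T+\frac{1}{1-\rho}\,x_4x_4^T.\]
Now $\{y_i\}_{i=1}^{p-1}$ together with $\alpha$ is an orthonormal basis of $\RR^p$, so $\sum_{i=1}^{p-1}y_iy_i^T=I_p-\alpha\alpha^T$, giving $\sum_{i=1}^{p-1}x_1^{(i)}(x_1^{(i)})^T=\begin{bmatrix}I_p-\alpha\alpha^T&0\\0&0\end{bmatrix}$; similarly $\sum_{i=1}^{q-1}x_2^{(i)}(x_2^{(i)})^T=\begin{bmatrix}0&0\\0&I_q-\beta\beta^T\end{bmatrix}$. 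Finally
\[x_3x_3^T=\frac12\begin{bmatrix}\alpha\alpha^T&\alpha\beta^T\\\beta\alpha^T&\beta\beta^T\end{bmatrix},\qquad x_4x_4^T=\frac12\begin{bmatrix}\alpha\alpha^T&-\alpha\beta^T\\-\beta\alpha^T&\beta\beta^T\end{bmatrix}.\]
Substituting these three expressions into the display above yields exactly the asserted formula for $\Sigma^{-1}$.

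There is essentially no hard step here; the only things to watch are the bookkeeping of the $\tfrac12$ factors against the eigenvalue reciprocals $\tfrac{1}{1\pm\rho}$ (which combine into the stated $\tfrac{1}{2(1\pm\rho)}$) and the signs in the off-diagonal blocks coming from $x_4$. As an independent sanity check one may instead verify $\Sigma\Sigma^{-1}=I_{p+q}$ by block multiplication, where the cross terms cancel precisely because $\|\alpha\|_2^2=\|\beta\|_2^2=1$; this re-derives the lemma without invoking Lemma~\ref{Lemma: spectrum of sigma}.
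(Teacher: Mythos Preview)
Your proof is correct and follows the same approach as the paper, which simply states that the result follows directly from Lemma~\ref{Lemma: spectrum of sigma}. You have made explicit precisely the eigenvalue-product and eigenspace-inversion computations that the paper leaves implicit.
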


\begin{proof}[Proof of Lemma \ref{Lemma: determinant of Sigma}]
Follows directly from Lemma~\ref{Lemma: spectrum of sigma}.
\end{proof}


\begin{lemma}\label{Lemma: trace }
Suppose $\Sigma_1$ and $\Sigma_2$ are of the form \eqref{def: sigma in Theorem 3} with singular vectors $\alpha_1$, $\beta_1$, $\alpha_2$, and $\beta_2$, respectively. 
Then
\[ Tr(\Sigma_1 \Sigma_2^{-1})=p+q +\dfrac{2\rho^2}{1-\rho^2} \lb 1-(\beta_1^T\beta_2)(\alpha_1^T\alpha_2)\rb. \]
\end{lemma}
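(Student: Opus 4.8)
\textbf{Proof proposal for Lemma~\ref{Lemma: trace }.}
The plan is to compute $Tr(\Sigma_1\Sigma_2^{-1})$ directly using the explicit formula for $\Sigma_2^{-1}$ obtained in Lemma~\ref{Lemma: determinant of Sigma}. First I would write $\Sigma_1 = I_{p+q} + \rho N_1$ where $N_1 = \begin{bmatrix} 0 & \alpha_1\beta_1^T \\ \beta_1\alpha_1^T & 0\end{bmatrix}$, and similarly decompose $\Sigma_2^{-1}$ as given in Lemma~\ref{Lemma: determinant of Sigma} into the block-diagonal piece plus the two rank-controlled pieces weighted by $1/(2(1+\rho))$ and $1/(2(1-\rho))$. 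Then $Tr(\Sigma_1\Sigma_2^{-1}) = Tr(\Sigma_2^{-1}) + \rho\, Tr(N_1\Sigma_2^{-1})$, so it suffices to evaluate these two traces.

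For the first term, $Tr(\Sigma_2^{-1})$: using Lemma~\ref{Lemma: determinant of Sigma}, the block-diagonal part contributes $Tr(I_p - \alpha_2\alpha_2^T) + Tr(I_q - \beta_2\beta_2^T) = (p-1) + (q-1)$ since $\|\alpha_2\|_2 = \|\beta_2\|_2 = 1$. The $1/(2(1+\rho))$ piece contributes $\tfrac{1}{2(1+\rho)}(\|\alpha_2\|_2^2 + \|\beta_2\|_2^2) = \tfrac{1}{1+\rho}$, and likewise the $1/(2(1-\rho))$ piece contributes $\tfrac{1}{1-\rho}$ (the off-diagonal blocks $\pm\alpha_2\beta_2^T$ do not contribute to the trace). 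Adding, $Tr(\Sigma_2^{-1}) = p+q-2 + \tfrac{1}{1+\rho} + \tfrac{1}{1-\rho} = p + q - 2 + \tfrac{2}{1-\rho^2}$.

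For the cross term $Tr(N_1\Sigma_2^{-1})$, I would again go block by block. The key elementary identities are $Tr\big((\alpha_1\beta_1^T)(\beta_2\alpha_2^T)\big) = (\beta_1^T\beta_2)(\alpha_1^T\alpha_2)$ and its symmetric counterpart. The block-diagonal part of $\Sigma_2^{-1}$ multiplied by $N_1$ has only off-diagonal blocks, hence zero trace. The $1/(2(1+\rho))$ piece gives $\tfrac{1}{2(1+\rho)}\big[Tr(\alpha_1\beta_1^T\beta_2\alpha_2^T) + Tr(\beta_1\alpha_1^T\alpha_2\beta_2^T)\big] = \tfrac{1}{1+\rho}(\alpha_1^T\alpha_2)(\beta_1^T\beta_2)$, while the $1/(2(1-\rho))$ piece with its sign flips on the off-diagonal blocks gives $-\tfrac{1}{1-\rho}(\alpha_1^T\alpha_2)(\beta_1^T\beta_2)$. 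Summing, $Tr(N_1\Sigma_2^{-1}) = (\alpha_1^T\alpha_2)(\beta_1^T\beta_2)\big(\tfrac{1}{1+\rho} - \tfrac{1}{1-\rho}\big) = -\tfrac{2\rho}{1-\rho^2}(\alpha_1^T\alpha_2)(\beta_1^T\beta_2)$. Therefore $Tr(\Sigma_1\Sigma_2^{-1}) = p+q-2+\tfrac{2}{1-\rho^2} - \tfrac{2\rho^2}{1-\rho^2}(\alpha_1^T\alpha_2)(\beta_1^T\beta_2) = p+q + \tfrac{2\rho^2}{1-\rho^2}\big(1 - (\beta_1^T\beta_2)(\alpha_1^T\alpha_2)\big)$, where in the last step I use $\tfrac{2}{1-\rho^2} - 2 = \tfrac{2\rho^2}{1-\rho^2}$. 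This is the claimed identity. There is no real obstacle here — the only thing to be careful about is keeping track of the $\pm$ signs on the off-diagonal blocks in the two rank-two pieces of $\Sigma_2^{-1}$, and using $\|\alpha_i\|_2 = \|\beta_i\|_2 = 1$ consistently; everything else is bookkeeping with the cyclic property of the trace.
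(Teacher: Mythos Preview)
Your proof is correct and follows essentially the same route as the paper: both use the explicit form of $\Sigma_2^{-1}$ from Lemma~\ref{Lemma: determinant of Sigma} and then compute the trace via the cyclic property and $\|\alpha_i\|_2=\|\beta_i\|_2=1$. The only cosmetic difference is that the paper first consolidates the three pieces of $\Sigma_2^{-1}$ into a single block form with $C_\rho=\rho/(1-\rho^2)$ before multiplying by $\Sigma_1$, whereas you split $\Sigma_1=I_{p+q}+\rho N_1$ and handle the three pieces of $\Sigma_2^{-1}$ separately; the arithmetic is identical.
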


\begin{proof}
 Lemma~\ref{Lemma: determinant of Sigma} can be used to obtain the form of $\Sigma_2^{-1}$, which implies $ \Sigma_1 \Sigma_2^{-1}$ equals
 \begin{align*}
\begin{bmatrix}
 I_p & \rho\alpha_1\beta_1^T\\
 \rho\beta_1\alpha_1^T & I_q
 \end{bmatrix}
 \begin{bmatrix}
 I_p+\rho C_{\rho}\alpha_2\alpha_2^T & -C_{\rho}\alpha_2\beta_2^T\\
 -C_{\rho}\beta_2\alpha_2^T & I_q+\rho C_{\rho}\beta_2\beta_2^T,
 \end{bmatrix}
 \end{align*}
 where 
 $C_{\rho}={\rho}/{1-\rho^2}$.
 Since $Tr(\Sigma_1\Sigma^{-1}_2)$ equals the sum of the two $p\times p$ and $q\times q$ diagonal submatrices, we obtain that
 \begin{align*}
  Tr(\Sigma_1\Sigma^{-1}_2)
 =&\ Tr\lb I_p+\rho C_{\rho}\alpha_2\alpha_2^T-\rho C_{\rho}(\beta_1^T\beta_2)\alpha_1\alpha_2^T\rb\\ &\ +Tr\lb I_q+\rho C_{\rho}\beta_2\beta_2^T-\rho C_{\rho}(\alpha_1^T \alpha_2)\beta_1\beta_2^T\rb\\
 =&\ p+q + \dfrac{\rho^2}{1-\rho^2} \lb Tr(\alpha_2^T\alpha_2)+Tr(\beta_2^T\beta_2)\\
 &\ -(\beta_1^T\beta_2)Tr(\alpha_1\alpha_2^T)-(\alpha_1^T\alpha_2)Tr(\beta_1\beta_2^T)\rb,
 \end{align*}
 where we used the linearity of Trace operator, as well as the fact that $Tr(AB)=Tr(BA)$.
 Noticing $\|\alpha_2\|_2=\|\beta_2\|_2=1$, the result follows.
\end{proof}
\subsection{Proof of Key Lemmas for Theorem~\ref{thm: CT matrix operator norm}}
 \label{Subsection: key lemma for CT}
\subsubsection{Proof of Lemma~\ref{lemma: supp: s1}}
\begin{proof}[Proof of Lemma~\ref{lemma: supp: s1}]
 Note that
\begin{align*}
\|\eta(\mS_1)- \txy\|_{op}\leq &\  \underbrace{\| \eta(\mS_1) -\mS_1 \|_{op}}_{T_1}\\
&\ +\underbrace{\|\mS_1-\txy\|_{op}}_{T_2}.
\end{align*}


We deal with the term $T_1$ first.
 Recall from \eqref{intheorem: CT operator: hsy decomp} that $\mS_1=\mP_{E_1\times E_2}(\tSxy)$  is a sparse matrix. In particular, each row and column  of $\mS_1$ can have at most $s_y$ and $s_x$ many nonzero elements, respectively. 
Now we make use of two elementary facts.
First, for $x\neq 0$, 
$|\eta(x)-x|\leq {\Thres}/{\sqn}$, and second,  for any matrix  $A\in\RR^{p\times q}$,
 $$\|A\|_{op}\leq \max_{1\leq i\leq p}\sum_{j=1}^{p}|A_{ij}|\wedge \max_{1\leq j\leq q}\sum_{i=1}^{n}|A_{ij}|. $$
 The above results, combined with the row and column sparsity of $\mS_1$, lead to
 \begin{align*}
  T_1=&\ \| \eta(\mS_1) -\mS_1\|_{op}\\
  \leq &\ \slb\max_{1\leq i\leq p}\|(\mS_1)_{i*}\|_{0}\srb\wedge \slb \max_{1\leq j\leq q}\|(\mS_1)_{j}\|_{0}\srb \frac{\Thres}{\sqn}\\
  \leq &\ \min\{s_x,s_y\}\dfrac{\Thres}{\sqn},   
 \end{align*}
which is the first term in the bound of $\|\eta(\mS_1)-\Sxy\|_{op}$.

Now for $T_2$, noting $\txy=U\Lambda V^T$, observe that
\begin{align*}
  \MoveEqLeft  \mS_1-\txy\\
  =&\ \underbrace{ U\Lambda^{1/2}\left(\frac{\mZ^T\mZ}{n}-I_r\right)\Lambda^{1/2}V^T}_{S_{11}}\\
    &\ +\underbrace{ U\Lambda^{1/2}\frac{\mZ^T\mZ_2}{n}\Sy\slb\tV B_y(\tV_{E_2*})^T\srb}_{S_{12}}\\
    &\ +\underbrace{(\tU_{E_1*} B_y\tU^T)\Sx \frac{\mZ_1^T\mZ}{n}\Lambda^{1/2}V^T}_{S_{13}}\\
    &\ +\underbrace{(\tU_{E_1*} B_x\tU^T)\Sx \frac{\mZ_1^T\mZ_2}{n}\Sy\slb\tV B_y(\tV_{E_2*})^T\srb }_{S_{14}}.
\end{align*}
It is easy to see that
\begin{align*}
    \|S_{11}\|_{op}\leq &\  \|\Sx^{-1/2}\|_{op}\|\Sx^{1/2}U\|_{op}\|\Sy^{-1/2}\|_{op}\|\Sy^{1/2}V\|_{op}\|\Lambda\|_{op}\\
    &\ \times \norm{\frac{\mZ^T\mZ}{n}-I_r}_{op}.
\end{align*}
Since $(\mX,\mY)\sim\mathbb P\in\mP(r,s_x,s_y,\B)$,  $\Sx^{-1}$ and $\Sy^{-1}$ are bounded it operator norm by $\B$. Also, $\Sx^{1/2}\tU$ and $\Sy^{1/2}\tV$ are orthonormal matrices.
Therefore the operator norms of the matrices $\Sx^{1/2}U$, $\Sy^{1/2}V$, and $\Lambda$ are bounded by one.
On the other hand, by Bai-Yin's law on eigenvalues of Wishart matrices (cf.  Theorem 4.7.1 of \cite{vershynin2020}), $\|{\mZ^T\mZ}/{n}-I_r\|_{op}\leq C(\sqrt{r/n}+r/n)$ with high probability. Since $r<s_x<\sqn$, clearly $r/n<1$. Thus  $\|S_{11}\|_{op}\leq \B C\sqrt{r/n}$ with high probability. Hence it suffices to show that the terms $S_{12}$, $S_{13}$, and $S_{14}$ are small in operator norm, for which, we will make use of Lemma~\ref{lemma: normal data matrix}. First let us consider the case of $S_{12}$. Clearly,
\begin{align*}
    \|S_{12}\|_{op}\leq &\  \|\Sx^{-1/2}\|_{op}\|\Sx^{1/2}U\|_{op}\|\Lambda^{1/2}\|_{op}\\
    &\ \times\norm{\frac{\mZ^T\mZ_2}{n}\Sy\slb\tV B_y(\tV_{E_2*})^T\srb}_{op}.
\end{align*}
We already mentioned that $\|\Sx^{-1}\|_{op}\leq\B$, and  $\|\Sx^{1/2}U\|_{op}$ and $\|\Lambda\|_{p}$ are bounded by one.
 Therefore, it follows that
\[\|S_{12}\|_{op}\leq\B^{1/2}\norm{\frac{\mZ^T\mZ_2}{n}\Sy\slb\tV B_y(\tV_{E_2*})^T\srb}_{op}.\]
Now we apply Lemma~\ref{lemma: normal data matrix} on the term $\mZ^T\mZ_2\Sy\slb\tV B_y(\tV_{E_2*})^T\srb$ with $A=I_r$, and $B=\Sy\tV B_y(\tV_{E_2*})^T$. Note that $\Sy$, $\tV$, and $B_y$ are full rank matrices, i.e., they have rank $q$. Therefore, the rank of $B$ equals rank of 
$\tV_{E_2*}$. Note that the rows of the matrix $\tV$ are linearly independent because the square matrix $\tV$ has full rank. Therefore, the rank of $\tV_{E_2*}$ is $|E_2|$, which is $s_y$. Hence, the rank of $B$ is also $s_y$. Also note that $\text{rank}(A)=r\leq s_y\leq n$. Therefore Lemma~\ref{lemma: normal data matrix} can be applied with  $a=r$ and $b=s_y$. Also, $\|A\|_{op}=1$ trivially follows. Using the same arguments which led to \eqref{eq: intheorem: CT: M}, on the other hand, we can show that $\|B\|_{op}\leq\B$ by \eqref{intheorem: CT operator: hsy decomp}.
Therefore Lemma~\ref{lemma: normal data matrix} implies that for any $t>0$, the following holds with probability at least $1-\exp(-Cn)-\exp(-t^2/2)$:
\[\norm{\frac{\mZ^T\mZ_2}{n}\Sy\slb\tV B_y(\tV_{E_2*})^T\srb}_{op}\leq C\B\frac{\max\{\sqrt{s_y},t\}}{\sqn},\]
which implies $|S_{12}|\leq C\B^{3/2}\max\{\sqrt{ s_y},t\}/\sqrt{n}$ with high probability. 
Exchanging the role of $X$ and $Y$ in the above arguments, we can show that $|S_{13}|\leq C\B^{3/2}\max\{\sqrt{ s_x},t\}/\sqrt{n}$ with high probability. For $S_{14}$, we note that
\[\|S_{14}\|_{op}\leq \norm{(\tU_{E_1*} B_x\tU^T)\Sx \frac{\mZ_1^T\mZ_2}{n}\Sy\slb\tV B_y(\tV_{E_2*})^T\srb}_{op}.\] 
We intend to apply Lemma~\ref{lemma: normal data matrix} with $A=\Sx\tU B_x(\tU_{E_1*})^T$ and $B=\Sy\tV B_y(\tV_{E_2*})^T$. Arguing in the lines of the proof for the term $S_{12}$, we can show that $A$ and $B$ have rank $a=s_x$ and $b=s_y$, respectively. Without loss of generality we assume $s_y\geq s_x$, which yields $b\geq a$, as requred by Lemma~\ref{lemma: normal data matrix}.
Otherwise, we can just take the transpose of $S_{14}$, which leads to $a=s_y$ and $b=s_x$, implying $b\geq a$. Using \eqref{eq: intheorem: CT: M}, as before, we can show that the operator norms of $A$ and $B$ are bounded by $\B$. Therefore,  Lemma~\ref{lemma: normal data matrix} implies that  for all $t\geq 0$,
\[\|S_{14}\|_{op}\leq C\B^2\frac{\max\{\sqrt s_x,\sqrt s_y, t\}}{\sqn}\]
with probability at least $1-\exp(-Cn)-\exp(-t^2/2)$. Hence, it follows that with probability $1-o(1)$,
\[\|\mS_1-\txy\|_{op}\leq  C\B^2 \frac{\max\{\sqrt{s_x},\sqrt{s_y}\}}{\sqrt{n}}.\]

\end{proof}

\subsubsection{Proof of Lemma~\ref{lemma: sqrt n: M N lemma: rot}}
\label{subsec: proof of sqrt n lemma}
 Without loss of generality, we will assume that $p>q$. We will also assume, without loss of generality, that $p'=p$ and $q'=q$. If that is not the case, we can add some zero rows to  $M$ and zero columns to $N$, respectively, which does not change their operator norm, but ensures $p'=p$ and $q'=q$.
 For any $p\in\NN$, let $\S^{p-1}$ denote the unit sphere in $\RR^p$. We denote an $\e$-net (with respect to Eucledian norm) on any set $\mathcal{X}\subset \RR^p$ by $T^{\e}(\mathcal{X})$. When $\mathcal{X}=\S^{p-1}$, there exists an $\e$-net of $\S^{p-1}$ so that 
  \[|T^{\e}(\S^{p-1})|\leq \slb 1+2/\e\srb ^p.\]
  By  $T^{\e}_{p}$, we denote  such an $\e$-net. Although $T^{\e}_{p}$ may not be unique, that is not necessary for our purpose.  For a subset $S\subset [p]$,  $T_{p}^{\e}(S)$ will denote an $\e$-net of the set
  $\{x\in \S^{p-1}\ :\ x_i=0\text{ if }i\neq 0\}$. Note that each element of the latter set has at most $|S|-1$ many degrees of freedom, from which, one can show that
  $|T^{\e}_k(S)|\leq ( 1+2/\e) ^{|S|}$.
  The following Fact on $\e$-nets will be very useful for us. The proof is standard and can be found, for example, in \cite{vershynin2020}.
  \begin{fact}\label{fact: e-net}
   Let $A\in\RR^{p\times q}$ for $p,q\in\mathbb{N}$. Then there exist $x\in T^{\e}_{p}$ and $y\in T^{\e}_q$ such that
  $|\langle x,Ay\rangle |\geq (1-2\e)\|A\|_{op}$.
  \end{fact}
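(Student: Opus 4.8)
The plan is to run the standard two-sided net argument. First I would recall that for any $A\in\RR^{p\times q}$ one has $\|A\|_{op}=\sup_{u\in\S^{p-1},\,v\in\S^{q-1}}|\langle u,Av\rangle|$, and that by compactness of $\S^{p-1}\times\S^{q-1}$ together with continuity of $(u,v)\mapsto\langle u,Av\rangle$ this supremum is attained at some pair $(u^*,v^*)$, so that $|\langle u^*,Av^*\rangle|=\|A\|_{op}$. Since $T^{\e}_{p}$ and $T^{\e}_{q}$ are $\e$-nets of $\S^{p-1}$ and $\S^{q-1}$, I may then pick $x\in T^{\e}_{p}$ with $\|x-u^*\|_2\le\e$ and $y\in T^{\e}_q$ with $\|y-v^*\|_2\le\e$, noting that $\|x\|_2=\|y\|_2=1$ because the net points lie on the respective unit spheres.

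Next I would control the discrepancy $|\langle x,Ay\rangle-\langle u^*,Av^*\rangle|$ via bilinearity, writing
$$\langle x,Ay\rangle-\langle u^*,Av^*\rangle=\langle x-u^*,Ay\rangle+\langle u^*,A(y-v^*)\rangle.$$
Cauchy--Schwarz and the definition of the operator norm give $|\langle x-u^*,Ay\rangle|\le\|x-u^*\|_2\|Ay\|_2\le\e\|A\|_{op}\|y\|_2=\e\|A\|_{op}$ and similarly $|\langle u^*,A(y-v^*)\rangle|\le\|u^*\|_2\|A(y-v^*)\|_2\le\e\|A\|_{op}$. Summing, the discrepancy is at most $2\e\|A\|_{op}$, whence
$$|\langle x,Ay\rangle|\ge|\langle u^*,Av^*\rangle|-2\e\|A\|_{op}=(1-2\e)\|A\|_{op},$$
which is the asserted bound.

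There is no genuine obstacle here; the argument is routine, and the only points deserving a word of care are (i) justifying existence of the optimizing pair $(u^*,v^*)$, which is immediate from compactness in finite dimensions, and (ii) recording that the nets $T^{\e}_p,T^{\e}_q$ underlying the cardinality bound $(1+2/\e)^p$ can be taken with all points on the sphere, so that $\|x\|_2=\|y\|_2=1$ and the Cauchy--Schwarz estimates carry the clean factor $\e$ rather than $1+\e$. Were one to use a net whose points are only within $\e$ of the sphere, the identical computation would still go through with a constant slightly worse than $1-2\e$, so nothing essential is lost in either case.
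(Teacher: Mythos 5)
Your proof is correct and is exactly the standard two-sided net argument that the paper itself omits, simply citing \cite{vershynin2020}: attain the operator norm at $(u^*,v^*)$, approximate by net points, and bound the bilinear discrepancy by $2\e\|A\|_{op}$ via the telescoping decomposition and Cauchy--Schwarz. Nothing is missing.
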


  Letting $\mA=\eta(\M \mZ_1^T\mZ_2\N )$, 
and  using Fact \ref{fact: e-net}, we obtain that 
\[P\left( \|\mA\|_{op}>\delta\right )\leq P\left( \max_{x\in T^{\e}_p,y\in T^{\e}_q}|\langle x, \mA y\rangle |\geq (1-2\e)\delta\right)\]
for any $\delta>0$.
Proceeding like Proposition 15 of \cite{deshpande2014}, we fix $1<\da\leq \min\{p,q\}$, and introduce
the sets
\begin{align}\label{inlemma: def: sxsy}
S_x &\ =\{i \in[p]:|x_i|\geq \sqrt{\da/p}\},\nn\\
 S_y&\ =\{i\in[q]: |y_i|\geq \sqrt{\da/q}\},
\end{align}
and their complements  $S_x^c=[p]\setminus S_x$ and $S_y^c=[q]\setminus S_y$. The precise value of $\da$ will be chosen later.
For any subset $A\subset [k]$, $k\in\NN$, and vector $x\in\RR^k$, we denote by  $x_A$ the projection of $x$ onto $A$, which means $x_A\in\RR^p$ and $(x_A)_i=x_i$ if $i\in A$, and zero otherwise.
Let us denote the projections of $x$ and $y$ on $S_x$, $S_x^c$, $S_y$, and $S_y^c$,  by $x_{S_x}$, $x_{S_x^c}$, $y_{S_y}$, and $y_{S_y^c}$, respectively.
Note that this implies
\[x=x_{S_x}+x_{S_x^c},\quad y=y_{S_y}+y_{S_y^c},\]
as well as  
\[x_{S_x}, x_{S_x^c}\in \RR^p,\quad y_{S_y},y_{S_y^c}\in\RR^q.\]
There are fewer elements the sets $S_x$ and $S_y$ compared to their complements. Therefore, we will treat these sets separately.
To that end, we consider the splitting
\begin{align}\label{inlemma: CT: big lemma}
\MoveEqLeft P\lp \max_{x\in T^{\e}_p,y\in T^{\e}_q}|\langle x, \mA y\rangle |\geq 4\delta(1-2\e)\rp\nn\\
\leq &\ \underbrace{P\lp \max_{x\in T^{\e}_p,y\in T^{\e}_q}|\langle x_{S_x}, \mA y_{S_y}\rangle |\geq \delta(1-2\e)\rp}_{T_1}\nn \\
&\ + \underbrace{P\lp \max_{x\in T^{\e}_p,y\in T^{\e}_q}|\langle x_{S_x}, \mA y_{S_y^c}\rangle |\geq \delta(1-2\e)\rp}_{T_2}\nn\\
&\ +\underbrace{ P\lp \max_{x\in T^{\e}_p,y\in T^{\e}_q}|\langle x_{S_x^c}, \mA y\rangle |\geq \delta(1-2\e)\rp }_{T_3}
\end{align}
The  term $T_1$ can be bounded by Lemma \ref{Adlemma: lowrank: premultiply: 1}.

\begin{lemma}\label{Adlemma: lowrank: premultiply: 1}
Suppose $\M$ and $\N$ are as in
Lemma~\ref{lemma: sqrt n: M N lemma: rot} and
$\mA=\eta(\mQ)$ where
$\mQ=\M \mZ_1^T\mZ_2 \N/n$.
Then for any $\Delta>0$, there exist absolute constants $C,c>0$ such that
\begin{align*}
\MoveEqLeft P\lbs \max_{x\in T^{\e}_p,y\in T^{\e}_q}|\langle x_{S_x}, \mA y_{S_y}\rangle |\geq \Delta\rbs\\
\leq &\ C\exp \lb (p+q)\dfrac{\log (C\da)}{\da}\\
&\ -\dfrac{n^2\Delta^2}{4C\|\M\|_{op}^2\|\N\|_{op}^2(2n+p+q)}\rb\\
&\ + \dfrac{C}{\Delta^2}\|\M\|_{op}^2\|N\|_{op}^2(n(p+q))^C\lbs e^{ -c(n+q)}+ e^{ -c(n+p)}\rbs 
\end{align*}
\end{lemma}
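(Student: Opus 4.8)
\textbf{Proof strategy for Lemma~\ref{Adlemma: lowrank: premultiply: 1}.} The plan is to control the bilinear form $\langle x_{S_x},\mA y_{S_y}\rangle$ uniformly over the $\e$-nets $T^\e_p$ and $T^\e_q$ by first decomposing the thresholded matrix $\mA=\eta(\mQ)$ into the raw (unthresholded) part $\mQ$ plus the correction $\eta(\mQ)-\mQ$, and then exploiting the fact that $S_x$ and $S_y$ are \emph{small} index sets: by the definition \eqref{inlemma: def: sxsy}, since $\|x\|_2=1$ we have $|S_x|\cdot(\da/p)\leq 1$, hence $|S_x|\leq p/\da$, and likewise $|S_y|\leq q/\da$. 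Thus, instead of taking a union bound over all of $T^\e_p\times T^\e_q$, I would only need to union over the (vastly fewer) sparse unit vectors supported on sets of size at most $p/\da$ and $q/\da$ respectively; this is where the factor $\exp((p+q)\log(C\da)/\da)$ comes from, via the standard bound $\sum_{|S|\le p/\da}|T^\e_k(S)|\le \binom{p}{p/\da}(1+2/\e)^{p/\da}$ together with $\binom{p}{p/\da}\le (e\da)^{p/\da}$.

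First I would fix a pair of sparse unit vectors $u$ (supported on $S_x$, $|S_x|\le p/\da$) and $w$ (supported on $S_y$, $|S_y|\le q/\da$) and bound $|\langle u,\mQ w\rangle|=|u^T\M\mZ_1^T\mZ_2\N w|/n$. Writing $a=\M^Tu$ and $b=\N w$, this is $|a^T\mZ_1^T\mZ_2 b|/n$ with $\|a\|_2\le\|\M\|_{op}$ and $\|b\|_2\le\|\N\|_{op}$; conditionally on $\mZ_1$, this is a Gaussian with variance $\|\mZ_1 a\|_2^2\|b\|_2^2/n^2$, and $\|\mZ_1 a\|_2^2\lesssim \|a\|_2^2(n+p)$ with exponentially high probability by Fact~\ref{fact: Gaussian matrix concentration inequality} (or a $\chi^2$ bound as in Fact~\ref{fact: Chi square tail probability}). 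A two-sided Gaussian tail then gives $P(|\langle u,\mQ w\rangle|\ge \Delta/2)\lesssim \exp(-cn^2\Delta^2/(\|\M\|_{op}^2\|\N\|_{op}^2(2n+p+q)))$ plus the exceptional-event probabilities $e^{-c(n+p)}+e^{-c(n+q)}$ for $\mZ_1,\mZ_2$. Multiplying by the net cardinality bound above yields the first term in the stated bound, with the second term absorbing the exceptional events inflated by the net size and a crude polynomial factor $(n(p+q))^C$. For the correction $\eta(\mQ)-\mQ$ restricted to $S_x\times S_y$: each nonzero entry is at most $\tau/\sqn$ in absolute value, so $\|\mP_{S_x\times S_y}(\eta(\mQ)-\mQ)\|_{op}\le \tau\sqrt{|S_x||S_y|}/\sqn\le \tau\sqrt{pq}/(\da\sqn)$, and since $\da$ will eventually be chosen of order $\log(p\vee q)$ this is harmless; alternatively, and more in the spirit of the lemma statement, one can simply bound $|\langle x_{S_x},(\eta(\mQ)-\mQ)y_{S_y}\rangle|$ pointwise and fold it into $\Delta$. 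I would carry the constants so that the split $\Delta=\Delta/2+\Delta/2$ between the raw term and the Chebyshev-type control of the correction produces exactly the $C/\Delta^2$ prefactor on the exceptional term.

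The main obstacle I anticipate is the bookkeeping of the two competing exponential rates: the net entropy $(p+q)\log(C\da)/\da$ must be dominated both by $n^2\Delta^2/(\|\M\|_{op}^2\|\N\|_{op}^2(2n+p+q))$ (when $\Delta$ is taken near the target threshold, i.e. $\Delta\sim \|\M\|_{op}\|\N\|_{op}\sqrt{(p+q)/n}\,e^{-\tau^2/K}$ with the later choice of $\da$) and by the $c(n+p)$, $c(n+q)$ rates of the matrix-concentration exceptional events. This forces a careful choice of $\da$ — large enough that $\log(C\da)/\da$ is small, but not so large that the sparsity sets $S_x,S_y$ become empty and the decomposition degenerates — which is precisely why the lemma is stated for a free parameter $\da\in(1,\min\{p,q\}]$ and the optimization over $\da$ is deferred to the proof of Lemma~\ref{lemma: sqrt n: M N lemma: rot}. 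A secondary technical point is ensuring the conditioning argument (Gaussian-in-$\mZ_2$ given $\mZ_1$) is clean when $\M,\N$ may be rank-deficient; replacing $p',q'$ by $p,q$ via zero-padding, as already set up at the start of the proof of Lemma~\ref{lemma: sqrt n: M N lemma: rot}, removes that nuisance, and the rank of $\mZ_1 a$ versus $a$ plays no role since only $\ell_2$ norms enter.
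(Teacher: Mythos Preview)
Your approach has a genuine gap in the treatment of the correction term $\eta(\mQ)-\mQ$. The deterministic bound you propose,
\[
\bigl|\langle x_{S_x},(\eta(\mQ)-\mQ)y_{S_y}\rangle\bigr|\le \frac{\tau\sqrt{|S_x||S_y|}}{\sqrt n}\le \frac{\tau\sqrt{pq}}{\da\sqrt n},
\]
is far too large to ``fold into $\Delta$'' for the range of $\Delta$ where the lemma is nontrivial. In the downstream proof of Lemma~\ref{lemma: sqrt n: M N lemma: rot} one takes $\delta\asymp\|\M\|_{op}\|\N\|_{op}\sqrt{(p+q)/n}\,e^{-\tau^2/K}$ and $\da=\exp(\tau^2/(2K_0))$ (not $\da\asymp\log(p\vee q)$ as you assert); with these choices the ratio of your correction to $\delta$ is of order $\tau\sqrt{\min(p,q)}\,e^{\tau^2/K-\tau^2/(2K_0)}$, which diverges. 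No choice of $\da$ simultaneously makes the correction negligible and keeps the entropy term $(p+q)\log(C\da)/\da$ small enough. So the decomposition $\eta(\mQ)=\mQ+(\eta(\mQ)-\mQ)$ with a crude entrywise bound on the second piece cannot yield the stated inequality.

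The paper avoids this entirely by \emph{not} decomposing. It applies a Gaussian concentration inequality for functions that are Lipschitz on a high-probability set (Corollary~10 of \cite{deshpande2014}) directly to $f_{x,y}(\Z)=\langle x,\eta(\mQ)y\rangle$, viewed as a function of the stacked Gaussian vector $\Z=(\vec(\mZ_1^T),\vec(\mZ_2^T))$. The crucial observation is that soft thresholding satisfies $|\eta'|\le 1$ everywhere, so by the chain rule
\[
\|\nabla_{\Z}f_{x,y}(\Z)\|_2\le \frac{\|\M\|_{op}\|\N\|_{op}}{n}\bigl(\|\mZ_1\|_{op}^2+\|\mZ_2\|_{op}^2\bigr)^{1/2},
\]
which is the \emph{same} Lipschitz bound one would get for the unthresholded bilinear form. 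On the good set $\mathcal G_1$ where $\|\mZ_i\|_{op}\lesssim\sqrt n+\sqrt{p}$ (resp.\ $\sqrt q$), this gives $\mathcal L^2\asymp\|\M\|_{op}^2\|\N\|_{op}^2(2n+p+q)/n^2$, and the concentration lemma yields both the main exponential term (after the union bound over the sparse net $\mathcal T$, whose cardinality is $\exp((p+q)\log(C\da)/\da)$) and the residual $C\Delta^{-2}\,\E[\max f_{x,y}^4]^{1/2}P(\mathcal G_1^c)^{1/2}$, which is the second term in the statement. The thresholding never has to be ``undone''; its only role is through $|\eta'|\le 1$ in the gradient bound.
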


We state another lemma which helps in controlling  the terms  $T_2$ and $T_3$.

\begin{lemma}\label{Adlemma: lowrank: premultiply: 2}
Suppose $\M$, $\N$, $\mZ_1$, $\mZ_2$, and $\mA$ are as in Lemma~\ref{lemma: sqrt n: M N lemma: rot}.
Let $K_0={161}\|M\|^2_{op}\|N\|^2_{op}$.  Suppose $K>0$ is such that $K\geq  K_0$ and moreover,  $\tau\in[\sqrt{K_0},  \sqrt{K\log p}/2]$.
  Let $\mathcal T_2$ be either the set $T_q^\e$ or the set $\Tvq=\{y_{S_y}:y\in T_q^{\e}\}$.
Then there exist absolute constants $C, c>0$  such that the following holds for any $\Delta>0$:
\begin{align*}
& P\lbs \max_{x\in T^{\e}_p,y\in \mathcal T_2 }|\langle x_{S^c_x}, \mA y\rangle |\geq \Delta\rbs\\
\leq &\ C\exp\lb C(p+q) -\dfrac{\Delta^2n^2 e^{\tau^2/K}}{C\|M\|_{op}^2\|N\|_{op}^2 \da(2n+p+q)}\rb\\
&\ +\dfrac{C\|M\|_{op}^2\|N\|_{op}^2}{\Delta^2}(n(p+q))^C\exp\lb-c\min(n,\sqrt{p})\rb.
\end{align*}
\end{lemma}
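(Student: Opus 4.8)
\textbf{Proof plan for Lemma~\ref{Adlemma: lowrank: premultiply: 2}.}
The goal is to bound the supremum over the net $T^\e_p\times\mathcal T_2$ of $|\langle x_{S_x^c},\mA y\rangle|$, where $S_x^c$ is the ``small-coordinate'' set, i.e.\ $|x_i|<\sqrt{\da/p}$ on $S_x^c$. The plan is to first reduce to a per-pair tail bound and then union bound over the net. Fix $x\in T^\e_p$ and $y\in\mathcal T_2$. Writing $\mQ=\M\mZ_1^T\mZ_2\N/n$ and $\mA=\eta(\mQ;\tau/\sqn)$, decompose $\langle x_{S_x^c},\mA y\rangle=\langle x_{S_x^c},(\mA-\mQ)y\rangle+\langle x_{S_x^c},\mQ y\rangle$. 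For the first (``thresholding defect'') term, use that $|\eta(u;t)-u|\le|u|\wedge t$ entrywise, so each entry of $\mA-\mQ$ is bounded by $\min\{|\mQ_{ij}|,\tau/\sqn\}$; moreover if $|\mQ_{ij}|<\tau/\sqn$ then the entry of $\mA$ is exactly $0$, so only entries with $|\mQ_{ij}|\ge\tau/\sqn$ survive. This is where the factor $e^{\tau^2/K}$ enters: conditionally, $\mQ_{ij}$ is (up to the $\|M\|_{op}\|N\|_{op}$ scaling) a sub-exponential-type variable at scale $1/\sqn$, and one pays a Gaussian/sub-exponential tail cost $e^{-c\tau^2}$ for each surviving coordinate; summing these with the smallness of $x$ on $S_x^c$ (each $|x_i|^2<\da/p$) and the Cauchy--Schwarz bookkeeping over the at most $p\cdot q$ entries produces the $\da(2n+p+q)/(n^2 e^{\tau^2/K})$ term in the exponent, where the $(2n+p+q)$ is the usual Bai--Yin scale for $\|\mZ_1^T\mZ_2\|$-type bilinear forms.

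The second term $\langle x_{S_x^c},\mQ y\rangle=\tfrac1n (\M^T x_{S_x^c})^T\mZ_1^T\mZ_2(\N y)$ is a bilinear form in the independent Gaussian matrices $\mZ_1,\mZ_2$ with deterministic (given $x,y$) loading vectors $u=\M^T x_{S_x^c}$ and $v=\N y$ satisfying $\|u\|_2\le\|\M\|_{op}$, $\|v\|_2\le\|\N\|_{op}$. Conditioning on $\mZ_1$, the variable $\tfrac1n u^T\mZ_1^T\mZ_2 v$ is Gaussian with variance $\tfrac1{n^2}\|\mZ_1 u\|_2^2\|v\|_2^2$, and $\|\mZ_1 u\|_2^2$ concentrates at $n\|u\|_2^2$ via a $\chi^2$ bound (Fact~\ref{fact: Chi square tail probability} or Fact~\ref{fact: Gaussian matrix concentration inequality}); combining gives a tail of the form $\exp(-cn\Delta^2/(\|\M\|_{op}^2\|\N\|_{op}^2))$ off a low-probability event of order $e^{-c(n+p)}+e^{-c(n+q)}$, exactly matching the shape of the Lemma~\ref{Adlemma: lowrank: premultiply: 1} estimate but without the $1/\da$ gain (since here we do not exploit smallness of $x$ for this term — actually we will, but the cruder bound already suffices to be absorbed). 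Finally, union-bound over $|T^\e_p|\le(1+2/\e)^p$ and $|\mathcal T_2|\le(1+2/\e)^q$ (for $\mathcal T_2=\Tvq$ the cardinality is even smaller), which contributes the $e^{C(p+q)}$ prefactor, and collect the polynomial-in-$n(p+q)$ factors coming from the low-probability events after a Markov/Chebyshev step; the hypotheses $\log(p\vee q)=o(n)$ and $\log n=o(\sqrt p\vee\sqrt q)$ guarantee these events are negligible and let the $\min(n,\sqrt p)$ rate appear.

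The main obstacle is the thresholding-defect term: one must carefully track that, restricted to $S_x^c$ where $|x_i|<\sqrt{\da/p}$, the surviving entries of $\mQ$ (those exceeding $\tau/\sqn$ in modulus) are simultaneously rare (probability $\sim e^{-c\tau^2}$ each, by a sub-exponential tail for the entries of $\M\mZ_1^T\mZ_2\N$) and controllable in aggregate operator norm. The clean way is to bound $\|\mathbf 1\{|\mQ|\ge\tau/\sqn\}\odot\mQ\|_{op}$ restricted to rows in $S_x^c$ by a second-moment / $\e$-net argument on $\mQ$ itself: write the surviving-entry matrix as a product, use $|\mQ_{ij}|\mathbf 1\{|\mQ_{ij}|\ge\tau/\sqn\}\le \tfrac{\sqn}{\tau}\,e^{-\tau^2/K}\cdot(e^{\tau^2/K}|\mQ_{ij}|^2\wedge\ldots)$-type truncation inequalities, and then reduce to a bilinear-form tail exactly as above, where the prefactor $e^{-\tau^2/K}$ survives. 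Handling the interaction between the net discretization of $x,y$ and the data-dependent support of the surviving entries (the set $\{(i,j):|\mQ_{ij}|\ge\tau/\sqn\}$ is random, so one cannot fix it before union-bounding) is the delicate point; the resolution is to bound everything through $\|\mQ\|_{op}$ and $\|\mQ\|_{S_x^c}\|$-type quantities that are net-free, invoking Fact~\ref{fact: e-net} only at the very end. Once this term is under control, the two pieces combine and a union bound over the two $\e$-nets completes the proof, with $\e$ chosen to be a small absolute constant (e.g.\ $\e=1/4$).
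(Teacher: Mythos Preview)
Your decomposition $\langle x_{S_x^c},\mA y\rangle=\langle x_{S_x^c},(\mA-\mQ)y\rangle+\langle x_{S_x^c},\mQ y\rangle$ does not work. The second term $\langle x_{S_x^c},\mQ y\rangle$ is a plain bilinear form with $\|x_{S_x^c}\|_2\le 1$ and $\|y\|_2\le 1$; its size is governed by $\|\mQ\|_{op}\asymp\sqrt{(p+q)/n}$, and after a union bound over a net of cardinality $e^{C(p+q)}$ you need $\Delta^2\gtrsim \|M\|_{op}^2\|N\|_{op}^2(p+q)/n$, with \emph{no} $e^{-\tau^2/K}$ improvement. You write that this ``cruder bound already suffices to be absorbed,'' but it does not: the entire content of the lemma is the gain $e^{-\tau^2/K}$ over the trivial $\|\mQ\|_{op}$ bound, and your second term simply reproduces the trivial bound. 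The smallness $|x_i|<\sqrt{\da/p}$ on $S_x^c$ constrains entries, not $\|x_{S_x^c}\|_2$, so it buys nothing for a bilinear form against the dense matrix $\mQ$. In short, splitting the sparse matrix $\mA$ into two dense pieces destroys precisely the structure you must exploit.

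The paper's argument is quite different and does not decompose $\mA$. It treats $f_{x,y}(\Z)=\langle x_{S_x^c},\eta(\mQ)y\rangle$ as a (nonlinear) function of the Gaussian vector $\Z=(\vec(\mZ_1^T),\vec(\mZ_2^T))$ and applies a Gaussian concentration inequality for Lipschitz functions restricted to a ``good set'' $\G_2$ (Corollary~10 of Deshpande--Montanari). The Lipschitz bound (Lemma~\ref{Surrogate lemma: Adlemma 1: derivative}) involves $\|v\circ\grad\eta(\vec(\mQ))\|_2^2$ with $v=\vec(x_{S_x^c}y^T)$; here both factors are small simultaneously: $|x_i|^2<\da/p$ on $S_x^c$, and $\grad\eta(\mQ_{ij})$ is nonzero only on the random support $\{|\mQ_{ij}|>\tau/\sqrt n\}$. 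This gives $\|v\circ\grad\eta\|_2^2\le (\da/p)\sup_j\sum_i\mathbf 1\{|\mQ_{ij}|>\tau/\sqrt n\}$, and the good set $\G_2$ is defined so that this column-count is at most $4p\,e^{-\tau^2/K}$. Controlling $P(\G_2^c)$ is the technical heart: the indicators $\mathbf 1\{|\mQ_{ij}|>\tau/\sqrt n\}$ are \emph{dependent} across $i$ (they share the same $\mZ_2 N_j$), so an ordinary Chernoff bound fails; the paper invokes a Panconesi--Srinivasan generalized Chernoff inequality (Lemma~\ref{lemma: CT: Chernoff}) after verifying a product-expectation condition via a $\chi^2$ tail. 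Your proposal does not identify either the Lipschitz-concentration mechanism or the dependent-Chernoff step, and the entrywise/truncation inequalities you sketch for the ``defect'' term do not substitute for them.
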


Note that when $\mathcal  D=T_q^{\e}$, Lemma~\ref{Adlemma: lowrank: premultiply: 2} yields a bound on $T_3$. On the other hand, the case $\mathcal T_2=\Tvq$ yields a bound on the term
\begin{equation}\label{inlemma: prob: second term pseudo}
 T_2'=  P\lp \max_{x\in T^{\e}_p,y\in T^{\e}_q}|\langle x_{S_x^c}, \mA y_{S_y}\rangle |\geq \delta(1-2\e)\rp.
 \end{equation}
While $T_2'$ is not exactly equal to $T_2$,  interchanging the role of $x$ and $y$ in $T_2'$ gives $T_2$. Since the upper bound on $T_2'$ given by Lemma~\ref{Adlemma: lowrank: premultiply: 2} is symmetric in $p$ and $q$, it is not hard to see that the same bound works for $T_2$.

If we let $\e=1/4$, then $\Delta=\delta/2$. 
Combining the  bounds on $T_1$, $T_2$, and $T_3$,  we conclude that the right hand side of \eqref{inlemma: CT: big lemma} is $o(1)$ if $\Delta^2$ is larger than some constant multiple of \begin{align*}
   \max\lbs & \frac{(n+p+q)(p+q)}{n^2}\slb \frac{\log \da}{\da}+J_{p,q}e^{-\tau^2/K_0}\srb,\\
    &\ \frac{(n(p+q))^C}{\exp(c\min\{n,\sqrt p\})} \rbs \|M\|^2_{op}\|N\|_{op}^2
\end{align*}
where $K_0={320}\|M\|^2_{op}\|N\|^2_{op}$. We will show that the first term dominates the second term. 
By our assumption on $\tau$,  $\tau^2<80\log p\|M\|_{op}^2\|N\|_{op}^2$, which implies $\tau^2/K_0<\log(p\wedge q)/2$, which combined with the fact $\da>1$, yields $\da\exp(-\tau^2/K_0)>\da/\sqrt{p\wedge q}$. 
On the other hand, under $p>q$, our assumption on $n$ implies $\log n=o(\sqrt p)$. Also because $p+q=o(\log n)$, it  follows  that ${(n(p+q))^C}{\exp(-c\min\{n,\sqrt p\})}$ is small, in particular 
\begin{align*}
 \MoveEqLeft \frac{(n+p+q)(p+q)}{n^2}\slb \frac{\log \da}{\da}+J_{p,q}e^{-\tau^2/K_0}\srb\\
  \geq &\ \frac{(n+p+q)(p+q)}{n^2\sqrt{p\wedge q}}\\
  \gg &\ (n(p+q))^C\exp(-c\min\{n,\sqrt p\}).  
\end{align*}
Therefore,  for $P(\|\mA\|_{\delta}>\delta)$ to be small,
\begin{align*}
\delta^2 > &\ C \min_{1<\da<p\wedge q}\|M\|_{op}^2\|N\|_{op}^2\frac{(n+p+q)(p+q)}{n^2}\\
&\ \times \slb \frac{\log \da}{\da}+\da e^{-\tau^2/K_0}\srb    
\end{align*}
suffices.  In particular, we choose $\da=\exp(\tau^2/(2K_0))$. Note that because $\tau^2\leq K_0\log(p\wedge q)/2$, this choice of $\da$ ensures that $\da\ll \min\{p,q\}$, as required. The proof follows noting this choice of $\da$ also implies
\begin{align*}
  \MoveEqLeft  \frac{\log \da}{\da}+\da e^{-\tau^2/K_0}\leq e^{-\tau^2/(2.5 K_0)}\\
    =&\ \lbs\exp(\frac{-\tau^2}{40^2\|M\|_{op}^2\|N\|_{op}^2})\rbs^2.
\end{align*}
$\hfill\Box$


\subsubsection{Proof of Lemma~\ref{lemma: CT: Bickel levina case}}
\begin{proof}[Proof of \ref{lemma: CT: Bickel levina case}]
For any $i\in[p]$ and $j\in[q]$, \[\mZ_1M_{i*}/\|M_{i*}\|_2\sim N(0,I_n)\]
and $\mZ_2 N_{j}/\|N_j\|_2\sim N(0,I_n)$ are independent.  In this case,  there exist absolute constants $\delta$, $c$ and $C>0$,  so that (cf. Lemma A.3 of  \cite{bickel2008})
\[P\slb \frac{|M_{i*}^T\mZ_1^T\mZ_2 N_j|}{\|M_{i*}\|_2\|N_j\|_2}\geq nt\srb\leq C\exp(-c nt^2)\]
for all $t\leq \delta$. Since $(M\mZ_1^T\mZ_2N)_{ij}=M_{i*}^T\mZ_1^T\mZ_2 N_j$, and $\|M_{i*}\|_2,\|N_i\|_{2}\leq\B$, using union bound we obtain
\[P\slb {|M\mZ_1^T\mZ_2 N|_\infty}\geq n t\srb\leq C\exp(\log(p'q')-c  nt^2/\B^4).\]
Letting $\tau=\B^2\sqrt{C'\log(p+q)}$ and   $t=\tau/\sqn$, we observe that for our choice of $\tau$, $t<\delta$ for all sufficiently large $n$ since $\log(p+q)=o(n)$. Therefore, the above inequality leads to
\begin{align*}
   P(\eta(\mQ)\neq 0) &\ =P\slb |\mQ|_\infty \geq \tau/\sqn\srb\\
   \leq &\ C\exp(2\log(p'+q')-c C'\log(p+q)). 
\end{align*}
Because $p'\leq p$ and $q'\leq q$ by our assumption on $M$ and $N$,  $C'>2/c$ suffices. Hence the proof follows. 
\end{proof}

\subsubsection{Proof of Lemma~\ref{lemma: supp: S3}}
\begin{proof}[Proof of Lemma~\ref{lemma: supp: S3}]

From the definition of $\mS_3$ in \eqref{intheorem: CT operator: hsy decomp},  and \eqref{def: Operator: S3}, it is not hard to see that $\eta(\mS_3)=\eta(\mathbf H_1)+\eta(\mathbf H_2)$.
We will show that $\mathbf H_1$ is of the form $M[\mZ \ \mZ_1]^T \mZ_2 N$ where $\|M\|_{op}\leq 2\B$ and $\|N\|_{op}\leq \B$. Then the first part would follow from Lemma~\ref{lemma: sqrt n: M N lemma: rot}, which, when applied to this case, would imply
\[\|\eta(\mathbf H_1)\|_{op}\leq C\B^2\lb \sqrt{\dfrac{p+q}{n}}\vee \dfrac{p+q}{n}\rb e^{-\Thres^2/K}\]
provided $\Thres\in[36\B^2,\sqrt{K\log(\max{p+r,q})}/2]$ and $K\geq 1288\B^4$. Since $r<\min\{p,q\}$, the upper bound of $\Thres$ becomes $\sqrt{K\log(2\max\{p,q\})}/2$.
The proof for $\|\eta(\mathbf H_2)\|_{op}$ will follow in a similar way, and hence skipped. 

 Letting
 \begin{align*}
     A_1=&\ \Lambda^{1/2}U^T,\quad A_2=\Sx\tU B_x(\tU_{E_1*})^T,\\
     A_3=&\ \Sy\tV B_y(\tV_{F_2*})^T,
 \end{align*}
we note that \eqref{def: Operator: S3} implies $\mathbf H_1=A_1^T\mZ^TZ_2A_3+A_2^T\mZ_1^TZ_2A_3$, which can be written as
\begin{align*}
 \mathbf H_1=A_4^T\slb\begin{bmatrix}
   \mZ & \mZ_1
   \end{bmatrix}\srb^TZ_2A_3,\quad \text{where} \quad A_4=\begin{bmatrix}
   A_1\\ A_2
   \end{bmatrix}.
\end{align*}
We will now invoke Lemma~\ref{lemma: sqrt n: M N lemma: rot} because
 $Z_3=[\mZ\ \mZ_2]$ is a Gaussian data matrix with $n$ rows and $p+r\leq 2p$ columns, and the matrices $A_4$ and $A_3$ are also bounded in operator norm. To see the latter, first, noting  $\|A_4\|_{op}=\sqrt{\|A_4^TA_4\|_{op}}$, we observe that
\[\|A_4^TA_4\|_{op}= \|A_1^TA_1+A_2^TA_2\|_{op}\leq \|A_1\|_{op}^2+\|A_2\|_{op}^2.\]
Therefore it suffices to 
 bound the operator norms of $A_1$, $A_2$, and $A_3$ only. Using \eqref{eq: intheorem: CT: M}, we can show that the operator norm of the matrices of the form $A_2$ or $A_3$ is bounded by $\B$ for $(X,Y)\sim\mathbb P\in\mP(r,s_x,s_y,\B)$. Since $\Sx^{1/2} U$ has orthogonal columns, it can be easily seen that $\|A_1\|_{op}\leq 1$. Therefore
\[\|A_4\|_{op}\leq \|A_1\|_{op}+\|A_2\|_{op}\leq 1+\B\leq 2\B\]
because $\B>1$ as per the definition of $\mP(r,s_x,s_y,\B)$.
The proof of the first part  now follows by Lemma~\ref{lemma: sqrt n: M N lemma: rot}. Because
$\|A_4\|_{op}\leq 2\B$ and $\|A_3\|_{op}\leq\B$, the proof of the second part follows directly from Lemma~\ref{lemma: CT: Bickel levina case}, and hence skipped.
\end{proof}

 \subsection{Proof of Additional Lemmas for Section~\ref{sec: low deg} and Theorem~\ref{thm: low deg}}
\label{sec: add lemmas: low-deg}
\begin{proof}[Proof of Lemma~\ref{lemma: low deg: bayesian}]
To prove the current lemma, we will require a result on the concentration of $\alpha$ and $\beta$ under $\pi_x$ and $\pi_y$. To that end,  for $s,m\in\NN$ satisfying $s\leq m$,  let us define the set
 \[\mathcal W(s,m)=\lbs x\in\RR^m\ :\ \|x\|_0\in[s/2,2s],\|x\|_2\in[0.9,1.1]\rbs.\]
 Suppose $\pi_x$ and $\pi_y$ are the Rademacher priors on $\alpha$ and $\beta$ as defined in Section \ref{sec: low deg}. The following lemma then says that $\alpha$ and $\beta$ concentrates on $\mathcal W(s_x,p)$ and $\mathcal W(s_y,q)$ with probability tending to one.
 \begin{lemma}\label{lemma: can use radamander}
Suppose $s_x,s_y\to\infty$. Then
  \begin{equation}\label{limit: pi x}
     \lim_n\pi_x(\alpha\in\mathcal W(s_x,p))=1;\quad \lim_n\pi_y(\beta\in\mathcal W(s_y,q))=1.
 \end{equation}
 \end{lemma}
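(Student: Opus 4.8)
The plan is to observe that under the Rademacher prior $\pi_x$ of \eqref{prior: radamander}, both quantities $\|\alpha\|_0$ and $\|\alpha\|_2$ are governed by a single Binomial random variable, so that the two defining constraints of $\mathcal W(s_x,p)$ reduce to one standard concentration estimate. An identical argument then handles $\beta$ under $\pi_y$.

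First I would note that $N_x:=\|\alpha\|_0=\sum_{i=1}^p \mathbf 1\{\alpha_i\neq 0\}$ is a sum of $p$ i.i.d.\ $\mathrm{Bernoulli}(s_x/p)$ variables, hence $N_x\sim\mathrm{Binomial}(p,s_x/p)$ with $\E N_x=s_x$. Moreover, since every nonzero coordinate of $\alpha$ has magnitude exactly $1/\sqrt{s_x}$ by \eqref{prior: radamander}, one has the deterministic identity $\|\alpha\|_2^2=N_x/s_x$. Consequently, on the event $\{(1-\delta)s_x\le N_x\le(1+\delta)s_x\}$ for a fixed $\delta\in(0,1/5)$, we have simultaneously $\|\alpha\|_0\in[s_x/2,2s_x]$ (because $1-\delta>1/2$ and $1+\delta<2$) and $\|\alpha\|_2^2\in[1-\delta,1+\delta]\subset[0.81,1.21]$, i.e.\ $\|\alpha\|_2\in[0.9,1.1]$; that is, $\alpha\in\mathcal W(s_x,p)$.

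Next I would invoke the multiplicative Chernoff bound for Binomial sums: for $\delta\in(0,1)$, $\pi_x(|N_x-s_x|\ge\delta s_x)\le 2\exp(-\delta^2 s_x/3)$. Fixing, say, $\delta=1/6$, this tail is $2\exp(-s_x/108)\to 0$ as $s_x\to\infty$, which yields $\pi_x(\alpha\in\mathcal W(s_x,p))\to 1$. The argument for $\beta$ under $\pi_y$ is verbatim the same with $(s_x,p)$ replaced by $(s_y,q)$ and $s_y\to\infty$, giving $\pi_y(\beta\in\mathcal W(s_y,q))\to 1$, which is the claim.

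There is no genuine obstacle in this lemma; the only points requiring care are the deterministic relation $\|\alpha\|_2^2=N_x/s_x$ (which relies on all nonzero atoms having equal magnitude) and checking that the chosen deviation level $\delta$ is small enough that $[1-\delta,1+\delta]$ sits inside both $[1/2,2]$ and $[0.81,1.21]$.
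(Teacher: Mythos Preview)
Your proof is correct and follows essentially the same approach as the paper: both arguments reduce to concentration of the Binomial variable $N_x=\|\alpha\|_0$ around its mean $s_x$. Your version is slightly more streamlined, since you exploit the deterministic identity $\|\alpha\|_2^2=N_x/s_x$ (each nonzero coordinate has magnitude $1/\sqrt{s_x}$) to handle both constraints of $\mathcal W(s_x,p)$ with a single Chernoff bound, whereas the paper applies Chebyshev separately to $\|\alpha\|_0$ and to $\sum_i\alpha_i^2$. One small numerical remark: the range $\delta\in(0,1/5)$ you state does not quite force $[1-\delta,1+\delta]\subset[0.81,1.21]$ (you need $\delta\le 0.19$), but your actual choice $\delta=1/6$ is safely within bounds, so the argument goes through unchanged.
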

 Here the probability $\pi_x(\alpha\in\mathcal W(s_x,p))$ depends on $n$ through $s_x$ and $p$. Similarly $\pi_y(\beta\in\mathcal W(s_y,q))$ depends on $n$ through $s_y$ and $q$.
 
Recall the definition of   $\PP_{\alpha,\beta}$ from  \eqref{def: low deg: PP alpha beta}.
  Let us consider the class
  \[\mP_{sub}(\B)=\lbs \PP_{\alpha,\beta}\ :\  \alpha\in \mathcal W(s_x,p),\ \beta\in\mathcal W(s_y,q)\rbs.\]
   If $\alpha\in\mathcal W(s_x,p)$ and $\beta\in \mathcal W(s_x,p)$, than $\|\alpha\|_2\|\beta\|_2\leq (1.1)^2<\B$ because $\B>2$. 
Therefore \eqref{def: low deg: PP alpha beta} implies that $(X,Y)\sim\PP\in\mP_{sub}(\B)$ has canonical correlation $\B^{-1}$. Thus $\mP_{sub}(\B)\subset\mP_G(r,2s_x,2s_y,\B)$, implying
\begin{align*}
  \MoveEqLeft  \liminf_n \sup_{\PP_n\in \mP_G(r,2s_x,2s_y,\B)^n}\PP_n\slb\Phi_n(\X,\Y)=1)\srb\\
  \geq &\ \liminf_n \sup_{\PP_n\in \mP_{sub}(\B)}\PP_n\slb\Phi_n(\X,\Y)=1)\srb.
\end{align*}
  Suppose $\mathcal F_x$ and $\mathcal F_y$ are the Borel $\sigma$-field associated with $\mathcal W(s_x,p)$ and $\mathcal W(s_y,q)$, respectively. 
Define the probability measures $\tilde \pi_x$ and $\tilde \pi_y$ on  $( \mathcal W(s_x,p), \mathcal F_x)$ and  $(\mathcal W(s_y,q), \mathcal F_y)$, respectively, by \[\tilde\pi_x(A)=\frac{\pi_x(A)}{\pi_x(\mathcal W(s_x,p))}\quad \text{for all }A\in \mathcal F_x, \]
and
\[\tilde\pi_y(B)=\frac{\pi_y(B)}{\pi_y(\mathcal W(s_y,q))}\quad\text{for all }  B\in \mathcal F_y.\]
Note also that if $\alpha\in \mathcal W(s_x,p)$ and $\beta\in\mathcal W(s_y,q)$, then $\PP_{\alpha,\beta}\in \mP_{sub}(\B)$.
Therefore 
 \begin{align*}
 &  \liminf_n \sup_{\PP_n\in \mP_{sub}(\B)}\PP_n(\Phi_n(\X,\Y)=1))\\
 \geq &\ \liminf_n \dint_{\substack{\W(s_x,p)\\\times\W(s_y,q)}} \PP_{n,\alpha,\beta}\slb \Phi_n(\X,\Y)=1\srb d\tilde\pi_x(\alpha)d\tilde\pi_y(\beta)\\
    =&\ \frac{\liminf_n \dint_{\substack{\W(s_x,p)\\\times\W(s_y,q)}} \PP_{n,\alpha,\beta}\slb \Phi_n(\X,\Y)=1\srb d\pi_x(\alpha)d\pi_y(\beta)}{\limsup_n \slb \pi_x(\mathcal W(s_x,p))\pi_y(\mathcal W(s_y,q)) \srb},
    \end{align*}
   whose denominator is one by Lemma~\ref{lemma: can use radamander}.
Denoting $\W(s_y,q)^c=\RR^p\setminus \W(s_y,q)$, we note that
\begin{align*}
 \MoveEqLeft  \dint_{\RR^p\times\W(s_y,q)^c} \PP_{n,\alpha,\beta}\slb \Phi_n(\X,\Y)=1\srb d\pi_x(\alpha)d\pi_y(\beta)\\
   \leq &\ 1-\pi_y(\W(s_y,q))\to_n 0  
\end{align*}
   by Lemma~\ref{lemma: can use radamander}. Similarly, denoting $\W(s_x,p)^c=\RR^p\setminus \W(s_x,p)$, we can show that
   \[ \dint_{\W(s_x,p)^c\times\RR^y} \PP_{n,\alpha,\beta}\slb \Phi_n(\X,\Y)=1\srb d\pi_x(\alpha)d\pi_y(\beta)\to_n 0.\]
   Therefore, it holds that
   \begin{align*}
       \liminf_n \dint\limits_{\W(s_x,p)\times\W(s_y,q)} \PP_{n,\alpha,\beta}\slb \Phi_n(\X,\Y)=1\srb d\pi_x(\alpha)d\pi_y(\beta)\\
       =\& \liminf_n \E_{\pi}\slbt\PP_{n,\alpha,\beta}\slb \Phi_n(\X,\Y)=0\srb\srbt.
   \end{align*}
Thus the proof follows. 
 
 \end{proof}
\subsubsection*{Proof of Lemma~\ref{lemma: can use radamander}}

  \begin{proof}[Proof of Lemma~\ref{lemma: can use radamander}.]
    We are  going to show \eqref{limit: pi x} only for $\pi_x$ because the proof for $\pi_y$ follows in the identical manner. Throughout we will denote by $\E_{\pi_x}$ and $\text{var}_{\pi_x}$ the expectation and variance under $\pi_x$.
Note that when $\alpha\sim \pi_x$,
$\|\alpha\|_0=\sum_{i=1}^{p}I[\alpha_i\neq 0]$, 
where $I[\alpha_i\neq 0]$'s are i.i.d. Bernoulli random variables with success probability $s_x/p$. Therefore, Chebyshev's inequality yields that for any $\e>0$,
\[\pi_x\slb \sbl\la\alpha\ra_0-s_x\sbl>s_x\e\srb\leq p\frac{\text{var}_{\pi_x}(I[\alpha_i\neq 0])}{s_x^2\e^2}=\frac{1-s_x/p}{s_x\e^2},\]
which goes to zero if $s_x\to\infty$.
Therefore, for $\epsilon=1/2$, we have
\[\pi_x\slb \|\alpha\|_0\in [s_x/2,2 s_x]\srb \leq \pi_x\slb \sbl\la\alpha\ra_0-s_x\sbl>s_x\e\srb\to 0.\]
Also, since  $\E_{\pi_x}[\sum_{i=1}^p\alpha_i^2]=1$, Chebyshev's inequality implies that
\begin{align*}
  \pi_x\slb \sum_{i=1}^p\alpha_i^2-1\geq \e\srb
   \leq &\ \frac{\text{var}_{\pi_x} \slb\sum_{i=1}^p\alpha_i^2\srb}{\e^2}\\
    &\ \stackrel{(a)}{=}\frac{p.\text{var}_{\pi_x}(\alpha_i^2)}{\e^2}  \leq \frac{p\E_{\pi_x}[\alpha_i^4]}{\e^2}=\frac{1}{s_x\e^2},
\end{align*}
which goes to zero if $s_x\to\infty$ for any fixed $\e>0$. Here (a) uses the fact that $\alpha_i$'s are i.i.d. The proof now follows setting $\e=0.1$.


\end{proof}

 \subsubsection{Proof of Lemma~\ref{lemma: low deg: dot product}}
 Proof of Lemma depends on two auxiliary lemmas. We state and prove these lemmas first.
 \begin{lemma}\label{lemma: low deg: hermite poly}
  Suppose $\myv\in \ZZ^m$, and $A\in \RR^{m\times m}$ is a matrix. Let $\PP$ be the measure induced by the $m$-dimensional standard Gaussian random vector and denote by $\E_{\PP}$ the corresponding expectation. Then for any $x\in\RR^m$ we have
   \[\sum_{j\in \ZZ^m} \frac{t^j}{j!}E_{ \PP}[H_j(AZ)]=e^{t^T(A^2-I)t/2}.\]
  \end{lemma}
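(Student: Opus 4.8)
The plan is to obtain this from the classical generating-function identity for Hermite polynomials together with the Gaussian moment generating function; the only genuine subtlety is justifying the exchange of an infinite sum with an expectation. (The statement should be read with $t\in\RR^m$ in place of $x$, and the vector $\myv$ appearing in the hypothesis is vestigial.)

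First I would record the univariate identity $\sum_{k\in\ZZ}\frac{s^k}{k!}h_k(x)=e^{sx-s^2/2}$, valid for all $s,x\in\RR$. This is standard \citep{szego1939}, but it also drops out of the recursion in the excerpt: combining $h_{k+1}(x)=xh_k(x)-h_k'(x)$ with the companion identity $h_k'=kh_{k-1}$ shows that $F(s,x):=\sum_k \tfrac{s^k}{k!}h_k(x)$ solves $\partial_s F=(x-s)F$ with $F(0,x)=1$, hence $F(s,x)=e^{sx-s^2/2}$. Taking the product over the $m$ coordinates and using $H_{\myv}(y)=\prod_i h_{\myv_i}(y_i)$ and $t^{\myv}/\myv!=\prod_i t_i^{\myv_i}/\myv_i!$ yields the multivariate version $\sum_{j\in\ZZ^m}\frac{t^j}{j!}H_j(y)=e^{t^Ty-\|t\|_2^2/2}$ for every $y\in\RR^m$.

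Next I would substitute $y=AZ$ with $Z\sim\PP=N_m(0,I_m)$, take $\E_{\PP}$ of both sides, and interchange the sum and the expectation to get $\sum_j \frac{t^j}{j!}\E_{\PP}[H_j(AZ)]=\E_{\PP}\big[e^{t^TAZ-\|t\|_2^2/2}\big]=e^{-\|t\|_2^2/2}e^{\|A^Tt\|_2^2/2}=e^{t^T(AA^T-I)t/2}$, where the middle step is just the Gaussian moment generating function applied to the linear form $(A^Tt)^TZ$. Since $A$ is symmetric in the context in which this lemma is invoked, $AA^T=A^2$ and one recovers exactly $e^{t^T(A^2-I)t/2}$; for general $A$ the identity holds verbatim with $A^2$ replaced by $AA^T$.

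The main, and essentially only, obstacle is the legitimacy of the interchange in the last step, which I would settle by Fubini--Tonelli. Let $\bar h_k$ denote the polynomial obtained from $h_k$ by replacing every coefficient with its absolute value; then $|h_k(x)|\le\bar h_k(|x|)$, and the same computation as above gives $\sum_k\frac{s^k}{k!}\bar h_k(x)=e^{sx+s^2/2}$ for $s\ge 0$. Consequently $\sum_{j\in\ZZ^m}\frac{|t^j|}{j!}\,\E_{\PP}|H_j(AZ)|\le e^{\|t\|_2^2/2}\,\E_{\PP}\big[e^{\sum_i|t_i|\,|(AZ)_i|}\big]<\infty$, because every linear functional of a Gaussian vector has finite exponential moments. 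This finiteness is precisely what licenses the exchange of sum and integral, and the remaining steps are routine.
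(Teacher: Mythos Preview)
Your argument is correct and follows essentially the same route as the paper: invoke the multivariate Hermite generating function, substitute $y=AZ$, and take expectations using the Gaussian MGF. The paper simply cites the generating function and integrates without justifying the interchange of sum and expectation; your Fubini--Tonelli step via $\bar h_k$ and your observation that the stated $A^2$ really means $AA^T$ (equal here since $A=\Sigma^{1/2}$ is symmetric) are both correct refinements that the paper omits.
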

  
  \begin{proof}[Proof of Lemma~\ref{lemma: low deg: hermite poly}]
  The generating function of $H_{\myv}$ has the convergent expansion \cite[Proposition 6]{rahman2017}
  \[\sum_{j\in \ZZ^m} \frac{t^j}{j!}H_j(x)=\exp\lbs t^Tx-t^Tt/2\rbs\]
 for any $x\in\RR^m$.  Therefore,
  \[\sum_{j\in \ZZ^m} \frac{t^j}{j!}H_j(Ax)=\exp\lbs t^TAx-t^Tt/2\rbs.\]
  Multiplying both side by the density $d\PP$ of $\PP$ and then integrating over $\RR^m$ gives us
  \[\sum_{j\in \ZZ^m} \frac{t^j}{j!}E_{ \PP}[H_j(AZ)]=E_{ \PP}\left[e^{ t^TAZ}\right]e^{-t^Tt/2}=e^{t^T(A^2-I)t/2}.\]
  \end{proof}
  
  \begin{lemma}\label{lemma: low deg: mgf lemma}
  Let $\Sigma(\alpha,\beta,1/\B)$ be as defined in \eqref{def: sigma in low deg}.
  Suppose $z=(z_x,z_y)$ where $z_x\in\ZZ^{p}$ and $z_y\in\ZZ^q$. Then for any $t\in\RR^{p+q}$, we have
  \begin{align*}
    \MoveEqLeft  \partial^{z}_t \exp\left\{\frac{1}{2}t^T\slb \Sigma (\alpha,\beta,1/\B) -I_{p+q}\srb t\right\}\bl_{t=(0,\ldots,0)}\\
      =&\ \begin{cases}
  \B^{-|z_x|}|z_x|!\alpha^{z_x}\beta^{z_y} & \text{if }|z_x|=|z_y|,\\
  0 & o.w.
   \end{cases}
  \end{align*}
  \end{lemma}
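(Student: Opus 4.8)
\textbf{Proof proposal for Lemma~\ref{lemma: low deg: mgf lemma}.}

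The plan is to compute the mixed partial derivative at $t=0$ directly, exploiting the block structure of $\Sigma(\alpha,\beta,1/\B)$. Writing $t=(t_x,t_y)$ with $t_x\in\RR^p$ and $t_y\in\RR^q$, the quadratic form simplifies because $\Sigma(\alpha,\beta,1/\B)-I_{p+q}$ has zero diagonal blocks: indeed
\[
\tfrac12 t^T\slb\Sigma(\alpha,\beta,1/\B)-I_{p+q}\srb t=\tfrac1\B (t_x^T\alpha)(t_y^T\beta),
\]
so that the exponential factorizes as $\exp\{\tfrac1\B(t_x^T\alpha)(t_y^T\beta)\}$. Expanding the exponential in its power series gives
\[
\exp\Bl\tfrac1\B(t_x^T\alpha)(t_y^T\beta)\Br=\sum_{k=0}^{\infty}\frac{1}{\B^k k!}(t_x^T\alpha)^k(t_y^T\beta)^k.
\]
First I would observe that $\partial_t^z$ applied to the $k$-th summand and evaluated at $t=0$ is nonzero only when the total $t_x$-degree $|z_x|$ equals $k$ and the total $t_y$-degree $|z_y|$ equals $k$; in particular it vanishes entirely unless $|z_x|=|z_y|$, which already gives the second case of the claim.

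When $|z_x|=|z_y|=k$, only the $k$-th term survives, and I would compute $\partial_{t_x}^{z_x}(t_x^T\alpha)^k\big|_{t_x=0}$ and $\partial_{t_y}^{z_y}(t_y^T\beta)^k\big|_{t_y=0}$ separately. By the multinomial theorem $(t_x^T\alpha)^k=\sum_{|\mathbf m|=k}\binom{k}{\mathbf m}t_x^{\mathbf m}\alpha^{\mathbf m}$, and applying $\partial_{t_x}^{z_x}$ and setting $t_x=0$ kills every term except $\mathbf m=z_x$, leaving $\frac{k!}{z_x!}\,z_x!\,\alpha^{z_x}=k!\,\alpha^{z_x}$; here I used that $\partial_{t_x}^{z_x}t_x^{z_x}=z_x!$ and the multinomial coefficient is $k!/z_x!$ when $|z_x|=k$. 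The same computation for the $y$-block yields $k!\,\beta^{z_y}$. Combining with the prefactor $\frac{1}{\B^k k!}$ gives $\frac{1}{\B^k k!}\cdot k!\,\alpha^{z_x}\cdot k!\,\beta^{z_y}=\B^{-k}k!\,\alpha^{z_x}\beta^{z_y}$, which upon substituting $k=|z_x|$ is exactly $\B^{-|z_x|}|z_x|!\,\alpha^{z_x}\beta^{z_y}$, as claimed.

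There is no serious obstacle here; the only point requiring a little care is the bookkeeping of the multi-index notation (ensuring the multinomial coefficients, the factors $z_x!$ from $\partial_{t_x}^{z_x}t_x^{z_x}$, and the $1/k!$ from the exponential series combine correctly), and checking that the cross-block structure really does make the quadratic form a pure product $(t_x^T\alpha)(t_y^T\beta)$ with no surviving $t_x^Tt_x$ or $t_y^Tt_y$ contributions — which is immediate from the definition \eqref{def: sigma in low deg} since the diagonal blocks of $\Sigma(\alpha,\beta,1/\B)$ are $I_p$ and $I_q$. I would also note in passing that this lemma feeds directly into Lemma~\ref{lemma: low deg: dot product} via the generating-function identity of Lemma~\ref{lemma: low deg: hermite poly}, where $A=\Sigma(\alpha,\beta,1/\B)^{1/2}$ so that $A^2-I$ is the matrix appearing above.
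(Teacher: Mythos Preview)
Your proof is correct and follows essentially the same approach as the paper: both reduce the quadratic form to $\B^{-1}(t_x^T\alpha)(t_y^T\beta)$ using the block structure, expand the exponential as a power series, and then read off the coefficient of $t^z$ via the multinomial theorem. The paper phrases the last step as identifying the coefficient of $t^z$ in the expanded series, whereas you compute the partial derivative directly, but the bookkeeping is identical.
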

  \begin{proof}[Proof of Lemma~\ref{lemma: low deg: mgf lemma}]
  Let us partition $t$ as $(t_x,t_y)$ where $t_x=(t_x(1),\ldots,t_x(p))\in\RR^p$ and $t_y=(t_y(1),\ldots,t_y(q))\in\RR^q$. We then calculate
   \[
  \frac{ t^T\slb \Sigma(\alpha,\beta,1/\B)-I_{p+q}\srb t}{2}=\B^{-1} t_x^T\alpha\beta^T t_y,
 \]
   which implies
   \begin{align*}
 \MoveEqLeft \exp\left\{\frac{1}{2}t^T\slb \Sigma (\alpha,\beta,1/\B) -I_{p+q}\srb t\right\}\\
  =&\ \exp \slbs\B^{-1} \sum_{i=1}^{p}\sum_{j=1}^{q} \alpha_i\beta_j t_x(i)t_y(j)\srbs  \\
  =&\ \sum_{k=0}^\infty \B^{-k} \frac{\slb \sum_{i=1}^{p} \alpha_i t_x(i)\srb^k \slb \sum_{j=1}^{q} \beta_j t_y(j)\srb^k}{k!}
  \end{align*}
  which equals
  \begin{align*}
  \MoveEqLeft \sum_{k=0}^{\infty}\frac{\B^{-k}}{k!}\sum_{\substack{z_x\in\ZZ^p,\\|z_x|=k}}\sum_{\substack{z_y\in\ZZ^q,\\|z_y|=k}}\frac{k!}{z_x!}\frac{k!}{z_y!}\alpha^{z_x}\beta^{z_y}t_x^{z_x}t_y^{z_y}\\
  =&\ \sum_{k=0}^{\infty}\sum_{\substack{z_x\in\ZZ^p,\\|z_x|=k}}\sum_{\substack{z_y\in\ZZ^q,\\|z_y|=k}}{\B^{-k}}\frac{k!}{z!}\alpha^{z_x}\beta^{z_y}t_x^{z_x}t_y^{z_y}\\
  \stackrel{(a)}{=}&\ \sum_{\substack{z\in\ZZ^{p+q}\\ |z_x|=|z_y|}}\B^{-|z_x|}\frac{|z_x|!}{z!}\alpha^{z_x}\beta^{z_y}t^z.
   \end{align*}
   In step (a), we stacked the variables $z_x$ and $z_y$ to form $z=(z_x,z_y)^T$. Note that following the terminologies set in the beginning of Appendix~\ref{sec; proof of low deg}, $z!=z_x!z_y!$ and $t^z=t_x^{z_x}t_y^{z_y}$. Note that if $|z_x|\neq|z_y|$, then the term $t^z$ has zero coefficient in the above expansion.
  Thus the lemma follows.
  \end{proof}
 \begin{proof}[Proof of Lemma~\ref{lemma: low deg: dot product}]
   \begin{align*}
     \langle \LL_n, H_\myv\rangle_{L^2(\QQ_n)}=& \ \E_{(\X,\Y)\sim \QQ_n}\lbt \E_{\pi}\lbt H_\myv(\X,\Y) \frac{d \PP_{n,\alpha,\beta}}{d \QQ_n}\rbt \rbt\\
     =&\ \E_{\pi}\slbt \E_{(\X,\Y)\sim \PP_{n,\alpha,\beta}}\slbt H_\myv (\X,\Y)\srbt\srbt\\
     \stackrel{(a)}{=}&\ \E_{\pi} \lbt\E_{\substack{(X_i,Y_i)\sim \PP_{\alpha,\beta},\\ i\in[n]}}\lbt\prod_{i\in[n]}H_{\myv_i}(X_i,Y_i)\rbt\rbt\\
     =&\ \E_{\pi}\lbt \prod_{i\in[n]}\E_{(X_i,Y_i)\sim \PP_{\alpha,\beta}}\slbt H_{\myv_i} (X_i,Y_i)\srbt\rbt
     \end{align*}
     where (a) follows because $(X_i,Y_i)$'s are independent observations.
     Now note that  if $\|\alpha\|\|\beta\|_2\geq \B$, then \eqref{def: low deg: PP alpha beta} implies
      \[\E_{(X_i,Y_i)\sim \PP_{\alpha,\beta}}\slbt H_{\myv_i} (X_i,Y_i)\srbt=\E_{(X_i,Y_i)\sim \QQ}\slbt H_{\myv_i} (X_i,Y_i)\srbt=0,\]
       where the last step follows because $\E_{Z\sim\QQ}[H_{\myv_i}(Z)]=0$ for any $i\in[n]$.
     If $\|\alpha\|\|\beta\|_2< \B$, then $\Sigma(\alpha,\beta, 1/\B)$ defined in \eqref{def: sigma in low deg} is positive definite, and \eqref{def: low deg: PP alpha beta} implies
      \begin{align*}
    \MoveEqLeft  \E_{(X_i,Y_i)\sim \PP_{\alpha,\beta}}\slbt H_{\myv_i} (X_i,Y_i)\srbt\\
    =&\ \E_{\mZ\sim \QQ}\lbt H_{\myv_i} \slb\Sigma(\alpha,\beta,1/\B)^{1/2}\mZ\srb\rbt   \\
      =&\ \  \partial^{\myv_i}_{t}\lb\exp\left\{\frac{1}{2}t^T\slb \Sigma (\alpha,\beta,1/\B) -I_{p+q}\srb t\right\}\rb\bl_{t=(0,\ldots,0)}
      \end{align*}
     by Lemma~\ref{lemma: low deg: hermite poly}. Here $\Sigma(\alpha,\beta,1/\B)$ is as in \eqref{def: sigma in low deg}, and $\Sigma(\alpha,\beta,1/\B)$ is positive definite because $\|\alpha\|_2\|\beta\|_2<\B$, as discussed in Section~\ref{sec: low deg}.
    %
Therefore, we can write
\begin{align*}
  \MoveEqLeft  \E_{(X_i,Y_i)\sim \PP_{\alpha,\beta}}\slbt H_{\myv_i} (X_i,Y_i)\srbt\\
    =&\ 1\{\|\alpha\|_2\|\beta\|_2<\B\}\\
    &\ \times\partial^{\myv_i}_{t}\lb\exp\left\{\frac{1}{2}t^T\slb \Sigma (\alpha,\beta,1/\B) -I_{p+q}\srb t\right\}\rb\bl_{t=(0,\ldots,0)}
\end{align*}
 Lemma~\ref{lemma: low deg: mgf lemma} gives the form of the partial derivative in the above expression, and implies that the partial derivative is zero unless $|\myv^x_i|=|\myv^y_i|$.
 Therefore, $\langle \LL_n, H_\myv\rangle_{L^2(\QQ_n)}\neq 0$ only if $|\myv^{x}_i|=|\myv^{y}_i|$ for all $i\in[n]$. In this case, $|\myv_i|=2|\myv^{x}_i|$ is even, and by  Lemma~\ref{lemma: low deg: mgf lemma},
  \begin{align*}
    \MoveEqLeft  \langle \LL_n, H_\myv\rangle_{L^2(\QQ_n)}\\
      =& \E_{\pi}\lbt1\{\|\alpha\|_2\|\beta\|_2<\B\}\prod_{i\in[n]}\B^{-|\myv^{x}_i|}|\myv^{x}_i|!\alpha^{\myv^{x}_i}\beta^{\myv^{y}_i}\rbt\\
      =&\ \lbs\B^{-\sum_{i=1}^n|\myv^{x}_i|}\prod_{i=1}^n|\myv^{x}_i|!\rbs\\
      &\ \times\E_{\pi}\lbt 1\{\|\alpha\|_2\|\beta\|_2<\B\}\alpha^{\si\myv^{x}_i}\beta^{\si\myv^{y}_i}\rbt \\
      =&\ \B^{-|\myv|/2}\lbs\prod_{i=1}^n{|\myv^{x}_i|!}\rbs\\
      &\ \times\E_{\pi}\lbt 1\{\|\alpha\|_2\|\beta\|_2<\B\}\alpha^{\si\myv^{x}_i}\beta^{\si\myv^{y}_i}\rbt.
  \end{align*} 
  Therefore,
  \begin{align*}
    \MoveEqLeft   \langle \LL_n, \widehat H_\myv\rangle_{L^2(\QQ_n)}^2\\
       =&\ \begin{cases}\frac{\B^{-|\myv|}}{\myv!}\E_{\pi}\lbt 1\{\|\alpha\|_2\|\beta\|_2<\B\}\alpha^{\sum_{i=1}^n\myv^{x}_i}\beta^{\sum_{i=1}^{n}\myv^{y}_i}\rbt^2 \\
       \times \lbs\prod_{i=1}^n{|\myv^{x}_i|!}\rbs^2 \quad  \text{ if }|\myv^{x}_i|=|\myv^{y}_i|
  \text{ for all } i\in[n],\\
  \\
  0 \quad \text{ o.w.}\end{cases}
  \end{align*}
  
   \end{proof}

   \subsubsection{Proof of Lemma~\ref{lemma: low deg: projection}}
   \begin{proof}
 Lemma~\ref{lemma: low deg: dot product} implies that $\LL_n$ belongs to  the subspace generated by those  $H_\myv$'s whose degree-index $\myv$ has $|\myv^{x}_i|=|\myv^{y}_i|$ for all $i\in[n]$. The degree of the polynomial $H_\myv$ is $|\myv|$, which is even in the above case. Therefore, if $D_n\geq 1$ is odd,
 $\|\LL_n^{\leq D_n}\|^2_{L_2(\QQ_n)}$ equals $\|\LL_n^{\leq (D_n-1)}\|^2_{L_2(\QQ_n)}$.
 Hence, it suffices to compute the norm of  $\LL_n^{\leq 2\D_n}$, where $\D_n=\floor*{D_n/2}$. Suppose $\myv\in\ZZ^{n(p+q)}$ is such that $|\myv^{x}_i|=|\myv^{y}_i|$ for all $i\in[n]$.  Lemma~\ref{lemma: low deg: dot product} gives 
 \begin{align*}
    \MoveEqLeft \langle \LL_n, \widehat H_\myv\rangle_{L^2(\QQ_n)}^2\\
     =&\ \frac{\B^{-|\myv|}}{\myv!}\lbs\E_{\pi}\slbt1\{\|\alpha\|_2\|\beta\|_2<\B\}\alpha^{\sum_{i=1}^n\myv^{x}_i}\beta^{\sum_{i=1}^{n}\myv^{y}_i}\srbt\rbs^2\\
     &\ \times\lbs\prod_{i=1}^n{|\myv^{x}_i|!}\rbs^2.
 \end{align*}
 Consider the pair of replicas $\alpha_1,\alpha_2\iid \pi_x$ and $\beta_1,\beta_2\iid\pi_y$. Letting $\myW$ denote the indicator function of  the event
 $\{\|\alpha_1\|_2\|\beta_1\|_2<\B,\ \|\alpha_2\|_2\|\beta_2\|_2<\B\}$,
 we can then write
 \begin{align}\label{inlemma: low deg: replica}
     \langle \LL_n, \widehat H_\myv\rangle_{L^2(\QQ_n)}^2=&\ \frac{\B^{-|\myv|}}{\myv!}\E_{\pi}\slbt(\alpha_1\alpha_2)^{\sum_{i=1}^n\myv^{x}_i}(\beta_1\beta_2)^{\sum_{i=1}^{n}\myv^{y}_i}\myW\srbt\nn\\
     &\ \times\lbs\prod_{i=1}^n{|\myv^{x}_i|!}\rbs^2.
 \end{align}
 Denote by $\overline{d}=(d_1,\ldots,d_n)\in\ZZ^n$. Using \eqref{inlemma: low deg: replica}, we obtain the following expression:
 \begin{align*}
 \MoveEqLeft   \|\LL_n^{\leq 2\D_n}\|_{L_2(\QQ)}=\sum_{d=0}^{\D_n}\B^{-2d}\sum_{\overline d:\sum d_i=d}\sum_{\substack{\myv: \myv^{x}_i\in \ZZ^p,\\ \myv^{y}_i\in\ZZ^q,\\
    |\myv^{x}_i|=|\myv^{y}_i|=d_i}}T_{\overline d, w}
    \end{align*}
    where
    \[T_{\overline d, w}=\E_{\pi}\lbt \myW\prod_{i=1}^{n}\lb\frac{d_i^2}{\myv^{x}_i!\myv^{y}_i!}(\alpha_1\alpha_2)^{\myv^{x}_i}(\beta_1\beta_2)^{\myv^{y}_i}\rb\rbt.\]
 Therefore $ \|\LL_n^{\leq 2\D_n}\|_{L_2(\QQ)}$   equals
    \begin{align*}
    &\sum_{d=0}^{\D_n}\B^{-2d}\sum_{\overline d:\sum d_i=d} \E_{\pi}\lbt\myW\sum_{\substack{\myv: \myv^{x}_i\in \ZZ^p,\\ \myv^{y}_i\in\ZZ^q,\\
    |\myv^{x}_i|=|\myv^{y}_i|=d_i}}
    \lb \prod_{i=1}^{n}\frac{d_i!}{\myv^{x}_i!}(\alpha_1\alpha_2)^{\myv^{x}_i}\rb \\
    &\ \times
\lb \prod_{i=1}^{n}\frac{d_i!}{\myv^{y}_i!}(\beta_1\beta_2)^{\myv^{y}_i}\rb\rbt\\
=&\ \sum_{d=0}^{\D_n}\B^{-2d}\sum_{\overline d:\sum d_i=d} \E_{\pi}\lbt\myW \lb\sum_{\substack{w^x:\myv^{x}_i\in \ZZ^p\\ |\myv^{x}_i|=d_i}}\prod_{i=1}^{n}\frac{d_i!}{\myv^{x}_i!}(\alpha_1\alpha_2)^{\myv^{x}_i}\rb\\
&\ \times\lb\sum_{\substack{w^y:\myv^{y}_i\in \ZZ^q\\ |\myv^{y}_i|=d_i}}\prod_{i=1}^{n}\frac{d_i!}{\myv^{y}_i!}(\beta_1\beta_2)^{\myv^{y}_i}\rb\rbt
\end{align*}
In the last step, we used the variables $w^x=(w^x_1,\ldots,w^x_n)$, and $w_y=(w^y_1,\ldots,w^y_n)$. Suppose $z_i\in\ZZ^p$ for each $i\in[n]$.  For any $x\in\RR^p$ and $y\in\RR^q$, it holds that
\begin{align*}
   \MoveEqLeft \sum_{\substack{ z_i\in \ZZ^{p},|z_i|=d_i}}\prod_{i=1}^n\frac{d_i!}{z_i!}x^{z_i}y^{z_i}\\
    =&\ \prod_{i=1}^n \sum_{\substack{ z_i\in \ZZ^{p},|z_i|=d_i}}\frac{d_i!}{z_i!}x^{z_i}y^{z_i}\\
    \stackrel{(a)}{=}&\ \prod_{i=1}^n(x^Ty)^{d_i}=(x^Ty)^{\si d_i},
\end{align*}
where (a) follows from Fact~\ref{fact: multinomial theorem}.
  \begin{fact}\label{fact: multinomial theorem}[Multinomial Theorem]
 Suppose $\alpha\in\RR^p$. Then for $m\in\ZZ$, 
 \[\slb\sum_{i=1}^p \alpha_i\srb^m=\sum_{z\in\ZZ^p, |z|=m}\frac{k!\alpha^z}{z!}.\]
 \end{fact}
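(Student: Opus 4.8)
The plan is to prove Fact~\ref{fact: multinomial theorem} by induction on the number of summands $p$, using the ordinary binomial theorem both as the base case and as the engine of the inductive step; throughout, $\binom{m}{z}:=m!/z!$ denotes the multinomial coefficient, so that the claimed identity reads $\bigl(\sum_{i=1}^p\alpha_i\bigr)^m=\sum_{z\in\ZZ^p,\,|z|=m}\tfrac{m!}{z!}\alpha^z$. For $p=1$ the statement is essentially vacuous: the only $z\in\ZZ^1$ with $|z|=m$ is $z=(m)$, and $\tfrac{m!}{z!}\alpha^z=\alpha_1^m$, matching the left-hand side (and for $m=0$ both sides equal $1$).

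For the inductive step, assume the identity for $p-1$ summands and write $\alpha'=(\alpha_1,\dots,\alpha_{p-1})$. First split off the last term and apply the binomial theorem:
\[
\Bigl(\sum_{i=1}^p\alpha_i\Bigr)^m=\Bigl(\textstyle\sum_{i=1}^{p-1}\alpha_i+\alpha_p\Bigr)^m=\sum_{k=0}^m{m\choose k}\Bigl(\sum_{i=1}^{p-1}\alpha_i\Bigr)^{m-k}\alpha_p^{\,k}.
\]
Then apply the inductive hypothesis to each inner power $\bigl(\sum_{i=1}^{p-1}\alpha_i\bigr)^{m-k}$ to get
\[
\sum_{k=0}^m{m\choose k}\alpha_p^{\,k}\sum_{\substack{z'\in\ZZ^{p-1}\\|z'|=m-k}}\frac{(m-k)!}{z'!}(\alpha')^{z'}.
\]
The remainder is pure bookkeeping: put $z=(z',k)\in\ZZ^p$, so that $|z|=m$, $z!=z'!\,k!$, and $\alpha^z=(\alpha')^{z'}\alpha_p^{\,k}$; note the coefficient identity ${m\choose k}\tfrac{(m-k)!}{z'!}=\tfrac{m!}{k!\,z'!}=\tfrac{m!}{z!}$; and observe that the map $(z',k)\mapsto z$ is a bijection from $\{(z',k):k\in\{0,\dots,m\},\ z'\in\ZZ^{p-1},\ |z'|=m-k\}$ onto $\{z\in\ZZ^p:|z|=m\}$, with inverse $z\mapsto\bigl((z_1,\dots,z_{p-1}),z_p\bigr)$. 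Substituting, the displayed double sum collapses to $\sum_{z\in\ZZ^p,\,|z|=m}\tfrac{m!}{z!}\alpha^z$, which closes the induction.

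There is no genuine obstacle here; the only point requiring a moment's care is the re-indexing of the double sum over $(k,z')$ into a single sum over $z$, which is immediate once one recognizes that $k=z_p$ records exactly how the last coordinate is peeled off. As a cross-check one may instead argue combinatorially: expanding $\bigl(\alpha_1+\dots+\alpha_p\bigr)^m$ by distributivity yields $p^m$ monomials $\alpha_{i_1}\cdots\alpha_{i_m}$ over all tuples $(i_1,\dots,i_m)\in[p]^m$, and the number of such tuples producing a fixed exponent vector $z$ (i.e.\ with $\#\{j:i_j=l\}=z_l$ for each $l$) is the multinomial coefficient $m!/z!$, giving the same identity. Either route also covers $m=0$ trivially.
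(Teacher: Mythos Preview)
Your proof is correct; the paper does not actually prove this fact but simply invokes it as the classical Multinomial Theorem without argument. Your induction on $p$ (and the alternative combinatorial count) is the standard textbook approach, and you also correctly read the evident typo $k!$ in the statement as $m!$.
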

 Therefore it follows that
 \begin{align*}
     \MoveEqLeft \lb\sum_{\substack{w^x:\myv^{x}_i\in \ZZ^p\\ |\myv^{x}_i|=d_i}}\prod_{i=1}^{n}\frac{d_i!}{\myv^{x}_i!}(\alpha_1\alpha_2)^{\myv^{x}_i}\rb\lb\sum_{\substack{w^y:\myv^{y}_i\in \ZZ^q\\ |\myv^{y}_i|=d_i}}\prod_{i=1}^{n}\frac{d_i!}{\myv^{y}_i!}(\beta_1\beta_2)^{\myv^{y}_i}\rb\\
 =&\ (\alpha_1^T\alpha_2)^{\si d_i}(\beta_1^T\beta_2)^{\si d_i},
 \end{align*}
 which implies
\begin{align*}
\MoveEqLeft \|\LL_n^{\leq 2\D_n}\|_{L_2(\QQ)}\\
=&\ \sum_{d=0}^{\D_n}\B^{-2d}\sum_{\overline d:\sum d_i=d} \E_{\pi}\lbt \myW(\alpha_1^T\alpha_2)^{\si d_i}(\beta_1^T\beta_2)^{\si d_i}\rbt\\
\stackrel{(a)}{=}&\ \sum_{d=0}^{\D_n}\B^{-2d}{d+n-1\choose d}\E_{\pi}\lbt W(\alpha_1^T\alpha_2)^{d}(\beta_1^T\beta_2)^{d}\rbt\\
= &\ \E_{\pi}\lbt W\sum_{d=0}^{\D_n}\lbs{d+n-1\choose d}\lb \B^{-2}(\alpha_1^T\alpha_2)(\beta_1^T\beta_2)\rb^d\rbs\rbt.
 \end{align*}
 where $(a)$ follows since the number of $\overline d\in \ZZ^n$ such that $|\overline d|=d$ equals ${n+d-1\choose d}$. Noting  $\D_n=\floor*{D_n/2}$, the proof follows.
 \end{proof} 
\subsection{Proof of Technical Lemmas for Theorem~\ref{thm: CT matrix operator norm}}
\label{sec: add lemma: operator norm}

First, we introduce some additional notations and state some useful results that will be used repeatedly throughout the proof.
Suppose $A\in\RR^{p\times q}$. We can write $A$ as
\[A=\begin{bmatrix}
A_{*1} & A_{*2} &\cdots & A_{*q}
\end{bmatrix}.\]
  We define the vectorization operator as
  \[\vec(A)=\begin{bmatrix}
  A_{*1}\\ \cdots\\ A_{*q}
  \end{bmatrix}.\]
  We will use two well known operations on the vetorization operators, which follow from Section 10.2.2 of \cite{matrixcookbook}.
  \begin{fact}\label{fact: vectorize operator}
  \begin{itemize}
      \item[A.]  $Trace(A^TB)=\vec(A)^T\vec(B).$
      \item[B.] $\vec(AXB)=(B^T\otimes A)\vec(X)$
      where $\otimes$ denotes the Kronecker delta product.
  \end{itemize}
 \end{fact}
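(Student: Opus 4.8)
The plan is to verify both identities by an elementary index computation; they are standard bookkeeping facts about the vectorization and Kronecker-product operators (indeed they are quoted from Section~10.2.2 of \cite{matrixcookbook}), so no deep idea is needed, only care with conventions.

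First I would prove (A). Write $A,B\in\RR^{p\times q}$ with entries $A_{ij},B_{ij}$. Expanding the trace,
\[
\mathrm{Trace}(A^TB)=\sum_{j=1}^{q}(A^TB)_{jj}=\sum_{j=1}^{q}\sum_{i=1}^{p}(A^T)_{ji}B_{ij}=\sum_{j=1}^{q}\sum_{i=1}^{p}A_{ij}B_{ij}.
\]
On the other hand, since $\vec(A)$ stacks the columns $A_{*1},\dots,A_{*q}$ on top of one another, the Euclidean inner product splits as $\vec(A)^T\vec(B)=\sum_{j=1}^{q}A_{*j}^TB_{*j}=\sum_{j=1}^{q}\sum_{i=1}^{p}A_{ij}B_{ij}$, which is the same double sum. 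This yields (A).

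For (B) I would argue column by column. Fix $A\in\RR^{p\times n}$, $X\in\RR^{n\times m}$, $B\in\RR^{m\times q}$, so that $AXB\in\RR^{p\times q}$. The $k$-th column of $AXB$ is $AXB_{*k}=\sum_{l=1}^{m}B_{lk}\,AX_{*l}$, hence $\vec(AXB)$ is the vertical stack over $k=1,\dots,q$ of the blocks $\sum_{l=1}^{m}B_{lk}\,AX_{*l}$. On the other side, $B^T\otimes A$ is the block matrix whose $(k,l)$ block equals $(B^T)_{kl}A=B_{lk}A$, and $\vec(X)$ is the stack of $X_{*1},\dots,X_{*m}$; therefore the $k$-th block of $(B^T\otimes A)\vec(X)$ is exactly $\sum_{l=1}^{m}B_{lk}\,AX_{*l}$. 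The two stacks agree block by block, which proves (B).

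I do not expect any genuine obstacle here. The only point requiring attention is pinning down the conventions — column-stacking (rather than row-stacking) in the definition of $\vec(\cdot)$, and the block ordering inside the Kronecker product — so that the indices on the two sides line up; once these are fixed, both verifications are immediate, and one could equally well simply cite \cite{matrixcookbook}.
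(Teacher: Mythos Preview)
Your proposal is correct; the paper itself does not prove this fact at all but simply records it as following from Section~10.2.2 of \cite{matrixcookbook}, so your explicit index computation is more than the paper provides (and you rightly note one could just cite the reference).
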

 Often times we will also use the fact that \cite[Theorem 13.12]{laub2005matrix}
  \begin{equation}
      \|A\otimes B\|_{op}=\|A\|_{op}\|B\|_{op}.
  \end{equation}
  Define the Hadamard product between vectors 
  $x=(x_1,\ldots,x_p)$ and $y=(y_1,\ldots,y_p)$ by
  \[x\circ y=(x_1y_1,\ldots,x_py_p)^T.\]
  Note that Cauchy-Schwarz inequality implies that
  \begin{equation}\label{hadamard: cauchy schwarz}
      \|x\circ y\|_2\leq \|x\|_2\|y\|_2
  \end{equation}
 We will also often use of Fact~\ref{Fact: Frobenius norm inequality}, which states $\|AB\|_F^2\leq \|A\|_{op}^2\|B\|_F^2$.

\subsubsection{Proof of Lemma~\ref{lemma: technical}}
 \begin{proof}
The first result is immediate. For the second result,  denote by $x_D$ by the projection of $x$ on $R^D$.  Note that for any $x\in \RR^{m}$ and $y\in\RR^{p}$.
\begin{align*}
\dfrac{x^T D(A) y}{\|x\|.\|y\|}=\dfrac{x_{D_1}^TAy_{D_2}}{\|x\|.\|y\|}\leq \dfrac{x_{D_1}^TAy_{D_2}}{\|x_{D_1}\|.\|y_{D_2}\|}
\end{align*}
Thus the maximum singular value of $D(A)$ is smaller than that of $A$, indicating that
\[\|D(A)\|\leq \|A\|.\]
\end{proof}

\subsubsection{Proof of Lemma~\ref{lemma: normal data matrix}} 
First, we state and prove two facts, which are used in the proof of Lemma~\ref{lemma: normal data matrix}.
\begin{fact}
\label{fact: normal data mtx}
Suppose $A\in\RR^{n\times r}$, $B\in\RR^{p\times s}$ are potentially random matrices satisfying $A^TA=I_r$ and $B^TB=I_s$. Let $\mX\in\RR^{n\times p}$ be such that $r,s\leq p$, and $\mX\mid A,B$ is distributed as a standard Gaussian data matrix.  Then the matrix
$A^T\mX B\mid A, B$ is  distributed as a standard Gaussian data matrix.
\end{fact}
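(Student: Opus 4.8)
The statement to prove is Fact~\ref{fact: normal data mtx}: if $A^TA = I_r$ and $B^TB = I_s$ with $r,s \le p$, and $\mathbf X \mid A,B$ is a standard Gaussian data matrix (i.i.d.\ $N(0,1)$ entries), then $A^T\mathbf X B \mid A,B$ is again a standard Gaussian data matrix, i.e.\ has i.i.d.\ $N(0,1)$ entries.

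\medskip
\noindent\textbf{Proof plan.} The plan is to condition throughout on $A$ and $B$, so that we may treat them as fixed (deterministic) matrices; all statements below are understood conditionally on $(A,B)$. First I would vectorize: using Fact~\ref{fact: vectorize operator}(B), $\vec(A^T\mathbf X B) = (B^T \otimes A^T)\vec(\mathbf X)$. Since $\mathbf X$ has i.i.d.\ $N(0,1)$ entries, $\vec(\mathbf X) \sim N_{np}(0, I_{np})$. A linear image of a Gaussian vector is Gaussian, so $\vec(A^T\mathbf X B)$ is a centered Gaussian vector with covariance matrix
\[
(B^T \otimes A^T)(I_{np})(B^T \otimes A^T)^T = (B^T \otimes A^T)(B \otimes A) = (B^TB) \otimes (A^TA) = I_s \otimes I_r = I_{rs},
\]
where I used the mixed-product property of the Kronecker product $(P\otimes Q)(R\otimes S) = (PR)\otimes(QS)$ together with the hypotheses $A^TA = I_r$, $B^TB = I_s$. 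Hence $\vec(A^T\mathbf X B) \sim N_{rs}(0, I_{rs})$, which is exactly the statement that $A^T\mathbf X B$ has i.i.d.\ $N(0,1)$ entries, i.e.\ is a standard Gaussian data matrix.

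\medskip
\noindent\textbf{Remarks on the steps.} The only facts invoked are the Kronecker identity $\vec(PXR) = (R^T \otimes P)\vec(X)$ (Fact~\ref{fact: vectorize operator}(B)), the closure of multivariate Gaussians under linear maps, and the mixed-product property of $\otimes$; none of these requires $r,s \le p$ per se, though that condition is what makes $A,B$ genuinely have orthonormal columns of the stated sizes and is presumably needed elsewhere in the calling context. I do not anticipate a real obstacle here: the argument is a one-line covariance computation once one vectorizes. The only thing to be slightly careful about is the conditioning — since $A$ and $B$ may be random, one must phrase everything conditionally on $(A,B)$ and note that the resulting conditional law of $A^T\mathbf X B$ does not depend on the realized values of $A,B$ (it is always $N_{rs}(0,I_{rs})$), so the conditional distribution is in fact the unconditional one as well. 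I would state this explicitly to make the ``distributed as a standard Gaussian data matrix'' conclusion unambiguous.
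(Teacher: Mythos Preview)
Your proof is correct and follows essentially the same approach as the paper: vectorize, apply the Kronecker identity from Fact~\ref{fact: vectorize operator}(B), and compute the Gaussian covariance using the mixed-product property together with $A^TA=I_r$, $B^TB=I_s$. The only cosmetic difference is that the paper vectorizes $(A^T\mathbf X B)^T$ via $\vec(\mathbf X^T)$ rather than $A^T\mathbf X B$ via $\vec(\mathbf X)$, which is immaterial.
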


\begin{proof}[Proof of Fact \ref{fact: normal data mtx}]
$\mX\in\RR^{n\times p}$ is a Gaussian data matrix with covariance $\Sigma\in\RR^{p\times p}$ if and only if
\begin{equation}
 \label{infact: data matrix} 
 \vec(\mX^T)\sim N_{np}(0,I_n\otimes\Sigma).
\end{equation}
Now  
\[\vec((A^T\mX B)^T)=\vec(B^T\mX^TA)\stackrel{(a)}{=}(A^T\otimes B^T)\vec(\mX^T)\]
where (a) follows from Fact~\ref{fact: vectorize operator}B. However, since $(A^T\otimes B^T)\in\RR^{rs\times np}$, \eqref{infact: data matrix} implies
\[(A^T\otimes B^T)\vec(\mX^T)\mid A,B\sim N_{rs}\slb 0, (A^T\otimes B^T)(A\otimes B)\srb,\]
but
\[(A^T\otimes B^T)(A\otimes B)=A^TA\otimes B^TB=I_r\otimes I_s=I_{rs}.\]
Therefore, 
\[\vec((A^T\mX B)^T)\mid A,B\sim N_{rs}(0,I_{rs}).\]
 Then the result follows from  \eqref{infact: data matrix}.
\end{proof}
In the above fact, it may appear that $A^T\mX B$ is independent of matrices $A$ and $B$ since its conditional distribution is standard Gaussian. However, $A^T\mX B$  still depends on $A$ and $B$ through $r$ and $s$, which may be random quantities. 

\begin{fact}
\label{fact: normal data proj matrix}
Suppose $A\in\RR^{n\times k}$, $\mX\in\RR^{n\times p}$, $B\in\RR^{p\times s}$ are  such that conditional on $A$ and $B$, $\mX$ is distributed as a standard Gaussian data matrix. Further suppose that the rank of $A$ and $B$  are $a$ and $b$, respectively. Then the following assertion holds:
\[\frac{\|A^T\mX B\|_{op}}{\|A\|_{op}\|B\|_{op}}\leq \|\mathbb Z\|_{op}\]
where $\mathbb Z\mid A,B$ is distributed as  a standard Gaussian data matrix in $\RR^{a\times b}$.
\end{fact}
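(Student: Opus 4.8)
The plan is to diagonalize $A$ and $B$ by their compact singular value decompositions and then invoke Fact~\ref{fact: normal data mtx}. First I would write $A=U_A\Sigma_A V_A^T$, where $\Sigma_A\in\RR^{a\times a}$ is the diagonal matrix of the $a=\mathrm{rank}(A)$ strictly positive singular values of $A$ and $U_A\in\RR^{n\times a}$, $V_A\in\RR^{k\times a}$ have orthonormal columns; analogously $B=U_B\Sigma_B V_B^T$ with $\Sigma_B\in\RR^{b\times b}$, $U_B\in\RR^{p\times b}$, $V_B\in\RR^{s\times b}$. In particular $\|\Sigma_A\|_{op}=\|A\|_{op}$, $\|\Sigma_B\|_{op}=\|B\|_{op}$, and each of $U_A,V_A,U_B,V_B$ has operator norm $1$.

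Substituting into $A^T\mX B$ gives $A^T\mX B=V_A\Sigma_A\,(U_A^T\mX U_B)\,\Sigma_B V_B^T$, so by submultiplicativity of the operator norm together with $\|V_A\|_{op}=\|V_B^T\|_{op}=1$,
\[
\|A^T\mX B\|_{op}\le \|\Sigma_A\|_{op}\,\|U_A^T\mX U_B\|_{op}\,\|\Sigma_B\|_{op}=\|A\|_{op}\|B\|_{op}\,\|U_A^T\mX U_B\|_{op}.
\]
Next I would apply Fact~\ref{fact: normal data mtx} with the orthonormal factors $U_A,U_B$ playing the roles of $A,B$: these are measurable functions of $A,B$, they satisfy $U_A^TU_A=I_a$ and $U_B^TU_B=I_b$ with $a\le n$ and $b\le p$, and conditionally on $A,B$ the matrix $\mX$ is a standard Gaussian data matrix. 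Hence, conditionally on $A,B$, the matrix $\mathbb Z:=U_A^T\mX U_B\in\RR^{a\times b}$ is a standard Gaussian data matrix. Plugging $\|U_A^T\mX U_B\|_{op}=\|\mathbb Z\|_{op}$ into the displayed inequality and dividing by $\|A\|_{op}\|B\|_{op}$ yields the claim.

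I do not anticipate a genuine obstacle; the argument is essentially bookkeeping. The two points deserving a little care are: using the \emph{compact} SVD, so that the Gaussian matrix $\mathbb Z$ really has the stated dimensions $a\times b$ (the true ranks) rather than being embedded in a larger ambient space; and checking that the hypotheses of Fact~\ref{fact: normal data mtx} apply to $(U_A,U_B)$, for which it suffices that $a=\mathrm{rank}(A)\le n$ and $b=\mathrm{rank}(B)\le p$, both of which hold automatically.
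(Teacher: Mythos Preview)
Your proposal is correct and essentially identical to the paper's proof: both factor through an orthonormal basis for the column spaces of $A$ and $B$ and then apply Fact~\ref{fact: normal data mtx}. The only cosmetic difference is that the paper phrases this via projection matrices $P_A=V_AV_A^T$, $P_B=V_BV_B^T$ (writing $A=P_AA$, $B=P_BB$) rather than the compact SVD, but the orthonormal factors $V_A,V_B$ there play exactly the role of your $U_A,U_B$.
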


\begin{proof}[Proof of Fact \ref{fact: normal data proj matrix}]
Suppose $P_A$ and $P_B$ are the projection matrices onto the column spaces of $A$ and $B$, respectively. Then we can write $P_A=V_AV_A^T$ and $P_B=V_BV_B^T$, where $V_A\in\RR^{n\times a}$ and $V_B\in\RR^{p\times b}$ are matrices matrices with full column rank so that $V_A^TV_A=I_a$ and $V_B^TV_B=I_b$.  Writing $A=P_A A$ and $B=P_B B$,  we obtain that
\[\|A^T\mX B\|_{op}=\|A^TV_AV_A^T\mX V_BV_B^TB\|_{op}\]
which is bounded by
\[\|A\|_{op}\|V_A\|_{op}\|V_A^T\mX V_B\|_{op}\|V_B\|_{op}\|B\|_{op}.\]
That $\|V_A\|_{op}$ and $\|V_B\|_{op}$ are one follows from the definitions of $V_A$ and $V_B$. Fact~\ref{fact: normal data mtx} implies conditional on $V_A$ and $V_B$,  $V_A^T\mX V_B\in\RR^{a\times b}$ is  distributed as a standard Gaussian data matrix. Hence, the proof follows.
\end{proof}

\begin{proof}[Proof of Lemma~\ref{lemma: normal data matrix}]
 Let us denote the rank of $\mZ_1 D$ by $a'$. Note that $a'\leq \text{rank}(D)=a$.  Letting $A=\mZ_1D$, and applying Fact~\ref{fact: normal data proj matrix}, we have the bound
\[\|D^T\mZ_1^T\mZ_2B\|_{op}\leq \|\mZ_1 D\|_{op}\|\mZ\|_{op}\|B\|_{op}\]
where $\mZ\mid \mZ_1$ is distributed as a standard Gaussian data matrix in $\RR^{a'\times b}$. Next we apply Fact~\ref{fact: normal data proj matrix} again, but now on the term $\|\mZ_1 D\|_{op}$, which leads to
\[\|\mZ_1 D\|_{op}\leq \|D\|_{op}\|\mZ '\|_{op},\]
where $\mZ'\in \RR^{n\times a}$ is a standard Gaussian data matrix. Therefore,
\[\|D^T\mZ_1^T\mZ_2B\|_{op}\leq \|A\|_{op}\|\mZ'\|_{op}\|\mZ\|_{op}\|B\|_{op}.\]
We use the Gaussian matrix concentration inequality in Fact~\ref{fact: Gaussian matrix concentration inequality} to show that with probability at least $1-\exp(-Cn)$,
$\|\mZ'\|_{op}\leq \sqrt{2}(\sqrt{n}+\sqrt{a})$.
Also,  for $\mZ\in\RR^{a'\times b}$, the first part of Fact~\ref{fact: Gaussian matrix concentration inequality} implies  
\[\mathbb P\slb \|\mZ\|_{op}\leq \sqrt{a'}+\sqrt{b}+t\mid \mZ_1\srb\geq 1-\exp(-t^2/2)\]
for any $t>0$.
Since $a'\leq a$, and $t$ is deterministic, the above implies 
\[\mathbb P\slb \|\mZ\|_{op}\leq \sqrt{a}+\sqrt{b}+t\srb\geq 1-\exp(-t^2/2).\]
  Hence, for any $t>0$, we have the following with probability at least $1-\exp(-Cn)-\exp(-t^2/2)$:
\[\|D^T\mZ_1^T\mZ_2B\|_{op}\leq \sqrt{2}\|D\|_{op}\|B\|_{op}(\sqrt{n}+\sqrt{a})(\sqrt{a}+\sqrt{b}+t).\]
Since $a\leq b\leq n$, it follows that 
\[\|D^T\mZ_1^T\mZ_2B\|_{op}\leq C\|D\|_{op}\|B\|_{op}\sqrt{n}\max\{\sqrt{b},t\}.\]
Therefore, the proof follows.
 \end{proof}

 \subsubsection{Proof of Lemma \ref{Adlemma: lowrank: premultiply: 1}}
 \begin{proof}[Proof of Lemma \ref{Adlemma: lowrank: premultiply: 1}]

Denoting
\[\mathcal T=\lbs (x',y')\in\RR^p\ :\ x'= x_{S_x}, y'=y_{S_y}, x\in T_p^\e, y\in T_q^\e\rbs,\]
we note that
 \[ \max_{x\in T^{\e}_p,y\in T^{\e}_q}|\langle x_{S_x}, \mA y_{S_y}\rangle|= | \max_{(x,y)\in T}|\langle x, \mA y\rangle|.\]
Therefore it suffices to show that
there exist absolute constants $C,c>0$ such that
\begin{align*}
\MoveEqLeft P\lbs \max_{(x,y)\in \mathcal T}|\langle x,\mA y\rangle |\geq \Delta\rbs \\
\leq &\ C \exp\lbs (p+q)\dfrac{\log (C\da)}{\da}-\dfrac{n^2\Delta^2}{4C\|\M\|_{op}^2\|\N\|_{op}^2(2n+p+q)}\rbs.\\
&\ + \dfrac{C}{\Delta^2}\|\M\|_{op}^2\|N\|_{op}^2(n(p+q))^C\lbs e^{ -c(n+q)}+ e^{ -c(n+p)}\rbs 
\end{align*}

Let us denote 
$\Z_1=\vec(\mZ_1^T)$, $\Z_2=\vec(\mZ_2^T)$, and $\Z=(\Z_1^T,\Z^T_2)^T$.
 Thus
\[\Z^T=\{(\Z_1)^T_{*1},\ldots, (\mZ_1)^T_{*p}, (\mZ_2)^T_{*1},\ldots, (\mZ_1)^T_{*q}\}.\]
Recalling
$\mQ=\frac{\M \mZ_1^T\mZ_2\N}{n}$, we define
\begin{equation}\label{inlemma: def: fx(z1,z2) in lemma 13}
  f_{x,y}(\Z_1,\Z_2)= \Big\langle x, \eta(\mQ) y\Big\rangle=\langle x, \mA y\rangle.   
 \end{equation}
To obtain a tight concentration inequality for $ f_{x,y}(\Z_1,\Z_2)$, we want to use the following Gaussian concentration lemma due to \cite{deshpande2014}

\begin{lemma}[Corollary 10 of \cite{deshpande2014}]
Let $\mathcal Z\sim N(0,I_n)$ be a vector of $n$ i.i.d. standard Gaussian variables. Suppose $\mathcal B$ is a finite set and we have functions $F_b:\RR^n\mapsto \RR$ for every $b\in\mathcal B$. Assume $\mathcal G\in\RR^n\times\RR^n$ is a Borel set such that for lebesgue-almost every $(Z,Z')\in\mathcal G:$
\[\max_{b\in\mathcal B}\sup_{t\in[0,1]}\|\grad F_b(\sqrt{t} Z+\sqrt{1-t}Z')\|_{2}\leq \mathcal L.\]
Then, there exists an absolute constant $C>0$ so that for any $\Delta>0$,
\begin{align*}
    \MoveEqLeft \mathbb P\slb \max_{b\in\mathcal B}\abs{F_b(\Z)-\E F_b(\Z)}\geq \Delta\srb\\
    \leq &\  C|\mathcal B|\exp(-\frac{\Delta^2}{C\mathcal L^2})\\
    &\ +\frac{C}{\Delta^2}\E\slbt\max_{b\in\mathcal B}(F_b(\Z)-F_b(\Z'))^4\srbt \mathbb P(\mathcal G^c)^{1/2}.
\end{align*}
Here $\Z'$ is an independent copy of $\Z$.
\end{lemma}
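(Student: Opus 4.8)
Since the statement is exactly Corollary~10 of \cite{deshpande2014}, the plan is to reproduce the Gaussian interpolation (``smart path'') argument behind it, organised around a split according to whether the interpolation path between $\mathcal Z$ and an independent copy stays inside $\mathcal G$.

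\emph{Reduction.} First I would introduce an independent copy $\mathcal Z'\sim N(0,I_n)$ and, writing $M(w)=\max_{b\in\mathcal B}F_b(w)$, use $|M(w)-M(w')|\le\max_b|F_b(w)-F_b(w')|$ to pass from $\max_b|F_b(\mathcal Z)-\mathbb E F_b(\mathcal Z)|$ to the symmetrised increments $F_b(\mathcal Z)-F_b(\mathcal Z')$: centering around the mean is recovered by Jensen's inequality in $\mathcal Z'$, the two-sided $|\cdot|$ and the maximum over $\mathcal B$ are absorbed by a union bound (this is where the factor $|\mathcal B|$ enters), and the threshold $\Delta$ is replaced by $\Delta/2$ along the way.

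\emph{Interpolation and the main term.} For $\theta\in[0,\pi/2]$ put $W_\theta=\sin\theta\,\mathcal Z+\cos\theta\,\mathcal Z'$ and $\dot W_\theta=\cos\theta\,\mathcal Z-\sin\theta\,\mathcal Z'$; the map $(\mathcal Z,\mathcal Z')\mapsto(W_\theta,\dot W_\theta)$ is orthogonal on $\mathbb R^{2n}$, so $(W_\theta,\dot W_\theta)\overset{d}{=}(\mathcal Z,\mathcal Z')$ for every fixed $\theta$, and the substitution $t=\sin^2\theta$ identifies the path $\{W_\theta\}_\theta$ with $\{\sqrt t\,\mathcal Z+\sqrt{1-t}\,\mathcal Z'\}_{t\in[0,1]}$ appearing in the hypothesis. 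Using (local) Lipschitzness of $F_b$ to justify the fundamental theorem of calculus along this path, $F_b(\mathcal Z)-F_b(\mathcal Z')=\int_0^{\pi/2}\langle\nabla F_b(W_\theta),\dot W_\theta\rangle\,d\theta$. On the event that $(\mathcal Z,\mathcal Z')\in\mathcal G$ the hypothesis gives $\|\nabla F_b(W_\theta)\|_2\le\mathcal L$ for a.e.\ $\theta$; feeding this into a differential-inequality (Gr\"onwall) estimate for the partial interpolant $s\mapsto\mathbb E\!\left[e^{\lambda(F_b(W_s)-F_b(W_0))}\mathbf 1_{\mathcal G}\right]$ — equivalently, applying $e^{\lambda\cdot}$, Jensen with the uniform measure $\tfrac{2}{\pi}\,d\theta$ on $[0,\pi/2]$, and the conditional law $\dot W_\theta\mid W_\theta\sim N(0,I_n)$ — shows this quantity is at most $e^{c\lambda^2\mathcal L^2}$. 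Optimising over $\lambda$, undoing the reduction, and union-bounding over $\mathcal B$ produces the term $C|\mathcal B|\exp(-\Delta^2/(C\mathcal L^2))$.

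\emph{The error term, and the main obstacle.} On the complementary part I would write, with $\Psi:=\max_b|F_b(\mathcal Z)-F_b(\mathcal Z')|$,
\[\mathbb P\big(\Psi\ge\Delta,\ \mathcal G^c\big)\le\Delta^{-2}\,\mathbb E\big[\Psi^2\mathbf 1_{\mathcal G^c}\big]\le\Delta^{-2}\,\mathbb E[\Psi^4]^{1/2}\,\mathbb P(\mathcal G^c)^{1/2},\]
which is (a slight sharpening of) the stated summand $\tfrac{C}{\Delta^2}\,\mathbb E[\max_b(F_b(\mathcal Z)-F_b(\mathcal Z'))^4]\,\mathbb P(\mathcal G^c)^{1/2}$; adding the two contributions gives the claim. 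The delicate point is the main-term step: $\mathcal G$ is measurable with respect to the \emph{pair} $(\mathcal Z,\mathcal Z')$, whereas the conditional-Gaussian (or Gr\"onwall) computation wants $\|\nabla F_b\|_2\le\mathcal L$ to act on $W_\theta$ alone. Reconciling this — by replacing each $F_b$ with a globally $\mathcal L$-Lipschitz surrogate that agrees with $F_b$ wherever the path bound is in force, and tracking that the surrogate does not move the mean or the events used above — together with the routine measurability and a.e.-differentiability bookkeeping for non-smooth $F_b$ and for the pointwise maximum, is where the real work lies; everything else is the standard Gaussian interpolation machinery.
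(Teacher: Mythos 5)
First, a point of comparison: the paper does not prove this statement at all --- it is imported verbatim as Corollary~10 of \cite{deshpande2014} and used as a black box inside the proof of Lemma~\ref{Adlemma: lowrank: premultiply: 1}. So there is no in-paper argument to measure yours against; what you have written is an attempted reconstruction of Deshpande and Montanari's own proof.

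Your outer skeleton is the right one: symmetrization with an independent copy, the Maurey--Pisier interpolation $W_\theta=\sin\theta\,\mathcal Z+\cos\theta\,\mathcal Z'$ with $\dot W_\theta\mid W_\theta\sim N(0,I_n)$ for the sub-Gaussian main term, a union bound supplying the factor $|\mathcal B|$, and Chebyshev plus Cauchy--Schwarz producing an error of the form $\Delta^{-2}\,\mathbb{E}[\max_b(F_b(\mathcal Z)-F_b(\mathcal Z'))^4]^{1/2}\,\mathbb{P}(\mathcal G^c)^{1/2}$. But the step you flag as ``where the real work lies'' is not a loose end to be tidied --- it is the entire content of the corollary, and your proposal does not close it. The hypothesis bounds $\|\grad F_b\|_2$ only along interpolation segments of pairs lying in $\mathcal G$, and $\mathcal G$ is an event on the pair $(\mathcal Z,\mathcal Z')$; inserting $\mathbf 1_{\mathcal G}$ into $\mathbb{E}\bigl[\exp\bigl(\lambda\langle\grad F_b(W_\theta),\dot W_\theta\rangle\bigr)\bigr]$ destroys the conditional-Gaussian computation (one can no longer integrate $\dot W_\theta$ freely given $W_\theta$), while the crude bound $\langle\grad F_b(W_\theta),\dot W_\theta\rangle\le\mathcal L\|\dot W_\theta\|_2$ costs an extra $\sqrt n$ in the exponent and ruins the rate. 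Your proposed repair --- a globally $\mathcal L$-Lipschitz surrogate agreeing with $F_b$ ``wherever the path bound is in force'' --- is asserted, not constructed, and it is not obvious it can be built from the stated hypothesis: the gradient bound is not a Lipschitz bound on any fixed subset of $\mathbb R^n$, so a McShane/Kirszbraun extension has nothing to extend from. A second, smaller defect: the passage from $|F_b(\mathcal Z)-\mathbb{E}F_b(\mathcal Z)|$ to $|F_b(\mathcal Z)-F_b(\mathcal Z')|$ via Jensen is an inequality between exponential moments, not an inclusion of events, so you cannot afterwards split the original tail event along $\mathcal G$ versus $\mathcal G^c$ and apply Chebyshev to the symmetrized variable on $\mathcal G^c$; the fourth-moment term in the stated bound arises instead from controlling, via Cauchy--Schwarz, the gap between $\mathbb{E}F_b(\mathcal Z)$ and a good-set-restricted centering. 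As written, the proposal is a faithful outline of the standard machinery with the decisive step left open.
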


 In our case,  the index $b$ corresponds to $(x,y)$, the set $\mathcal B$ corresponds to $\mathcal T$, and the function $F_b(\Z)$ corresponds to $F_{x,y}(\Z)$.
 To find the centering and the Lipschitz constant $\mathcal L$, we need to compute $\E f_{x,y}(\Z_1,\Z_2)$ and $\triangledown_{\Z} f_{x,y}(\Z_1,\Z_2)$, respectively.

First, note that since $\mZ_1$ and $\mZ_2$ are independent standard Gaussian data matrices, $\mQ\stackrel{d}{=}-\mQ$. Noting $\E\eta(X)=0$ for any symmetric random variable $X$, we deduce 
\[\E\langle x, \mA y\rangle =\langle x, E [\eta(\mQ)]y\rangle=0.\]
Using Lemma~\ref{Surrogate lemma: Adlemma 1: derivative} we obtain that
\[\ns\pdv{f_{x,y}(\Z_1,\Z_2)}{\Z_1} \ns_2 \leq \|g(\mZ_2)\|_{op}\ns v\circ \grad\eta(\Vec(\mQ))\ns_2\]
and
\[\ns\pdv{f_{x,y}(\Z_1,\Z_2)}{\Z_2} \ns_2 \leq \|h(\mZ_1)\|_{op}\ns v\circ \grad\eta(\Vec(\mQ))\ns_2\]
where 
\[v=\Vec(xy^T),\quad g(\mZ_2)=\mZ_2\N\otimes \M^T/n,\]
\[  h(\mZ_1)=\mZ_1\M^T\otimes \N/n.\]
Because $|\grad \eta(x)|<1$ for each $x\in\RR$,
\begin{align*}
 \norm{ v\circ \grad\eta(\Vec(\mQ)) }_2\
\leq &\ \sup_x\grad|\eta(x)|\|v\|_2\\
\leq &\ \|v\|_2=\|x\|_2\|y\|_2
\end{align*}
since
$\|v\|_2^2=\|xy^T\|_F^2=\|x\|_2^2\|y\|^2_2$. 
Also, because $\|A\otimes B\|_{op}$ equals $\|A\|_{op}\|B\|_{op}$, we have
\begin{align}\label{inlemma: adlemma: g(z) inequality}
\|g(\mZ_2)\|^2_{op}=\ \frac{\|\mZ_2\N\otimes \M^T\|_{op}^2}{n^2}=&\ \frac{\|\mZ_2\N\|_{op}^2 \|M\|_{op}^2}{n^2}\nn\\
\leq &\  \frac{\|M\|_{op}^2 \|N\|_{op}^2 \|\mZ_2\|^2_{op}}{n^2}.
\end{align}
and similarly,
\begin{align}\label{inlemma: adlemma: h(z) inequality}
\|h(\mZ_1)\|^2_{op}\leq \frac{\|M\|_{op}^2 \|N\|_{op}^2 \|\mZ_1\|^2_{op}}{n^2}.
\end{align}
Therefore,
\[\ns\pdv{f_{x,y}(\Z_1,\Z_2)}{\Z_1} \ns_2\leq \|x\|_2\|y\|_2 \frac{\|M\|_{op} \|N\|_{op} \|\mZ_2\|_{op}}{n},\]
\[\ns\pdv{f_{x,y}(\Z_1,\Z_2)}{\Z_2} \ns_2\leq \|x\|_2\|y\|_2 \frac{\|M\|_{op} \|N\|_{op} \|\mZ_1\|_{op}}{n}.\]
Letting  $\grad f_{x,y}(\Z)$ denote $\pdv{f_{x,y}(\Z)}{\Z}$, we note that the
 above two inequalities imply 
\[\ns\grad f_{x,y}(\Z)\ns_2^2\leq \|x\|^2_2\|y\|^2_2 \frac{\|M\|^2_{op} \|N\|^2_{op} (\|\mZ_1\|^2_{op}+\|\mZ_2\|^2_{op})}{n^2}.\]
Because $\|x\|_2,\|y\|_2\leq 1$, we have
\begin{equation}\label{inlemma: ineq: del f x,y norm bound}
    \ns\grad f_{x,y}(\Z)\ns_2^2\leq  \frac{\|M\|^2_{op} \|N\|^2_{op} (\|\mZ_1\|^2_{op}+\|\mZ_2\|^2_{op})}{n^2}.
\end{equation}

We choose a good set $\mathcal{G}_1$ where the above bound is small. To that end, we take $\mathcal{G}_1$ to be
\begin{align}\label{inlemma: CT: def G1}
\G_1=\lbs &\ (\tilde \mZ_1, \tilde \mZ_1', \tilde \mZ_2, \tilde \mZ_2')\ :\  \tilde \mZ_1\in\RR^{n\times p}, \tilde \mZ_1'\in\RR^{n\times p}, \tilde \mZ_2\in\RR^{n\times q},\nn \\
&\ \tilde \mZ_2'\in \times \RR^{n\times q} ,
 \max\{\|\mZ_1\|_{op},\|\mZ_1'\|_{op}\}\leq \sqrt{2}(\sqn +\sqrt{p}),\nn\\
&\ \max\{\|\mZ_2\|_{op},\|\mZ_2'\|_{op}\}\leq \sqrt{2}(\sqn +\sqrt{q})\rbs.
\end{align}
Let us denote $\Z_i=\vec(\mZ_i^T)$ and $\tilde \Z_i=\vec(\tilde \mZ_i^T)$.
To apply Lemma~\ref{lemma: CT: deshpande}, now we define the process 
\[\Z_i(t)=\sqrt{t}\tilde \Z_i+\sqrt{1-t}\tilde \Z_i',\quad t\in[0,1], i=1,2.\]
Equation \ref{inlemma: ineq: del f x,y norm bound}
 implies that on $\G_1$,
\[\norm{\triangledown_\Z f_{x,y}\slb \Z_1(t),\Z_2(t)\srb}_{2}^2\leq \dfrac{4\|M\|_{op}^2\|N\|_{op}^2(2n+p+q)}{n^2}={\Lip}.\]

We are now in a position to apply Lemma~\ref{lemma: CT: deshpande}, which yields that
\begin{align}\label{inlemma: left: ss: bound}
\MoveEqLeft P\lb \max_{(x,y)\in \mathcal T}|f_{x,y}(\Z_1,\Z_2)|\geq \Delta \rb\nn\\
\leq &\ C|\mathcal T|\exp\lb-\dfrac{\Delta^2}{C\Lip^2}\rb+\dfrac{C}{\Delta^2}E\lbt \max_{(x,y)\in T} f_{x,y}(\Z_1,\Z_2)^4\rbt^{1/2}\nn\\
&\ \times P(\mathcal{G}^c_1)^{1/2}.
\end{align}
From equation 79 of \cite{deshpande2014} it follows that $C$ can be chosen so large such that
\[|\mathcal T|\leq \exp\lb (p+q)\dfrac{\log (C\da)}{\da}\rb.\]
Thus, after plugging in the value of ${\Lip}$, the first term on the right hand side of \eqref{inlemma: left: ss: bound} can be bounded above by
\[C \exp\lbs (p+q)\dfrac{\log (C\da)}{\da}-\dfrac{n^2\Delta^2}{4C\|\M\|^2\|\N\|^2(2n+p+q)}\rbs.\]
To bound the second term in \eqref{inlemma: left: ss: bound}, notice that
Lemma~\ref{suurogate lemma: operator 1} yields the bound
\[\E\lbt \max_{(x,y)\in \mathcal T} f_{x,y}(\Z_1,\Z_2)^4\rbt \leq C\|\M\|_{op}^4\|\N\|_{op}^4(n(p+q))^C,\]
whereas Fact~\ref{fact: Gaussian matrix concentration inequality} leads to the bound
\begin{equation}\label{inlemma: left: ss: G1c prob}
P(\G_1^c)^{1/2}\leq 2\slb\exp(-c(n+p))+\exp(-c(n+q))\srb.
\end{equation}
Therefore
the proof follows.
\end{proof}

\begin{lemma}\label{Surrogate lemma: Adlemma 1: derivative}
\label{lemma: CT: deshpande}
Suppose $f_{x,y}$ is as defined in \eqref{inlemma: def: fx(z1,z2) in lemma 13} and\\ $\mQ=M\mZ_1^T\mZ_2N/n$. Then
\[\ns\pdv{f_{x,y}(\Z_1,\Z_2)}{\Z_1} \ns_2 \leq \|g(\mZ_2)\|_{op}\ns v\circ \grad\eta(\Vec(\mQ)) \ns_2,\]
\[\ns\pdv{f_{x,y}(\Z_1,\Z_2)}{\Z_2} \ns_2 \leq\|h(\mZ_1)\|_{op}\ns v\circ \grad\eta(\Vec(\mQ))\ns_2\]
where 
$v=\Vec(xy^T)$, $ g(\mZ_2)=\mZ_2\N\otimes \M^T/n$,  and $h(\mZ_1)=\mZ_1M^T\otimes N/n$.
\end{lemma}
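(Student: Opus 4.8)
The plan is to treat $f_{x,y}$ as a composition that is smooth off a Lebesgue-null set and to differentiate through it. First I would rewrite the target as a linear functional of the entrywise soft-thresholding of $\vec(\mQ)$: by Fact~\ref{fact: vectorize operator}A,
\[
f_{x,y}(\Z_1,\Z_2)=\langle x,\eta(\mQ)y\rangle=\mathrm{Trace}\big((xy^T)^T\eta(\mQ)\big)=\big\langle \vec(xy^T),\,\vec(\eta(\mQ))\big\rangle=\big\langle v,\,\eta(\vec(\mQ))\big\rangle,
\]
where $v=\vec(xy^T)$ and, since $\eta$ acts entrywise, $\vec(\eta(\mQ))=\eta(\vec(\mQ))$. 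The soft-thresholding map $\eta$ is globally $1$-Lipschitz and differentiable off the two points $\pm\tau/\sqn$, with $|\grad\eta|\le 1$ wherever defined. Writing $L_i$ for the Jacobian of the linear map $\Z_i\mapsto\vec(\mQ)$ (with the other argument frozen), the chain rule gives, for Lebesgue-a.e.\ $(\Z_1,\Z_2)$, that $\pdv{f_{x,y}(\Z_1,\Z_2)}{\Z_i}=L_i^T\big(v\circ\grad\eta(\vec(\mQ))\big)$, whence
\[
\Big\|\pdv{f_{x,y}(\Z_1,\Z_2)}{\Z_i}\Big\|_2\le \|L_i\|_{op}\,\big\|v\circ\grad\eta(\vec(\mQ))\big\|_2 .
\]

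Next I would identify $L_1,L_2$ and bound their operator norms. For $i=1$, group $\mQ=\tfrac1n M\mZ_1^T(\mZ_2N)$ and apply Fact~\ref{fact: vectorize operator}B with $A=M$, $X=\mZ_1^T$, $B=\mZ_2N$, giving $\vec(\mQ)=\tfrac1n\big(N^T\mZ_2^T\otimes M\big)\Z_1$; thus $L_1^T=\tfrac1n(\mZ_2N\otimes M^T)=g(\mZ_2)$ exactly, and the first inequality follows from the display above. For $i=2$, I would bypass the commutation-matrix bookkeeping by estimating the operator norm of the linear map directly: since $\|\Z_2\|_2=\|\mZ_2\|_F$ and $\|\vec(\mQ)\|_2=\|\mQ\|_F$, two applications of Fact~\ref{Fact: Frobenius norm inequality} yield
\[
\|L_2\|_{op}=\sup_{\|\mZ_2\|_F=1}\tfrac1n\|M\mZ_1^T\mZ_2N\|_F\le \tfrac1n\,\|M\mZ_1^T\|_{op}\,\|N\|_{op}=\tfrac1n\|\mZ_1M^T\|_{op}\|N\|_{op}=\|h(\mZ_1)\|_{op},
\]
where the last equality uses $\|A\otimes B\|_{op}=\|A\|_{op}\|B\|_{op}$; plugging this into the bound gives the second inequality. (Equivalently one may write $L_2=\tfrac1n(N^T\otimes M\mZ_1^T)\mathbf K$ with $\mathbf K$ the orthogonal commutation matrix defined by $\vec(\mZ_2)=\mathbf K\vec(\mZ_2^T)$, and read off the same operator-norm bound.)

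I do not expect any serious obstacle here; the only delicate points are cosmetic. The transpose inside $\mZ_1^T$ means the Jacobian is cleanest when expressed against $\Z_i=\vec(\mZ_i^T)$ rather than $\vec(\mZ_i)$, which is handled exactly for $\Z_1$ and, for $\Z_2$, either by the direct Frobenius estimate above or by a permutation matrix; and the non-differentiability of $\eta$ at $\pm\tau/\sqn$ is harmless because $\eta$ is $1$-Lipschitz and the chain-rule identity holds almost everywhere in $(\Z_1,\Z_2)$ --- which is exactly the form in which the bound is used inside Lemma~\ref{Adlemma: lowrank: premultiply: 1} through Corollary~10 of \cite{deshpande2014}.
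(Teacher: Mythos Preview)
Your proposal is correct and follows essentially the same approach as the paper: rewrite $f_{x,y}$ as $\langle v,\eta(\vec(\mQ))\rangle$, apply the chain rule, and identify the Jacobian of $\Z_i\mapsto\vec(\mQ)$ as a Kronecker product whose operator norm gives the claimed bound. The one minor difference is in the $\Z_2$ part: the paper rewrites $f_{x,y}$ symmetrically as $(v')^T\eta(h(\mZ_1)^T\Z_2)$ with $v'=\vec(yx^T)$, differentiates, and then observes $\|v'\circ\grad\eta(\vec(\mQ^T))\|_2=\|v\circ\grad\eta(\vec(\mQ))\|_2$; your shortcut of bounding $\|L_2\|_{op}$ directly via Frobenius submultiplicativity (or via the orthogonal commutation matrix) is slightly cleaner and avoids introducing $v'$ altogether.
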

\begin{proof}
Using $v=\Vec(xy^T)$, and the fact that $Tr(AB)=Tr(BA)$,  we calculate that
\begin{align*}
f_{x,y}(\Z_1,\Z_2)=&\ Tr\lb yx^T\eta(\mQ)\rb
    \\
    =&\  Tr\lb (xy^T)^T\eta\slb \frac{\M \mZ_1^T\mZ_2\N}{n}\srb\rb\\
    =&\ \vec(xy^T)^T\vec\lb \eta\slb \frac{\M \mZ_1^T\mZ_2\N}{n}\srb\rb\\
    =&\ v^T\eta\lb \vec\slb \frac{\M \mZ_1^T\mZ_2\N}{n}\srb \rb.
\end{align*}
Fact~\ref{fact: vectorize operator} implies
\begin{equation}\label{inlemma: CT: Qmn form}
   \Vec( \mQ)=  \frac{(\N^T\mZ_2^T\otimes \M)}{n}\Z_1 \\
    =  g(\mZ_2)^T\Z_1,
\end{equation}
which yields $f_{x,y}(\Z_1,\Z_2)=v^T\eta(g(\mZ_2)^T\Z_1)$. 
 Noting $v\in\RR^{pq}$, we can hence write $f_{x,y}(\Z_1,\Z_2)$ as 
\[f_{x,y}(\Z_1,\Z_2)=\sum_{i=1}^{pq}v_i \eta\lb [g(\mZ_2)_i]^T\Z_1\rb.\]
Let us denote by $\grad \eta(x)$ the derivative of $\eta(x)$ evaluated at $x\in\RR$. For $A\in\RR^{p\times q}$, we denote by  $\grad \eta(A)$ the matrix whose $(i,j)$-th entry equals $\grad\eta(A_{i,j})$.
Then we obtain that for $j\in [np]$, 
\[\pdv{f_{x,y}(\Z_1,\Z_2)}{(\Z_1)_j}=\sum_{i=1}^{pq}v_i \grad\eta\lb [g(\mZ_2)_i]^T\Z_1\rb g(\mZ_2)_{ij},\]
indicating that
\begin{align*}
  \pdv{f_{x,y}(\Z_1,\Z_2)}{\Z_1}
 =&\ \sum_{i=1}^{pq}v_i \grad\eta\lb [g(\mZ_2)_i]^T\Z_1\rb g(\mZ_2)_{i}\\
 = &\ g(\mZ_2)\lbt v\circ \grad\eta\lb g(\mZ_2)^T\Z_1\rb\rbt
\end{align*}
where $\circ$ implies the Hadamard product.  It follows that
\[\ns\pdv{f_{x,y}(\Z_1,\Z_2)}{\Z_1} \ns_2 \leq \|g(\mZ_2)\|_{op}\ns v\circ \grad\eta\lb g(\mZ_2)^T\Z_1\rb \ns_2.\]
Then the first part of the proof follows from \eqref{inlemma: CT: Qmn form}. The proof of the second part follows similarly, and hence, skipped.

 Writing $v'=Vec(yx^T)$, we have
\begin{align*}
    f_{x,y}(\Z_1,\Z_2)=&\ Tr\lb \eta\slb \frac{\N^T\mZ_2^T\mZ_1\M^T}{n}\srb xy^T\rb\\
    = &\ Tr\lb xy^T\eta\slb \frac{\N^T\mZ_2^T\mZ_1\M^T}{n}\srb \rb,
     \end{align*}
    which equals
    \begin{align*}
  \MoveEqLeft  Tr\lb (yx^T)^T\eta\slb \frac{\N^T\mZ_2^T\mZ_1\M^T}{n}\srb\rb\\
    = &\ \vec(yx^T)^T\vec\lb \eta\slb \frac{\N^T \mZ_2^T\mZ_1\M^T}{n}\srb\rb\\
    =&\ (v')^T\eta\lb \vec\slb \frac{\N^T \mZ_2^T\mZ_1\M^T}{n}\srb \rb.
    \end{align*}
   Fact~\ref{fact: vectorize operator} implies that the above equals
    \begin{align*}
   (v')^T \eta \lb \frac{(\M \mZ_1^T\otimes \N^T)}{n}\Z_2 \rb
    =&\ (v')^T \eta \lb h(\mZ_1)^T\Z_2 \rb.
\end{align*}
where $h(\mZ_1)=\frac{\mZ_1M^T\otimes N}{n}$.
Thus, similarly we can show that
\begin{align*}
 \MoveEqLeft \ns\pdv{f_{x,y}(\Z_1,\Z_2)}{\Z_2}\ns_2\\
 \leq &\ \|h(\mZ_1)\|_{op} \ns v'\circ \grad\eta \lb h(\mZ_1)^T\Z_2 \rb\ns_2\\
 = &\ \|h(\mZ_1)\|_{op}\ns\vec\slb (xy^T)^T\srb\circ \vec\lb\grad\eta \lb \slbt\frac{\M \mZ_1^T\mZ_2\N}{n} \srbt\rb^T\rb\ns_2\\
 =&\ \|h(\mZ_1)\|_{op}\ns\vec\slb xy^T\srb\circ \vec\lb\grad\eta \lb \slbt\frac{\M \mZ_1^T\mZ_2\N}{n} \srbt\rb\rb\ns_2\\
 =&\ \|h(\mZ_1)\|_{op}\ns v\circ \grad\eta\lb g(\mZ_2)^T\Z_1\rb\ns_2.
\end{align*}
Therefore, the proof follows.
\end{proof}

\begin{lemma}\label{suurogate lemma: operator 1}
There exists an absolute constant $C$ so that the function $f_{x,y}$ defined in \eqref{inlemma: def: fx(z1,z2) in lemma 13} satisfies
\[\E\lbt \max_{\|x\|_2\leq 1, \|y\|_2\leq 1} f_{x,y}(\Z_1,\Z_2)^4\rbt \leq C\|\M\|_{op}^4\|\N\|_{op}^4(n(p+q))^C.\]
\end{lemma}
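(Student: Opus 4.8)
The plan is to bound the fourth moment of $f_{x,y}(\Z_1,\Z_2) = \langle x, \eta(\mQ)y\rangle$ uniformly over the unit ball, where $\mQ = M\mZ_1^T\mZ_2N/n$. First I would reduce to controlling the operator norm of $\eta(\mQ)$: since $|f_{x,y}(\Z_1,\Z_2)| \leq \|x\|_2\|y\|_2\|\eta(\mQ)\|_{op} \leq \|\eta(\mQ)\|_{op}$ whenever $\|x\|_2,\|y\|_2\leq 1$, it suffices to show $\E[\|\eta(\mQ)\|_{op}^4] \leq C\|M\|_{op}^4\|N\|_{op}^4(n(p+q))^C$.

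Next I would use the contractivity of soft thresholding. Since $x\mapsto\eta(x,t)$ is $1$-Lipschitz with $\eta(0,t)=0$, we have $|\eta(\mQ_{ij})| \leq |\mQ_{ij}|$ entrywise, so the entrywise comparison gives $\|\eta(\mQ)\|_{op} \leq \|\eta(\mQ)\|_F \leq \|\mQ\|_F$; alternatively, and more efficiently, one can simply bound $\|\eta(\mQ)\|_{op}\leq \|\mQ\|_{op}$ directly since soft thresholding only shrinks entries toward zero — but to be safe I would route through the cruder Frobenius bound or the operator-norm bound, whichever is cleanest. Then $\|\mQ\|_{op} \leq \|M\|_{op}\|N\|_{op}\|\mZ_1^T\mZ_2\|_{op}/n$, so the whole problem reduces to showing $\E[\|\mZ_1^T\mZ_2\|_{op}^4] \leq C\,n^4(n(p+q))^C$ for some absolute $C$, for standard Gaussian data matrices $\mZ_1\in\RR^{n\times p}$, $\mZ_2\in\RR^{n\times q}$.

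For this last step I would invoke the Gaussian matrix concentration inequality of Fact~\ref{fact: Gaussian matrix concentration inequality}: $\|\mZ_1\|_{op}$ and $\|\mZ_2\|_{op}$ each exceed $\sqrt 2(\sqrt n + \sqrt p)$ (resp.\ $\sqrt 2(\sqrt n+\sqrt q)$) only with probability at most $\exp(-C(n+p))$. Writing $\|\mZ_1^T\mZ_2\|_{op} \leq \|\mZ_1\|_{op}\|\mZ_2\|_{op}$ and splitting the expectation over the good event and its complement, the good event contributes at most $(2(\sqrt n+\sqrt p)(\sqrt n+\sqrt q))^4 \lesssim (n(p+q))^C$, while on the bad event we use the tail bound $\mathbb P(\|\mZ_1\|_{op}\geq \sqrt p + \sqrt n + t)\leq e^{-t^2/2}$ to integrate $\E[\|\mZ_1\|_{op}^8]^{1/2}\E[\|\mZ_2\|_{op}^8]^{1/2}$ against $\mathbb P(\text{bad})^{1/2}\leq e^{-C(n+p)/2}$, which is super-polynomially small and hence absorbed. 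Combining these pieces and tracking the factors of $n$ in the denominator $n^4$ gives the claimed bound.

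The main obstacle is bookkeeping rather than anything deep: one must make sure the reduction $|f_{x,y}|\leq \|\eta(\mQ)\|_{op}$ followed by $\|\eta(\mQ)\|_{op}\leq\|\mQ\|_{op}$ (or the Frobenius detour) is clean, and that the moment bounds $\E[\|\mZ_i\|_{op}^8]$ extracted from the sub-Gaussian tail of the operator norm are only polynomial in $p+q+n$, so that the product with the exponentially small probability of the bad event still vanishes. The only subtlety worth double-checking is whether one wants the cruder $\|\eta(\mQ)\|_{op}\le\|\eta(\mQ)\|_F\le \|\mQ\|_F \le \sqrt{\min(p,q)}\,\|\mQ\|_{op}$ bound (which costs an extra harmless $\sqrt{\min(p,q)}$ factor absorbed into $(n(p+q))^C$) versus the tight $\|\eta(\mQ)\|_{op}\le\|\mQ\|_{op}$; either works, and I would use whichever requires the least justification, noting explicitly that soft thresholding is a coordinate-wise contraction fixing $0$.
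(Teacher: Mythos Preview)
Your proposal is correct and follows essentially the same route as the paper: bound $|f_{x,y}|\le\|\eta(\mQ)\|_{op}\le\|\eta(\mQ)\|_F\le\|\mQ\|_F$ via the entrywise contraction property, then control $\|\mQ\|_F$ by $\|M\|_{op}\|N\|_{op}$ times Gaussian matrix norms. The only cosmetic difference is that the paper finishes by bounding $\|\mQ\|_F\le n^{-1}\|M\|_{op}\|N\|_{op}\|\mZ_1\|_F\|\mZ_2\|_F$ and using the trivial chi-square moment $\E\|\mZ_i\|_F^4\lesssim (n(p+q))^{C}$, which avoids your good/bad-event split (and sidesteps the questionable inequality $\|\eta(\mQ)\|_{op}\le\|\mQ\|_{op}$, which is \emph{not} true in general for entrywise contractions --- your instinct to route through Frobenius was the right call).
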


\begin{proof}
As usual, we let $\mQ=M\mZ_1^T\mZ_2N/n$.
Since $\|x\|_2,\|y\|_2\leq 1$, we have
\begin{align*}
 \MoveEqLeft f_{x,y}(\Z_1,\Z_2)^4\leq \norm{\eta(\mQ)}_{op}^4\stackrel{(a)}{\leq} \|\eta(\mQ)\|^4_F\\
&\   \stackrel{(b)}{\leq} \|\mQ\|_F^4\stackrel{(c)}{\leq} n^{-4}\|\M\|_{op}^4\|N\|_{op}^4\|\mZ_1\|_F^4\|\mZ_2\|_F^4.  
\end{align*}
Here (a) follows because the operator norm is smaller than the Frobenius norm, $(b)$ follows because  $|\eta(x)|\leq |x|$, and 
(c) follows from Fact~\ref{Fact: Frobenius norm inequality}.
Since $\mZ_1$ and $\mZ_2$ are independent,
\begin{align*}
  \MoveEqLeft  \E\lbt \max_{ \|x\|_2\leq 1, \|y\|_2\leq 1} f_{x,y}(\Z_1,\Z_2)^4\rbt \\
    \leq &\ n^{-4}\|\M\|_{op}^4\|\N\|_{op}^4\E[\|\mZ_1\|_F^4]\E[\|\mZ_2\|_F^4].
\end{align*}
Now note that since $\mZ_1$ and $\mZ_2$ are standard Gaussian data matrices, 
\[\E[\|\mZ_1\|_F^4]\leq \E \slbt Tr(\mZ_1^T\mZ_1)^2\srbt\leq k_1(n+p)^{k_2}\]
for some absolute constants $k_1$ and $k_2$. We can choose $C$ so large such that $k_1(n+p)^{k_2}\leq C(n+p)^{C}$. 
Similarly, we can show that 
\[\E[\|\mZ_2\|_F^4]\leq \E \slbt Tr(\mZ_2^T\mZ_2)^2\srbt\leq C(n+q)^{C},\]
implying
\[\E\lbt \max_{\|x\|_2\leq 1, \|y\|_2\leq 1} f_{x,y}(\Z_1,\Z_2)^4\rbt \leq C\|\M\|_{op}^4\|\N\|_{op}^4(n(p+q))^C\]
for sufficiently large $C$. 
\end{proof}

\subsubsection{Proof of Lemma~\ref{Adlemma: lowrank: premultiply: 2}}
\begin{proof}

The framework will be same as the proof of Lemma \ref{Adlemma: lowrank: premultiply: 1}.
Define $\T=\T_1\times  \mathcal T_2$
where
\[\T_1=\lbs x'\in \RR^p\ :\ x'=x_{S_x},\  x\in T_p^{\e} \rbs.\]
Let $\Z_1$, $\Z_2$, $\Z$, and $f_{x,y}$ be as in Lemma \ref{Adlemma: lowrank: premultiply: 1}. In this case, the main difference from Lemma~\ref{Adlemma: lowrank: premultiply: 1} is that  $|\T|$ is much larger. Eventually we will arrive at \eqref{inlemma: left: ss: bound} using the concentration inequality in Lemma~\ref{lemma: CT: deshpande}, but large $|\T|$ makes the right hand side of the   inequality in \eqref{inlemma: left: ss: bound} much larger. Therefore, we require a tighter bound on $\Lip$, which is  the bound on the Lipschitz constant of $\grad f_{x,y}(\Z)$  on the good set, so that the concentration inequality in \eqref{inlemma: left: ss: bound} is still useful. To bound the Lipschitz constant, as before, we bound $\|\triangledown f_{x,y}(\Z_1,\Z_2)\|^2$ using 
 Lemma~\ref{Surrogate lemma: Adlemma 1: derivative}, which implies that
\[\ns\pdv{f_{x,y}(\Z_1,\Z_2)}{\Z_1} \ns_2 \leq \|g(\mZ_2)\|_{op}\ns v\circ \grad\eta(\Vec(\mQ)) \ns_2,\]
where $v=\vec(xy^T)$ and $g(\mZ_2)=\mZ_2\N\otimes \M^T/n$.
From \eqref{inlemma: adlemma: g(z) inequality}
it follows that
\begin{equation}\label{inlemma: CT: g(Z2)}
   \|g(\mZ_2)\|_{op}^2\leq \frac{\|\M\|_{op}^2\|\N\|_{op}^2\|\mZ_2\|_{op}^2}{n^2}. 
\end{equation}
In Lemma~\ref{Adlemma: lowrank: premultiply: 1}, we bounded $\norm{ v\circ \grad\eta( \Vec(\mQ)) }_2$ by $\|v\|_2$, which was later bounded by $1$. 
 We require  a tighter bound on  $\norm{ v\circ \grad\eta( \Vec(\mQ)) }_2$ this time. Note that $\grad \eta(z)\leq 1\{|z|\geq \tau/\sqn\}$ at all $z\in\RR$ for  any directional derivative of $\eta$. Noting  $\|x\|_\infty\leq \sqrt{\da/p}$ for $x\in\T_1$, we deduce that any $A\in\RR^{p\times q}$ and  $(x,y)\in\T$ satisfy
 \begin{align*}
     \norm{ v\circ \grad\eta( \Vec(A)) }_2^2= &\ \sum_{i=1}^p\sum_{j=1}^qx_i^2y_j^2\eta(A_{i,j})^2\\
     \leq &\ \frac{\da}{p}\sum_{j=1}^qy_j^2\sup_{j\in[q]}\sum_{i=1}^p \eta(A_{i,j})^2\\
  =   &\  \frac{\da}{p}\|y\|_2^2\sup_{j\in[q]}\sum_{i=1}^p1\{|A_{i,j}|>\tau/\sqn\},
 \end{align*}
which is not greater than \[{\da}\sup_{j\in[q]}\sum_{i=1}^p1\{|A_{i,j}|>\tau/\sqn\}/p\]
because $\|y\|_2^2\leq 1$ for $y\in \mathcal T_2$.
Thus, it follows that
\begin{align*}
    \ns\pdv{f_{x,y}(\Z_1,\Z_2)}{\Z_1} \ns^2_2\leq &\  \frac{2\da\|\M\|_{op}^2\|\N\|_{op}^2\|\mZ_2\|_{op}^2}{pn^2}\\
    &\ \times\sup_{j\in[q]}\lbs \sum_{i=1}^p1\{\abs{(\mQ)_{i,j}}>\tau/\sqn\}\rbs.
\end{align*}
Similarly, we can show that
\begin{align*}
   \MoveEqLeft \ns\pdv{f_{x,y}(\Z_1,\Z_2)}{\Z_2} \ns^2_2\leq \frac{2\da \|\M\|_{op}^2\|\N\|_{op}^2\|\mZ_1\|_{op}^2}{pn^2}\\
    &\ \times\sup_{j\in[q]}\lbs \sum_{i=1}^p1\{\abs{(\mQ)_{i,j}}>\tau/\sqn\}\rbs.
\end{align*}
Thus,
\begin{align*}
    \ns\grad{f_{x,y}(\Z)} \ns^2_2\leq &\  \frac{2\da\|\M\|_{op}^2\|\N\|_{op}^2(\|\mZ_1\|_{op}^2+\|\mZ_2\|_{op}^2)}{n^2}\\
    &\ \times\sup_{j\in[q]}\frac{\lbs \sum_{i=1}^p1\{\abs{(\mQ)_{i,j}}>\tau/\sqn\}\rbs}{p}.
\end{align*}

We want to define the good set $\G_2$ of $(\tilde \mZ_1,\tilde \mZ_2,\tilde \mZ_1', \tilde \mZ_2')$ such that 
\[Z_i(t)=\sqrt{t}\tilde \mZ_i+\sqrt{1-t}\tilde \mZ_i',\quad t\in[0,1], i=1,2,\]
satisfies both
$\|\mZ_1(t)\|^2_{op}+\|\mZ_2(t)\|^2_{op}\leq 4(2n+p+q)$ and 
\[\sup_{j\in[q]} \sum_{i=1}^p1\{|(M\mZ_1(t)^T\mZ_2(t)N)_{i,j}|>\tau\sqn\} \leq 4pe^{-\tau^2/K}.\]
 We claim that the above holds if   $(\tilde \mZ_1,\tilde \mZ_2, \tilde \mZ_1', \tilde \mZ_2')\in\G_1$ defined in \eqref{inlemma: CT: def G1}, and for all $j\in[q]$,
 \begin{align}\label{inlemma: left: ssc: G2}
&\ \sum_{i=1}^{p}1\{|(M\tilde \mZ_1^T\tilde \mZ_2N)_{i,j}|>\tau\sqn/2\},\nn\\
&\ \sum_{i=1}^{p}1\{|(M (\tilde\mZ'_1)^T\tilde \mZ'_2N)_{i,j}|>\tau\sqn/2\}\leq 2p e^{-\tau^2/K}\nonumber \nn\\
&\ \sum_{i=1}^{p}1\{|(M (\tilde\mZ'_1)^T\tilde \mZ_2N)_{i,j}|>\tau\sqn/2\},\nn\\ 
&\ \sum_{i=1}^{p}1\{|(M\tilde (\mZ_1)^T\tilde \mZ'_2N)_{i,j}|>\tau\sqn/2\}\leq 2p e^{-\tau^2/K}. 
 \end{align}
The above claim follows from (89) and (90) of \cite{deshpande2014}. Therefore we define the good set $\mathcal G_2$ to be the subset of $\mathcal G_1$ where \eqref{inlemma: left: ssc: G2} is satisfied. 
Defining $\Z_1(t)=\vec(\mZ_1(t)^T)$ and $\Z_2(t)=\vec(\mZ_2(t)^T)$, we obtain that for some absolute constant $C>0$, it holds that
\begin{align*}
 \MoveEqLeft   \|\grad f_{x,y}(\Z_1(t),\Z_2(t))\|_2^2\\
 \le &\ q C\underbrace{\dfrac{\da(2n+p+q)\|\M\|_{op}^2\|N\|^2_{op}e^{-\tau^2/K_0}}{n^2}}_{\Lip^2}=C\Lip^2
\end{align*}
provided $\tilde \mZ_1$, $\tilde \mZ'_1$, $\tilde \mZ_2$, $\tilde \mZ_2'\in\mathcal G_2$.
 Similar to the proof of Lemma~\ref{Adlemma: lowrank: premultiply: 1}, using Lemma~\ref{lemma: CT: deshpande}, we obtain that there exists an absolute constant $C$ so that
 \begin{align}\label{inlemma: deshpande: 2nd}
 \MoveEqLeft P\lbs \max_{(x,y)\in \T}|f_{x,y}(\Z_1,\Z_2)|\geq \Delta \rbs\nn\\
  \leq &\  C|\T|\exp\lb-\dfrac{\Delta^2}{C\Lip^2}\rb\nn\\
  &\ +\dfrac{C}{\Delta^2}E\slbt \max_{(x,y)\in \T} f_{x,y}(\mZ_1,\mZ_2)^4\srbt^{1/2}P(\G_2^c)^{1/2}.  \end{align}
Now since $|\T|\leq|T_p^{\e}|\times |T_q^{\e}|$, and for any $k\in\NN$,  the $\e$-net  $T_k^{\e}$ is chosen so as to satisfy 
$|T_k^{\e}|\leq (1+2/\e)^{k}$, we have
 $|\T|\leq (1+2/\e)^{p+q}$. Therefore, we conclude that the first term of the bound in \eqref{inlemma: deshpande: 2nd} is not larger than
 \[C\exp\lb-\dfrac{\Delta^2}{C\Lip^2}+(p+q)\log(1+2/\e)\rb.\]

Rest of the proof is devoted to bounding the second term of the bound in \eqref{inlemma: deshpande: 2nd}. The expectation term can be bounded easily using
 Lemma~\ref{suurogate lemma: operator 1}, which yields 
\[\E\lbt \max_{(x,y)\in T} f_{x,y}(\mZ_1,\mZ_2)^4\rbt\leq C\|\M\|_{op}^4\|\N\|^4_{op}\{n(p+q)\}^C.\]
We will now show that $P(\G_2^c)$
is small.
Note that by definition,  $\G_2=\G_1\cap \mathcal V$, where $\mathcal V$ is the set of $(\tilde \mZ_1,\tilde \mZ_2, \tilde \mZ_1',\tilde \mZ_2')$, which satisfies the equation system \eqref{inlemma: left: ssc: G2}. Notice that by \eqref{inlemma: left: ss: G1c prob}, we already have $P(\G_1^c)\leq e^{-c(n+p)}+e^{-c(n+q)}$ for some $c>0$. Thus it suffices to show that  $P(\mathcal{V}^c)$ is small.
To this end, note that since $\tilde \mZ_1,\tilde \mZ'_1,\tilde \mZ_2,\tilde \mZ_2'$ are independent,  \eqref{inlemma: left: ssc: G2} implies
\begin{align*}
    P(\mathcal{V}^c)\leq 4P\lb & \sum_{i=1}^{p}1\Big\{|M^T_{i*}\tilde Z_1^T\tilde Z_2 N_{j}|>\tau\sqn/4\Big\}>2p e^{-\tau^2/K}\\
    &\ \text{ for all }j\in[q]\rb.
\end{align*}
Defining the set
$\mathcal{A}_j=\lbs\| \tilde  \mZ_2\N_{*j}\|_2\leq 2\sqn\|\N_{*j}\|_2\rbs$, we bound the above probability as follows:
\begin{align}\label{inlemma: CT: concentration: Vc}
 P(\mathcal{V}^c)\leq  4\sum_{j=1}^{q}P\lb &  \sum_{i=1}^{p}1\{|M^T_{i*}\tilde Z_1^T\tilde Z_2 N_{j}|>\tau\sqn/4\}\nn\\
 & >2p\exp(-\tau^2/K_0)\ \bl \ \tilde \mZ_2\in \mathcal A_j\rb\nn \\
 +&\ 4\sum_{j=1}^q P\slb \tilde \mZ_2\in\mathcal A_j^c \srb.
\end{align}
Now note that $\tilde \mZ_2\N_{*j}\sim N_n\slb 0,\|\N_{*j}\|_2^2I_n\srb$, or
${\tilde \mZ_2\N_{*j}}/{\|\N_{*j}\|_2}\sim N_n(0,I_n)$.
Therefore, there exists a universal constant $c>0$ so that  \begin{align}\label{inlemma: CT: concentration: Z2}
    \sum_{j=1}^qP(\tilde \mZ_2\in \mathcal{A}_j^c)=&\ \sum_{j=1}^q P\lb  \|\N_{*j}\|_2^{-1}\|\tilde  \mZ_2\N_{*j}\|_2>2\sqn \rb\nn\\
    \leq &\  q\exp(-cn),
\end{align}
where the last bound is due to the Chi-square tail bound in Fact~\ref{fact: Chi square tail probability} (see also Lemma 1 of \cite{laurent2000} and Lemma 12 of \cite{deshpande2014}).
Therefore, it only remains to bound the first term in \eqref{inlemma: CT: concentration: Vc}.
 We begin with an expansion of $|M^T_{i*}\tilde Z_1^T\tilde Z_2 N_{j}|$ as follows
\begin{align*}
   |M^T_{i*}\tilde Z_1^T\tilde Z_2 N_{j}|= &\ \bl\sum_{l=1}^{p}\sum_{k=1}^n\M_{il}(\tilde \mZ_1)_{kl}(\tilde \mZ_2\N)_{kj}\bl\\
   =&\ \bl\sum_{l=1}^{p}\M_{il}\underbrace{\sum_{k=1}^n(\tilde \mZ_1)_{kl}(\tilde \mZ_2\N)_{kj}}_{\Psi^j_{l}}\bl.
\end{align*}
Since $\tilde \mZ_1$ and $\tilde \mZ_2$ are independent, $\tilde \mZ_1$ conditioned on $\tilde \mZ_2$ is still a  standard Gaussian data matrix.
Hence, for $l\in[p]$, conditional on $\tilde \mZ_2$, $\Psi_{l}^j$'s are independent $N(0,\|\tilde Z_2N_{*j}\|^2_2)$ random variables. As a result, for each $l\in[p]$ and $j\in[q]$, $\Psi_l^j$ can be written as $\|\tilde Z_2N_{*j}\|_2\mathbb Z_l$, where $\mathbb Z_l=\Psi_l^j/\|\tilde \mZ_2 N_{*j}\|_2$, and  $\mathbb \mZ_1,\ldots, \mathbb Z_p\mid \tilde \mZ_2\iid N(0,1)$. 
Noting $\|N_j\|_2\leq\|N\|_{op}$ for every $j\in[q]$,  we derive the following bound provided $\tilde \mZ_2\in \mathcal A_j$: 
\begin{align*}
  \MoveEqLeft  \sum_{i=1}^{p}1\{|M^T_{i*}\tilde Z_1^T\tilde Z_2 N_{j}|> \tau\sqn/4\}\\
    =&\ \sum_{i=1}^p 1\slbt \|\tilde Z_2N_{*j}\|_2\sbl\sum_{l=1}^pM_{il}\mathbb Z_l\sbl>\tau\sqrt n/4\srbt\\
    \leq &\ \sum_{i=1}^p 1\slbt \sqrt 2\|N\|_{op}\sbl\sum_{l=1}^pM_{il}\mathbb Z_l\sbl>\tau/4\srbt.
\end{align*}
Defining
\begin{equation}\label{inlemma: CT: def: f}
    f(x)\equiv f(x_1,\ldots,x_p)=\sum_{i=1}^p\frac{1\slbt |\sum_{l=1}^pM_{il}x_l|>\tau/(4\sqrt{2}\|N\|_{op})\srbt}{p},
\end{equation}
we notice that the above calculations implies conditional on $\tilde \mZ_2\in\mathcal A_j$,
\[
   \frac{\sum_{i=1}^{p}|M^T_{i*}\tilde Z_1^T\tilde Z_2 N_{j}|> \tau\sqn/4]}{p}\leq f(\mathbb Z_1,\ldots,\mathbb Z_p). 
\]
Therefore,
\begin{align}\label{inlemma: CT: concentration: bound by f}
   \MoveEqLeft P\lb \sum_{i=1}^{p}1\{|M^T_{i*}\tilde Z_1^T\tilde Z_2 N_{j}|>\tau\sqn/4\}>2p e^{-\tau^2/K}\mid \tilde \mZ_2\in \mathcal A_j\rb\nn\\
    \leq&\ P\slb f(\mathbb Z_1,\ldots,\mathbb Z_p)>2\exp(-\tau^2/K) \mid \tilde \mZ_2\in \mathcal A_j\srb,
\end{align}
which is is bounded by $\exp(-2\sqrt{p})$
by Lemma~\ref{lemma: CT: Chernoff}.  Therefore, \eqref{inlemma: CT: concentration: Vc}, \eqref{inlemma: CT: concentration: Z2}, and \eqref{inlemma: CT: concentration: bound by f} jointly imply that
\[P(\mathcal V^c)\leq 4 q\exp(-cn)+4q\exp(-2\sqrt{p}).\]
Therefore $P(\mathcal G_2^c)$ is bounded by
\begin{align*}
  \MoveEqLeft   \exp(-c(n+p))+\exp(-c(n+q))+4 q\exp(-cn)\\
  &\ +4q\exp(-2\sqrt{p})
     \leq 4q\exp(-c\min\{n,\sqrt{p}\}),
\end{align*}
which completes the proof.

\end{proof}

\begin{lemma}
\label{lemma: CT: Chernoff}
Suppose  $160\|M\|^2_{op}\|N\|^2_{op}<K,\tau^2$ and $\tau<\sqrt{K\log p}/2$. Further suppose $\mathbb Z_1,\ldots \mathbb Z_p$  are independent standard Gaussian random variables. Then the function $f$  defined in \eqref{inlemma: CT: def: f} satisfies
\[\mathbb P\slb f(\mathbb Z_1,\ldots,\mathbb Z_p)>2\exp(-\tau^2/K)\srb\leq \exp(-2\sqrt{p}).\]
\end{lemma}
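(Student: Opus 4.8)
\textbf{Proof plan for Lemma~\ref{lemma: CT: Chernoff}.}

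The plan is to view $f(\mathbb Z_1,\ldots,\mathbb Z_p)$ as an average of Bernoulli-type indicators and control it by its expectation via a Chernoff bound, after establishing the correct bound on the expectation. First I would fix a row index $i\in[p]$ and study the single summand $1\{|\sum_{l=1}^p M_{il}\mathbb Z_l|>\tau/(4\sqrt 2\|N\|_{op})\}$. Since the $\mathbb Z_l$ are i.i.d.\ standard Gaussian, $\sum_{l=1}^p M_{il}\mathbb Z_l\sim N(0,\|M_{i*}\|_2^2)$, and $\|M_{i*}\|_2\le \|M\|_{op}$. Hence a standard Gaussian tail bound gives
\[
\mathbb P\slb \sbl\sum_{l=1}^p M_{il}\mathbb Z_l\sbl > \frac{\tau}{4\sqrt 2\|N\|_{op}}\srb
\le 2\exp\lb-\frac{\tau^2}{64\,\|M\|_{op}^2\|N\|_{op}^2}\rb.
\]
Because $160\|M\|_{op}^2\|N\|_{op}^2<K$, in particular $64\|M\|_{op}^2\|N\|_{op}^2 < K$ (indeed $64<160$), so the right-hand side is at most $2\exp(-\tau^2/K)$ — wait, this needs care: we want the single-coordinate probability $\le \exp(-\tau^2/K)$ so that $\E f \le \exp(-\tau^2/K)$ and the threshold $2\exp(-\tau^2/K)$ sits comfortably above the mean. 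The factor $2$ and the constants are arranged precisely so that $64\cdot(\text{something})\le K$ with room to spare; I would track the numerology so that $p\,\E f_i \le \exp(-\tau^2/K)$ in expectation of the \emph{un-normalized} count, i.e.\ $\E\sum_i 1\{\cdots\} \le p\exp(-\tau^2/K)$, so $\E f \le \exp(-\tau^2/K)$.

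Next, with $N_i := 1\{|\sum_l M_{il}\mathbb Z_l|>\tau/(4\sqrt2\|N\|_{op})\}$ and $S := \sum_{i=1}^p N_i = p f$, I would apply a Chernoff bound for sums of (not necessarily independent, but that is fine since Chernoff via Markov on $e^{\lambda S}$ only needs a bound on $\E e^{\lambda S}$) Bernoulli variables. The cleanest route: the $N_i$ are functions of the \emph{same} Gaussian vector $(\mathbb Z_1,\dots,\mathbb Z_p)$, so they are \emph{not} independent. To get concentration of $S$ around its mean I would instead use the bounded-differences / Gaussian-concentration route, or — simpler — observe that $S\le p$ always and bound $\mathbb P(S > 2p\exp(-\tau^2/K))$ by Markov only if $2\exp(-\tau^2/K)$ exceeds $\E f$ by a constant factor, which it does, but Markov alone gives $\mathbb P(f>2\E f)\le 1/2$, not $\exp(-2\sqrt p)$. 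So the real tool must be a genuine concentration inequality. I would invoke the same device used elsewhere in \cite{deshpande2014} (their Lemma for exactly this step): $f$ is a Lipschitz-in-an-appropriate-sense functional, or one decomposes $S$ using the Efron–Stein / Gaussian log-Sobolev inequality to get a subexponential upper tail. Concretely, replacing $\mathbb Z_j$ by an independent copy changes $S$ by at most $1$ (only the indicators $N_i$ with $M_{ij}\neq 0$ can flip, but a more careful accounting — each coordinate change flips a controlled number of $N_i$'s — is needed), and then McDiarmid/Gaussian-concentration yields $\mathbb P(S>\E S + t)\le \exp(-ct^2/(\text{variance proxy}))$. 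Choosing $t = p\exp(-\tau^2/K)$ and using $\tau<\sqrt{K\log p}/2$ so that $\exp(-\tau^2/K) > p^{-1/4}$, hence $t > p^{3/4}$, gives an exponent of order $-p^{1/2}$, which produces the claimed $\exp(-2\sqrt p)$ after fixing constants.

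The main obstacle I anticipate is precisely this concentration step: the indicators $N_i$ share the randomness of $(\mathbb Z_1,\dots,\mathbb Z_p)$, so a naive union bound or Markov inequality is far too weak, and one must exploit that a single coordinate $\mathbb Z_j$ influences the count $S=pf$ only weakly (through the magnitudes $|M_{ij}|$), which requires either a structural assumption on $M$ absorbed into $\|M\|_{op}$ or — as in \cite{deshpande2014} — a moment/Chernoff argument tailored to $\sum_i 1\{|\langle M_{i*},\mathbb Z\rangle|>u\}$. Managing the interplay between the constants $160$, $K$, $\tau$, and the exponent $\tau^2/K \le \tfrac14\log p$ (which is what makes $p\exp(-\tau^2/K)\ge p^{3/4}$ and thus the deviation $t$ polynomially large in $p$) will be the delicate bookkeeping. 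Once that concentration estimate is in hand, substituting $t=p\exp(-\tau^2/K)$ and simplifying with $\tau<\sqrt{K\log p}/2$ finishes the proof.
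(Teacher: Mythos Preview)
Your plan correctly identifies the obstacle but does not overcome it. The indicators $N_i=1\{|M_{i*}^T\mathbb Z|>t\}$ all depend on the same Gaussian vector $\mathbb Z$, and $M$ is in general a dense matrix; changing a single coordinate $\mathbb Z_j$ can in principle flip every $N_i$, so the bounded-difference constant in McDiarmid is of order $p$, not $O(1)$, and the resulting exponent is far too weak. Gaussian (log-Sobolev) concentration is likewise unavailable because $f$ is a sum of indicators and has no useful Lipschitz constant. Your final numerology ($\exp(-\tau^2/K)>p^{-1/4}$, hence the deviation $t\gtrsim p^{3/4}$, hence exponent $\asymp\sqrt p$) would be correct \emph{if} a concentration inequality with variance proxy $O(p)$ were available, but none of the tools you name supply one here.

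The paper's route is different and supplies exactly the missing ingredient. It uses a generalized Chernoff inequality for dependent Bernoulli variables due to \cite{panconesi1997}: if for every subset $\mathcal B\subset[p]$ one has $\E\prod_{i\in\mathcal B}N_i\le\delta^{|\mathcal B|}$, then $\mathbb P(p^{-1}\sum N_i\ge\varepsilon)\le\exp(-pD(\delta\|\varepsilon))$. The product condition is verified by the key observation
\[
\E\Big[\prod_{i\in\mathcal B}N_i\Big]=\mathbb P\Big(|M_{i*}^T\mathbb Z|>t\ \forall i\in\mathcal B\Big)\le \mathbb P\Big(\sum_{i\in\mathcal B}(M_{i*}^T\mathbb Z)^2>|\mathcal B|t^2\Big)\le \mathbb P\Big(\|M\|_{op}^2\|\mathbb Z\|_2^2>|\mathcal B|t^2\Big),
\]
and a chi-square tail bound then gives $\delta=\exp(-\tau^2/(160\|M\|_{op}^2\|N\|_{op}^2))$. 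With $\varepsilon=2\exp(-\tau^2/K)$ and $K>160\|M\|_{op}^2\|N\|_{op}^2$ one has $\delta\le\varepsilon/2$, so $D(\delta\|\varepsilon)\ge\varepsilon^2/2=2\exp(-2\tau^2/K)$; the constraint $\tau^2\le K(\log p)/4$ then makes $pD(\delta\|\varepsilon)\ge 2\sqrt p$. The chi-square trick on the intersection event, together with the Panconesi--Srinivasan bound, is the idea your proposal is missing.
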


\begin{proof}[Proof of Lemma~\ref{lemma: CT: Chernoff}]
Note that $pf(\mathbb Z_1,\ldots,\mathbb Z_p)$ is a sum of dependent Bernoulli random variables. Therefore the traditional Chernoff’s or Hoeffding's  bound for inependent Bernoulli random variables will not apply.
We use a generalized version of Chernoff’s inequality, originally due to \cite{panconesi1997} (also discussed by \cite{pelekis2015, linial2014} among others), which applies to weakly dependent Bernoulli random variables.
\begin{lemma}[\cite{panconesi1997}]
\label{lemma: Linial2014}
Let $X_1,\ldots, X_p$ be Bernoulli random variables and $\e\in(0,1)$. Suppose there exists $\delta\in(0,\e)$ such that for any $\mathcal B\subset[p]$, the following assertion holds:
\begin{equation}\label{cond: linial}
    \E\slbt \prod_{i\in \mathcal B}X_i\srbt \leq \delta^{|\mathcal B|}.
\end{equation}
For $x,y\in(0,1)$, we denote
\[D(x\mm y)=y\log(y/x)+(1-y)\log((1-y)/(1-x)).\]
Then we have
\[\mathbb P\lbt \frac{\sum_{i=1}^pX_i}{p}\geq \e \rbt\leq \exp\slb-pD(\delta\mm \e)\srb.\]
\end{lemma}

Note that if we take $X_i=1\{ |\sum_{l=1}^pM_{il}\mathbb Z_l|>\tau/(4\sqrt{2}\|N\|_{op})\}$ and $\e=2\exp(-\tau^2/K)$, then the above  lemma can be applied to bound $P( f(\mathbb Z_1,\ldots,\mathbb Z_p)>2\exp(-\tau^2/K))$ provided \eqref{cond: linial} holds, which will be referred as the weak dependence Condition from now on. Suppose $|\mathcal B|=k$. For the sake of simplicity, we take $\mathcal B=\{1,\ldots, k\}$. The  arguments, which are to follow, would hold for any other choice of $\mathcal B$ as well as long as $\|\mathcal B|=k$. Denote by $M_k$ the submatrix of $M$ containing only the first $k$ rows of $M$. Let us denote $\mathbb Z_{1:k}=(\mathbb Z_1,\ldots,\mathbb Z_k)$. Letting $t=\tau/(4\sqrt{2}\|N\|_{op})$, we observe that for our choice of $X_i$'s, $\E[\prod_{i\in \mathcal B}X_i]$ equals
\begin{align*}
      P\slb |M_{i*}^T\mathbb Z_{1:k}|>t,\  l\in[k]\srb
     \leq &\  P\slb \mathbb Z_{1:k}^T M_k^TM_k \mathbb Z_{1:k}>kt^2\srb\\
     \leq &\ P\slb \|M_k^TM_k\|_{op} \sum_{i=1}^k \mathbb \mZ^2_{l}  >kt^2\srb.
\end{align*}
The operator norm $\|M_k^TM_k\|_{op}$ equals $\|M_k\|_{op}^2$, which is bounded by $\|M\|_{op}^2$ by Lemma~\ref{lemma: technical}B. Therefore, the right hand side of the last display is bounded by $P( \sum_{l=1}^k\mathbb Z_l^2>kt^2/\|M\|_{op}^2)$. 
By Chi-square tail bounds (see for instance Fact~\ref{fact: Chi square tail probability}), the latter probability is bounded above by $\exp(-kt^2/(5\|M\|_{op}^2))$ for all $t>\sqrt{5}\|M\|_{op}$. Since $t=\tau/(4\sqrt{2}\|N\|_{op})$, note that $\tau>\sqrt{160}\|M\|_{op}\|N\|_{op}$ suffices. For such $\tau$, we have thus shown that
\[\E\slbt\prod_{i\in \mathcal B}X_i\srbt\leq \exp(-|\mathcal B|\frac{\tau^2}{160\|M\|_{op}^2\|N\|_{op}^2}).\]
Thus our 
$\delta= \exp(-\frac{\tau^2}{160\|M\|_{op}^2\|N\|_{op}^2})$,
which is less than $\e/2=\exp(-\tau^2/K)$ because  $K>160\|M\|_{op}^2\|N\|_{op}^2$. Thus our $(\delta,\e)$ pair satisfies the weak dependence condition. Therefore by 
Lemma~\ref{lemma: Linial2014}, it follows that
\[\mathbb P\slb f(\mathbb Z_1,\ldots,\mathbb Z_p)>2\exp(-\tau^2/K)\srb\leq \exp(-p D(\delta\mm \epsilon)).\]
We will now use the lower bound $D(x\mm y)\geq 2(x-y)^2$ for $x,y\in(0,1)$. Because $|\delta-\e|\leq \e/2$, $D(\delta\mm\e)\geq \e^2/2$, indicating
$p D(\delta\mm \epsilon)\geq 2p\exp(-{2\tau^2}/{K}),$
which is greater than $2\sqrt p$ if $2\tau^2/K\leq \log p/2$, or equivalently $\tau^2\leq (K\log p)/4$. 
 Therefore, the current lemma follows.
\end{proof}

\FloatBarrier
\bibliographystyle{IEEEtran}
\bibliography{sparseCCA,biblio_cca_support}
\vskip 0pt plus -1fil

\begin{IEEEbiographynophoto}{Nilanjana Laha}
received  a Bachelor of Statistics in 2012 and a Master of Statistics in 2014 from the Indian Statistical Institute, Kolkata. Then she received a Ph.D. in statistics in 2019 from the University of Washington, Seattle. She was a postdoctoral research fellow at the  department of Biostatistics at Harvard university from 2019 to 2022. She is currently an assistant professor in Statistics at Texas A \& M University. Her
research interests include dynamic treatment regimes, high dimensional association, and shape constrained inference.
\end{IEEEbiographynophoto}
\vskip 0pt plus -1fil
\begin{IEEEbiographynophoto}{Rajarshi Mukherjee}
received  a Bachelor of Statistics in 2007 and a Master of Statistics in 2009 from the Indian Statistical Institute, Kolkata. He received his Ph.D. degree in Bisostatistics from Harvard University in  2014. He was a Stein fellow in the department of Statistics at Stanford University from 2014 to 2017. He was an assistant professor at the division of Biostatistics at the University of California, Berkeley, from 2017 to 2018. Since 2018, he has been an assistant professor at the department of Biostatistics at Harvard University. His research interests primarily lie in  structured signal detection problems in high dimensional and network models, and functional estimation and adaptation theory in nonparametric statistics.
\end{IEEEbiographynophoto}
\end{document}